\documentclass[a4paper,12pt]{amsart}
\usepackage[a4paper,tmargin=40mm,bmargin=35mm,hmargin=20mm]{geometry}
\usepackage[utf8]{inputenc}
\usepackage[T1]{fontenc}
\usepackage[dvipsnames]{xcolor}
\usepackage{relsize}
\usepackage{amsmath}
\usepackage{amssymb}
\usepackage{amsthm}
\usepackage{amscd}
\usepackage{thmtools}
\usepackage{mathtools}
\usepackage[osf]{Baskervaldx}
\usepackage[baskervaldx]{newtxmath}
\usepackage[cal=boondoxo]{mathalfa}

\usepackage{mathdots}
\usepackage[colorlinks=true,linkcolor=BrickRed,citecolor=Green,urlcolor=Blue]{hyperref}
\usepackage[capitalise]{cleveref}
\usepackage{enumitem}
\usepackage{mathrsfs}  
\usepackage{microtype}
\usepackage{tikz-cd}
\usepackage{adjustbox}
\usepackage{contour}
\usepackage{enumitem}
\usepackage{bm}

\crefname{equation}{}{}

\newtheorem{theorem}{Theorem}[section]
\newtheorem{thmx}{Theorem}

\newtheorem{prop}[theorem]{Proposition}
\newtheorem{cor}[theorem]{Corollary}
\newtheorem{lemma}[theorem]{Lemma}

\theoremstyle{definition}

\newtheorem{rmk}[theorem]{Remark}
\newtheorem{rmks}[theorem]{Remarks}

\newtheorem{ex}[theorem]{Example}
\newtheorem{convention}[theorem]{Convention}

\DeclareMathOperator{\Hom}{\mathrm{Hom}}

\DeclareMathOperator{\Tor}{\mathrm{Tor}}

\DeclareMathOperator{\RHom}{\mathbf{R}\mathrm{Hom}}
\DeclareMathOperator{\RQ}{\mathbf{R}Q}

\newcommand{\Ccal}{\mathcal{C}}
\newcommand{\Dcal}{\mathcal{D}}
\newcommand{\Ecal}{\mathcal{E}}
\newcommand{\Fcal}{\mathcal{F}}
\newcommand{\Gcal}{\mathcal{G}}
\newcommand{\Hcal}{\mathcal{H}}
\newcommand{\Ical}{\mathcal{I}}

\newcommand{\Lcal}{\mathcal{L}}

\newcommand{\Ocal}{\mathcal{O}}
\newcommand{\Pcal}{\mathcal{P}}

\newcommand{\Scal}{\mathcal{S}}
\newcommand{\Tcal}{\mathcal{T}}
\newcommand{\Ucal}{\mathcal{U}}
\newcommand{\Vcal}{\mathcal{V}}
\newcommand{\Wcal}{\mathcal{W}}
\newcommand{\Xcal}{\mathcal{X}}
\newcommand{\Ycal}{\mathcal{Y}}

\newcommand{\Qbb}{\mathbb{Q}}

\newcommand{\Zbb}{\mathbb{Z}}

\newcommand{\1}{\mathbf{1}}

\newcommand{\bc}{\dagger}
\newcommand{\cd}{\dagger}

\makeatletter
\def\pmb@@#1#2#3{\leavevmode\setboxz@h{#3}%
\dimen@-\wdz@
\kern-.5\ex@\copy\z@
\kern\dimen@\kern.25\ex@\raise.4\ex@\copy\z@
\kern\dimen@\kern.2\ex@\raise.3\ex@\copy\z@
\kern\dimen@\kern.15\ex@\raise.2\ex@\copy\z@
\kern\dimen@\kern.25\ex@\box\z@
}
\makeatother

\newcommand{\Bou}[1]{\pmb{\langle} #1 \pmb{\rangle}}
\newcommand{\BouBig}[1]{\pmb{\Big\langle} #1 \pmb{\Big\rangle}}
\newcommand{\SBou}[1]{\pmb{|} #1 \pmb{\rangle}}
\newcommand{\SBouBig}[1]{\pmb{\Big|} #1 \pmb{\Big\rangle}}
\newcommand{\CSBou}[1]{\pmb{|} #1 \pmb{\rangle}_\star}

\newcommand{\SBL}[1]{\mathbf{sBL}(#1)}
\newcommand{\CSBL}[1]{\mathbf{csBL}(#1)}
\newcommand{\BL}[1]{\mathbf{BL}(#1)}

\newcommand{\Perv}[1]{\mathbf{Perv}(#1)}

\newcommand{\WTcal}[1]{\widetilde{\Tcal^{<{#1}}}}
\newcommand{\WDcal}[1]{\widetilde{\Dcal^{<{#1}}}}
\newcommand{\PI}{\Pcal\Ical}

\newcommand{\cl}[1]{\overline{\{{#1}\}}}

\newcommand{\D}{\mathcal{D}}

\newcommand{\cpt}{\mathrm{c}}

\newcommand{\bdd}{\mathrm{b}}

\newcommand{\op}{\mathrm{op}}

\newcommand{\K}{\mathcal{K}}

\newcommand{\Inj}[1]{\mathrm{Inj}(#1)}

\newcommand{\Mod}[1]{\mathrm{Mod}(#1)}

\newcommand{\Flat}{\mathrm{Flat}}

\newcommand{\Qcoh}[1]{\mathrm{Qcoh}(#1)}
\newcommand{\coh}[1]{\mathrm{coh}(#1)}

\newcommand{\depth}{\mathrm{depth}}

\newcommand{\Spec}[1]{\mathrm{Spec}(#1)}
\newcommand{\SH}{\mathcal{SH}}

\newcommand{\Supp}{\mathrm{Supp}}

\newcommand{\supp}{\mathrm{supp}}

\renewcommand*{\Perp}[1]{{}^{\perp_{#1}}}

\newcommand{\hocolim}{\mathrm{hocolim}}

\newcommand{\Loc}{\mathrm{Loc}}

\newcommand{\thick}[1]{\mathrm{thick}(#1)}

\newcommand{\pp}{\mathfrak{p}}

\newcommand{\qq}{\mathfrak{q}}

\newcommand{\mm}{\mathfrak{m}}

\newcommand{\qc}{\mathrm{qc}}

\newcommand{\yo}{\mathbf{y}}

\newcommand{{\tst}}{\textit{t}-}

\DeclareRobustCommand{\scrvar}[1]{%
  \text{\textls[-100]{\usefont{U}{BOONDOX-calo}{m}{n}#1}}%
}
\newcommand{\SHom}{\scrvar{Hom}}
\newcommand{\SHomqc}{\scrvar{Hom}^{\qc}}
\newcommand{\RSHom}{\mathbf{R}\SHom}

\newcommand{\newterm}[1]{\textit{#1}}

\title{Semi-Bousfield classes and nonmonotone perversities}
\author{Dolors Herbera}
\address[D. Herbera]{Departament de Matemàtiques, Universitat Autònoma de Barcelona, 08193 Bellaterra (Barcelona), Spain}
\email{dolors.herbera@uab.cat}

\author{Michal Hrbek}
\address[M. Hrbek]{Institute of Mathematics of the Czech Academy of Sciences, \v{Z}itn\'{a} 25, 115 67 Prague, Czech Republic}
\email{hrbek@math.cas.cz}

\author{Giovanna Le Gros}
\address[G. Le Gros]{Institute of Mathematics of the Czech Academy of Sciences, \v{Z}itn\'{a} 25, 115 67 Prague, Czech Republic}
\email{legros@math.cas.cz}

\subjclass[2020]{}

\thanks{Herbera  was partially supported by the projects MIMECO  PID2023-147110NB-I00 financed by the Spanish Government.} 

\thanks{Hrbek and Le Gros were supported by the project LQ100192601 Lumina
quaeruntur, funded by the Czech Academy of Sciences (RVO 67985840)}
\keywords{tensor triangulated category, Bousfield class, semi-Bousfield class, t-structure, Noetherian scheme, unbounded derived category, perversity function}

\subjclass[2020]{14A30, 18G80 (primary), 13D09, 14F08, 14B05} 

\begin{document}

\begin{abstract}
   In the generality of a rigidly-compactly generated tensor triangulated category, we introduce semi-Bousfield classes in terms of the vanishing of the tensor product in positive degrees with respect to a fixed reasonable $t$-structure. We show that semi-Bousfield classes provide a common generalisation of Bousfield classes and compactly generated tensor-compatible $t$-structures. Then we specialise to the setting of the unbounded derived category $\D_{\qc}(X)$ of a Noetherian scheme $X$ and show that the stratification bijection naturally extends to an assignment which takes a (not necessarily monotone) perversity on $X$ to a semi-Bousfield class in $\D_{\qc}(X)$. If $X$ is regular, this assignment constitutes a stratification of the whole semi-Bousfield lattice, while in the singular case its image consists precisely of those semi-Bousfield classes arising from objects of finite Tor-dimension. Restricting this bijection to monotone perversities recovers the recent classification of compactly generated tensor-compatible $t$-structures of Dubey and Sahoo \cite{DS23}.
\end{abstract}
\maketitle
\tableofcontents
\section*{Introduction}
In the seminal work \cite{BBD82}, Be\u{\i}linson, Bernstein, Deligne, and Gabber showed that integer-valued functions on the set of strata of a suitable stratified topological space $X$, called the \textit{perversities}, give rise to $t$-structures in the derived category of sheaves on $X$. These in turn provide a  convenient categorical framework for cohomology theories, in this case, they were used to conceptualise and generalise the notions of intersection cohomology and perverse sheaves of Goresky and MacPherson. 

The parallel story for coherent sheaves over a Noetherian scheme $X$ was developed by Deligne, although it remained publicly unavailable until the preprint of Bezrukavnikov \cite{Bez00}. In this setting, perversities are integer-valued functions on the set of points of $X$, and the $t$-structures in the bounded derived category $\D^\bdd(\coh{X})$ are induced by those perversities which are both monotone and comonotone. The caveat is that, for this to actually work, $X$ needs to satisfy a weak, yet nonvacuous, condition of admitting a dualising complex, or more generally, being CM-excellent. In fact, based on new results from \cite{HM24} one can produce a monotone and comonotone perversity over certain pathological affine Noetherian schemes which does not induce a $t$-structure in the bounded derived category, in particular providing a counterexample to \cite[Conjecture 7.10]{Sta10}, see the discussion in \cref{ss:comonotone}.

As is often the case, the situation becomes more transparent if we enlarge the scope and work in the setting of the unbounded derived category $\D_{\qc}(X)$ of complexes of sheaves with quasi-coherent cohomology. Here, the collection of all $t$-structures often does not even form a set---see Stanley \cite[\S 8]{Sta10}---so we need to impose a suitable finiteness condition on the $t$-structures we study. Since the $t$-structures on $\D^\bdd(\coh{X})$ arising from perversities always extend to $t$-structures in $\D_{\qc}(X)$ which are generated by perfect complexes, it is natural to study compactly generated $t$-structures. It was proved first in the affine case by Alonso, Jeremías, and Saorín \cite{AJS10} that the compactly generated $t$-structures correspond bijectively to monotone perversities (which are now allowed to also take values in the limit points $-\infty$ and $\infty$), see also the exposition in \cite[\S 2]{HNS24}. The extension of this classification to general Noetherian schemes was recently obtained by Dubey and Sahoo \cite{DS23}, in which monotone perversities correspond bijectively to compactly generated $t$-structures satisfying a tensor compatibility condition (which is vacuous in the affine case); the connection to the bounded derived category setting is summarised in \cite{Sah24}. Also, recently Lank \cite{Lan25} extended this classification to the stacky setting.

Due to the groundbreaking work of Neeman \cite{Nee92}, Alonso, Jeremías, and Souto \cite{AJS04}, and Stevenson \cite{Ste13}, we now know that $\D_{\qc}(X)$ is \newterm{stratified} in the modern tensor-triangular sense of Barthel, Heard, and Sanders \cite{BHS23},  based on the seminal concept  of Benson, Iyengar, and Krause \cite{BIK08}. In effect, this means that the assignment sending an arbitrary subset $Y$ of $X$ to the subcategory $\supp^{-1}(Y)$ of all objects whose Balmer-Favi support is inside $Y$ covers all of the localising $\otimes$-ideals in $\D_{\qc}(X)$. In particular, any localising $\otimes$-ideal is in fact a \newterm{Bousfield class} of a suitable object given as the kernel of the derived tensor product, a classical notion with origins in stable homotopy theory \cite{Bou79}. In fact, stratification always induces a complete lattice isomorphism between the Boolean lattice of subsets of $X$ and the Bousfield lattice of $\D_{\qc}(X)$, see \cite[Theorem~8.8]{BHS23}. The stratification is verified by checking two conditions one can impose on a general big tt-category---the Local to Global principle which allows to reduce the statement to the subcategory of objects supported on a singleton, and the Minimality principle which asserts that the latter subcategory contains no nontrivial localising ideals. Restricting this analysis to  Bousfield classes, the Minimality principle boils down to a Tensor nonvanishing property (also known as the Tensor theorem), see the discussion in \cref{ss:Bousfield-stratification}.

Our starting point is the observation that the two structural classifications we laid out above, can be viewed as a single assignment, which sends a perversity $p$ to the subcategory consisting of objects whose graded support is bounded below by $p$. Equivalently, from the commutative algebra perspective, these are objects whose local depth at each point $x$ is greater than $p(x)$. Restricting this assignment to monotone perversities yields precisely the coaisles of compactly generated tensor-compatible $t$-structures. On the other hand, restricting to perversities which only take values in $\{-\infty,\infty\}$ recovers the stratification assignment itself. The ``pullback'' of these two is easy to interpret as well: A monotone perversity $p$ with values in $\{-\infty,\infty\}$ correspond to a specialisation-closed subset $V=\{x \in X \colon p(x) = \infty\}$ of $X$, the corresponding ideal is then the image of the smashing localisation consisting precisely of $V$-local objects, equivalently the coaisle of the stable $t$-structure generated by the perfect complexes supported on $V$. We thus arrive at the following chart:
\begingroup
\renewcommand*{\arraystretch}{0.5}
\setlength\arraycolsep{1pt}
$$
\begin{tikzcd}[column sep = 0.1em, row sep = 0.1em]
  \begin{Bmatrix} \begin{matrix}   \text{ Smashing} \\ \text{localisations} \end{matrix} & \text{\huge{:}} &  \begin{matrix}  \text{ Monotone perversities} \\ \text{valued in $\{-\infty,\infty\}$} \end{matrix} \end{Bmatrix}  & \subseteq & \begin{Bmatrix} \begin{matrix}   \text{ Bousfield} \\ \text{classes} \end{matrix} & \text{\huge{:}} &  \begin{matrix}  \text{ Perversities valued} \\ \text{ in $\{-\infty,\infty\}$} \end{matrix} \end{Bmatrix} \\
    \rotatebox[origin=c]{270}{$\subseteq$} & & \rotatebox[origin=c]{270}{$\subseteq$} \\
    \begin{Bmatrix} \begin{matrix}   \text{ Compactly generated} \\ \text{$\otimes$-$t$-structures} \end{matrix} & \text{\huge{:}} &  \begin{matrix}  \text{ Monotone perversities } \\ \text{valued in $\Zbb \cup \{-\infty,\infty\}$} \end{matrix} \end{Bmatrix} & \subseteq & \begin{Bmatrix} \begin{matrix}   \text{\huge{\textbf{?}}}  \end{matrix} & \text{\huge{:}} &  \begin{matrix}  \text{ Perversities valued } \\ \text{ in $\Zbb \cup \{-\infty,\infty\}$}  \end{matrix} \end{Bmatrix}\\
\end{tikzcd}
$$
\endgroup

A natural task arises to search for a suitable kind of structure which fits into the ``pushout'' corner of the chart in place of the `\textbf{?}' symbol. To this end, we propose the notion of a \newterm{semi-Bousfield class}. Our definition takes place in the generality of a ``$ttt$-category''---a rigidly-compactly generated $t$ensor $t$riangulated category endowed with a $t$-structure satisfying mild axioms. A semi-Bousfield class is then a refinement of Bousfield classes, where the actual vanishing of the tensor product is replaced by vanishing of its nonpositive cohomology. In this generality, we show that semi-Bousfield classes provide a common generalisation of Bousfield classes one hand and compactly generated $\otimes$-$t$-structures on the other. As in the classical theory, semi-Bousfield classes form a set-sized complete lattice, into which the Bousfield lattice embeds. From another point of view, a semi-Bousfield class can be realised as a nullity class of suspension-closed collection of pure-injective objects, which under mild assumptions fits into a co-$t$-structure. This also naturally leads to the definition of a cohomological semi-Bousfield class.

In what follows, we focus on the case study of the derived category $\D_{\qc}(X)$ of a Noetherian scheme $X$. We define a local depth assignment $\Phi$ and interpret it as an injection from general perversities to semi-Bousfield classes. The neatest possible outcome of this map being a bijection happens precisely if $X$ is a regular scheme, which provides a semi-Bousfield version of stratification for $\D_{\qc}(X)$. This is done by establishing a suitable version of the Local to Global principle and a $t$-structure substitute of the Minimality principle for semi-Bousfield classes. Explicitly, we prove the following theorem. 

\begin{thmx}(\cref{T:regular})
  \label{IT:regular}
  Let $X$ be a Noetherian scheme. Then $X$ is regular if and only if the local depth assignment $\Phi$ induces a lattice isomorphism:
  $$
  \begin{tikzcd}[column sep = 5.0em, row sep = 0.1em]
   \begin{Bmatrix} 
    \text{Perversities on $X$} \end{Bmatrix} \arrow[leftrightarrow]{r}{1-1} & \begin{Bmatrix} 
      \text{Semi-Bousfield classes in $\D_{\qc}(X)$}\end{Bmatrix} \\
  \end{tikzcd}
  $$
  In addition, any cohomological semi-Bousfield class (of a pure-injective object) is a semi-Bousfield class.
\end{thmx}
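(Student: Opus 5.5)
The plan has two directions, and in both I work with the standard $t$-structure on $\D_{\qc}(X)$ and with the local depth assignment $\Phi$. This sends a perversity $p\colon X\to\Zbb\cup\{\pm\infty\}$ to the class of objects $M$ whose local depth at each $x$ exceeds $p(x)$. Concretely, the condition is that $\RHom$-type local cohomology $\mathbf{R}\Gamma_x M_x$, or dually $\kappa(x)\otimes^{\mathbf L} M_x$ shifted appropriately, vanishes in degrees $\le p(x)$.

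For the forward direction ($X$ regular implies $\Phi$ is a bijection), I would follow the stratification template.

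\emph{Step 1: realise $\Phi(p)$ as a semi-Bousfield class.} Put $E_p=\bigoplus_{x\in X}\kappa(x)[-p(x)]$, suitably interpreted for infinite values: drop the point if $p(x)=-\infty$, and use all shifts if $p(x)=\infty$. Then $\Phi(p)$ should be exactly the class of $M$ with $E_p\otimes M$ vanishing in nonpositive degrees. The key input is that $\kappa(x)\otimes^{\mathbf L} M$ computes the graded support at $x$, and its lowest nonvanishing degree is governed by depth via the Koszul/Auslander--Buchsbaum type identity. This shows $\Phi$ is well defined and injective, since we can recover $p(x)$ by testing on $\kappa(x)$-shifts. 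This step works for any Noetherian $X$.

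\emph{Step 2: Local to Global.} For an arbitrary object $F$, I want $\SBou{F}=\SBou{\bigoplus_x \Gamma_x F}$, where $\Gamma_x$ denotes the Balmer--Favi idempotent at $x$. I expect to prove this by the same argument as in the Bousfield case, using that the idempotents $\Gamma_x\1$ are flat-like and commute with the relevant truncations. The semi-Bousfield class of a direct sum is the intersection of the classes of its summands.

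\emph{Step 3: Minimality substitute.} This is the main obstacle. For $F$ supported at $x$, I must show that $\SBou{F}=\SBou{\kappa(x)[-n]}$ for $n$ the appropriate ``lowest degree'' of $F$. Over a regular local ring $\kappa(x)$ is perfect over $\Ocal_{X,x}$, so $\kappa(x)$ lies in the thick tensor ideal generated by $\Gamma_x F$ in a bounded range of degrees, since $\Gamma_x F$ has nonzero $\kappa(x)\otimes F$. Conversely, $\Gamma_x F$ is built from $\kappa(x)$ by extensions and colimits within a controlled degree window; the window is bounded by the Tor-dimension of $\kappa(x)$, i.e. the Krull dimension. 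I would first try to produce both inclusions by tensoring with the Koszul complex on a regular system of parameters and tracking degrees via the $t$-structure. Combining Steps 2 and 3, every semi-Bousfield class is $\Phi(p)$ for the perversity $p(x)=$ the minimal degree at $x$. Lattice compatibility then follows because $\Phi$ turns pointwise min/max into joins and intersections of classes.

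For the converse, suppose $X$ is singular at $x$. Then $\kappa(x)$ has infinite Tor-dimension over $\Ocal_{X,x}$. I would take $F=\kappa(x)$ (or its pure-injective dual) and show that its semi-Bousfield class is not of the form $\Phi(p)$. Classes in the image of $\Phi$ are determined by finitely many degree conditions per point that are testable on $\kappa(y)$. By contrast, $\kappa(x)\otimes M$ can have cohomology in arbitrarily negative degrees for $M$ of fixed depth, because of unbounded Tor. Concretely, I would compare against $M=\kappa(x)[-n]$, using nonvanishing of $\Tor_i(\kappa(x),\kappa(x))$ for all $i$.

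Finally, the claim about cohomological semi-Bousfield classes of pure-injective $I$ reduces to Step 3 in the same way. Local to global works for $\Hom(-,I)$ via the adjunction with $\Gamma_x$. Locally, a pure-injective supported at $x$ is cogenerated in the appropriate sense by shifts of the injective hull of $\kappa(x)$, whose cohomological class coincides with the tensor class of $\kappa(x)$ by Matlis duality.
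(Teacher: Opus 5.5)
Your overall architecture matches the paper: a Local to Global step, then a pointwise analysis in $\Gamma_x\D$, and for the converse the class $\SBou{\kappa(x)}$ together with infinite Tor at a singular point. But there are genuine gaps.

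\textbf{Step~1 and Step~3.} Step~1 is false as stated for singular $X$, and it contradicts your own converse. It would give $\SBou{\kappa(x)}=\Phi(p)$ for the perversity with $p(x)=0$ and $p\equiv-\infty$ elsewhere.

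The lowest degree of $\kappa(x)\otimes^{\mathbf L}M$ is not governed by depth, since Auslander--Buchsbaum needs finite projective dimension. At a singular point, $\depth_x\kappa(x)=0$ while $\kappa(x)\otimes^{\mathbf L}\kappa(x)$ is unbounded below. Depth is detected by $\RHom(\kappa(x),-)$ instead, because $\inf\RHom_{\Ocal_{X,x}}(\kappa(x),M)=\inf\Gamma_xM$ (Foxby--Iyengar). This is why the paper defines $\Phi(p)$ through $\Gamma_x(\Ocal_X)$.

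Only when $\Ocal_{X,x}$ is regular of dimension $d$ can you pass to $\kappa(x)\otimes^{\mathbf L}-$, via $\RHom(\kappa(x),-)\cong\Sigma^{-d}\kappa(x)\otimes^{\mathbf L}-$. Even then the test object is $\Sigma^{p(x)-d}\kappa(x)$, not $\kappa(x)[-p(x)]$.

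The resulting identity $\inf\Gamma_xF=\inf(\kappa(x)\otimes^{\mathbf L}F)+d$, valid for arbitrary unbounded $F$, is exactly what Step~3 needs and what your sketch omits. Your two inclusions do give $\SBou{F}=\SBou{\Sigma^{-\inf F}\kappa(x)}$ for $F\in\Gamma_x\D$:
\begin{itemize}
\item the flat dimension $d$ of $\kappa(x)$ costs $d$ degrees in one direction;
\item building $F$ from $\Sigma^{-j}\kappa(x)$, $j\geq\inf F$, gives the other.
\end{itemize}
This works only because these shifts cancel exactly. A degree window merely ``bounded by the Krull dimension'' would only sandwich $\SBou{F}$ between two classes in the image of $\Phi$.

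The paper instead uses \cref{gen-crit}: it suffices that $\Ccal\cap\Gamma_x\D$ be closed under $\RHom(\kappa(x),-)$, which it gets from the isomorphism $\theta$ of \cref{avramov-foxby}(i).

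Step~2 is true, but not by the Bousfield-case argument, since $\Gamma_x$ is only left $t$-exact and does not commute with truncations. You need the minimal-point argument of \cref{local-piece}.

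In the converse, you must also exhibit a nonzero object of $\SBou{\kappa(x)}\cap\Gamma_x\D$, for example $\Sigma^{-1}\Gamma_x(\Ocal_X)$, using $\kappa(x)\otimes^{\mathbf L}\Gamma_x(\Ocal_X)\cong\kappa(x)$. Otherwise ``no shift of $\kappa(x)$ lies in the class'' is consistent with $p(x)=\infty$.

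\textbf{Cohomological classes.} Your final paragraph does not work. Over the local ring $\Ocal_{X,x}$ one has $\CSBou{E(\kappa)}=\{M\colon\RHom(M,E(\kappa))\in\D^{<0}\}=\D^{>0}$, because $\Hom(-,E(\kappa))$ is exact and faithful. That is $\SBou{\Ocal_{X,x}}$, not $\SBou{\kappa}$: the object $\Sigma^{-1}\kappa$ lies in the first but not in the second unless $\Ocal_{X,x}$ is a field. So the Matlis-duality identification is wrong.

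Moreover, the local pieces of a pure-injective $I$ that Local to Global actually produces are $[\Gamma_x(\Ocal_X),I]$. These are cosupported at $x$, not supported there, so there is no reduction to pure-injectives cogenerated by shifts of $E(\kappa(x))$ with controlled degrees.

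The paper treats $\CSBou{\Ecal}$ with the same criterion \cref{gen-crit}. For $M\in\CSBou{\Ecal}\cap\Gamma_x\D$ and $E\in\widehat{\Ecal}$, the map $\theta$ gives $\RHom(\RHom(\kappa,M),E^x)\cong\kappa\otimes^{\mathbf L}\RHom(M,E^x)\in\D^{<0}$, hence $\RHom(\kappa,M)\in\CSBou{\Ecal}$.

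In your Koszul language there is an equally short repair. In the regular case $\Sigma^{-d}\kappa(x)\in\Fcal^{\geq 0}$, and $\Sigma^{-d}\kappa(x)\otimes^{\mathbf L}M\cong\RHom(\kappa(x),M)$ for $M\in\Gamma_x\D$. So by \cref{F0preserve} every cohomological semi-Bousfield class is closed under $\RHom(\kappa(x),-)$. This handles tensor and cohomological classes uniformly and supplies the missing minimality step.
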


Note that this provides a qualitative difference between the classical Bousfield lattice and the larger semi-Bousfield lattice---unlike the latter, the former is oblivious to singularities of $X$. The point we wish to emphasise is that the semi-Bousfield lattice actually recognises when $X$ is regular or not.  For singular schemes, the image of $\Phi$ does not span the whole semi-Bousfield lattice, however, we can still describe its image in a very satisfactory way, as formulated in the following main theorem of the paper. For the unexplained terminology, see \cref{ss:main-result}.
\begin{thmx}(\cref{T:main}, \cref{C:main-finite-tor})
  \label{IT:main}
  Let $X$ be a Noetherian scheme. The local depth assignment $\Phi$ induces a lattice isomorphism:
  $$
  \begin{tikzcd}[column sep = 5.0em, row sep = 0.1em]
   \begin{Bmatrix} 
    \text{Perversities on $X$} \end{Bmatrix} \arrow[leftrightarrow]{r}{1-1} & \begin{Bmatrix} 
      \text{Semi-Bousfield classes of} \\ \text{objects of finite Tor-dimension}\end{Bmatrix} \\
  \end{tikzcd}
  $$
\end{thmx}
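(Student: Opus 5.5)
We would set things up concretely and then argue pointwise. Fix the standard $t$-structure on $\D_{\qc}(X)$. For an object $E$, the semi-Bousfield class $\SBou{E}$ is the class of $F$ such that $E\otimes^{\mathbf L} F$ has vanishing cohomology in nonpositive degrees. For a perversity $p\colon X\to \Zbb\cup\{\pm\infty\}$, the local depth assignment is
\[
\Phi(p)=\{F : \depth_{\Ocal_{X,x}} F_x > p(x) \text{ for all } x\in X\},
\]
where depth is measured via $\RHom(k(x),F_x)$, or equivalently via the graded Balmer--Favi support $\kappa(x)\otimes F$.

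The proof splits into two directions: (a) each $\Phi(p)$ is the semi-Bousfield class of some object of finite Tor-dimension, and $\Phi$ is injective and order-preserving; (b) every semi-Bousfield class of an object of finite Tor-dimension is of the form $\Phi(p)$.

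For (a) we build an explicit witness
\[
E_p=\bigoplus_{x} \Sigma^{-p(x)}\kappa(x),
\]
with $\kappa(x)$ the residue field object, suitably shifted. The finite values of $p$ contribute one shift each, $p(x)=\infty$ contributes nothing, and $p(x)=-\infty$ contributes all shifts. A finite-Tor-dimension variant uses the local Koszul objects $K(x)=\Gamma_x\Ocal\otimes$ Koszul. The key computation is the tensor nonvanishing / depth formula at a point: for $F$, the lowest nonvanishing degree of $\kappa(x)\otimes F$ equals the depth of $F$ at $x$. This gives $\SBou{E_p}=\Phi(p)$, since the tensor product is a sum and vanishing in nonpositive degrees can be checked summandwise. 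Injectivity follows by testing against shifts of $\kappa(x)$, or of the injective hull $E(k(x))$, whose depth is concentrated at $x$. The lattice operations correspond because $\SBou{E\oplus E'}=\SBou{E}\cap\SBou{E'}$ matches the pointwise maximum of perversities.

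For (b) we take $E$ of finite Tor-dimension and set
\[
p_E(x)=\sup\{n : \Sigma^{-n}\kappa(x)\otimes E \text{ has nonzero cohomology in degrees} \le 0\},
\]
which records the lowest degree of $\kappa(x)\otimes E$. We then show $\SBou{E}=\Phi(p_E)$ in two steps.

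\emph{Local to Global.} For a semi-Bousfield class, $F\in\SBou{E}$ if and only if $\Gamma_x F\otimes E$, or the localised-at-$x$ version, lies appropriately for all $x$. We would prove this by writing $E\otimes F$ via the Local-to-Global principle for localising ideals, and using that coaisles of $t$-structures are closed under the relevant homotopy colimits and extensions, with noetherian induction on specialisation-closed pieces.

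\emph{$t$-Minimality.} Over the local ring $R=\Ocal_{X,x}$ with $F$ supported at the closed point, $E\otimes F$ is coconnective in the required range if and only if $\kappa(x)\otimes E\otimes F$ is. This is where finite Tor-dimension enters. For $E$ of finite flat dimension, $E\otimes F$ can be computed degreewise-boundedly. A Nakayama-type argument (``depth of a tensor product'', an Auslander--Iyengar style formula
\[
\inf(k\otimes^{\mathbf L}(E\otimes F))=\inf(k\otimes E)+\inf(k\otimes F)
\]
for complexes) shows that the lowest degree of $E\otimes F$ is controlled by the $\kappa$-fibres. The same formula then identifies the condition with $\depth F_x>p_E(x)$.

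The main obstacle is this $t$-Minimality step. Without finite Tor-dimension it fails; singular residue fields of infinite projective dimension give extra classes, which explains the regularity criterion of \cref{IT:regular}. We would first try to reduce to $E$ a bounded complex of flat modules, then to a single flat module via the finite Tor-amplitude filtration, and then use Lazard's theorem to pass to finite free modules, where the formula is the classical depth lemma. Care is needed with unbounded $F$, handled by truncation and Milnor colimits.
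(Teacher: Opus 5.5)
Your proposal has a genuine gap. It starts from identifying local depth with the lowest degree of $\kappa(x)\otimes^{\mathbf L}F$.

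\textbf{Depth versus width.} The lowest degree of $\kappa(x)\otimes^{\mathbf L}F$ is the \emph{width} of $F$ at $x$. Local depth is $\inf\Gamma_xF=\inf\RHom_{\Ocal_{X,x}}(k(x),F_x)$, and the two are different invariants.
\begin{itemize}
\item Over a regular local ring $R$ of dimension $d$ one has $\RHom_R(k,R)\cong\Sigma^{-d}k$, hence $\inf(k\otimes^{\mathbf L}_RF)=\depth F-d$.
\item At a singular point there is no such relation. In fact $\SBou{k(x)}$ is then not in the image of $\Phi$ at all; this is how regularity is shown to be necessary in the regular-scheme theorem.
\end{itemize}
Consequently your witness $E_p=\bigoplus_x\Sigma^{-p(x)}\kappa(x)$ fails in three ways. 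Its semi-Bousfield class is not $\Phi(p)$. It is not of finite Tor-dimension at singular points. Its treatment of $p(x)=\pm\infty$ is reversed relative to your own definition of $\Phi$: depth $>\infty$ is the strongest condition, so it must contribute all shifts.

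The correct witness is $\coprod_x\Sigma^{p(x)}\Gamma_x(\Ocal_X)$. Each $\Gamma_x(\Ocal_X)$ lies in $\Fcal^{\geq 0}$, is cohomologically bounded, and satisfies $\Gamma_x(\Ocal_X)\otimes^{\mathbf L}F\cong\Gamma_xF$. Its semi-Bousfield class is therefore $\Phi(p)$ by the very definition of local depth.

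\textbf{The $t$-Minimality step.} The same confusion breaks this step. You claim that, for $F$ supported at $x$, $E\otimes^{\mathbf L}F\in\D^{>0}$ if and only if $\kappa(x)\otimes^{\mathbf L}E\otimes^{\mathbf L}F\in\D^{>0}$. This is false, and no shift of the range repairs it. Take $E=\Ocal$ and $F=\Sigma^{-1}k(x)$ at a singular point: then $F\in\D^{>0}$, while $k\otimes^{\mathbf L}F$ is unbounded below. Here $E$ has finite Tor-dimension, so that hypothesis does not rescue the claim. Additivity of width is true, but it controls neither $\inf(E\otimes^{\mathbf L}F)$ nor depth.

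Your formula $p_E(x)=-\inf(\kappa(x)\otimes^{\mathbf L}E)$ is in fact the right perversity. But that is only visible once one knows $\SBou{E}\cap\Gamma_x\D=\Gamma_x\D^{>p(x)}$, which is the whole content of the theorem.

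\textbf{The missing idea.} You need to show that $\SBou{E}\cap\Gamma_x\D$ is closed under $M\mapsto\RHom_{\Ocal_{X,x}}(k(x),M)$.
\begin{itemize}
\item That complex is a sum of shifts of $k(x)$ with the same infimum as $M$ (Foxby--Iyengar). So closure plus d\'evissage gives $\Gamma_x\D^{\geq\inf M}\subseteq\SBou{E}$.
\item Closure follows from $\inf\bigl(E\otimes^{\mathbf L}\RHom(k,M)\bigr)=\inf\RHom(k,M\otimes^{\mathbf L}E)$. For $M\in\D^+$ this is the Avramov--Foxby isomorphism, valid for $E$ of finite flat dimension.
\end{itemize}
This, not a Tor-amplitude filtration plus Lazard's theorem, is where finite Tor-dimension enters. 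Such reductions cannot work, because $\SBou{-}$ yields only one inclusion under extensions or filtered colimits in the variable $E$. For example, $\SBou{R_{\pp}}$ strictly contains $\SBou{R}=\D^{>0}$ for a nonmaximal prime $\pp$, although $R_\pp$ is a filtered colimit of finite free modules.

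\textbf{Unbounded $M$.} For $M$ unbounded below, the Avramov--Foxby map genuinely fails over singular rings. Truncations and Milnor colimits do not help, because $\SBou{E}$ is not closed under truncations. The paper needs a separate lemma, proved with minimal hoinjective complexes of semi-artinian modules. It shows that $M\otimes^{\mathbf L}E\notin\D^+$ whenever $M\notin\D^+$, $E\in\Fcal^+$ and $x\in\supp M\cap\supp E$.
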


In the last two sections of the paper, we provide some consequences of \cref{IT:regular} and \cref{IT:main}. First, we show in \cref{s:tstructures} that \cref{IT:main} not only restricts to, but can be used to directly recover the classification of compactly generated $\otimes$-$t$-structures of \cite{DS23} with relatively low effort. This indicates a possible top-down approach to $t$-structure classification in ttt-categories. In \cref{s:Tor-pair}, we explain how the classification can be restricted to $\Qcoh{X}$, the Grothendieck category of quasi-coherent sheaves over $X$, yielding a nonaffine generalisation of the recent Tor-pair classification \cite[Theorem 4.17]{HHLG24}.
\begin{thmx}(\cref{T:Tor-pair})
  Let $X$ be a semi-separated Noetherian scheme. The local depth assignment $\Phi$ induces a lattice isomorphism:
  $$
  \begin{tikzcd}[column sep = 5.0em, row sep = 0.1em]
   \begin{Bmatrix} 
    \text{Perversities $p$ on $X$ such that} \\ \text{ $0 \leq p(x) \leq \depth(\Ocal_{X,x})$ } \end{Bmatrix} \arrow[leftrightarrow]{r}{1-1} & \begin{Bmatrix} 
      \text{Hereditary Tor-pairs in $\Qcoh{X}$ generated } \\ \text{by sheaves of finite flat dimension}\end{Bmatrix} \\
  \end{tikzcd}
  $$
  Furthermore, the image of the assignment contains all hereditary Tor-pairs in $\Qcoh{X}$ if and only if $X$ is regular.
\end{thmx}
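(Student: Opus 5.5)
The plan is to deduce the statement from \cref{IT:main}, by passing from $\D_{\qc}(X)$ to $\Qcoh{X}$ through the standard heart $\Qcoh{X}$.

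Recall that a hereditary Tor-pair $(\Ecal,\Fcal)$ in $\Qcoh{X}$ is a pair of classes that are mutual $\Tor_{>0}$-orthogonals and are closed under the relevant syzygies/kernels. Fix the standard $t$-structure on $\D_{\qc}(X)$. For a collection $\Scal \subseteq \Qcoh{X}$ of sheaves of finite flat dimension, put
$$
\Fcal = \{F \in \Qcoh{X} \mid \Tor_i(S,F)=0 \text{ for all } i>0,\ S \in \Scal\}.
$$
The first step is to rewrite this condition in terms of the derived tensor product. Using the suspension closure built into the definition of a semi-Bousfield class, $F \in \Fcal$ should hold exactly when $F$, viewed as a complex concentrated in degree $0$, lies in the semi-Bousfield class of the object $\bigoplus_{i \geq 1} S[i]$ (or a similar shift), suitably intersected with the heart. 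Semi-separatedness will be used to compute $\Tor$ via flat resolutions and to identify $\Tor$ computed in $\Qcoh{X}$ with cohomology of $S \otimes^{\mathbf{L}} F$. That object has finite Tor-dimension, so \cref{IT:main} provides a perversity $p$ with $\Fcal = \Phi(p) \cap \Qcoh{X}$. By the local depth description, this says that $F \in \Fcal$ iff $\depth$-type invariants of $F$ at each point $x$, i.e.\ the vanishing of $\Tor$ against residue fields $k(x)$ in certain degrees, exceed $p(x)$.

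The second step is to normalise $p$ and show the assignment is well-defined and bijective onto the stated range. Since $F$ sits in degree $0$, the values $p(x)<0$ impose no condition, so $p$ can be truncated to $\max(p,0)$. Values above $\depth(\Ocal_{X,x})$ cannot be detected by sheaves of finite flat dimension: by Auslander–Buchsbaum/Bass-type bounds, a finite flat dimension sheaf at $x$ has flat dimension at most $\depth(\Ocal_{X,x})$, and this is where the Tor-pair side saturates. For surjectivity, given $p$ in the range, I would construct generators explicitly as Koszul complexes on systems of parameters (or regular sequences of length $p(x)$) at the points $x$. Their cohomology sheaves, or suitable syzygies of these complexes, are finite flat dimension sheaves, and one checks that they realise $\Phi(p) \cap \Qcoh{X}$. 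Injectivity follows by testing membership on sheaves such as injective hulls $E(k(x))$ or $k(x)$-modules pushed forward.

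Finally, for the regular statement, I would use \cref{IT:regular}. If $X$ is regular, every sheaf has finite flat dimension locally, and even globally up to Noetherian dimension issues handled locally, so every hereditary Tor-pair arises. Conversely, if $x$ is a singular point, take the cotorsion pair generated by $k(x)$. Its Tor-orthogonal should not be of the form $\Phi(p) \cap \Qcoh{X}$, since $\Tor_i(k(x),-)$ is nonzero for infinitely many $i$ on suitable sheaves; one mirrors the singular-case argument behind \cref{IT:regular}.

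I expect the main obstacle to be the surjectivity in the second step: showing that the heart-level Tor-pair determined by $p$ is actually generated by \emph{sheaves} (not complexes) of finite flat dimension. This needs $p(x) \leq \depth(\Ocal_{X,x})$ to guarantee that the relevant syzygies exist and are flat-dimension-bounded compatibly with specialisation, which I would handle locally and glue using semi-separatedness.
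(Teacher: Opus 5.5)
Your first step is sound once the shift is fixed. For sheaves, $\Tor_{>0}(S,F)=0$ says exactly that $\Sigma^{-1}F\in\SBou{S}$, so a single desuspension suffices; with $S[i]=\Sigma^iS$, the coproduct $\bigoplus_{i\ge1}\Sigma^iS$ would instead give the Bousfield class $\Bou{S}$. Hence \cref{T:main} shows that a Tor-pair generated by sheaves of finite flat dimension has right class $\Ecal_p=\{F\in\Qcoh{X}\colon \depth_x(F)\ge p(x)\ \forall x\}$ for some $p$, and you may replace $p$ by $\max(p,0)$.

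The upper bound $p(x)\le\depth(\Ocal_{X,x})$ is then automatic, simply because $\Ocal_X\in\Ecal$ and $\depth_x(\Ocal_X)=\depth(\Ocal_{X,x})$. Your Auslander--Buchsbaum justification is false for non-finitely generated modules. For $R=k[[x_1,x_2,x_3,y_1,y_2,y_3]]/(x_iy_j)$ one has $\depth R=1$. But for $\pp=(x_1,x_2,x_3,y_1,y_2)$ the ring $R_\pp$ is regular of dimension $2$, so $k(\pp)$ has flat dimension $2$ over $R$.

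The genuine gap is in the second step. Once you have passed to the heart, two things remain: (a) that $p\mapsto\Ecal_p$ is injective on the range $0\le p\le\depth$, and (b) that every such $\Ecal_p$ is generated by sheaves of finite flat dimension. Neither of your arguments delivers them.

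\emph{Injectivity.} $E(k(x))$ and pushforwards of $k(x)$-modules all have $\depth_x=0$. So they separate $p(x)=0$ from $p(x)\ge1$, but not $p(x)=1$ from $p(x)=2$.

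\emph{Generation.} Koszul quotients give the wrong class. Take $X=\Spec{R}$ with $R$ regular local of dimension $2$, and $p=1$ at the closed point, $0$ elsewhere. Then $\Ecal_p=\{M\colon\depth M\ge1\}$ contains $R/(a)$ for every nonzerodivisor $a\in\mm$. Yet $\Tor_1(R/(a),R/(a))\neq0$, so no nonempty family of such quotients generates $\Ecal_p$.

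The paper avoids both problems by lifting to $\D$ rather than intersecting with the heart.
\begin{enumerate}
\item By brutal truncation of hoflat complexes, \cref{Tor-to-sB} attaches to a hereditary Tor-pair $(\Ccal,\Ecal)$ the semi-Bousfield pair $(\Fcal^{>0}\star\Ccal,\Fcal^{>1}\star\Sigma^{-1}\Ecal)$.
\item \cref{Tor-sB} shows these are exactly the semi-Bousfield pairs with $\Fcal^{>0}\subseteq\Ycal\subseteq\D^{>0}$, and each is determined by its heart part.
\item By \cref{P:Tor-pairs}(i), $\Phi(p)$ satisfies this sandwich condition precisely when $0\le p(x)\le\depth(\Ocal_{X,x})$ (test on $\Sigma^{-1}\Ocal_X$). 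Injectivity is then inherited from $\Phi$.
\item Generators come from \cref{P:Tor-pairs}(ii) with \cref{T:main}. If $\Phi(p)=\SBou{\Scal}$ with $\Scal\subseteq\Fcal^+$, each $S\in\Scal$ lies in $\SBou{\Phi(p)}\subseteq\D^{\ge0}$. The degree-zero piece $C_S$ of $\sigma^{>0}S\to S\to C_S$ is then a sheaf of finite flat dimension, and $\Ecal_p$ is the Tor-orthogonal of the $C_S$.
\end{enumerate}
For $S=\Sigma^{p(x)}\Gamma_x(\Ocal_X)$, this $C_S$ is a cokernel in a Čech complex computing $\Gamma_x(\Ocal_X)$. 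That is the ``suitable syzygy'' your plan needed, but the reason it generates $\Ecal_p$ is the truncation triangle, not a Koszul computation.

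Your regular-case argument is fine in outline, with ``cotorsion pair'' corrected to ``Tor-pair''. The singular direction can be completed with $(i_x)_*\Omega^nk(x)$ for $n\ge\depth(\Ocal_{X,x})$: it lies in every $\Ecal_p$ with $p\le\depth$, but $\Tor_1$ against $k(x)$ is nonzero.
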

The paper is concluded with a couple of appendices. In Appendix~\ref{sec:appendix-vnr} we discuss a non-Noetherian setting, specifically the one of a von Neumann regular ring. In this case, the derived category can fail to be stratified in general. Nevertheless, the semi-Bousfield version of stratification holds and can be again used to recover the classification of compactly generated $t$-structures in terms of perversities, which are continuous with respect to a suitable topology. Finally, in Appendix~\ref{ss:cohomology-nonvanishing} we recall cohomological formulas for grade-sensitive tensor nonvanishing over a local ring, as well as provide a certain generalisation for unbounded complexes, which is required to prove \cref{IT:main} in its full generality.
\subsection*{Acknowledgements} We are grateful to Pat Lank, Leonid Positselski, Charalampos Verasdanis, Jordan Williamson, and Alexandra Zvonareva for useful discussions in various stages of development of the manuscript.
\section{Semi-Bousfield classes in big ttt-categories}
\label{s:semibousfield}
In this section, our aim is to introduce the notion of a semi-Bousfield class in the general setting of a ``big ttt-category'' --- a big tensor triangulated category endowed with a reasonable $t$-structure and to establish some of its basic properties. In the whole paper, by a ``subcategory'' we always mean a full and isomorphism-closed subcategory.
\subsection{Big tt-categories}\cite{Bal05, BF11} Let $\Tcal$ be a \newterm{big tt-category}, which is a shorthand for a \textit{rigidly-compactly generated tensor triangulated category}. This amounts to the following assumptions:
\begin{itemize}
  \item $\Tcal$ is a \newterm{compactly generated triangulated category}. 
  
  \noindent
  That is, $\Tcal$ is triangulated with suspension functor $\Sigma$ admitting all small coproducts such that the subcategory $\Tcal^\cpt$ of compact objects is skeletally small and generates $\Tcal$ in the sense $(\Tcal^\cpt)\Perp{0}=0$, see  Section~\ref{sub:tstructure} for notation.
  \item $\Tcal$ is a \newterm{tensor triangulated category}. 
  
  \noindent
  That is, $\Tcal$ is equipped with a symmetric monoidal product $- \otimes -\colon \Tcal \times \Tcal \to \Tcal$ which is exact, coproduct-preserving, and admits a tensor unit object $\1$ which belongs to $\Tcal^\cpt$. By Brown representability, $- \otimes -$ is closed, that is, for each $X \in \Tcal$ the functor $X \otimes -$ admits a right adjoint $[X,-]\colon \Tcal \to \Tcal$. This defines the internal Hom functor $[-,-]\colon \Tcal^{\op} \times \Tcal \to \Tcal$.
  \item In $\Tcal$, \newterm{rigid} and compact objects coincide.
  
  \noindent
  That is, $\Tcal^\cpt$ coincides with the subcategory consisting of those objects $X \in \Tcal$ such that the natural map $[X,\1] \otimes Y \to [X,Y]$ is an isomorphism for all $Y \in \Tcal$. It follows that $\Tcal^\cpt$ is a tensor triangulated subcategory of $\Tcal$. Indeed, for any $x \in \Tcal^\cpt$ the functor $[x,-]$ is isomorphic to $[x,\1] \otimes -$, thus preserves coproducts, and so $x \otimes -$ preserves compacts. The functor $(-)^* \coloneqq [-,\1]\colon (\Tcal^\cpt)^\op \xrightarrow{\simeq} \Tcal^\cpt$ is then easily verified to be an equivalence.
\end{itemize}
\begin{convention}
    The distinction between positive and negative suspensions play an important role in this paper. Thus  note that  when we say a subcategory is closed under $\Sigma$, or suspensions, we mean only nonnegative suspensions. Whereas, a subcategory is closed under $\Sigma^{-1}$, or desuspensions, we mean only nonpositive suspensions.
\end{convention}
\subsection{$t$-structures}\cite{BBD82} \label{sub:tstructure}
Given two subcategories $\Ccal$ and $\Dcal$ of $\Tcal$, let 
$$\Ccal \star \Dcal = \{X \in \Tcal \colon \exists C \to X \to D \rightsquigarrow, C \in \Ccal, D \in \Dcal\}$$
denote the subcategory of $\Tcal$ of extensions of objects of $\Ccal$ by objects of $\Dcal$. Given a subcategory $\Ccal$ of $\Dcal$, we shall use the following notation for various orthogonal subcategories:
$$\Ccal\Perp{0} = \{X \in \Tcal \colon \Hom_\Tcal(C,X) = 0 ~\forall C \in \Ccal\},$$
$$\Perp{0}\Ccal = \{X \in \Tcal \colon \Hom_\Tcal(X,C) = 0 ~\forall C \in \Ccal\},$$
$$\Ccal\Perp{\leq 0} = \{X \in \Tcal \colon \Hom_\Tcal(C,\Sigma^i X) = 0 ~\forall C \in \Ccal ~\forall i \leq 0\},$$
$$\Perp{\leq 0}\Ccal = \{X \in \Tcal \colon \Hom_\Tcal(X,\Sigma^iC) = 0 ~\forall C \in \Ccal ~\forall i \leq 0\},$$
$$\Ccal\Perp{\Zbb} = \{X \in \Tcal \colon \Hom_\Tcal(C,\Sigma^i X) = 0 ~\forall C \in \Ccal ~\forall i \in \Zbb\},$$
$$\Perp{\Zbb}\Ccal = \{X \in \Tcal \colon \Hom_\Tcal(X,\Sigma^iC) = 0 ~\forall C \in \Ccal ~\forall i \in \Zbb\}.$$
If $\Ccal = \{Y\}$ is a singleton subcategory for some object $Y \in \Tcal$, we write just $Y\Perp{0}$, \textit{et cetera}.

A \newterm{$t$-structure} on $\Tcal$ is a pair $(\Ucal,\Vcal)$ of subcategories such that the following axioms hold:
\begin{enumerate}
  \item[(T1)] $\Vcal \subseteq \Ucal\Perp{0}$,
  \item[(T2)] $\Ucal$ is closed under $\Sigma$,
  \item[(T3)] $\Tcal = \Ucal \star \Vcal$.
\end{enumerate}

As per the usual convention, we call $\Ucal$ the \newterm{aisle} and $\Vcal$ the \newterm{coaisle} of the $t$-structure. 

\subsection{Big ttt-categories} From now on, we assume that $\Tcal$ is a \textit{big ttt-category}, meaning that $\Tcal$ is a tt-category  endowed with a fixed $t$-structure, denoted by $(\Tcal^{\leq 0},\Tcal^{>0})$, and satisfying the following assumptions:
\begin{itemize}
  \item $(\Tcal^{\leq 0},\Tcal^{>0})$ is \newterm{compactly generated}. 
  
  \noindent
  That is, 
  $$\Tcal^{>0} = (\Tcal^{\leq 0,\cpt})\Perp{0} \coloneqq \{X \in \Tcal \colon \Hom_\Tcal(S,X)=0 ~\forall S \in \Tcal^{\leq 0,\cpt}\},$$ 
  where $\Tcal^{\leq 0,\cpt} = \Tcal^{\leq 0} \cap \Tcal^\cpt$.
  \item $(\Tcal^{\leq 0},\Tcal^{>0})$ is \newterm{nondegenerate}. 
  
  \noindent
  That is, $\bigcap_{n \in \Zbb}\Tcal^{\leq n} = 0$ and $\bigcap_{n \in \Zbb}\Tcal^{> n} = 0$, where $\Tcal^{\leq n} = \Sigma^{-n}\Tcal^{\leq 0}$ and $\Tcal^{> n} = \Sigma^{-n}\Tcal^{> 0}$. Equivalently, the induced cohomological functor $H^0\colon \Tcal \to \Hcal$ to the heart $\Hcal = \Tcal^{\leq 0} \cap \Tcal^{\geq 0}$ detects vanishing in the sense that for any $X \in \Tcal$ we have $X = 0$ if and only if $H^i (X) = 0$ for all $i \in \Zbb$, where $H^i(-) \coloneqq H^0(\Sigma^i -)$.
  \item $\1 \in \Tcal^{\leq 0}$ and $\Tcal^{\leq 0} \otimes \Tcal^{\leq 0} \subseteq \Tcal^{\leq 0}$. 
\end{itemize}

\vspace{1em}

\begin{center}
\textit{In the rest of this section, $\Tcal$ will denote a big ttt-category.}
\end{center}

We shall use the following notational conventions. For any object $X \in \Tcal$, let:
$$\Sigma^{-\infty} X = 0 \text{ and } \Sigma^{\infty} X = \coprod_{n \in \Zbb}\Sigma^n X.$$
Furthermore, we let: 
$$\Tcal^{>\infty} = 0 \text{ and } \Tcal^{>-\infty} = \Tcal,$$
as well as
$$\Tcal^{+} = \bigcup_{n \in \Zbb}\Tcal^{>n} \text{ and }\Tcal^{-} = \bigcup_{n \in \Zbb}\Tcal^{\leq n}.$$

In some places, it will be convenient to assume that $\Tcal$ is \newterm{algebraic}, that is, $\Tcal$ is equivalent to the stable category of a Frobenius exact category. Recall that a Frobenius exact category is an exact category with enough projective and injective objects, and the projective objects and injective objects coincide. 
\subsection{Tensor $t$-structures revisited}
Following \cite{DS23}, a $t$-structure $(\Ucal,\Vcal)$ in $\Tcal$ is a \newterm{$\otimes$-$t$-structure} if $\Tcal^{\leq 0} \otimes \Ucal \subseteq \Ucal$. Equivalently, one has $[\Tcal^{\leq 0},\Vcal] \subseteq \Vcal$. The following easy but useful observation shows that this condition can be checked in terms of orthogonality of the two constituents of the $t$-structure with respect to the internal Hom functor $[-,-]$.
\begin{lemma}\label{tensor-tstr}
  Let $(\Ucal,\Vcal)$ be a $t$-structure in $\Tcal$. The following are equivalent:
  \begin{enumerate}
    \item[(i)] $(\Ucal,\Vcal)$ is a $\otimes$-$t$-structure,
    \item[(ii)] $[\Ucal,\Vcal] \subseteq \Tcal^{>0}$, that is, for any $U \in \Ucal$ and $V \in \Vcal$ we have $[U,V] \in \Tcal^{>0}$.
  \end{enumerate}
  \begin{proof}
    $(i) \implies (ii)\colon$ For any $X \in \Tcal^{\leq 0}$ we have $\Hom_\Tcal(X,[U,V]) \cong \Hom_\Tcal(X \otimes U,V)$. By $(i)$, $X \otimes U \in \Ucal$ and so the latter $\Hom$ group vanishes. We proved that $\Hom_\Tcal(\Tcal^{\leq 0},[U,V]) = 0$ and therefore $[U,V] \in \Tcal^{>0}$.

    $(ii) \implies (i)\colon$ Let $X \in \Tcal^{\leq 0}$ and let us prove that $X \otimes U \in \Ucal$ for $U \in \Ucal$. For $V \in \Vcal$, we have $\Hom_\Tcal(X \otimes U,V) \cong \Hom_\Tcal(X,[U,V])$, which vanishes because $[U,V] \in \Tcal^{>0}$ by $(ii)$.
  \end{proof}
\end{lemma}
\begin{rmk}
  In view of \cref{tensor-tstr}, we see that $\otimes$-$t$-structures can be defined similarly to $t$-structures with the external Hom suitably replaced by the internal Hom. Indeed, a pair $(\Ucal,\Vcal)$ forms a $\otimes$-$t$-structure if and only if the axioms (T2) and (T3) of the definition of a $t$-structure hold together with the following replacement of (T1):
\begin{enumerate}[leftmargin=2cm]
  \item[($\otimes$-T1)] $[\Ucal,\Vcal] \subseteq \Tcal^{>0}$.
\end{enumerate}
\end{rmk}
\subsection{Definable subcategories}\label{ss:definable}
\cite{Kr00, Bel00} Let $\Mod{\Tcal^\cpt}$ be the right module category over the ringoid $\Tcal^\cpt$, that is, the category of additive functors $(\Tcal^\cpt)^\op \to \Mod{\Zbb}$, and consider the \newterm{restricted Yoneda embedding} $\yo\colon \Tcal \to \Mod{\Tcal^\cpt}$ given by 
$$\yo(X) = \Hom_\Tcal(-,X)_{\restriction \Tcal^\cpt}.$$ 
A triangle $X \xrightarrow{f} Y \xrightarrow{g} Z \xrightarrow{h} \Sigma X$ is \newterm{pure} if the image $0 \to \yo X \xrightarrow{\yo f} \yo Y \xrightarrow{\yo g} \yo Z \xrightarrow{\yo h} 0$ is a short exact sequence. In particular, the map $f$ is sent to a monomorphism $\yo(f)$ and we call such maps \newterm{pure monomorphism} and say that $X$ is a \newterm{pure subobject} of $Y$. Dually, $g$ is a \newterm{pure epimorphism} and $Z$ is a \newterm{pure quotient} of $Y$. The map $h$ is sent via $\yo$ to zero and we call these the \newterm{phantom} maps. Following \cite[Appendix A]{BKS20}, the tensor product $- \otimes -$ extends to a unique colimit-preserving symmetric monoidal product on the Grothendieck category $\Mod{\Tcal^\cpt}$, which we also denote $- \otimes -$, such that $\yo$ is a tensor functor. An object $E \in \Tcal$ is \newterm{pure-injective} if $\yo(E)$ is injective in $\Mod{\Tcal^\cpt}$, or equivalently, if any pure monomorphism with domain $E$ is a split monomorphism. Furthermore, $\yo$ induces an equivalence between the pure-injective objects of $\Tcal$ and the injective objects of $\Mod{\Tcal^\cpt}$.

A subcategory $\Ccal$ of $\Tcal$ is \newterm{definable} if there is a set $F$ of morphisms in $\Tcal^\cpt$ such that $\Ccal = F\Perp{0} = \{X \in \Tcal \colon \Hom_\Tcal(f,X) = 0 ~\forall f \in F\}$. Definable subcategories are closed under products, pure subobjects (and thus also coproducts), and pure quotients. In fact, if $\Tcal$ has a model (i.e., it is the homotopy category of a monoidal model category) then the converse is also true, providing an intrinsic characterisation, see \cite[Proposition 6.8]{BW24}. 
Another useful and equivalent description is that the definable subcategories of $\Tcal$ are precisely those of the form $\yo^{-1}(\Fcal)$, where $\Fcal \subseteq \Mod{\Tcal^\cpt}$ is a torsion-free class of a hereditary torsion pair of finite type.
\subsection{Semi-Bousfield classes}
Recall that given an object $X \in \Tcal$, a \newterm{Bousfield class} of $X$ is the subcategory 
$$\Bou{X} = \{Y \in \Tcal \colon X \otimes Y = 0\}.$$ 
The following generalisation of the notion of Bousfield classes will be the central focus of this paper. Given a subcategory $\Xcal \subseteq \Tcal$, we define the \newterm{semi-Bousfield class} of $\Xcal$ as

$$\SBou{\Xcal} = \{Y \in \Tcal \colon X \otimes Y \in \Tcal^{>0} ~\forall X \in \Xcal\}.$$
In the case $\Xcal = \{X\}$ is a single object, we simply write  $\SBou{X}$ and call the result the \newterm{semi-Bousfield class} of $X$. As we will observe soon in \cref{Bousfield-set}, allowing whole subcategories as opposed to just objects is  a matter of notational convenience, rather than an actual generalisation.
\begin{rmk}
  An incarnation of the notion of a semi-Bousfield class, as well as part of the following Lemma, can be found implicitly in the proof of \cite[Lemma 2.1]{Nee18}.
\end{rmk}
\begin{lemma}\label{SBousfield-closure}
  Let $~\Xcal$ be a subcategory of $~\Tcal$. Then $\SBou{\Xcal}$ is closed under coproducts, $\Sigma^{-1}$, extensions, pure subobjects, and pure quotients. If $~\Tcal$ has a model, then $\SBou{\Xcal}$ is closed under filtered homotopy colimits.
\end{lemma}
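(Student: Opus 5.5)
Since $\SBou{\Xcal} = \bigcap_{X \in \Xcal} \SBou{X}$, and each of the listed closure properties is preserved under intersections, it suffices to treat a single object $X$. The idea is to write $\SBou{X}$ as the preimage of the coaisle $\Tcal^{>0}$ under the functor $X \otimes -$, and then check which closure properties of $\Tcal^{>0}$ survive that preimage. First we record what we need about $\Tcal^{>0}$ itself. Since the $t$-structure is compactly generated, $\Tcal^{>0} = (\Tcal^{\leq 0,\cpt})\Perp{0}$ is a definable subcategory: take $F$ to be the maps $S \to 0$ with $S \in \Tcal^{\leq 0,\cpt}$. It is therefore closed under coproducts, products, pure subobjects and pure quotients. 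It is also closed under extensions and under $\Sigma^{-1}$, as any coaisle is.

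Coproducts, $\Sigma^{-1}$ and extensions are now immediate because $X \otimes -$ is exact and coproduct-preserving. Indeed, $X \otimes \coprod Y_i \cong \coprod X \otimes Y_i$ and $X \otimes \Sigma^{-1}Y \cong \Sigma^{-1}(X \otimes Y)$. A triangle $Y' \to Y \to Y'' \rightsquigarrow$ is sent to a triangle, so $X \otimes Y$ lies in $(X\otimes Y') \star (X \otimes Y'')$.

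For purity, the key step is that $X \otimes -$ preserves pure triangles. The restricted Yoneda functor $\yo$ is a tensor functor, so $\yo(X \otimes Y) \cong \yo X \otimes \yo Y$, and the tensor product on $\Mod{\Tcal^\cpt}$ is colimit preserving. We also use that $\yo X$ is flat in $\Mod{\Tcal^\cpt}$, being a filtered colimit of representables $\yo(x)$ with $x$ compact. Tensoring a short exact sequence by a flat object keeps it exact. So if $Y' \to Y \to Y''$ is pure, then $X \otimes Y' \to X\otimes Y \to X \otimes Y''$ is again a pure triangle. Consequently, if $X \otimes Y \in \Tcal^{>0}$, then $X \otimes Y'$ is a pure subobject and $X \otimes Y''$ a pure quotient of it, and both lie in $\Tcal^{>0}$ by definability. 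The main point to justify is this flatness of $\yo X$ and its compatibility with the extended tensor product. It is standard (Krause, BKS20 Appendix A), so we cite it. An alternative avoiding it: $\Hom(S, X\otimes Y) \cong \Hom(S \otimes X^{\vee}\ldots)$ is not available for noncompact $X$, so we stick with flatness.

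For the model case, write a filtered homotopy colimit $\hocolim Y_i$. Then $X \otimes \hocolim Y_i \simeq \hocolim (X \otimes Y_i)$, since the tensor product of a monoidal model preserves homotopy colimits in each variable. It remains to see that $\Tcal^{>0}$ is closed under filtered homotopy colimits. For $S$ compact, $\Hom(S,\hocolim Z_i) \cong \operatorname{colim} \Hom(S, Z_i)$, which vanishes when all $Z_i \in \Tcal^{>0}$ and $S \in \Tcal^{\leq 0,\cpt}$. Alternatively, a filtered homotopy colimit is a pure quotient of the coproduct, so definable subcategories are closed under filtered homotopy colimits, and we can argue directly with $\SBou{X}$ using the closures already proved.
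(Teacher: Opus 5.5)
Your proposal is correct and follows the paper's proof. The paper likewise treats coproducts, $\Sigma^{-1}$ and extensions as immediate, and handles purity by noting that $X\otimes -$ preserves pure triangles (it cites \cite[Proposition 2.10(a)]{BKS19}, which your flatness argument for $\yo X$ unpacks) combined with definability of $\Tcal^{>0}$. For filtered homotopy colimits it uses exactly your second alternative, realising them as pure quotients of coproducts; your first model-case argument, via compact objects commuting with filtered homotopy colimits, is also valid but not needed.
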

\begin{proof}
  The closure under coproducts, $\Sigma^{-1}$, and extensions is straightforward to check. Let $Y' \to Y \to Y'' \to \Sigma Y'$ be a pure triangle and assume that $Y \in \SBou{\Xcal}$. For any $X \in \Xcal$, $Y' \otimes X \to Y \otimes X \to Y'' \otimes X \to \Sigma Y' \otimes X$ is also a pure triangle by \cite[Proposition 2.10(a)]{BKS19}, and thus $Y \otimes X \in \Tcal^{>0}$ implies that both $Y' \otimes X$ and $Y'' \otimes X$ also belong to $\Tcal^{>0}$, as $\Tcal^{>0}$ is a definable subcategory of $~\Tcal$. The final claim follows as a filtered homotopy colimit can be realised as a pure quotient of a coproduct, see the proof of \cite[Lemma 4.5]{Lak20}.
\end{proof}

Iyengar and Krause proved in \cite{IK13} that the collection of Bousfield classes forms a set in a well-generated tensor triangulated category. The same holds in our setting for semi-Bousfield classes.

\begin{lemma}\label{Bousfield-set}
The following hold. 
  \begin{enumerate}
    \item[(i)] Let $~\Xcal$ be a subcategory of $~\Tcal$. Then there is an object $X \in \Tcal$ such that $\SBou{X} = \SBou{\Xcal}$.
    \item[(ii)] The semi-Bousfield classes of $\Tcal$ form a set.
  \end{enumerate}
\end{lemma}
\begin{proof}
  Follows from the more general \cref{cBousfield-set}.
\end{proof}
The right nondegeneracy of our fixed $t$-structure ensures that the notion of a semi-Bousfield class does indeed generalise that of a Bousfield class. 
\begin{lemma}\label{SBou-Bou}
  Any Bousfield class is a semi-Bousfield class. A semi-Bousfield class is a Bousfield class if and only if it is closed under positive suspension $\Sigma$.
\end{lemma}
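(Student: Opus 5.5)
The plan is to prove the two claims directly from the definitions, using only nondegeneracy of the fixed $t$-structure.

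\emph{Bousfield classes are semi-Bousfield.} Given $X \in \Tcal$, consider the subcategory $\Xcal = \{\Sigma^n X \colon n \in \Zbb\}$. By \cref{Bousfield-set}(i) it suffices to show $\Bou{X} = \SBou{\Xcal}$.
\begin{itemize}
\item If $X \otimes Y = 0$, then $\Sigma^n X \otimes Y \cong \Sigma^n(X\otimes Y) = 0 \in \Tcal^{>0}$ for every $n$.
\item Conversely, suppose $\Sigma^n X \otimes Y \in \Tcal^{>0}$ for all $n$. Then $X \otimes Y \in \Sigma^{-n}\Tcal^{>0} = \Tcal^{>n}$ for all $n \in \Zbb$.
\item Nondegeneracy gives $\bigcap_n \Tcal^{>n} = 0$, so $X \otimes Y = 0$.
\end{itemize}
Alternatively, one can take the single object $\Sigma^\infty X = \coprod_n \Sigma^n X$. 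Since $\otimes$ preserves coproducts and $\Tcal^{>0}$ is closed under coproducts and summands, $\Sigma^\infty X \otimes Y \in \Tcal^{>0}$ holds if and only if each $\Sigma^n X \otimes Y \in \Tcal^{>0}$. Coproduct-closure of $\Tcal^{>0}$ holds because it is definable, or because it is compactly generated.

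\emph{A Bousfield class is closed under $\Sigma$.} This is clear, since $X \otimes \Sigma Y \cong \Sigma(X \otimes Y)$.

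\emph{A $\Sigma$-closed semi-Bousfield class is a Bousfield class.} Let $\Scal = \SBou{\Xcal}$ be closed under $\Sigma$. By \cref{SBousfield-closure} it is also closed under $\Sigma^{-1}$, so it is closed under all $\Sigma^n$, $n \in \Zbb$. Take $\Xcal' = \Xcal$ and set $Z = \coprod_{X \in \Xcal'} X$. We want $\Scal = \Bou{Z}$; to avoid a proper-class coproduct, first replace $\Xcal$ by a single object via \cref{Bousfield-set}(i), so $Z = X$.
\begin{itemize}
\item If $Y \in \Scal$, then $\Sigma^n Y \in \Scal$ for all $n$. Hence $\Sigma^n(X\otimes Y) \in \Tcal^{>0}$, that is, $X\otimes Y \in \Tcal^{>-n}$ for all $n$. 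Nondegeneracy gives $X \otimes Y = 0$.
\item If $X \otimes Y = 0$, then trivially $X\otimes Y \in \Tcal^{>0}$, so $Y \in \Scal$.
\end{itemize}
Thus $\Scal = \Bou{X}$.

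The only step needing any care is the reduction to a single object in the last part. It is handled by \cref{Bousfield-set}, which the statement lets us assume. The rest is formal.
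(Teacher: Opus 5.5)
Your proof is correct. The first claim is argued the same way as in the paper, via $\Sigma^{\infty}X$ and nondegeneracy $\bigcap_{n}\Tcal^{>n}=0$.

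For the converse of the second claim your route is slightly more direct. You use the shift-closure of $\SBou{X}$ itself: for every $Y\in\SBou{X}$ it gives $X\otimes Y\in\bigcap_n\Tcal^{>n}=0$, hence $\SBou{X}=\Bou{X}$ at once. The paper instead transfers the $\Sigma$-closure to the double class $\SBou{\SBou{X}}$ and places $\Sigma^{\infty}X$ there. It then concludes $\SBou{X}=\SBou{\Sigma^{\infty}X}$ using $\SBou{\SBou{\SBou{X}}}=\SBou{X}$, and finally quotes the first claim. Both arguments rest on the same two facts: nondegeneracy and $X\otimes\Sigma Y\cong\Sigma(X\otimes Y)$. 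Yours skips the detour through the double class.

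Two cosmetic points about your last bullet:
\begin{itemize}
\item $\Sigma^n(X\otimes Y)\in\Tcal^{>0}$ means $X\otimes Y\in\Tcal^{>n}$, not $\Tcal^{>-n}$. This is harmless, since $n$ runs over all of $\Zbb$.
\item Closure under $\Sigma^{-1}$ is not needed there. The $\Tcal^{>n}$ decrease in $n$, so $\bigcap_{n\geq 0}\Tcal^{>n}=\bigcap_{n\in\Zbb}\Tcal^{>n}$ already.
\end{itemize}
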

\begin{proof}
  Let $X \in \Tcal$ and we claim that $\Bou{X} = \SBou{\Sigma^{\infty}X}$, the $\subseteq$ inclusion is clear. For the converse, observe that $Y \in \SBou{\Sigma^{\infty}X}$ if and only if $Y \otimes X \in \Tcal^{>n}$ for all $n \in \Zbb$. This implies $Y \otimes X \in \bigcap_{n \in \Zbb}\Tcal^{>n} =  \Tcal^{>\infty} = 0$ by our assumption of nondegeneracy of the base $t$-structure.

  For the second claim, assume that $\SBou{X}$ is closed under $\Sigma$. Then also $\SBou{\SBou{X}}$ is easily seen to be closed under $\Sigma$, and thus $\SBou{X} = \SBou{\Sigma^{\infty} X} = \Bou{X}$ is a Bousfield class by the previous claim.
\end{proof}
\subsection{Semi-Bousfield lattice and pairs}
In view of the above, the semi-Bousfield classes of $\Tcal$ form a partially ordered set with the order given by $\SBou{X} \leq \SBou{Y}$ if and only if $\SBou{X} \supseteq \SBou{Y}$. This poset is in fact a complete lattice: For any collection $X_i, i \in I$ of objects of $\Tcal$ the join is given as 
$$\bigvee_{i \in I}\SBou{X_i} = \SBouBig{\coprod_{i \in I}X_i}$$ 
while the infinite meet is then obtained formally from infinite joins via 
$$\bigwedge_{i \in I}\SBou{X_i} = \bigvee \{ \SBou{Y} \colon \SBou{Y} \leq \SBou{X_i} ~\forall i \in I\}.$$ 
We shall call this poset a \newterm{semi-Bousfield lattice} of $\Tcal$ and denote it as $\SBL{\Tcal}$.

A \newterm{semi-Bousfield pair} is a pair $(\Xcal,\Ycal)$ of subcategories of $\Tcal$ such that $\Xcal = \SBou{\Ycal}$ and $\Ycal = \SBou{\Xcal}$. Any semi-Bousfield class fits into a semi-Bousfield pair, as for any subcategory $\Xcal$ of $\Tcal$, the pair $(\SBou{\Xcal},\SBou{\SBou{\Xcal}})$ is a semi-Bousfield pair. If one (or equivalently, both) of the semi-Bousfield classes in the semi-Bousfield pair is a Bousfield class, we call the pair a \newterm{Bousfield pair}.

\begin{rmk}\label{pair-lattice}
The order on the semi-Bousfield lattice can be naturally re-interpreted in terms of semi-Bousfield pairs. Consider a poset whose elements are the semi-Bousfield pairs $(\Xcal,\Ycal)$ and the order is given by inclusion of the second component. Then this poset is a complete lattice: Given a set $(\Xcal_i,\Ycal_i), i \in I$ of semi-Bousfield pairs, their join is given by taking intersection $(\bigcap_{i \in I}\Xcal_i,-)$ in the first component while the meet is given by taking intersection $(-,\bigcap_{i \in I}\Ycal_i)$ in the second component. Indeed, this is well-defined: Let $X_i, Y_i \in \Tcal$ be such that $\Xcal_i = \SBou{X_i}$ and $\Ycal_i = \SBou{Y_i}$, then $\bigcap_{i \in I}\Xcal_i = \bigvee_{i \in I}\SBou{X_i}$ and $\bigcap_{i \in I}\Ycal_i =\bigvee_{i \in I}\SBou{Y_i}$ are semi-Bousfield classes. The meet operation also corresponds to the meet in $\SBL{\Tcal}$, because $(\bigwedge_{i\in I}\Xcal_i,\bigvee_{i \in I}\Ycal_i)$ is a semi-Bousfield pair. Indeed, $Z \in \bigwedge_{i \in I}\SBou{X_i}$ translates to $$Z \in \bigcap \{ \SBou{Y} \colon \SBou{X_i} \subseteq \SBou{Y} ~\forall i \in I\},$$
or equivalently, 
$$Z \in \bigcap \{ \SBou{Y} \colon \SBou{Y_i} \supseteq \SBou{\SBou{Y}} ~\forall i \in I\}.$$  
This is further equivalent to $Z \otimes V \in \Tcal^{>0}$ for any $V \in \Tcal$ such that $V \otimes Y_i \in \Tcal^{>0}$ for all $i \in I$, which translates precisely to $Z \in \SBou{\bigvee_{i \in I}\Ycal_i}$.
\end{rmk}

The relationship with the classical Bousfield lattice $\BL{\Tcal}$ is as follows. In general, we have $\Bou{X} = \SBou{\Sigma^{\infty}X} = \bigcap_{n \in \Zbb} \Sigma^{-n}\SBou{X}$. Therefore, the following hold:
$$\SBou{X} \leq \SBou{Y} \implies \Bou{X} \leq \Bou{Y}$$
$$\Bou{X} \leq \Bou{Y} \iff \SBou{\Sigma^{\infty}X} \leq \SBou{\Sigma^{\infty}Y}.$$
In particular, the map $\Bou{X} \mapsto \SBou{\Sigma^{\infty} X}$ is a lattice embedding of the Bousfield lattice into the semi-Bousfield lattice. By Lemma~\cref{SBou-Bou}, its image consists of those semi-Bousfield classes which are closed under $\Sigma$.

\begin{prop}\label{Bou-sublattice}
  The assignment $\BL{\Tcal} \to \SBL{\Tcal}$ given by $\Bou{X} \mapsto \SBou{\Sigma^{\infty} X}$ realises the Bousfield lattice as a complete sublattice of the semi-Bousfield lattice.
\end{prop}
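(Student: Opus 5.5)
The plan is to check three things in turn: that the assignment is a well-defined order embedding, that it preserves arbitrary joins, and that it preserves arbitrary meets. Meets are the only step needing an idea, because meets in both lattices are defined only formally, as joins of lower bounds.

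\textbf{Order embedding.} Well-definedness and order-reflection come from the equivalence displayed before the statement: $\Bou{X} \leq \Bou{Y}$ if and only if $\SBou{\Sigma^{\infty}X} \leq \SBou{\Sigma^{\infty}Y}$. In particular $\Bou{X}=\Bou{Y}$ forces $\SBou{\Sigma^\infty X}=\SBou{\Sigma^\infty Y}$, and the map is injective and an order isomorphism onto its image. By \cref{SBou-Bou} (more precisely its proof, $\SBou{\Sigma^\infty X}=\Bou{X}$), this image consists exactly of the semi-Bousfield classes closed under $\Sigma$.

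\textbf{Joins.} For a family $X_i$, $i \in I$, the join in $\BL{\Tcal}$ is $\Bou{\coprod_i X_i} = \bigcap_i \Bou{X_i}$, since $\otimes$ preserves coproducts. This is sent to $\SBou{\Sigma^\infty \coprod_i X_i} = \SBou{\coprod_i \Sigma^\infty X_i}$, which is $\bigvee_i \SBou{\Sigma^\infty X_i}$ by the formula for joins in $\SBL{\Tcal}$.

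\textbf{Meets.} Since the map is an order embedding, it is enough to show that the image is closed under arbitrary meets computed in $\SBL{\Tcal}$. Indeed, the $\SBL{\Tcal}$-meet of the images then lies in the image, is a lower bound there, and dominates every lower bound coming from $\BL{\Tcal}$; so it is the image of the $\BL{\Tcal}$-meet. To prove closure I will use \cref{pair-lattice}.
\begin{enumerate}
\item Put $\Xcal_i = \SBou{\Sigma^\infty X_i}$ and $\Ycal_i = \SBou{\Xcal_i}$. By \cref{pair-lattice}, the meet corresponds to the semi-Bousfield pair whose second component is $\bigcap_i \Ycal_i$, so its first component is $\SBou{\bigcap_i\Ycal_i}$.
\item By \cref{SBou-Bou}, it suffices to show that $\SBou{\bigcap_i\Ycal_i}$ is closed under $\Sigma$.
\item The key observation is that $\SBou{\Ccal}$ is closed under $\Sigma$ whenever $\Ccal$ is. If $Z \otimes W \in \Tcal^{>0}$ for all $W\in\Ccal$, then $\Sigma Z\otimes W \cong Z\otimes \Sigma W \in \Tcal^{>0}$, because $\Sigma W \in \Ccal$.
\item Each $\Xcal_i = \Bou{X_i}$ is closed under $\Sigma$, since $X_i\otimes \Sigma Y \cong \Sigma(X_i\otimes Y)$. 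By step 3, each $\Ycal_i$ is closed under $\Sigma$.
\item Hence $\bigcap_i\Ycal_i$ is closed under $\Sigma$, and by step 3 again so is $\SBou{\bigcap_i\Ycal_i}$. This is what step 2 requires.
\end{enumerate}

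The main obstacle is exactly this meet step, because the meet has no direct description. The detour through the second component of semi-Bousfield pairs in \cref{pair-lattice}, together with the stability of $\Sigma$-closure under $\SBou{-}$, is what I expect to resolve it.
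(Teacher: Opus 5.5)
Your proof is correct, but it handles the meet step differently from the paper.

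\textbf{The paper's route.} The paper never computes the meet. It shows that whenever $\SBou{Y} \leq \SBou{\Sigma^{\infty}X_i}$ for all $i$, also $\SBou{\Sigma^{\infty}Y} \leq \SBou{\Sigma^{\infty}X_i}$. The reason is that $Z \in \Bou{X_i}$ gives $\Sigma^{\infty}Z \in \Bou{X_i} \subseteq \SBou{Y}$. Hence $Z \otimes Y \in \bigcap_n \Tcal^{>n} = 0$ by nondegeneracy. Since $\SBou{Y} \leq \SBou{\Sigma^{\infty}Y}$ always holds, every lower bound of the family in $\SBL{\Tcal}$ is dominated by a lower bound lying in the image. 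So the formal meet, which is defined as a join of lower bounds, is a join of Bousfield classes and therefore lies in the image. In effect, this shows that $\SBou{Y} \mapsto \Bou{Y}$ is left adjoint to the inclusion $\BL{\Tcal} \hookrightarrow \SBL{\Tcal}$, which makes preservation of meets formal.

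\textbf{Your route.} You compute the meet explicitly via \cref{pair-lattice}, as the Galois closure $\SBou{\bigcap_i \SBou{\Bou{X_i}}}$. You then observe that $\SBou{-}$ preserves closure under $\Sigma$. Nondegeneracy therefore enters only through \cref{SBou-Bou}. Your key observation is the one the paper itself uses inside the proof of \cref{SBou-Bou}.

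\textbf{Comparison.} Your approach gives an explicit formula for meets of Bousfield classes. It also gives the general fact that the closure $\SBou{\SBou{-}}$ sends $\Sigma$-stable subcategories to $\Sigma$-stable ones. The paper's approach is shorter and avoids identifying the meet at all.
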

\begin{proof}
  By the discussion above, $\BL{\Tcal}$ is a subposet of $\SBL{\Tcal}$. The join operation is easy to check to agree in this subposet. Let $X_i, i \in I$, be a collection of objects of $\Tcal$. To check that also arbitrary meets agree, it is enough to show that whenever $Y$ is such that $\SBou{Y} \leq \SBou{\Sigma^{\infty}X_i}$ for all $i \in I$ then $\SBou{\Sigma^{\infty} Y} \leq \SBou{\Sigma^{\infty}X_i}$. The assumption translates as: If $Z \in \Tcal$ satisfies $Z \otimes X_i = 0$ for some $i \in I$ then $Z \otimes Y \in \Tcal^{>0}$. However, if the hypothesis holds for $Z$, it clearly holds also for $\Sigma^{\infty}Z$, and then the conclusion is $Z \otimes Y \in \Tcal^{>\infty} = 0$, so that $Z \in \SBou{\Sigma^{\infty}Y} = \Bou{Y}$.
\end{proof}
\begin{rmk}
  A classical property studied in Bousfield lattices is the existence of complements, that is, a pair of objects $X, Y \in \Tcal$ such that $\Bou{X} \wedge \Bou{Y} = \Tcal$ and $\Bou{X} \vee \Bou{Y} = 0$. In such a case, we have a (semi-)Bousfield pair $(\Bou{X},\Bou{Y})$: Indeed, $\Bou{X \otimes Y} \supseteq \Bou{X} \wedge \Bou{Y} = \Tcal$ implies $X \otimes Y = 0$, and thus $\Bou{\Bou{X}} \subseteq \Bou{Y}$. Conversely, let $Z \in \Bou{Y}$ and $V \in \Bou{X}$. Then $\Bou{Z \otimes V} \subseteq \Bou{X} \vee \Bou{Y} = 0$, showing that $Z \otimes V = 0$, and so $Z \in \Bou{\Bou{X}}$.

  However, it is not true that given a Bousfield pair $(\Bou{X},\Bou{Y})$ the objects $X,Y$ are complements. Every Bousfield class fits into a Bousfield pair, while not every Bousfield class admits a complement, e.g. \cite[Corollary 7.4]{DP08}. Indeed, it can happen that $\Bou{X} \vee \Bou{Y} = \Bou{X} \cap \Bou{Y} \neq 0$.
\end{rmk}
\subsection{Compactly generated $\otimes$-$t$-structures}
Recall from \cite{DS23} that a compactly generated $t$-structure $(\Ucal,\Vcal)$ is a $\otimes$-$t$-structure if and only if $\Ucal^\cpt = \Ucal \cap \Tcal^\cpt$ is a $\otimes$-preaisle, that is, it is closed under tensoring by objects from $\Tcal^{\leq 0, \cpt}$. Note that it is defined relative to the fixed $t$-structure.

\begin{lemma}\label{cgtstrtotensor}
  The following are equivalent for a subcategory $\Vcal$ of $\Tcal$ and a subcategory $\Scal$ of $\Tcal^\cpt$:
  \begin{enumerate}    
    \item[(i)] $\Vcal$ is the coaisle of the  $\otimes$-$t$-structure (compactly) generated by $\Scal$,
    \item[(ii)] $\Vcal = \{X \in \Tcal \colon [S,X] \in \Tcal^{>0} ~\forall S \in \Scal\}$,
    \item[(iii)] $\Vcal = \SBou{\Scal ^*}$ where $\Scal^*=\{S^* \colon S \in \Scal\} \subseteq \Tcal^\cpt$,
  \end{enumerate}
\end{lemma}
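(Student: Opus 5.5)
The plan is to establish (ii)$\iff$(iii) by rigidity and then (i)$\iff$(ii) by adjunction, using \cref{tensor-tstr} for the tensor-compatibility.

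For (ii)$\iff$(iii), since each $S \in \Scal$ is compact and hence rigid, we have natural isomorphisms $[S,X] \cong [S,\1] \otimes X = S^* \otimes X$. Therefore $[S,X] \in \Tcal^{>0}$ for all $S \in \Scal$ if and only if $S^* \otimes X \in \Tcal^{>0}$ for all $S\in\Scal$. By the definition of the semi-Bousfield class, the latter says exactly $X \in \SBou{\Scal^*}$.

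For (i)$\iff$(ii), we first identify the $\otimes$-$t$-structure generated by $\Scal$. Following \cite{DS23}, this is the compactly generated $t$-structure whose aisle is generated by $\Scal' \coloneqq \Tcal^{\leq 0,\cpt} \otimes \Scal = \{C \otimes S \colon C \in \Tcal^{\leq 0,\cpt}, S \in \Scal\}$, a set of compact objects. Its aisle is the smallest cocomplete pre-aisle containing $\Scal'$, and its coaisle is $\Vcal = (\Scal')\Perp{\leq 0}$. The compactly generated aisle is indeed $\otimes$-compatible: tensoring with $\Tcal^{\leq 0,\cpt}$ preserves $\Scal'$, because $\Tcal^{\leq 0,\cpt}$ is closed under $\otimes$. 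It is also preserved by coproducts, extensions and $\Sigma$, so the aisle is closed under tensoring by $\Tcal^{\leq 0,\cpt}$. Closure under tensoring by all of $\Tcal^{\leq 0}$ then follows because $\Tcal^{\leq 0}$ is generated by $\Tcal^{\leq 0,\cpt}$ under coproducts, extensions and $\Sigma$, as the base $t$-structure is compactly generated. Moreover, any $\otimes$-$t$-structure whose aisle contains $\Scal$ contains $\Scal'$, since $\Tcal^{\leq 0,\cpt}\subseteq\Tcal^{\leq 0}$. So this is the smallest such structure.

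Next we compute the coaisle $\Vcal = (\Scal')\Perp{\leq 0}$. For $X \in \Tcal$, $i \le 0$, $C\in\Tcal^{\leq 0,\cpt}$, adjunction gives
\[
\Hom_\Tcal(C\otimes S,\Sigma^i X)\cong\Hom_\Tcal(\Sigma^{-i}C,[S,X]).
\]
Since $\Tcal^{\leq 0,\cpt}$ is closed under $\Sigma$, the objects $\Sigma^{-i}C$ range exactly over $\Tcal^{\leq 0,\cpt}$. Hence $X \in (\Scal')\Perp{\leq 0}$ if and only if $[S,X] \in (\Tcal^{\leq 0,\cpt})\Perp{0} = \Tcal^{>0}$ for every $S \in \Scal$, using compact generation of the base $t$-structure. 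This is exactly (ii).

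The main obstacle is the bookkeeping in identifying ``the $\otimes$-$t$-structure generated by $\Scal$'' with the one compactly generated by $\Scal'$. Specifically, one must pass from closure under $\Tcal^{\leq 0,\cpt}$ to closure under all of $\Tcal^{\leq 0}$. I would handle this by the standard argument that the class $\{Y \colon Y \otimes \Ucal \subseteq \Ucal\}$ is closed under coproducts, extensions and $\Sigma$ and contains $\Tcal^{\leq 0,\cpt}$, hence contains $\Tcal^{\leq 0}$. Alternatively, once $\Vcal$ is computed as in (ii), one can check $[\Tcal^{\leq 0},\Vcal]\subseteq\Vcal$ directly via $[S,[Y,X]] \cong [Y,[S,X]]$ and the $\otimes$-closure of $\Tcal^{\leq 0}$.
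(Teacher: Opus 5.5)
Your proof is correct and follows essentially the paper's route: (ii)$\iff$(iii) via rigidity, $[S,X]\cong S^*\otimes X$, and (i)$\iff$(ii) by identifying the coaisle with $\widehat{\Scal}\Perp{\leq 0}$, where $\widehat{\Scal}=\{S\otimes C\colon S\in\Scal,\ C\in\Tcal^{\leq 0,\cpt}\}$, using $\Hom_\Tcal(C\otimes S,\Sigma^i X)\cong\Hom_\Tcal(\Sigma^{-i}C,[S,X])$ and compact generation of the base $t$-structure. You also check explicitly that the $t$-structure compactly generated by $\widehat{\Scal}$ is the smallest $\otimes$-$t$-structure whose aisle contains $\Scal$; this is a useful addition, because the paper's (i)$\Rightarrow$(ii) step writes $\Vcal=\Scal\Perp{\leq 0}$, which is literally correct only when $\Scal$ is already closed under $\Tcal^{\leq 0,\cpt}\otimes-$.
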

\begin{proof}
   Assume $(i)$. Let $(\Ucal,\Vcal)$ be the compactly generated $\otimes$-$t$-structure generated by $\Scal$. That is, $\Vcal = \Scal\Perp{\leq 0}$. We are going to prove that $(ii)$ holds, that is, that $\Vcal = \{X \in \Tcal \colon [\Scal,X] \in \Tcal^{>0}\}$. Indeed, one inclusion is \cref{tensor-tstr}. The other follows from the isomorphism 
   $$\Hom_\Tcal(S,\Sigma^i X) \cong \Hom_\Tcal(\Sigma^{-i}\1,[S,X])$$ 
   as, by our assumptions, $\Sigma^{-i}\1 \in \Tcal^{\leq 0}$ for all $i \leq 0$. 
   
   Assume $(ii)$. The duality $(-)^*$ on $\Tcal^c$  yields  $\Vcal = \SBou{\Scal^*}$ which shows that $(ii)$ implies $(iii)$. 
   
   It remains to prove that  $(iii)$ implies $(i)$.  Arguing as above, $\SBou{\Scal ^*} = \{X \in \Tcal \colon [\Scal,X] \in \Tcal^{>0}\}$. It is not hard to check that the latter subcategory coincides with $\widehat{\Scal}\Perp{\leq 0}$, where 
   $$\widehat{\Scal} = \{S \otimes C \colon S \in \Scal, C \in \Tcal^{\leq 0, \cpt}\},$$ 
   and this is a coaisle of a compactly generated $\otimes$-$t$-structure.
\end{proof}

If $\Tcal$ has a model, we call a $t$-structure $(\Ucal,\Vcal)$ \newterm{homotopically smashing} if $\Vcal$ is closed under filtered homotopy colimits.

\begin{lemma}\label{product-tstr}
  Assume that $\Tcal$ has a model. Let $\Xcal$ be a semi-Bousfield class. Then there is a $t$-structure of the form $(\Ucal,\Xcal)$ if and only if $\Xcal$ is closed under products. Such a $t$-structure has a definable coaisle, in particular, it is homotopically smashing.
\end{lemma}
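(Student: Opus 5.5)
The plan is to reduce everything to the fact that a semi-Bousfield class is definable once it is closed under products, and then to use the standard result that definable subcategories closed under $\Sigma^{-1}$ and extensions are coaisles.

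\emph{Necessity.} If $(\Ucal,\Xcal)$ is a $t$-structure, then $\Xcal=\Ucal\Perp{0}$. Any right orthogonal class is closed under products, so this direction is immediate.

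\emph{Sufficiency.} Assume $\Xcal=\SBou{X}$ is closed under products; by \cref{Bousfield-set} we may take a single object $X$. By \cref{SBousfield-closure}, $\Xcal$ is closed under coproducts, $\Sigma^{-1}$, extensions, pure subobjects and pure quotients. Since $\Tcal$ has a model, closure under products, pure subobjects and pure quotients characterises definable subcategories by \cite[Proposition 6.8]{BW24}, so $\Xcal$ is definable. (Closure under coproducts also follows from the other three closures.)

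It remains to build the aisle. For this I would invoke the known result that a definable subcategory closed under extensions and $\Sigma^{-1}$ is the coaisle of a $t$-structure; see e.g.\ Laking \cite{Lak20}, or Saorín--Šťovíček--Virili. The mechanism is as follows. Definable subcategories are preenveloping, and since $\Xcal$ is closed under products and is definable, every object admits an $\Xcal$-preenvelope. Extension closure together with $\Sigma^{-1}$-closure lets one refine a preenvelope $Y\to V$ into a right approximation triangle $U\to Y\to V\rightsquigarrow$ with $U\in\Perp{0}\Xcal$. Concretely, this step is Wakamatsu-type, or alternatively it follows from the equivalence of cosuspended, precovering-dually-preenveloping classes with coaisles. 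We then set $\Ucal=\Perp{0}\Xcal$. By construction $\Ucal$ is closed under $\Sigma$ (because $\Xcal$ is closed under $\Sigma^{-1}$), and (T1) and (T3) hold. Definability of the coaisle is then just the fact established above, and the homotopically smashing property follows from \cref{SBousfield-closure}: $\Xcal$ is closed under filtered homotopy colimits.

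The main obstacle is this existence step: producing the truncation triangle from preenvelopes, i.e.\ showing that a definable, extension-closed, cosuspended class is a coaisle. I would cite the literature result (coaisles in compactly generated categories with a model, as treated in \cite{Lak20}) rather than reprove it. Should a direct argument be needed, the first thing to try is a transfinite construction killing maps into $\Xcal$, using cosuspension closure to control the successive cones.
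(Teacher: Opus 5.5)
Your necessity argument, the definability step (via \cref{SBousfield-closure} and \cite[Proposition 6.8]{BW24}) and the homotopically smashing conclusion all match the paper. You diverge at the existence of the aisle, and that step is not secured as written.

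The mechanism you sketch does not work. An $\Xcal$-preenvelope can have cocone far from $\Perp{0}\Xcal$: for $Y, V' \in \Xcal$ with $V' \neq 0$, the split map $Y \to Y \oplus V'$ is an $\Xcal$-preenvelope. Its cocone is $\Sigma^{-1}V'$, which lies in $\Xcal$ and not in $\Perp{0}\Xcal$. A Wakamatsu-type argument needs left-minimal approximations (envelopes), which you have no reason to expect here. There is also no general principle that cosuspended, extension-closed, preenveloping subcategories are coaisles. So your step rests entirely on a general theorem of the form ``definable $+$ extension-closed $+$ cosuspended $\Rightarrow$ coaisle''. You attribute this only with ``see e.g.'' and have not located a precise statement.

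The paper needs no such general result, because it uses structure specific to semi-Bousfield classes. Your proposal uses nothing about $\Xcal$ beyond the closure properties in \cref{SBousfield-closure}. By \cref{tensorstr-t-costr}, $\SBou{X}=\CSBou{X^\bc}$. By \cref{csB-orth}, this equals $\Perp{0}\widehat{\Ecal}$ with $\widehat{\Ecal}=\{S\otimes X^\bc \colon S\in\Tcal^{\leq 0,\cpt}\}$. This is a collection of pure-injectives (\cref{Hom-PI}) closed under nonnegative suspensions. Thus $\Xcal$ has a cosilting-type presentation as a left orthogonal to pure-injectives. For such subcategories, closure under products is exactly what \cite[Lemma 4.8]{AMV17} needs to produce the $t$-structure.

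To repair your proof, either replace your existence step with this argument, or give a precise reference for the general definable-coaisle statement in the setting where $\Tcal$ has a model.
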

\begin{proof}
  By \cref{SBousfield-closure} and \cref{ss:definable}, the subcategory $\Xcal$ of $\Tcal$ is definable if and only if it is closed under products.
  In \cref{ss:cohomologicalsb}, in particular using \cref{csB-orth} and \cref{tensorstr-t-costr}, we will see that $\Xcal=\Perp{>0}\Ecal$ for some collection of pure-injectives $\Ecal$. Therefore the existence of the $t$-structure then follows from \cite[Lemma 4.8]{AMV17}. 
\end{proof}
\subsection{Brown-Comenetz duality}\label{ss:bw-duality}
The \newterm{character duality} on the category $\Mod{\Zbb}$ of abelian groups is defined as the conservative exact functor
$$(-)^\cd\coloneqq \Hom_\Zbb(-,\Qbb/\Zbb)\colon \Mod{\Zbb}
^\op \to \Mod{\Zbb}.$$ 
Given $X \in \Tcal$, we shall use the same notation $X^\bc \in \Tcal$ for the \newterm{Brown-Comenetz dual} of $X$, that is, a uniquely determined object such that we have the natural isomorphism
$$\Hom_\Tcal(-,X^\bc) \cong \Hom_\Tcal(\1,X \otimes -)^\cd.$$
\begin{rmk}\label{BC-ring}
  Let $R$ be a commutative ring. Then the character duality can also be considered as a triangle functor
  $$(-)^\cd\colon \D(\Mod{R})^\op \to \D(\Mod{R}).$$
  For any $X \in \D(\Mod {R})$, we have the natural isomorphism 
  $$\Hom_{\D(\Mod {R})}(-,X^\cd) \cong \Hom_{\D(\Mod {R})}(R,X \otimes_R^\mathbf{L} -)^\cd,$$
  showing that $(-)^\cd$ is the Brown-Comenetz duality on $\Tcal = \D(\Mod R)$. In particular, there is no conflict of notation here.
\end{rmk}
We gather some well-known basic properties of Brown-Comenetz duality. 
\begin{lemma}\label{BW-duality}
Let $X, Y \in \Tcal$ and $S \in \Tcal^{c}$.
  \begin{enumerate}
    \item There is a natural isomorphism $(X \otimes Y)^\bc \cong [X,Y^\bc]$.
    \item An object of $\Tcal$ is pure-injective if and only if it is a retract in an object of the form $X^\bc$.
    \item There is a natural morphism $X \to X^{\bc\bc}$ to the double Brown-Comenetz dual which is always a pure monomorphism.
    \item There is a natural isomorphism $\Hom_\Tcal(S,X)^\cd \cong \Hom_\Tcal(S^*,X^\bc)$.
  \end{enumerate}
\end{lemma}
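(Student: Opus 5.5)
The plan is to derive all four items from the defining isomorphism $\Hom_\Tcal(-,X^\bc) \cong \Hom_\Tcal(\1,X \otimes -)^\cd$, using the tensor–Hom adjunction, rigidity of compacts, and injectivity of $\Qbb/\Zbb$.

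For (1), I would compare representable functors. For $Z \in \Tcal$, adjunction and the definition give natural isomorphisms
$$\Hom_\Tcal(Z,[X,Y^\bc]) \cong \Hom_\Tcal(Z\otimes X,Y^\bc) \cong \Hom_\Tcal(\1,Y\otimes Z\otimes X)^\cd \cong \Hom_\Tcal(Z,(X\otimes Y)^\bc),$$
where the last step uses symmetry of $\otimes$. Yoneda then gives $(X\otimes Y)^\bc \cong [X,Y^\bc]$. For (4), take $S$ compact. By definition $\Hom_\Tcal(S^*,X^\bc) \cong \Hom_\Tcal(\1,X\otimes S^*)^\cd$. Rigidity gives $X \otimes S^* \cong [S,X]$, so $\Hom_\Tcal(\1,[S,X]) \cong \Hom_\Tcal(S,X)$, which yields (4).

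For (3), I would define the map $X \to X^{\bc\bc}$ as the element corresponding to the canonical evaluation. Here $\Hom_\Tcal(X,X^{\bc\bc}) \cong \Hom_\Tcal(\1,X^\bc\otimes X)^\cd$, and the pairing is induced by the map $\Hom_\Tcal(\1,X^\bc \otimes X)\to \Hom_\Tcal(\1, X^{\bc}\otimes X)$; more concretely, one uses the evaluation $\Hom_\Tcal(X,X^\bc)$-style pairing, i.e.\ the element of $\Hom_\Tcal(\1,X^\bc\otimes X)^\cd$ coming from applying the counit-type map. To check purity, I would apply $\yo$ and evaluate at a compact $S$. Using (4) twice, together with $S^{**}\cong S$, gives
$$\Hom_\Tcal(S,X^{\bc\bc}) \cong \Hom_\Tcal(S^*,X^\bc)^\cd \cong \Hom_\Tcal(S,X)^{\cd\cd}.$$
Under this identification the map becomes the canonical double character embedding, which is injective because $\Qbb/\Zbb$ is an injective cogenerator. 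So $\yo(X)\to\yo(X^{\bc\bc})$ is a monomorphism, i.e.\ the map is a pure monomorphism. \textbf{The main obstacle is naturality bookkeeping:} one must verify that the map constructed abstractly really corresponds to double-dual evaluation under these isomorphisms. I would handle this by defining the map through the chain of isomorphisms at the level of $\yo$, relying on the fact that maps into pure-injectives are determined by $\yo$.

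For (2), I would first show that each $X^\bc$ is pure-injective. By (4), $\yo(X^\bc)(S) \cong \Hom_\Tcal(S^*,X)^\cd$, which is the character dual of the flat functor $\yo(X)$ composed with the duality. This is an injective module in $\Mod{\Tcal^\cpt}$, since $\Hom_{\Tcal^\cpt}(M,\yo(X^\bc)) \cong (M\otimes \yo X)^\cd$ is exact in $M$, using the tensor extension to $\Mod{\Tcal^\cpt}$. Retracts of pure-injectives are again pure-injective. Conversely, if $E$ is pure-injective, the pure monomorphism $E\to E^{\bc\bc}$ from (3) splits, so $E$ is a retract of $(E^\bc)^\bc$.
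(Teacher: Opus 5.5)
Your proof is correct, but it takes a more self-contained route than the paper, whose proof is almost entirely by citation. The paper obtains (1) from the isomorphism $X^\bc\cong[X,\1^\bc]$ of \cite{BW24} plus adjunction, quotes Christensen--Strickland \cite{CS98} for (2) and (3), and quotes \cite{BW24} again for (4).

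Your Yoneda computations for (1) and (4), from the defining isomorphism, symmetry and rigidity, are sound and essentially what underlies those citations.

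For (2) and (3) you replace \cite{CS98} by Krause's characterisation of pure-injectives \cite{Kr00}: for $E$ pure-injective, $\Hom_\Tcal(Y,E)\to\Hom(\yo Y,\yo E)$ is bijective for every $Y$. This works, but it forces a logical order different from the one you wrote. First (1) and (4). Then the ``if'' half of (2): $\yo(X^\bc)\cong\Hom_\Tcal((-)^*,X)^\cd$ by (4), and this is injective because $S\mapsto\Hom_\Tcal(S^*,X)$ is a homological, hence flat, functor on $\Tcal^\cpt$. Then (3). Only then the ``only if'' half of (2).

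Your first attempt to write down the canonical map in (3) is garbled: the map $\Hom_\Tcal(\1,X^\bc\otimes X)\to\Hom_\Tcal(\1,X^\bc\otimes X)$ you write is just the identity. The canonical description goes through the evaluation $X^\bc\otimes X\to\1^\bc$, adjoint to the identity under $[X,\1^\bc]\cong X^\bc$. Follow it with $\Hom_\Tcal(\1,\1^\bc)\cong\Hom_\Tcal(\1,\1)^\cd\to\Qbb/\Zbb$, given by evaluation at $\mathrm{id}_\1$.

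Your fallback is nonetheless legitimate: define $\eta_X$ as the unique lift of the double-character embedding $\yo X\to\yo X^{\bc\bc}$, which exists because $X^{\bc\bc}$ is pure-injective. This makes purity and naturality automatic, so the ``naturality bookkeeping'' you flagged disappears. Any natural pure monomorphism is all the statement and its later uses in the paper require.

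The paper's route buys brevity and agreement with the standard map of \cite{CS98}. Yours buys a derivation from the defining isomorphism alone, at the cost of importing Krause's theorem.
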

\begin{proof}
  \begin{enumerate}
    \item By \cite[Lemma 3.12]{BW24}, there is a natural isomorphism $X^\bc \cong [X,\1^\bc]$. Applying it twice together with the adjunction, we obtain the desired natural isomorphism 
    $$(X \otimes Y)^\bc \cong [X \otimes Y,\1^\bc] \cong [X,[Y,\1^\bc]] \cong [X,Y^\bc].$$
    \item This is \cite[Proposition 3.12, Proposition 3.15]{CS98}.
    \item This is \cite[Proposition 4.13]{CS98}.
    \item This is a special case of \cite[Proposition 6.11]{BW24}.
  \end{enumerate}
\end{proof}
\begin{cor}\label{Hom-PI}
  If $E$ is a pure-injective object of $\Tcal$ then $[X,E]$ is pure-injective for any $X \in \Tcal$. In particular, if $S\in \Tcal ^c$ then $S\otimes E$ is also pure-injective.
\end{cor}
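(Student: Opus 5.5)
By \cref{BW-duality}(2), a pure-injective object $E$ is a retract of $Y^\bc$ for some $Y \in \Tcal$. The plan is to reduce to the case $E = Y^\bc$ and then compute $[X,Y^\bc]$ using \cref{BW-duality}(1).

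First, suppose $E \oplus E' \cong Y^\bc$. Since $[X,-]$ is an additive functor (it is a right adjoint), we obtain
$$[X,E] \oplus [X,E'] \cong [X,Y^\bc].$$
Thus $[X,E]$ is a retract of $[X,Y^\bc]$. Next, \cref{BW-duality}(1) gives a natural isomorphism
$$[X,Y^\bc] \cong (X \otimes Y)^\bc.$$
The right-hand side is a Brown-Comenetz dual, so it is pure-injective by \cref{BW-duality}(2). Pure-injective objects are closed under retracts: by definition $\yo(E)$ is injective, $\yo$ is additive, and injectives are closed under direct summands. Therefore $[X,E]$ is pure-injective.

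For the second assertion, let $S \in \Tcal^\cpt$. Rigidity gives $[S^*,E] \cong [S^*,\1] \otimes E$. Because $(-)^*$ is an equivalence on $\Tcal^\cpt$, we have $S^{**} \cong S$, so $[S^*,E] \cong S \otimes E$. Applying the first part with $X = S^*$ shows that $S \otimes E$ is pure-injective.

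No step here is a genuine obstacle. The only points needing care are that retracts of pure-injectives are pure-injective, and that the double-dual identification $S^{**} \cong S$ holds, which follows from $(-)^*$ being an equivalence.
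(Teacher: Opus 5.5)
Your proposal is correct and is essentially the paper's argument. The paper obtains the retraction from the split monomorphism $E \to E^{\bc\bc}$ of \cref{BW-duality}(3) rather than from (2). It then likewise uses $[X,E^{\bc\bc}] \cong (X \otimes E^\bc)^\bc$ together with \cref{BW-duality}(2), and handles $S \otimes E \cong [S^*,E]$ by rigidity exactly as you do.
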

\begin{proof}
 Since $E$ is pure-injective, the natural map $E \to E^{\bc\bc}$ of \cref{BW-duality}(3) is a split monomorphism. It follows that $[X,E]$ is a retract in $[X,E^{\bc\bc}]$, and by \cref{BW-duality}(1), $[X, E^{\bc\bc}] \cong (X\otimes E^\bc)^\bc$, so the conclusion follows by \cref{BW-duality}(2).

 To prove the second part of the statement, let $S\in \Tcal ^c$. By our rigidity  hypothesis,  $S\otimes E\cong [S^*,E]$.  Hence, by the first part of the statement, $S\otimes E$ is  pure-injective.
\end{proof}
We say that two definable subcategories $\Ccal$ and $\Ccal^{\vee}$ are \newterm{dual definable} if there is a set $F$ of morphisms in $\Tcal^\cpt$ such that $\Ccal = F\Perp{0}$ and $\Ccal^{\vee} = (F^*)\Perp{0}$, where $F^* = \{f^* \colon f \in F\}$ is the dual set of morphisms in $\Tcal^\cpt$.
\begin{lemma}\label{dualdefinable}
  Let $\Ccal$ be a definable subcategory of $\Tcal$. Then $\Ccal^{\vee}$ is the unique definable subcategory of $\Tcal$ satisfying $X \in \Ccal$ if and only if $X^\bc \in \Ccal^{\vee}$.
\end{lemma}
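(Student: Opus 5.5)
We have to show two things: for $\Ccal = F\Perp{0}$ we have $X \in \Ccal$ if and only if $X^\bc \in (F^*)\Perp{0}$, and any definable subcategory with this property equals $(F^*)\Perp{0}$. We treat existence and uniqueness separately.

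\textbf{Existence.} Fix $F$ with $\Ccal = F\Perp{0}$ and set $\Ccal^\vee = (F^*)\Perp{0}$. Take a morphism $f\colon S \to S'$ in $F$. By \cref{BW-duality}(4), naturally in the compact variable, $\Hom_\Tcal(f,X)^\cd$ is identified with $\Hom_\Tcal(f^*,X^\bc)$. Here we use that $(-)^*$ is a contravariant equivalence on $\Tcal^\cpt$ with $S^{**}\cong S$, so that the isomorphism is natural with respect to $f$ and $f^*$.

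Since $(-)^\cd$ is exact and conservative on $\Mod{\Zbb}$, the map $\Hom_\Tcal(f,X)$ vanishes if and only if its character dual vanishes. This gives $X \in F\Perp{0}$ if and only if $X^\bc \in (F^*)\Perp{0}$.

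The naturality check is the one place needing care. The isomorphism of \cref{BW-duality}(4) must be natural in $S$, which \cite[Proposition 6.11]{BW24} provides. Without it, $\Hom_\Tcal(f,X)^\cd$ would not match $\Hom_\Tcal(f^*,X^\bc)$ as a map, only objectwise.

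\textbf{Uniqueness.} Suppose $\Dcal$ is another definable subcategory with $X \in \Ccal \iff X^\bc \in \Dcal$ for all $X$. We show $\Dcal = \Ccal^\vee$. Since both $\Dcal$ and $\Ccal^\vee$ satisfy the condition, for every $X$ we have $X^\bc \in \Dcal \iff X^\bc \in \Ccal^\vee$. So the two subcategories contain the same Brown–Comenetz duals, and hence by \cref{BW-duality}(2) the same pure-injective objects, since both are closed under retracts.

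It remains to see that a definable subcategory is determined by the pure-injectives it contains. Let $Y$ lie in a definable $\Ecal$. The pure monomorphism $Y \to Y^{\bc\bc}$ of \cref{BW-duality}(3) has pure-injective target $Y^{\bc\bc}$, and we claim $Y^{\bc\bc} \in \Ecal$.

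To see this, choose a set $G$ of morphisms with $\Ecal = G\Perp{0}$ and apply the existence part twice: once to $G$, giving $Y^\bc \in (G^*)\Perp{0}$, and once to $G^*$, giving $Y^{\bc\bc} \in (G^{**})\Perp{0} = G\Perp{0} = \Ecal$. Conversely, if $Y^{\bc\bc} \in \Ecal$, then $Y$ is a pure subobject of it and so $Y \in \Ecal$, because definable subcategories are closed under pure subobjects.

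Thus $Y \in \Ecal \iff Y^{\bc\bc} \in \Ecal$, and the latter is a condition on pure-injectives only. Applying this to $\Dcal$ and $\Ccal^\vee$, which contain the same pure-injectives, gives $\Dcal = \Ccal^\vee$. In particular $\Ccal^\vee$ does not depend on the choice of $F$.
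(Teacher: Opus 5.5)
Your proposal is correct and is essentially the paper's argument: existence via \cref{BW-duality}(4) together with faithfulness of $(-)^\cd$, and uniqueness via the pure monomorphism $Y \to Y^{\bc\bc}$ of \cref{BW-duality}(3). Your explicit check that a definable subcategory is closed under $(-)^{\bc\bc}$, obtained by applying existence to $G$ and then to $G^*$, is what the paper compresses into ``by symmetry''. Your remark that $\Ccal^\vee$ does not depend on the choice of $F$ is a worthwhile addition.

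There is one small slip. The step ``same Brown--Comenetz duals, hence by \cref{BW-duality}(2) the same pure-injectives, since both are closed under retracts'' does not follow as written. A pure-injective $E \in \Dcal$ is a retract of some $X^\bc$, but nothing tells you that $X^\bc \in \Dcal$.

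You do not need this step. $Y^{\bc\bc} = (Y^\bc)^\bc$ is itself a Brown--Comenetz dual, so agreement on duals already gives $Y^{\bc\bc} \in \Dcal \iff Y^{\bc\bc} \in \Ccal^\vee$. Alternatively, you can recover the pure-injective claim by taking $X = E^\bc$ and using the double-dual closure you prove afterwards.
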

\begin{proof}
  First we check that $\Ccal^\vee$ has the above property. Given a morphism $f$ in $\Tcal^\cpt$, we have using \cref{BW-duality}(4) that $\Hom_\Tcal(f^*,X^\bc)$ is zero if and only if $\Hom_\Tcal(f,X)^\cd$ is zero which is, by conservativeness of the functor $\Hom_\Zbb(-,\Qbb/\Zbb)\colon \Mod{\Zbb}^\op \to \Mod{\Zbb}$, equivalent to $\Hom_\Tcal(f,X)$ being zero. Thus, if $F$ is a set of morphisms in $\Tcal^\cpt$ such that $\Ccal = F\Perp{0}$ and $\Ccal^\vee = (F^*)\Perp{0}$, we see that $X \in \Ccal$ if and only if $X^\bc \in \Ccal^\vee$.

  Now let $\Dcal$ be another definable subcategory of $\Tcal$ and assume that $X \in \Ccal$ if and only if $X^\bc \in \Dcal$. This means that for any $X \in \Tcal$, $X^\bc \in \Dcal$ if and only if $X^\bc \in \Ccal^\vee$. By symmetry, this shows that $X \in \Dcal$ if and only if $X^{\bc\bc} \in \Ccal^\vee$, but the latter condition is equivalent to $X \in \Ccal^\vee$ by \cref{BW-duality}(3).
\end{proof}
\subsection{$\otimes$-co-$t$-structures}
\label{ss:cotstructres}
A \newterm{co-$t$-structure} on $\Tcal$ is a pair $(\Xcal,\Wcal)$ of subcategories of $\Tcal$ closed under direct summands such that the following axioms hold:
\begin{enumerate}
  \item[(coT1)] $\Wcal \subseteq \Xcal\Perp{0}$,
  \item[(coT2)] $\Wcal$ is closed under $\Sigma$,
  \item[(coT3)] $\Tcal = \Xcal \star \Wcal$.
\end{enumerate}
Following \cite{DS23b}, a co-$t$-structure $(\Xcal,\Wcal)$ in $\Tcal$ is a \newterm{$\otimes$-co-$t$-structure} if for any $S \in \Tcal^{\leq 0,\cpt}$ and $X \in \Xcal$ we have $[S,X] \in \Xcal$. As with $\otimes$-$t$-structures, it is defined relative to the fixed $t$-structure. 

Let us denote by $(\Pcal^{\geq 0},\WTcal{0})$ the co-$t$-structure generated by $(\Tcal^{\leq 0,\cpt})^*$, meaning that $\WTcal{0} = ((\Tcal^{\leq 0,\cpt})^*)\Perp{0}$. The pair $(\Pcal^{\geq 0},\WTcal{0})$ indeed forms a co-$t$-structure by \cite[Theorem 4.3, Corollary 4.6]{AI12}. Moreover, in the terminology of \cref{ss:bw-duality}, this means that $\WTcal{0} = ({\Tcal^{>0}})^\vee$ is the dual definable subcategory to the definable coaisle $\Tcal^{>0}$. We also let $\WTcal{n} \coloneqq \Sigma^{-n}\WTcal{0}$ for each $n \in \Zbb$. In fact, $(\Pcal^{\geq 0},\WTcal{0})$ is a $\otimes$-co-$t$-structure, as for any $S,U \in \Tcal^{\leq 0,\cpt}$ and $W \in \WTcal{0}$ we have $\Hom_\Tcal(U^*,[S^*,W]) \cong \Hom_\Tcal((S \otimes U)^*,W) = 0$ as $S \otimes U \in \Tcal^{\leq 0,\cpt}$. The following is the co-$t$-structure analogue of \cref{tensor-tstr}.
\begin{lemma}\label{tensor-cotstr}
 Let $(\Xcal,\Wcal)$ be a co-$t$-structure in $\Tcal$. The following are equivalent:
 \begin{enumerate}
  \item[(i)] $(\Xcal,\Wcal)$ is a $\otimes$-co-$t$-structure,
  \item[(ii)] for all $X \in \Xcal$ and $W \in \Wcal$ we have $[X,W] \in \WTcal{0}$.
\end{enumerate}
\begin{proof}
  $(i) \implies (ii)\colon$ For any $S \in\Tcal^{\leq 0,\cpt}$ we have $\Hom_\Tcal(S^*,[X,W]) \cong \Hom_\Tcal([S,X],W) = 0$, as the assumption yields $[S,X] \in \Xcal$ Therefore, $[X,W] \in \WTcal{0}$ as desired.

  $(ii) \implies (i)\colon$ Let $S \in \Tcal^{\leq 0,\cpt}$, $X \in \Xcal$, and $W \in \Wcal$, and let us show that $\Hom_\Tcal([S,X],W) = 0$. We have $\Hom_\Tcal([S,X],W) \cong \Hom_\Tcal(S^* \otimes X,W) \cong \Hom_\Tcal(S^*,[X,W])$, where the last group vanishes by $(ii)$.
\end{proof}
\end{lemma}
\subsection{Cohomological semi-Bousfield classes}
\label{ss:cohomologicalsb}
Let $\PI$ be the subcategory of $\Tcal$ consisting of all pure-injective objects. A \newterm{cohomological semi-Bousfield class}\footnote{This terminology is not compatible with the original paper of Hovey \cite{Hov95} which introduced cohomological Bousfield classes as full left orthogonals to \textit{arbitrary objects} of $\Tcal$. Our terminology is however in line with that of Krause and Stevenson, see \cite[Proposition 3.2.7]{KS19}.} is a subcategory of $\Tcal$ of the form $\CSBou{\Ecal} = \{X \in \Tcal \colon [X,E] \in \WTcal{0} ~\forall E \in \Ecal\}$ for some subcategory $\Ecal$ of $\PI$. The collection $\CSBL{\Tcal}$ of cohomological semi-Bousfield classes again admits a partial order by reverse inclusion.

Given a subcategory $\Ecal$ of $\PI$, set $\widehat{\Ecal} \coloneqq \{S \otimes E \colon S \in \Tcal^{\leq 0,\cpt}, E \in \Ecal\}$.
\begin{lemma}\label{csB-orth}
  For any $\Ecal \subseteq \PI$ we have $\CSBou{\Ecal} = \Perp{0}\widehat{\Ecal}$.
\end{lemma}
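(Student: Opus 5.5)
We prove the equality by unwinding both sides with the tensor--Hom adjunction and rigidity, one object $X \in \Tcal$ and one $E \in \Ecal$ at a time. By definition, $X \in \CSBou{\Ecal}$ means $[X,E] \in \WTcal{0}$ for all $E \in \Ecal$. Since $\WTcal{0} = ((\Tcal^{\leq 0,\cpt})^*)\Perp{0}$, this is equivalent to
$$\Hom_\Tcal(S^*,[X,E]) = 0 \quad \text{for all } S \in \Tcal^{\leq 0,\cpt},\ E \in \Ecal.$$

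The key step is to rewrite each such Hom group. By adjunction,
$$\Hom_\Tcal(S^*,[X,E]) \cong \Hom_\Tcal(S^* \otimes X, E) \cong \Hom_\Tcal(X, [S^*,E]).$$
Because $S^*$ is rigid, we have $[S^*,E] \cong (S^*)^* \otimes E$. The duality $(-)^*$ is an equivalence on $\Tcal^\cpt$ with $S^{**} \cong S$, so $[S^*,E] \cong S \otimes E$. Hence
$$\Hom_\Tcal(S^*,[X,E]) \cong \Hom_\Tcal(X, S \otimes E).$$
As $S$ ranges over $\Tcal^{\leq 0,\cpt}$ and $E$ over $\Ecal$, the objects $S \otimes E$ range exactly over $\widehat{\Ecal}$. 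Therefore $X \in \CSBou{\Ecal}$ if and only if $\Hom_\Tcal(X,\widehat{\Ecal}) = 0$, that is, $X \in \Perp{0}\widehat{\Ecal}$.

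The only point needing care is the identification $[S^*,E] \cong S \otimes E$. This uses the rigidity axiom in the form $[x,Y] \cong x^* \otimes Y$ for compact $x$, together with $x^{**} \cong x$, both recorded in the definition of a big tt-category. Pure-injectivity of $E$ plays no role in the equality itself. Its only relevance is that, by \cref{Hom-PI}, the objects of $\widehat{\Ecal}$ are again pure-injective, which will matter for later applications such as \cref{product-tstr}.
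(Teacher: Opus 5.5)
Your proof is correct and is essentially the paper's argument. The paper likewise uses the isomorphism $\Hom_\Tcal(X,S\otimes E)\cong\Hom_\Tcal(S^*,[X,E])$ together with $\WTcal{0}=((\Tcal^{\leq 0,\cpt})^*)\Perp{0}$; you have simply spelled out how that isomorphism follows from adjunction and rigidity.
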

\begin{proof}
  For any $E \in \Ecal$ and $S \in \Tcal^{\leq 0, \cpt}$ we have the isomorphism $\Hom_\Tcal(X,S \otimes E) \cong \Hom_\Tcal(S^*,[X,E])$. If $X \in \CSBou{\Ecal}$, we have $[X,E] \in \WTcal{0}$, making the latter $\Hom$ group vanish, showing that $X \in \Perp{0}\widehat{\Ecal}$. Conversely, if $\Hom_\Tcal(S^*,[X,E]) = 0$ for all $S \in \Tcal^{\leq 0,\cpt}$, we have $[X,E] \in (\Tcal^{\leq 0,\cpt})^*\Perp{0} = \WTcal{0}$.
\end{proof}

\begin{lemma}\label{csBousfield-closure}
  Let $~\Xcal$ be a subcategory of $~\Tcal$. Then $\CSBou{\Xcal}$ is closed under coproducts, $\Sigma^{-1}$, extensions, pure subobjects, and pure quotients. If $~\Tcal$ has a model, then $\CSBou{\Xcal}$ is closed under filtered homotopy colimits.
\end{lemma}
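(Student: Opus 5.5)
The plan is to reduce everything to \cref{csB-orth}, which identifies $\CSBou{\Xcal}$ (here $\Xcal \subseteq \PI$, as in the definition of a cohomological semi-Bousfield class) with the left orthogonal $\Perp{0}\widehat{\Xcal}$. Here $\widehat{\Xcal} = \{S \otimes E \colon S \in \Tcal^{\leq 0,\cpt}, E \in \Xcal\}$, and by \cref{Hom-PI} this is again a collection of pure-injective objects. So it suffices to show that, for any collection $\Ecal'$ of pure-injectives, $\Perp{0}\Ecal'$ has the listed closure properties. We also need $\widehat{\Xcal}$ to be closed under $\Sigma$. This holds because $\Sigma(S \otimes E) \cong (\Sigma S) \otimes E$ and $\Sigma S \in \Tcal^{\leq 0,\cpt}$ whenever $S$ is, since aisles are closed under $\Sigma$.

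Closure under coproducts and extensions is formal for any left orthogonal $\Perp{0}\Ccal$. Coproducts follow from $\Hom(\coprod X_i, C) = \prod \Hom(X_i,C)$, and extensions from the long exact $\Hom(-,C)$ sequence. For $\Sigma^{-1}$: if $X \in \Perp{0}\widehat{\Xcal}$ then $\Hom(\Sigma^{-1}X, S\otimes E) \cong \Hom(X, \Sigma(S \otimes E))$, which vanishes by the $\Sigma$-closure of $\widehat{\Xcal}$ noted above.

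For pure subobjects and pure quotients, let $X' \to X \to X'' \to \Sigma X'$ be a pure triangle with $X \in \Perp{0}\widehat{\Xcal}$, and let $E' \in \widehat{\Xcal}$, which is pure-injective. Since $\yo$ gives an isomorphism $\Hom_\Tcal(Y,E') \cong \Hom(\yo Y, \yo E')$ for pure-injective $E'$, with $\yo E'$ injective, the functor $\Hom_\Tcal(-,E')$ sends pure triangles to short exact sequences $0 \to \Hom(X'',E') \to \Hom(X,E') \to \Hom(X',E') \to 0$. Vanishing of the middle term then forces vanishing of both ends. The key point is that phantom maps into a pure-injective are zero, so the connecting morphisms do not interfere.

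Finally, if $\Tcal$ has a model, closure under filtered homotopy colimits follows exactly as in \cref{SBousfield-closure}: such a colimit is a pure quotient of a coproduct (proof of \cite[Lemma 4.5]{Lak20}), and we have closure under both operations.

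The only step needing care is the pure-triangle argument, specifically the identification of $\Hom_\Tcal(-,E')$ with $\Hom(\yo(-),\yo E')$ for pure-injective $E'$. This is standard (Krause), and everything else is formal.
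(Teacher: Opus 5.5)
Your proof is correct, and it differs from the paper's mainly in how it is organised. The paper only says the argument is identical to that of \cref{SBousfield-closure}. Read that way, one keeps the defining condition $[X,E]\in\WTcal{0}$ and observes that $[-,E]$ turns coproducts into products and $\Sigma^{-1}$ into $\Sigma$. Because $\yo E$ is injective, $[-,E]$ also turns a pure triangle $X'\to X\to X''\to\Sigma X'$ into a pure triangle $[X'',E]\to[X,E]\to[X',E]\to\Sigma[X'',E]$. All closure properties are then inherited from the definable, $\Sigma$-closed, extension-closed subcategory $\WTcal{0}$, mirroring the roles of $-\otimes X$ and $\Tcal^{>0}$ in the semi-Bousfield case.

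You instead first pass through \cref{csB-orth} and \cref{Hom-PI} to the left orthogonal $\Perp{0}\widehat{\Xcal}$ of a $\Sigma$-closed class of pure-injectives. After that, only exactness of $\Hom_\Tcal(-,E')$ on pure triangles is needed. This replaces both the ``$[-,E]$ preserves purity'' step and the appeal to definability of $\WTcal{0}$, and it is the same description the paper later uses in \cref{cBousfield-set}. The paper's version keeps the formal parallel with the tensor side; yours is slightly more elementary. Both rest on the same input, namely injectivity of $\yo E$ for $E\in\PI$.

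You were also right to read the hypothesis as $\Xcal\subseteq\PI$; it is genuinely needed. For example, in $\D(\Mod{\Zbb})$ with $\Xcal=\{\Zbb\}$, the object $\Sigma^{-1}\Zbb^{(\mathbb{N})}$ lies in the class. Its pure quotient $\Sigma^{-1}\Qbb$ does not, since $\mathrm{Ext}^1_{\Zbb}(\Qbb,\Zbb)\neq 0$.
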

\begin{proof}
  The proof follows identically to \cref{SBousfield-closure}, and because pure-injectives are mapped to injectives by the restricted Yoneda embedding, see \cref{ss:definable}. 
\end{proof}

\begin{lemma}\label{cBousfield-set}
    The following hold. 
  \begin{enumerate}
    \item[(i)] The cohomological semi-Bousfield classes of $\Tcal$ form a set.
    \item[(ii)] Let $\Ecal$ be a subcategory of $\PI$. Then there is an object $E \in \PI$ such that $\CSBou{E} = \CSBou{\Ecal}$.    
  \end{enumerate}
\end{lemma}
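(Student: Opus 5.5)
The plan is to reduce everything to module-theoretic data over the skeletally small ringoid $\Tcal^\cpt$, where only a set of objects can matter. By \cref{csB-orth}, for $\Ecal \subseteq \PI$ we have $\CSBou{\Ecal} = \Perp{0}\widehat{\Ecal}$. By \cref{Hom-PI}, every $S \otimes E$ with $S \in \Tcal^{\leq 0,\cpt}$ and $E \in \Ecal$ is again pure-injective. Since $\yo$ restricts to an equivalence between pure-injectives and injective $\Tcal^\cpt$-modules, and more precisely $\Hom_\Tcal(X,F) \cong \Hom(\yo X,\yo F)$ for pure-injective $F$ (\cref{ss:definable}), we obtain
$$X \in \CSBou{\Ecal} \iff \Hom(\yo X, \yo(S\otimes E)) = 0 \quad \text{for all } S \in \Tcal^{\leq 0,\cpt},\ E \in \Ecal.$$

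The key observation is that, for an injective module $I$, one has $\Hom(M,I)=0$ if and only if $\Hom(C,I)=0$ for every cyclic submodule $C \subseteq M$. One direction is restriction along inclusions. For the other, a nonzero map $M \to I$ is nonzero on some element, hence on the cyclic submodule that element generates. Here "cyclic" means a quotient of a representable functor $\Hom_\Tcal(-,c)_{\restriction \Tcal^\cpt}$ with $c \in \Tcal^\cpt$. Since $\Tcal^\cpt$ is skeletally small, the isomorphism classes of cyclic modules form a set $\mathcal{Q}$; indeed each is a quotient of $\yo(c)$ for $c$ in a fixed skeleton, and the subfunctors of $\yo(c)$ form a set. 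Let
$$\mathcal{Q}(\Ecal) = \{ C \in \mathcal{Q} \colon \Hom(C,\yo(S \otimes E)) = 0 \ \forall S\in\Tcal^{\leq 0,\cpt}, E \in \Ecal\}.$$
Then $X \in \CSBou{\Ecal}$ if and only if every cyclic submodule of $\yo X$ is isomorphic to a member of $\mathcal{Q}(\Ecal)$. Thus $\CSBou{\Ecal}$ is determined by the subset $\mathcal{Q}(\Ecal) \subseteq \mathcal{Q}$, and (i) follows.

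For (ii), for each $C \in \mathcal{Q} \setminus \mathcal{Q}(\Ecal)$ choose $E_C \in \Ecal$ and $S_C \in \Tcal^{\leq 0,\cpt}$ with $\Hom(C,\yo(S_C \otimes E_C)) \neq 0$, and put $E = \prod_{C} E_C$. This is a set-indexed product of pure-injectives, hence pure-injective, since $\yo$ preserves products and products of injective modules are injective.

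\emph{Inclusion $\CSBou{\Ecal} \subseteq \CSBou{E}$.} For compact $S$ we have $S \otimes - \cong [S^*,-]$, which commutes with products. Hence $\Hom_\Tcal(X,S\otimes E) \cong \prod_C \Hom_\Tcal(X, S \otimes E_C)$, and each factor vanishes for $X \in \CSBou{\Ecal}$.

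\emph{Inclusion $\CSBou{E} \subseteq \CSBou{\Ecal}$.} Let $X \in \CSBou{E}$, and suppose some cyclic submodule $C \subseteq \yo X$ is not in $\mathcal{Q}(\Ecal)$. Then $S_C \otimes E_C$ is a retract of $S_C \otimes E$. Since $X \in \CSBou{E}$, we get $\Hom(\yo X,\yo(S_C\otimes E_C)) = 0$. Because $\yo(S_C \otimes E_C)$ is injective, this forces $\Hom(C,\yo(S_C\otimes E_C))=0$, which contradicts the choice of $S_C$ and $E_C$.

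The only delicate point is the cyclic-submodule criterion together with the identification $\Hom_\Tcal(X,F) \cong \Hom(\yo X, \yo F)$ for pure-injective $F$. This identification is standard (Krause) and is implicit in the equivalence recalled in \cref{ss:definable}. Everything else is bookkeeping over the set $\mathcal{Q}$.
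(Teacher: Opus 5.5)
Your proof is correct. It shares the paper's first step: move to $\Mod{\Tcal^\cpt}$ via $\yo$ and use that $\yo$ sends pure-injectives to injectives. From there the two arguments are executed differently.

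\textbf{The paper's route.} It replaces $\Ecal$ by $\widehat{\Ecal}$ using \cref{csB-orth} and \cref{Hom-PI}. It then notes that $\CSBou{\Ecal}$ is the preimage under $\yo$ of a localising subcategory of $\Mod{\Tcal^\cpt}$. Part (i) follows from Gabriel's theorem that a Grothendieck category has only a set of localising subcategories. Part (ii) follows from the general fact that a hereditary torsion class is $\{M \colon \Hom(M,W)=0\}$ for a single injective $W$. This $W$ is built as a product of injective envelopes of the $\alpha$-generated torsion-free objects.

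\textbf{Your route.} You essentially unwind Gabriel's argument by hand. Testing against injectives on cyclic submodules parametrises $\CSBou{\Ecal}$ by the subset $\mathcal{Q}(\Ecal)$ of a fixed set. For (ii) you take a product of chosen witnesses $E_C \in \Ecal$.

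\textbf{What your route buys.}
\begin{itemize}
\item It is self-contained.
\item Your $E$ is a set-indexed product of objects of $\Ecal$ itself, which is a slightly sharper conclusion.
\item You work with $\widehat{\{E\}}$ throughout, so you obtain $\CSBou{E}=\CSBou{\Ecal}$ on the nose.
\end{itemize}
By contrast, the paper's $E$ a priori only satisfies $\Perp{0}E=\CSBou{\Ecal}$. Identifying this with $\CSBou{E}=\Perp{0}\widehat{\{E\}}$ needs an unstated step: closure of $\CSBou{\Ecal}$ under $S^*\otimes -$ for $S\in\Tcal^{\leq 0,\cpt}$. That closure does hold, via \cref{compactsflat} and \cref{F0preserve}.

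\textbf{What the paper's route buys.} It is shorter and more conceptual. It exhibits $\CSBL{\Tcal}$ as embedded in the set of localising subcategories of $\Mod{\Tcal^\cpt}$.

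\textbf{A small wording slip.} The implication $\Hom(M,I)=0 \Rightarrow \Hom(C,I)=0$ for $C\subseteq M$ does not follow from restriction alone. It uses injectivity of $I$, i.e.\ surjectivity of the restriction map. You do use this correctly in the second inclusion of (ii).
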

\begin{proof}
  Let $\Ccal = \CSBou{\Ecal}$. By \cref{csB-orth} and \cref{Hom-PI}, we may assume that $\Ccal = \Perp{0} \Ecal$. Then there is a localising subcategory $\Lcal$ of the module category $\Mod{\Tcal^\cpt}$ such that 
  $$\Ccal = \{X \in \Tcal \colon \yo(X) \in \Lcal\},$$ as $\Ccal$ is closed under pure subobjects, pure quotients, and coproducts. Recalling the classical results in \cite{Gab62}, in any given Grothendieck category, there is only a set of localising subcategories, we verified $(i)$.
  
  To prove $(ii)$, it suffices to observe that given a localising subcategory $\Lcal$ of a Grothendieck category $\Gcal$, there is an injective object $W \in \Gcal$ such that $\Lcal = \{M \in \Gcal \colon \Hom_\Gcal(M,W) = 0\}$, then we just let $E \in \PI$ be such that $\yo (E) = W$. To establish the observation, let $\alpha$ be a cardinal such that any object $M \in \Gcal$ is a direct union of its $\alpha$-generated subobjects. Let $W$ be the product of injective envelopes of a representative set of isomorphism classes of those objects from $\Lcal^\perp$ which are $\alpha$-generated. It is easily observed that any object of $\Lcal^\perp$ then embeds into a product of copies of $W$, establishing the claim.
\end{proof}

\begin{lemma}\label{tensorstr-t-costr}
  The assignment $\SBou{X} \mapsto \CSBou{X^\bc}$ yields a poset embedding $\SBL{\Tcal} \hookrightarrow \CSBL{\Tcal}$.
  
  Assume $\Tcal$ is algebraic. Then any cohomological semi-Bousfield class $\Xcal$ fits into a $\otimes$-co-$t$-structure $(\Xcal,\Wcal)$ as a left-hand constituent.
\end{lemma}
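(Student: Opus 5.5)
Both parts rest on one identity: I will show $\SBou{X} = \CSBou{X^\bc}$ for every $X \in \Tcal$. Given this, the first claim follows at once. By \cref{BW-duality}(2), $X^\bc$ is pure-injective, so $\CSBou{X^\bc}$ is a cohomological semi-Bousfield class. The assignment $\SBou{X} \mapsto \CSBou{X^\bc}$ is then literally the identity on subcategories of $\Tcal$. It is therefore well defined, independently of the representative $X$, and it is injective and order-preserving and order-reflecting for reverse inclusion.

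To prove the identity, take $Y \in \Tcal$. By \cref{BW-duality}(1), $[Y,X^\bc] \cong (Y \otimes X)^\bc$. Hence $Y \in \CSBou{X^\bc}$ if and only if $(Y \otimes X)^\bc \in \WTcal{0}$. In \cref{ss:cotstructres} it was noted that $\WTcal{0} = (\Tcal^{>0})^\vee$ is the dual definable subcategory of the definable coaisle $\Tcal^{>0}$. By \cref{dualdefinable}, $Z^\bc \in \WTcal{0}$ holds exactly when $Z \in \Tcal^{>0}$. Applying this with $Z = Y \otimes X$ gives $Y \in \CSBou{X^\bc}$ if and only if $Y \otimes X \in \Tcal^{>0}$, that is, if and only if $Y \in \SBou{X}$.

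For the second claim, let $\Xcal = \CSBou{\Ecal}$. By \cref{cBousfield-set}(ii) I may assume $\Ecal = \{E\}$ with $E$ pure-injective. By \cref{csB-orth}, $\Xcal = \Perp{0}\widehat{\Ecal}$. \cref{Hom-PI} shows that $\widehat{\Ecal}$ consists of pure-injectives, and it is closed under $\Sigma^{-1}$ because $\Tcal^{\leq 0,\cpt}$ is closed under $\Sigma$. Taking a product of representatives, $\Xcal = \Perp{0}\Sigma^{\leq 0}E'$ for a single pure-injective $E'$. This is the step I expect to be the main obstacle: producing the co-$t$-structure with left class $\Xcal$. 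I would invoke the known result that, in an algebraic compactly generated triangulated category, the left orthogonal to a set of pure-injectives closed under $\Sigma^{-1}$ is the left half of a co-$t$-structure. One route is to apply a cotorsion-pair completeness argument for cogenerated pairs in the Frobenius model, via the deconstructibility of pure-injective orthogonals. Alternatively one can use the results of Laking–Vitória/Angeleri–Marks–Vitória on pure-injective cosilting-type orthogonals. Then $\Wcal = \Xcal\Perp{0}$.

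Finally I check the $\otimes$-condition, i.e.\ that $[S,X] \in \Xcal$ for $S \in \Tcal^{\leq 0,\cpt}$ and $X \in \Xcal$. We have $[S,X] \cong S^*\otimes X$, so it suffices to show $[S^*\otimes X, E] \cong [S^*,[X,E]] \cong S\otimes[X,E]$ lies in $\WTcal{0}$. This holds because $[X,E] \in \WTcal{0}$ and $(\Pcal^{\geq 0},\WTcal{0})$ is a $\otimes$-co-$t$-structure, so $[S^*,-]$ preserves $\WTcal{0}$. This is the computation already recorded in \cref{ss:cotstructres}.
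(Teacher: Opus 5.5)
Your first claim is handled exactly as in the paper. Your remark that $X^\bc\in\PI$ by \cref{BW-duality}(2), which makes $\CSBou{X^\bc}$ a genuine cohomological semi-Bousfield class, is a welcome addition. In the second claim, however, you have the direction of the suspension closure reversed, and as written the key step is false. Since $\Sigma(S\otimes E)\cong(\Sigma S)\otimes E$ and the aisle $\Tcal^{\leq 0}$ is closed under $\Sigma$, the class $\widehat{\Ecal}$ is closed under $\Sigma$, not under $\Sigma^{-1}$. Correspondingly, $\Xcal=\Perp{0}\{\Sigma^iE'\colon i\geq 0\}$, not $\Perp{0}\{\Sigma^iE'\colon i\leq 0\}$.

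This is not cosmetic. A left orthogonal to a $\Sigma^{-1}$-closed class is closed under $\Sigma$. By contrast, the left half of any co-$t$-structure equals $\Perp{0}\Wcal$ with $\Wcal$ closed under $\Sigma$ by (coT2), so it is closed under $\Sigma^{-1}$; so is $\Xcal$ itself, by \cref{csBousfield-closure}. Your claimed equality therefore already fails for $\Xcal=\CSBou{\1^\bc}=\SBou{\1}=\Tcal^{>0}$. The ``known result'' you invoke is also false as you formulate it.

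With the direction corrected, your argument is the paper's. Here $\Xcal=\Perp{0}\widehat{\Ecal}$, and $\widehat{\Ecal}$ is a $\Sigma$-closed class of pure-injectives; it coincides with the paper's $\Vcal=\{[S^*,E]\}$, since $[S^*,E]\cong S\otimes E$. In the algebraic case, \cite[Corollary 5.4]{LV20} then provides the co-$t$-structure with left half $\Xcal$. Your preliminary reduction to a single pure-injective $E'$ via \cref{cBousfield-set} is harmless.

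You verify the $\otimes$-condition directly from the definition, via $[[S,X],E]\cong[S^*,[X,E]]$ and the fact that $[S^*,-]$ preserves $\WTcal{0}$ for $S\in\Tcal^{\leq 0,\cpt}$. This is correct. It is also slightly more direct than the paper's appeal to \cref{tensor-cotstr}, which formally requires $[X,W]\in\WTcal{0}$ for every $W$ in the right half of the co-$t$-structure, not just for $W\in\widehat{\Ecal}$.
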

\begin{proof}
  To prove the first assertion, we claim that $\SBou{X} = \CSBou{X^\bc}$. Indeed, we have $[Y,X^\bc] \cong (X \otimes Y)^\bc$ by \cref{BW-duality}(1) and thus $[Y,X^\bc] \in \WTcal{0}$ if and only if $X \otimes Y \in \Tcal^{>0}$ by \cref{dualdefinable}, as desired. Note that since $(\coprod_{i \in I}X_i)^\bc \cong \prod_{i \in I}X^\bc$ by \cite[Corollary 3.13]{CH09}, this embedding preserves infinite joins.
  
  Now assume $\Tcal$ is algebraic and let $\Xcal = \CSBou{\Ecal}$ be the cohomological semi-Bousfield class of a subcategory $\Ecal$ of $\PI$. We claim that $\Xcal = \Perp{0}\Vcal$, where $\Vcal = \{[S^*,E] \colon E \in \Ecal, S \in \Tcal^{\leq 0,\cpt}\}$. Indeed, the isomorphism $\Hom_\Tcal(X,[S^*,E]) \cong \Hom_\Tcal(S^*,[X,E])$ shows that $X \in \Perp{0}\Vcal$ if and only if $[X,E] \in \WTcal{0}$ for all $E \in \Ecal$, and thus if and only if $X \in \CSBou{\Ecal}$. Since objects of the form $[S^*,E]$ are also pure-injective, \cite[Corollary 5.4]{LV20} yields the existence of a co-$t$-structure $(\Xcal,\Vcal)$, here we need that $\Tcal$ is algebraic. The fact that the co-$t$-structure $(\Xcal,\Vcal)$ is a $\otimes$-co-$t$-structure follows directly from the computation above and \cref{tensor-cotstr}.
\end{proof}
\begin{rmk}
    The relationship between compactly generated $\otimes$-$t$-structures and compactly generated $\otimes$-co-$t$-structures, was shown in the presence of a model in \cite[Lemma 3.5 and Theorem 3.10]{DS23b}, following the non-tensor compatible version shown in \cite[Theorem 4.10]{SP16}.
\end{rmk}
\begin{rmks}
  $(1)\colon$ In general, not every cohomological Bousfield class is a Bousfield class, as first noted by Stevenson \cite{Ste14}. 
     
  \noindent$(2)\colon$ Also, not every aisle in a $\otimes$-co-$t$-structure is a cohomological semi-Bousfield class. In fact, in the category of abelian groups, there are class-many complete cotorsion pairs \cite[Example 5.16]{GT12}. It follows that there are class-many ($\otimes$-)co-$t$-structures in the unbounded derived category of the integers, as one can construct an injective assignment from the complete cotorsion pairs to co-$t$-structures. 
\end{rmks}
We sum up our findings so far in the following diagram. A \newterm{localising $\otimes$-ideal} is a thick $\otimes$-ideal $\Lcal$ of $\Tcal$ closed under $(X \otimes -)$ for any $X \in \Tcal$ (note that this forces $\Lcal$ to be closed under all coproducts in $\Tcal$). Recall from \cite[Proposition 3.2.7]{KS19} that a cohomological localising $\otimes$-ideal is a localising $\otimes$-ideal which is the full left orthogonal to a subcategory consisting of pure-injective objects of $\Tcal$.
$$
\begin{tikzcd}[column sep = 0.1em, row sep = 0.1em]
 & & \begin{Bmatrix} 
  \text{ Bousfield} \\ \text{classes} \end{Bmatrix} & \subseteq & \begin{Bmatrix} 
    \text{ Cohomological localising} \\ \text{$\otimes$-ideals} \end{Bmatrix} \\
  & & \rotatebox[origin=c]{270}{$\subseteq$} & & \rotatebox[origin=c]{270}{$\subseteq$}\\
\begin{Bmatrix} 
  \text{ Coaisles of compactly} \\ \text{generated $\otimes$-$t$-structures} \end{Bmatrix}& \subseteq & \begin{Bmatrix} \text{Semi-Bousfield} \\ \text{classes} \end{Bmatrix} & \subseteq & \begin{Bmatrix} \text{Cohomological semi-Bousfield} \\ \text{classes} \end{Bmatrix}\\
\end{tikzcd}
$$

\subsection{Standard pairs and dimensions}\label{ss:dimensions}
\cref{cgtstrtotensor} shows that the semi-Bousfield pair $(\Tcal^{>0},\Fcal^{\geq 0})$ has as right consistuent the class
$$\Fcal^{\geq 0} = \{X \in \Tcal \colon X \otimes \Tcal^{>0} \subseteq \Tcal^{>0}\}.$$
We shall also denote $\Fcal^{\geq -n} = \Sigma^{n}\Fcal^{\geq 0}$ and think of its objects as being of ``flat dimension'' at most $n$. Dually, we let
$$\Ical^{\leq 0} = \{X \in \Tcal \colon [\Tcal^{>0},X] \subseteq \WTcal{0}\}.$$
We shall also denote $\Ical^{\leq n} = \Sigma^{-n}\Ical^{\leq 0}$ and think of its objects as being of ``injective dimension'' at most $n$. Also let $\Fcal^+ = \bigcup_{n \in \Zbb}\Fcal_n $ denote the subcategory of all objects ``of finite flat dimension'' and $\Ical^- = \bigcup_{n \in \Zbb} \Ical_n$ denote the subcategory of all objects of ``finite injective dimension''. A justification of this nomenclature in the algebro-geometric setting will arrive later in \cref{scheme-dimensions}.

\begin{lemma}\label{FIdual}
  Let $X \in \Tcal$. Then:
  \begin{enumerate}
    \item[(i)] $X \in \Fcal^{\geq n}$ if and only if $X^\bc \in \Ical^{\leq -n}$ if and 
    only if $X^\bc \in \PI \cap \Ical^{\leq -n}$.
    \item[(ii)] $X \in \Fcal^{+}$ if and only if $X^\bc \in \Ical^{-}$ if and 
    only if $X^\bc \in \PI \cap \Ical^{-}$.
    \item[(iii)] if $X \in \Fcal^+$ and $Y \in \Ical^-$ then $[X,Y] \in \Ical^-$.
    \item[(iv)] $X \in \Fcal^+$ if and only if $(X \otimes -)$ preserves $\Tcal^+$.
  \end{enumerate}
\end{lemma}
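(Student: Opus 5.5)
The plan is to reduce everything to the Brown--Comenetz identity $[Y,X^\bc]\cong (X\otimes Y)^\bc$ from \cref{BW-duality}(1), combined with the dual-definability statement \cref{dualdefinable} applied to $\Ccal=\Tcal^{>0}$, whose dual definable subcategory is $\WTcal{0}$. Recall also that $\Ical^{\leq -n}=\Sigma^{n}\Ical^{\leq 0}$ and $\Fcal^{\geq n}=\Sigma^{-n}\Fcal^{\geq 0}$, and that $(\Sigma^k X)^\bc\cong \Sigma^{-k}X^\bc$. The shifts will therefore match up, and it suffices to treat $n=0$ in (i).

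For (i), I first prove $X\in\Fcal^{\geq 0}$ iff $X^\bc\in\Ical^{\leq 0}$.
\begin{itemize}
\item By definition, $X\in\Fcal^{\geq 0}$ means $X\otimes Y\in\Tcal^{>0}$ for all $Y\in\Tcal^{>0}$.
\item By \cref{dualdefinable}, $X\otimes Y\in\Tcal^{>0}$ if and only if $(X\otimes Y)^\bc\in\WTcal{0}$.
\item By \cref{BW-duality}(1), $(X\otimes Y)^\bc\cong[Y,X^\bc]$.
\end{itemize}
Hence the condition is equivalent to $[Y,X^\bc]\in\WTcal{0}$ for all $Y\in\Tcal^{>0}$, which is exactly $X^\bc\in\Ical^{\leq 0}$. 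Since $X^\bc$ is always pure-injective by \cref{BW-duality}(2), the third condition in (i) is equivalent to the second. For general $n$, apply this to $\Sigma^{n}X$ and track the shifts.

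Statement (ii) follows from (i) by taking unions over $n$, reading $\Fcal^+$ and $\Ical^-$ as the unions of the $\Fcal^{\geq n}$ and $\Ical^{\leq n}$.

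For (iii), take $X\in\Fcal^{\geq n}$, $Y\in\Ical^{\leq m}$, and $Z\in\Tcal^{>0}$. Adjunction gives $[Z,[X,Y]]\cong[Z\otimes X,Y]$. Now $Z\otimes X\in\Tcal^{>n}$ by the flat-dimension bound, after shifting appropriately. It therefore suffices to show that $\Ical^{\leq m}$ satisfies $[\Tcal^{>k},Y]\subseteq \WTcal{k+m}$ up to the correct sign convention; this is just the shifted definition, since $[\Sigma^{j}-,-]\cong\Sigma^{-j}[-,-]$. This yields $[X,Y]\in\Ical^{\leq m'}$ for some $m'$ depending on $m$ and $n$. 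The only care needed here is sign bookkeeping, which I will do by writing everything in terms of $\Sigma$-shifts of the base classes $\Tcal^{>0}$ and $\WTcal{0}$.

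For (iv), the forward direction is immediate: if $X\in\Fcal^{\geq n}$, then $X\otimes\Tcal^{>k}\subseteq\Tcal^{>k+n}$ by shifting, so $X\otimes-$ preserves $\Tcal^+$. The converse is the main obstacle, because preservation only tells us that each $Y\in\Tcal^{>0}$ lands in some $\Tcal^{>k(Y)}$, with no uniform bound. I would argue by contradiction. If no $n$ works, choose for each $j\geq 0$ an object $Y_j\in\Tcal^{>0}$ with $X\otimes Y_j\notin\Tcal^{>-j}$. Then $Y=\coprod_j Y_j$ lies in $\Tcal^{>0}$, since the coaisle is definable and hence closed under coproducts. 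Because $\otimes$ preserves coproducts, $X\otimes Y\cong\coprod_j X\otimes Y_j$. Definable subcategories are closed under pure subobjects, and a summand is a pure subobject. So if $X\otimes Y$ were in $\Tcal^{>-j}$, every summand would be too, contradicting the choice of $Y_j$ for large $j$. This contradicts $X\otimes Y\in\Tcal^+$, and gives a uniform $n$ with $X\in\Fcal^{\geq -n}$.
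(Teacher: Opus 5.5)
Your proposal is correct and follows the paper's proof of (i)--(iii) essentially verbatim: the identity $(X\otimes Y)^\bc\cong[Y,X^\bc]$ together with dual definability of $\Tcal^{>0}$ and $\WTcal{0}$, then shifting, then the adjunction $[Z,[X,Y]]\cong[Z\otimes X,Y]$. The only slip is that the bound in (iii) should read $[\Tcal^{>k},Y]\subseteq\WTcal{m-k}$, which is harmless since any finite bound suffices. The paper leaves (iv) to the reader. Your converse argument fills that gap correctly: you package the putative counterexamples $Y_j$ into $\coprod_j Y_j\in\Tcal^{>0}$ and use closure of the coaisle under coproducts and summands to force a uniform bound.
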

\begin{proof}
  We have the natural isomorphism $[X \otimes -]^\bc \cong [-,X^\bc]$. Therefore, $X^\bc \in \Ical^{\leq 0}$ if and only if $[X \otimes -]^\bc$ sends $\Tcal^{>0}$ to $\WTcal{0}$ if and only $X \otimes -$ sends $\Tcal^{>0}$ to $\Tcal^{>0}$ if and only if $X \in \Fcal^{\geq 0}$. The rest follows easily by shifting. To prove $(iii)$, let $Z \in \Tcal^{>0}$ and note that $[Z,[X,Y]] \cong [Z \otimes X,Y]$. Since $X \in \Fcal^+$, there is $n \in \Zbb$ such that $Z \otimes X \in \Tcal^{>n}$. Therefore, $Y \in \Ical^-$ yields some $m \in \Zbb$ such that $[Z,[X,Y]] \in \WTcal{m}$, establishing that $[X,Y] \in \Ical^-$. We leave $(iv)$ to the reader.
\end{proof}

\begin{lemma}\label{F0preserve}
  Let $\Ccal$ be a cohomological semi-Bousfield class and $F \in \Fcal^{\geq 0}$. Then the functor $(F \otimes -)$ preserves $\Ccal$.
\end{lemma}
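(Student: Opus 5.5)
Write $\Ccal = \CSBou{\Ecal}$ for a subcategory $\Ecal \subseteq \PI$, and fix $X \in \Ccal$ and $F \in \Fcal^{\geq 0}$. The goal is to show $[F \otimes X, E] \in \WTcal{0}$ for every $E \in \Ecal$. By tensor--hom adjunction,
$$[F \otimes X, E] \cong [F,[X,E]],$$
and the hypothesis $X \in \Ccal$ gives $W \coloneqq [X,E] \in \WTcal{0}$. It therefore suffices to show that $[F,-]$ preserves $\WTcal{0}$ for $F \in \Fcal^{\geq 0}$. This is a purely internal statement, independent of $\Ccal$, so I will reduce to it first.

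To prove $[F,W] \in \WTcal{0}$, I will test against the generators. I need $\Hom_\Tcal(S^*,[F,W]) = 0$ for all $S \in \Tcal^{\leq 0,\cpt}$. This group is isomorphic to $\Hom_\Tcal(S^* \otimes F, W)$. Replacing $S^*$ by a general object is awkward, so instead I will use the characterization of $\WTcal{0}$ as the dual definable subcategory $(\Tcal^{>0})^\vee$ together with Brown--Comenetz duality.

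Here is the cleaner route, exploiting that $W$ may be taken pure-injective. By \cref{Hom-PI}, $W = [X,E]$ is pure-injective, so by \cref{BW-duality}(3) it is a retract of $W^{\bc\bc}$. Since $\WTcal{0}$ is definable, hence closed under retracts and pure subobjects, \cref{dualdefinable} gives $W^\bc \in \Tcal^{>0}$. Indeed, $W^{\bc\bc} \in \WTcal{0}$ because $W$ is a pure subobject and the map splits, and applying \cref{dualdefinable} to $W^\bc$ yields $W^\bc \in \Tcal^{>0}$.

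Next, $F \otimes W^\bc \in \Tcal^{>0}$ by the definition of $\Fcal^{\geq 0}$. Applying \cref{dualdefinable} again, $(F \otimes W^\bc)^\bc \in \WTcal{0}$. By \cref{BW-duality}(1),
$$(F \otimes W^\bc)^\bc \cong [F, W^{\bc\bc}].$$
Since $W$ is a retract of $W^{\bc\bc}$, the object $[F,W]$ is a retract of $[F,W^{\bc\bc}]$, and so lies in $\WTcal{0}$, as definable subcategories are closed under summands.

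The main obstacle is making the duality step rigorous. The point is to pass from $W \in \WTcal{0}$ to $W^\bc \in \Tcal^{>0}$. \cref{dualdefinable} states that $Y \in \Tcal^{>0}$ if and only if $Y^\bc \in \WTcal{0}$, so I apply it with $Y = W^\bc$. This requires knowing $W^{\bc\bc} \in \WTcal{0}$, which follows from $W \in \WTcal{0}$ because $W^{\bc\bc} \cong W \oplus W'$ is not automatic. So instead I argue directly: $W \to W^{\bc\bc}$ is a pure monomorphism with $W$ pure-injective, hence split, but the complement need not lie in $\WTcal{0}$.

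To fix this, I will use the converse direction only: I need $[F,W] \in \WTcal{0}$, and $[F,W]$ is a summand of $[F,W^{\bc\bc}]$. So it suffices that $F \otimes W^\bc \in \Tcal^{>0}$, which needs $W^\bc \in \Tcal^{>0}$. By \cref{dualdefinable} applied to $\Ccal = \Tcal^{>0}$, $W^\bc \in \Tcal^{>0}$ if and only if $W^{\bc\bc} \in \WTcal{0}$. I expect to obtain this through the symmetric form of \cref{dualdefinable}, namely $(\Ccal^\vee)^\vee = \Ccal$ (using $f^{**} = f$), which gives $Y \in \WTcal{0}$ if and only if $Y^\bc \in \Tcal^{>0}$. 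Applied to $Y = W$, this yields $W^\bc \in \Tcal^{>0}$ directly, completing the argument.
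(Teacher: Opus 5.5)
Your final argument is correct and is essentially the paper's proof: both reduce via $[F\otimes C,E]\cong[F,[C,E]]$ to showing that $[F,-]$ preserves $\WTcal{0}\cap\PI$, write such a $W$ as a summand of $(W^{\bc})^{\bc}$ with $W^{\bc}\in\Tcal^{>0}$, and conclude from $[F,(W^{\bc})^{\bc}]\cong(F\otimes W^{\bc})^{\bc}\in\WTcal{0}$. Your symmetric use of \cref{dualdefinable} makes explicit the step the paper leaves implicit.

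Discard the justification in your third paragraph, since a split monomorphism $W\to W^{\bc\bc}$ says nothing about $W^{\bc\bc}$ itself. Also note that your opening reduction, to $[F,-]$ preserving all of $\WTcal{0}$, targets a false statement: in $\D(\Zbb)$ the choices $F=\Qbb\in\Fcal^{\geq 0}$ and $W=\Sigma\Zbb\in\WTcal{0}=\D^{<0}$ give $H^0([F,W])\cong\mathrm{Ext}^1_{\Zbb}(\Qbb,\Zbb)\neq 0$, so the pure-injectivity of $W$ is genuinely needed.
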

\begin{proof}
  Let $\Ecal \subseteq \PI$ be such that $\Ccal = \CSBou{\Ecal}$ and let $C \in \Ccal$. Then we have for any $E \in \Ecal$ that $[F \otimes C,E] \cong [F,[C,E]]$. By the assumption, $[C,E] \in \WTcal{0}$. By \cref{Hom-PI}, $E \in \PI$ implies $[C,E] \in \PI$. It thus remains to show that $[F,-]$ preserves $\WTcal{0} \cap \PI$. Any object $W \in \WTcal{0} \cap \PI$ is a direct summand of an object of the form $X^\bc$ for $X \in \Tcal^{> 0}$. Additionally, $[F,X^\bc] \cong (F \otimes X)^\bc$, and as $F \otimes -$ preserves $\Tcal^{> 0}$, $(F \otimes X)^\bc \in \WTcal{0}$, so we are done.
\end{proof}

We also remark on the following interaction of finite flat dimension with compact objects.

\begin{lemma}\label{compactsflat}
  For any compact object $S \in \Tcal^{\leq 0, \cpt}$ the dual $S^*$ belongs to $\Fcal^{\geq 0}$.
  
  Any compact object $S \in \Tcal^\cpt$ belongs to $\Fcal^{+}$ and to $\Tcal^-$.
\end{lemma}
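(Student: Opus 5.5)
To show $S^* \in \Fcal^{\geq 0}$ for $S \in \Tcal^{\leq 0,\cpt}$, the plan is to work directly from the definition $\Fcal^{\geq 0} = \{X \colon X \otimes \Tcal^{>0} \subseteq \Tcal^{>0}\}$. Fix $Y \in \Tcal^{>0}$. Since the base $t$-structure is compactly generated, it suffices to check that $\Hom_\Tcal(C, S^* \otimes Y) = 0$ for every $C \in \Tcal^{\leq 0,\cpt}$. By rigidity we have $S^* \otimes Y \cong [S,Y]$, and then adjunction gives
$$\Hom_\Tcal(C,[S,Y]) \cong \Hom_\Tcal(C \otimes S, Y).$$
The object $C \otimes S$ lies in $\Tcal^{\leq 0}$ because $\Tcal^{\leq 0} \otimes \Tcal^{\leq 0} \subseteq \Tcal^{\leq 0}$. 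Hence this group vanishes, as $Y \in \Tcal^{>0} = (\Tcal^{\leq 0})\Perp{0}$. (Equivalently, one can apply \cref{tensor-tstr} to the base $t$-structure, which is a $\otimes$-$t$-structure by assumption, to get $[S,Y] \in \Tcal^{>0}$ directly.)

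For the second statement, let $S \in \Tcal^\cpt$ be arbitrary. First I would show $S \in \Tcal^-$, and this is the one step needing an argument beyond formal manipulation. The aisle $\Tcal^{\leq 0}$ is compactly generated, so it is the smallest cocomplete, suspension- and extension-closed subcategory containing $\Tcal^{\leq 0,\cpt}$. Consequently $\bigcup_n \Tcal^{\leq n,\cpt}$ generates $\Tcal$: any $X$ with $\Hom_\Tcal(\Tcal^{\leq n,\cpt},X)=0$ for all $n$ lies in $\bigcap_n \Tcal^{>n} = 0$ by nondegeneracy. Now $\bigcup_n \Sigma^{-n}\Tcal^{\leq 0,\cpt}$ is a thick subcategory of $\Tcal^\cpt$: it is closed under shifts and triangles, since given a triangle one shifts both ends into a common $\Tcal^{\leq n}$, and it is closed under summands because aisles are. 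Since it generates, Neeman's thick subcategory theorem shows that it equals $\Tcal^\cpt$. Hence $S \in \Tcal^{\leq n}$ for some $n$, that is, $S \in \Tcal^-$.

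Finally, to get $S \in \Fcal^+$, apply the above to $S^* \in \Tcal^\cpt$. This gives $S^* \in \Tcal^{\leq n}$ for some $n$, so $\Sigma^{n} S^* \in \Tcal^{\leq 0,\cpt}$. By the first part, $(\Sigma^n S^*)^* \cong \Sigma^{-n} S \in \Fcal^{\geq 0}$, using $S^{**} \cong S$. Therefore $S \in \Sigma^{n}\Fcal^{\geq 0} = \Fcal^{\geq -n} \subseteq \Fcal^+$. The main obstacle is the thick-subcategory step; if one prefers to avoid Neeman's theorem, the same conclusion follows by writing $S$ as a retract of a finite extension of shifts of objects in $\bigcup_n\Tcal^{\leq n,\cpt}$, which is the compactness-of-$S$ argument inside the generation statement.
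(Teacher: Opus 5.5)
Your proof is correct and follows the paper's argument. The first part is the adjunction computation behind \cref{tensor-tstr}, applied to the base $t$-structure. The second part uses nondegeneracy plus compact generation to get $\Tcal^\cpt = \bigcup_n \Sigma^{-n}\Tcal^{\leq 0,\cpt}$, then applies the first part to a shift of $S^*$ together with $S^{**}\cong S$. The only difference is that the paper cites Murfet for $\thick{\Tcal^{\leq 0,\cpt}} = \Tcal^\cpt$, whereas you derive it from Neeman's theorem after checking directly that the union of shifts is already thick.
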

\begin{proof}
  For the first claim, let $S \in \Tcal^{\leq 0, \cpt}$ and let us check that $S^* \otimes - $ preserves $\Tcal^{>0}$. For any $X \in \Tcal^{>0}$, we have $S^* \otimes M \cong [S,M]$, and $[S,M] \in \Tcal^{>0}$ by \cref{tensor-tstr}.
  
  Now let us prove the second claim. The assumption $\bigcap_{n \in \Zbb}\Tcal^{>n} = 0$ is equivalent to $\thick{\Tcal^{\leq 0,\cpt}} = \Tcal^\cpt$ as $(\Tcal^{\leq 0}, \Tcal^{>0})$ is compactly generated, see \cite[Example 1, Corollary 9]{MurE}. It follows that there is $n \in \Zbb$ such that $\Sigma^n S \in \Tcal^{\leq 0, \cpt}$. This immediately yields $S \in \Tcal^-$. By the same argument, there is $k \in \Zbb$ such that $\Sigma^k S^* \in \Tcal^{\leq 0,\cpt}$, and thus $S^{**} \cong S \in \Fcal^{+}$ by the first claim.
\end{proof}

\begin{ex}
Consider the setting of the stable homotopy category $\SH$ of spectra. The choice of the base $t$-structure $(\Tcal^{\leq 0},\Tcal^{>0})$ is the Postnikov $t$-structure $(\SH^{\leq 0},\SH^{>0})$ whose induced $i$-th cohomology functor is the $i$-th homotopy group functor $\pi^i$ (to be consistent with our notation, we are committing the heresy of using cohomological indexing here), that is, $\SH^{\leq 0}$ consists of connective spectra and $\SH^{\geq 0}$ consists of coconnective spectra. Since the unit object, the sphere spectrum $S$, generates $\SH^{\leq 0}$ as an aisle, every (co-)$t$-structure in $\Scal\Hcal$ is automatically a $\otimes$-(co-)$t$-structure. Since $S$ is a silting object of $\Scal\Hcal$, standard arguments show that $\WTcal{0} = \SH^{<0}$ in this case. Then $\Pcal^{\geq 0}$ consists of spectra $X$ such that $[X,-]$ preserves connective spectra, $\Fcal^{\geq 0}$ consists of spectra $X$ such that $X \wedge -$ preserves coconnective spectra, and $\Ical^{\leq 0}$ consists of spectra $X$ such that $[-,X]$ sends coconnective spectra to connective spectra.
\end{ex}

\section{Generalities on $\D_{\qc}(X)$}\label{s:schemes}
In this section, let $X$ be a quasi-compact and quasi-separated scheme, $\Ocal_X$ denote the structure sheaf on $X$, $\Mod{X}$ the category of sheaves of $\Ocal_X$-modules, and $\Qcoh{X}$ its full subcategory of quasi-coherent sheaves. We let $\D(\Mod{X})$ denote the unbounded derived category of the Grothendieck category $\Mod{X}$. The setting of the big tt-category $\Tcal$ of the previous section will be specialised to the specific choice 
$$\D = \D_{\qc}(X),$$ 
the full subcategory of $\D(\Mod{X})$ of cochain complexes of $\Ocal_X$-modules with cohomology in $\Qcoh{X}$. The inclusion of $\Qcoh{X}$ into $\Mod{X}$ has a right adjoint 
$$Q\colon \Mod{X} \to \Qcoh{X}$$ 
called the \newterm{coherator}. Its right derived functor $$\RQ\colon \D(\Mod{X}) \to \D(\Qcoh{X})$$ 
restricts as 
$$\RQ\colon \D_{\qc}(X) \to \D(\Qcoh{X}),$$ 
and this latter functor is an equivalence for any scheme which is either semi-separated or otherwise Noetherian, see \cite[08DB, 08D7, 09T4]{Stacks}. Thus, in either of the two cases, $\D$ identifies with the derived category $\D(\Qcoh{X})$ of the Grothendieck category $\Qcoh{X}$. We now recall the big ttt-category structure on $\Tcal$ and some other useful notions. The category $\D$ is compactly generated and the subcategory $\D^\cpt$ of compact objects consists, up to isomorphism, of perfect complexes, see \cite{BVdB03}. In several places of this section, we will restrict to the case of $X$ being either semi-separated or Noetherian, and for the main results of the next sections, we will restrict to $X$ Noetherian (but not necessarily semi-separated).

\subsection{Tensor product and hoflats} The derived category $\D(\Mod{X})$ is endowed with the left derived tensor product $- \otimes_X^\mathbf{L} -$, computed using hoflat resolutions and the sheaf tensor product $- \otimes_X -$, which is the standard symmetric monoidal product on $\Mod{X}$. A cochain complex $F$ over $\Mod{X}$ is \newterm{hoflat} if $F \otimes_X N$ is acyclic for any acyclic complex $N$. A sheaf $F \in \Mod{X}$ is \newterm{flat} if and only if it is hoflat as a complex concentrated in degree zero, equivalently, if the induced tensor functor $F \otimes_X -$ is exact on $\Mod{X}$. Any bounded above complex of flat sheaves is hoflat and any complex is quasi-isomorphic to a hoflat complex with flat components \cite[Proposition 46, Proposition 48]{MurC}, \cite[05NI]{Stacks}. Then for any $M,N \in \D(\Mod{X})$, the derived tensor product $M \otimes_X^\mathbf{L} N$ is isomorphic to $F_M \otimes_X N$, where $F_M$ is any hoflat quasi-isomorphic replacement of $M$. This endows $\D(\Mod{X})$ with a symmetric monoidal product $- \otimes_X^\mathbf{L}-$, see \cite[Theorem 1.1]{CH11} with tensor unit $\1 = \Ocal_X$ being the structure sheaf. Furthermore, the derived tensor product restricts as a functor 
$$- \otimes_X^\mathbf{L} -\colon \D_{\qc}(X) \times \D_{\qc}(X) \to \D_{\qc}(X),$$ 
see \cite[2.5.8]{Lip60}, endowing $\D$ with a symmetric monoidal product. Here, note that $- \otimes_X^\mathbf{L} -$ is still computed using hoflat resolutions which are complexes in $\D_{\qc}(X)$ rather then complexes with components in $\Qcoh{X}$. Indeed, $\Qcoh{X}$ fails to contain enough flat sheaves unless $X$ is semi-separated, see \cite[Main Theorem]{SS21}.

\subsection{Internal Hom and hoinjectives} That the symmetric monoidal structure is closed follows from a suitable Brown representability property of $\D$, let us spell out how the internal Hom is computed explicitly. The symmetric monoidal category $\Mod{X}$ is endowed with an internal Hom functor, the sheaf Hom functor 
$$\SHom_X(-,-)\colon \Mod{X}^\op \times \Mod{X} \to \Mod{X}.$$ 
A complex $I$ of sheaves of $\Ocal_X$-modules is \newterm{hoinjective} if $\Hom_{\K(\Mod{X})}(A,I) = 0$ for any acyclic complex $A$. As a Grothendieck category, $\Mod{X}$ has enough injectives. It follows from standard results \cite[Proposition 47, Theorem 113]{MurB}, \cite[070L]{Stacks} that any bounded below complex of injective sheaves is hoinjective and that any complex is quasi-isomorphic to a hoinjective complex with injective components. The right derived functor 
$$\RSHom_X(-,-)\colon \D(\Mod{X})^\op \times \D(\Mod{X}) \to \D(\Mod{X})$$ 
is computed using hoinjectives in the sense that for any $M,N \in \D(\Mod{X})$, $\RSHom_X(M,N)$ is isomorphic to $\SHom_X(M,I_N)$ for any quasi-isomorphic hoinjective replacement $I_N$ of $N$, \cite[Lemma 23]{MurC}. The derived sheaf Hom functor $\RSHom_X(-,-)$ is the internal Hom of the symmetric monoidal category $\D(\Mod{X})$, \cite[Theorem 1.1]{CH11}. However, this internal Hom does not restrict to $\D_{\qc}(X)$. Instead, the internal Hom 
$$[-,-]\colon \D_{\qc}(X)^\op \times \D_{\qc}(X) \to \D_{\qc}(X)$$ 
is computed as the composition $\RQ \circ \RSHom_X(-,-)$ of the derived sheaf Hom and the right derived coherator functor, \cite[Theorem 1.2]{CH11}. In the case $X$ is semi-separated, the functor $\RQ \circ \RSHom_X(-,-)$ can be computed using hoinjectives in the sense that for any $M,N \in \D$, $\RQ \circ \RSHom_X(M,N)$ is isomorphic to $\SHomqc_X(M,N) = Q \circ \SHom_X(M,I_N)$ for any quasi-isomorphic hoinjective replacement $I_N$ of $N$, \cite[Lemma 4.10]{AJPV18}. Here, $\SHomqc_X(-,-)$ is the internal Hom of the symmetric monoidal category $\Qcoh{X}$ endowed with the restriction of $- \otimes_X -$, see \cite[01CE]{Stacks}. Note that the injective objects of $\Qcoh{X}$ may fail to be injective with respect to the internal Hom functor $\SHomqc_X(-,-)$ unless $X$ is semi-separated, \cite[Main Theorem]{SS21}. If $X$ is Noetherian, then we can compute hoinjective resolutions $\D(\Qcoh{X})$ in the following sense: Each complex $M \in \D$ is isomorphic to a hoinjective complex with components being injective quasi-coherent sheaves, by \cite[09T4]{Stacks} and the fact that the inclusion $\Qcoh{X} \hookrightarrow \Mod{X}$ then preserves injectives, \cite[II.7.17]{Har66}, see also \cite[070L]{Stacks}.

\begin{lemma}\label{hoinjectives}
  Let $X$ be either semi-separated or Noetherian. Then the following are equivalent for any $M \in \D$:
  \begin{enumerate}
    \item[(i)] $M \in \D^{>0}$,
    \item[(ii)] $\Hom_\D(\Sigma^i F,M) = 0$ for any $F \in \Qcoh{X}$ and $i \geq 0$. 
  \end{enumerate}
  Let $X$ be Noetherian. Then the following are equivalent for any $M \in \D$:
  \begin{enumerate}
    \item $M$ is isomorphic to a hoinjective complex of injective quasi-coherent sheaves concentrated in nonpositive degrees,
    \item $\Hom_\D(\Sigma^i F,M) = 0$ for any $F \in \Qcoh{X}$ and $i < 0$. 
  \end{enumerate}
\end{lemma}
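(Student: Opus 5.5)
The plan is to pass through the equivalence $\RQ\colon \D_{\qc}(X) \simeq \D(\Qcoh{X})$, valid under either hypothesis, and compute Hom groups out of stalk complexes using hoinjective resolutions. In both parts I fix a hoinjective complex $I$ of injective quasi-coherent sheaves quasi-isomorphic to $M$. Such resolutions exist because $\Qcoh{X}$ is a Grothendieck category, as recalled above. In the Noetherian case one may moreover take the components injective in $\Qcoh{X}$, which is what the second part requires.

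The key computation is the following. For $F \in \Qcoh{X}$ and $j \in \Zbb$ we have $\Hom_\D(\Sigma^{-j}F, M) \cong \Hom_{\K(\Qcoh{X})}(\Sigma^{-j}F, I)$. A chain map $\Sigma^{-j}F \to I$ is the same as a map $F \to Z^j I$. Such a map is null-homotopic exactly when it factors through the differential $d\colon I^{j-1} \to Z^j I$. Hence
$$\Hom_\D(\Sigma^{-j}F,M) \cong \Hom(F,Z^jI)/d_*\Hom(F,I^{j-1}).$$
Taking $F = Z^j I$, vanishing of this group for all $F$ is equivalent to the identity of $Z^jI$ factoring through $I^{j-1} \to Z^jI$. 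This says precisely that $I^{j-1} \to Z^jI$ is a split epimorphism. In particular $H^j(M)=0$, and $Z^jI$ is a direct summand of $I^{j-1}$, hence injective.

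For the first equivalence, $(i)\Rightarrow(ii)$ is immediate: $\Sigma^iF \in \D^{\leq -i} \subseteq \D^{\leq 0}$ for $i \geq 0$, and $(\D^{\leq 0},\D^{>0})$ is a $t$-structure. For $(ii)\Rightarrow(i)$, apply the computation with $j=-i \leq 0$. It gives $H^j(M)=0$ for all $j\leq 0$, that is, $M \in \D^{>0}$.

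For the second equivalence, $(1)\Rightarrow(2)$ holds because a chain map from $\Sigma^iF$ with $i<0$ lands in degree $-i>0$. There the hoinjective complex vanishes, and hoinjectivity lets us compute $\Hom_\D$ as homotopy classes.

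For $(2)\Rightarrow(1)$, the computation with $j=-i\geq 1$ gives $H^j(I)=0$ and $Z^jI$ injective for all $j \geq 1$. Since $H^1(I)=0$, the map $I^0 \to Z^1I$ is surjective. It splits because $Z^1 I$ is injective, so $I^0 \cong Z^0I \oplus Z^1I$ and $Z^0I$ is injective as well.

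Consider the subcomplex $I' = (\cdots \to I^{-1} \to Z^0I \to 0)$, which is concentrated in nonpositive degrees with injective components, and its quotient $I'' = (0\to I^0/Z^0I \to I^1 \to \cdots)$. The quotient $I''$ is exact: at $I^0/Z^0 I$ because the induced map to $I^1$ is injective, and in higher degrees because $H^j(I)=0$ for $j\geq 1$. It is also a bounded below complex of injectives, hence hoinjective and acyclic, thus contractible.

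Since $I'$ has injective components, the sequence $0\to I'\to I\to I''\to 0$ is degreewise split and gives a triangle in $\K(\Qcoh{X})$. As $I''\cong 0$ there, we get $I'\cong I$ in $\K(\Qcoh{X})$. So $I'$ is hoinjective and isomorphic to $M$ in $\D$, proving (1).

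The main obstacle is precisely this last step. A bounded above complex of injectives need not be hoinjective, so hoinjectivity of $I'$ cannot be taken for granted. It must be deduced via the contractibility of the bounded below exact quotient $I''$, which is why injectivity of $Z^0I$ and the degreewise splitting are needed.
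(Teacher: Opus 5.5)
Your proposal is correct and follows essentially the paper's proof. For the first equivalence, the paper also tests against the cocycle sheaf $Z^j$ of a quasi-coherent representative of $M$. It does so without a hoinjective resolution: the canonical map $\Sigma^{-j}Z^j(M)\to M$ is already nonzero on $H^j$ whenever $H^j(M)\neq 0$. For the second equivalence, the paper only cites ``a standard argument in derived categories of Grothendieck categories'' applied to $\D(\Qcoh{X})\cong\D$. Your split-epimorphism and truncation argument, including the contractibility of the bounded below acyclic quotient $I''$, correctly spells that argument out.
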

\begin{proof}
  $(i) \iff (ii)\colon$ As $\Sigma^i F \in \D^{\leq 0}$ for any $i \geq 0$, the implication $(i) \implies (ii)$ is clear and does not require any assumptions, let us prove $(ii) \implies (i)$. By the hypothesis on $X$, we can assume that $M$ has quasi-coherent components. Let $F \coloneqq Z^i(M)$ be the $i$-th sheaf of cocycles of $M$. Then the induced morphism $\Sigma^i F \to M$ is nonzero whenever $H^i(M) \neq 0$.

  $(1) \iff (2)\colon$ This follows from a standard argument in derived categories of Grothendieck categories applied to $\D(\Qcoh{X}) \cong \D$.
\end{proof}

\vspace{1em}


\subsection{Restriction, stalks, and the derived costalk functor}
Let $i_U\colon U \hookrightarrow X$ be an inclusion of an open set. Then we have the exact restriction functor 
$$(-)_{\restriction U}\colon \Qcoh{X} \to \Qcoh{U}$$ 
by \cite[Lemma 22]{MurD}. Assume further that $U$ is quasi-compact, then $(-)_{\restriction U}$ admits a right adjoint $(i_U)_*\colon \Qcoh{U} \to \Qcoh{X}$, the direct image. Note that as $(-)_{\restriction U}$ is exact, the direct image $(i_U)_*$ preserves injective objects. Furthermore, the direct image functor $(i_U)_*$ is left exact and its right derived functor 
$$\mathbf{R}(i_U)_*\colon \D(U) \coloneqq \D(\Qcoh{U}) \to \D$$ 
is the right adjoint to the trivially derived restriction functor $(-)_{\restriction U}\colon \D \to \D(U)$. If $i_U\colon U \to X$ is affine, e.g. if $U$ is affine and $X$ is semi-separated, then $(i_U)_*$ is exact \cite[0G9R]{Stacks}.

For any point $x \in X$ let $\Ocal_{X,x} = \varinjlim_{x \in U \subset X}\Ocal_X(U)$ denote the local commutative Noetherian ring called the \newterm{stalk} of $\Ocal_X$ at $x$. Then we have an inclusion of schemes map $i_x\colon \Spec{\Ocal_{X,x}} \hookrightarrow X$ obtained as the pre-composition of the inclusion $i_U\colon U \hookrightarrow X$ for any open affine subset $U$ of $X$ such that $x \in U$ with the affine scheme inclusion $\Spec{\Ocal_{X,x}} \hookrightarrow U \cong \Spec{\Ocal_X(U)}$ induced by the localisation ring map $\Ocal_X(U) \to \Ocal_{X,x}$. The inverse image functor 
$$(i_x)^*\colon \Qcoh{X} \to \Qcoh{\Spec{\Ocal_{X,x}}} \cong \Mod{\Ocal_{X,x}}$$ 
induced by $i_x$ is just the exact stalk functor assigning to a quasi-coherent sheaf $M$ the $\Ocal_{X,x}$-module $M_x \coloneqq \varinjlim_{x \in U \subset X}M(U)$. The direct image functor $(i_x)_*$ thus preserves injective quasi-coherent sheaves and hoinjective complexes. Furthermore, $(i_x)_*$ is left exact, and it is exact in the case that $X$ is semi-separated, as that assumption renders $i_x$ an affine scheme map. On the level of derived categories, we have an adjoint triple

$$
\begin{tikzcd}[column sep=large]
  \D(\Ocal_{X,x}) \arrow{r}{\mathbf{R}(i_x)_*} & \D \arrow[bend right=60]{l}[swap]{(-)_x} \arrow[bend left=60]{l}{(-)^x}
\end{tikzcd}
$$
where $(-)_x$ is the trivially derived exact stalk functor, $\mathbf{R}(i_x)_*$ is its right adjoint, the right derived direct image. The existence of the right adjoint $(-)^x$ to the derived direct image follows abstractly by Brown representability, see \cite{Nee96}. The upshot is that, since $(-)_x$ preserves compact objects, $\mathbf{R}(i_x)_*$ preserves coproducts, which by Brown representability yields the existence of its right adjoint, which we denote as $(-)^x$, and interpret it as the \newterm{derived costalk functor}.

The \newterm{(sheaf) support} of a quasi-coherent sheaf $M \in \Qcoh{X}$ is defined as 
$$\Supp(M) = \{x \in X \colon M_x \neq 0\}.$$
If $M \in \D$, we extend this by $\Supp(M) = \{x \in X \colon M_x \neq 0\}$, which boils down to the sheaf version via cohomology using the exactness of the stalk functor: 
$$\Supp(M) = \Supp(\coprod_{i \in \Zbb}H^i(M)).$$
\subsection{Standard $t$-structure}
The role of the base $t$-structure $(\Tcal^{\leq 0},\Tcal^{>0})$ will be played by the \newterm{standard $t$-structure} $(\D^{\leq 0},\D^{>0})$ corresponding to the cochain cohomology functor $H^0$, with the truncation triangles given by the soft truncations of cochain complexes $(-)^{\leq 0}$ and $(-)^{>0}$. Clearly, the aisle $\D^{\leq 0}$ contains the unit object $\Ocal_X$ and is closed under the left derived tensor product $- \otimes_{X}^\mathbf{L} -$. Finally, the standard $t$-structure $(\D^{\leq 0},\D^{>0})$ is compactly generated, at least if $X$ is Noetherian, this follows from the classification result of \cite{DS23}, but we also provide a direct proof below. As per usual, $\coh{X} \subseteq \Qcoh{X}$ denotes the subcategory of coherent sheaves. Recall that the category $\Qcoh{X}$ is locally finitely presentable, and if $X$ is Noetherian then the finitely presentable objects are precisely the coherent sheaves \cite[0GN6,01XZ]{Stacks}.

\begin{lemma}\label{perfect-approx}
  Let $X$ be a Noetherian scheme and $E \in \coh{X}$. Then there is $S \in \D^{\leq 0,\cpt}$ such that $\Supp(S) = \Supp(E)$ and $H^0(S) \cong E$.
\end{lemma}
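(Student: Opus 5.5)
The plan is to build $S$ as the truncation at degree $0$ of a locally free resolution of $E$, which is possible whenever $X$ has the resolution property. In general I would fall back on a Thomason-type approximation of $E$ by perfect complexes, which needs no global locally free sheaves.

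The key input is the approximation result for pseudo-coherent complexes on a quasi-compact quasi-separated scheme (Stacks 08EL/0DJN, Lipman–Neeman). Since $X$ is Noetherian, $E$ is pseudo-coherent. So for every $m$ there is a perfect complex $P$ and a map $P \to E$ inducing isomorphisms on $H^i$ for $i > -m$ and a surjection on $H^{-m}$. Moreover, $P$ can be chosen with tor-amplitude in $[-m',0]$, in particular in $\D^{\leq 0}$, with $H^0(P) \cong E$.

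The remaining problem is supports. A priori $\Supp(P)$ may be larger than $\Supp(E)$, because the lower cohomology of $P$ lives on a larger set. Let $Z = \Supp(E)$, which is closed since $E$ is coherent. I would use the theory of perfect complexes supported on $Z$: by Thomason–Trobaugh, $\D_{Z}$ (complexes supported on $Z$) is compactly generated by perfect complexes supported on $Z$. I would run the approximation inside $\D_{Z}$ instead of $\D$.

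Concretely, $E \in \D_Z$, and there is a perfect $K$ with $\Supp(K) = Z$ (a Koszul-type complex, glued as in Thomason). I would then work with the local cohomology functor $\mathbf{R}\Gamma_Z$, using that $\mathbf{R}\Gamma_Z E = E$. The cleanest route is to take the approximation $P \to E$ as above and replace $P$ by $P \otimes K^{\otimes?}$ or by a cone construction killing the cohomology away from $Z$.

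The argument I would actually attempt is an induction on the number of affine opens in a cover, following the proof that $\D_{Z}$ has enough compact generators, applied to the pair (degree-$0$ truncation, support). Locally, on an affine $\Spec R$ with $E = \tilde M$ and $Z = V(I)$ with $I = \mathrm{ann}(M)$, I would take a free presentation $R^b \to R^a \to M \to 0$ and kill the kernel's support outside $V(I)$. This can be done by tensoring with a Koszul complex on generators of $I$ and then truncating: the Koszul complex $K(f_1,\dots,f_r)\otimes_R F_\bullet$ for a finite free resolution piece is perfect, lies in degrees $\leq 0$, has support $V(I)\cap\Supp M = \Supp M$, and has $H^0 = M/(f)M = M$ since $I$ annihilates $M$.

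This local construction already proves the affine case. The \textbf{main obstacle} is globalizing it on a non-semi-separated Noetherian $X$, where neither global free presentations nor a global Koszul complex exist. There I would write $E$ as a quotient of a finite sum $\bigoplus (j_{U})_! $-type extensions, or more safely combine the global approximation $P \to E$ (from pseudo-coherence) with a perfect complex $K$ supported exactly on $Z$ in $\D^{\leq 0}$, coming from Thomason's theorem. I would then check that $S := P \otimes K$, truncated appropriately, still has $H^0 = E \otimes H^0(K)$. Arranging $H^0(K) \cong \mathcal{O}_X/\mathcal{J}$ for an ideal sheaf $\mathcal{J}$ annihilating $E$ gives $H^0(S) = E$, and $\Supp(S) = \Supp(P)\cap Z = Z$ because $\Supp P \supseteq \Supp E$. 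Producing such a $K$ globally, perfect and in $\D^{\leq 0}$ with this prescribed $H^0$, is the step I expect to require the most care.
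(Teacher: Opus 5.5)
Your affine construction is correct. The Koszul complex on generators of $\mathrm{Ann}(M)$, tensored with a free presentation of $M$, is perfect and lies in degrees $\leq 0$. Its $H^0$ is $M/\mathrm{Ann}(M)M=M$, and its support is $V(\mathrm{Ann}\,M)\cap\Supp(F)=\Supp(M)$.

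Your conditional global argument is also correct. Suppose $P\in\D^{\leq 0}$ is perfect with $H^0(P)\cong E$, and $K\in\D^{\leq 0}$ is perfect with $\Supp(K)=Z$ and $H^0(K)\cong\Ocal_X/\mathcal{J}$, where $\mathcal{J}E=0$. Then $S=P\otimes^{\mathbf{L}}_X K$ has the required properties.

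The gap is that such a $K$ is exactly what the lemma asserts for the coherent sheaf $\Ocal_X/\mathcal{J}$. Note that $V(\mathcal{J})=Z$ is forced, and one may take $\mathcal{J}=\mathrm{Ann}(E)$. So the reduction is circular. The only nontrivial part of the statement, namely controlling the support of a global perfect approximation on a possibly non-semi-separated $X$, is left open.

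The tools you mention do not close this gap:
\begin{itemize}
\item Thomason's theorem gives a perfect complex with support exactly $Z$, but with no control on its top cohomology. For instance, $K(x)\oplus\Sigma K(y)$ over $k[x,y]$ has top cohomology supported only on $V(x)$, not on its full support.
\item Koszul complexes do not glue. Thomason--Trobaugh only extend perfect complexes from an open subset after adding a shifted copy as a summand.
\item The induction on affine opens has the right shape, but as written it is only a plan; the gluing step is precisely where the work lies.
\end{itemize}

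The missing observation is that the Lipman--Neeman approximation theorem already comes with supports. In the form used in the paper (\cite[Theorem 4.1]{LN07}, \cite[08ES, 08EN]{Stacks}), take $T\subseteq X$ closed with quasi-compact complement (automatic for Noetherian $X$) and $E$ pseudo-coherent with cohomology supported on $T$. Then there is a perfect $P$ \emph{supported on $T$} and a map $P\to E$ that is an isomorphism on $H^i$ for $i>m$ and surjective on $H^m$.

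Apply this with $T=\Supp(E)$ and $m=-1$. This gives $P\in\D^{\leq 0,\cpt}$ with $H^0(P)\cong E$ and
$$\Supp(E)=\Supp(H^0(P))\subseteq\Supp(P)\subseteq T=\Supp(E),$$
which is the paper's entire proof. Your remark about running ``the approximation inside $\D_Z$'' points at exactly this, but you never commit to it. Instead you replace it with a construction that presupposes the statement being proved.
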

\begin{proof}
  Since $E$ is coherent, $\Supp(E)$ is a closed subset of $X$ with quasi-compact complement. Then we can apply the approximation by perfect complexes theorem of Lipman and Neeman \cite[Theorem 4.1]{LN07}, or more specifically the incarnation \cite[08ES, 08EN]{Stacks} to $E$ with the choice of parameters $T = \Supp(E)$ and $m=-1$. This yields the desired compact object $S$ such that $\Supp(S) \subseteq \Supp(E)$ and such that the cohomology of $S$ and $E$ coincides in nonnegative degrees, which yields both $S \in \D^{\leq 0,\cpt}$ and $H^0(S) \cong E$.
\end{proof}

\begin{prop}
  Let $X$ be a Noetherian scheme. Then the standard $t$-structure $(\D^{\leq 0},\D^{>0})$ on $\D_{\qc}(X)$ is compactly generated.
\end{prop}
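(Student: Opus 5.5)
We have to show that $\D^{>0} = (\D^{\leq 0,\cpt})\Perp{0}$, where $\D^{\leq 0,\cpt} = \D^{\leq 0}\cap\D^\cpt$. The inclusion $\D^{>0} \subseteq (\D^{\leq 0,\cpt})\Perp{0}$ is immediate from axiom (T1) of the standard $t$-structure, since $\D^{\leq 0,\cpt} \subseteq \D^{\leq 0}$. All the work is in the reverse inclusion: given $M \in \D$ with $\Hom_\D(S,M)=0$ for all $S \in \D^{\leq 0,\cpt}$, we must show $H^i(M)=0$ for all $i \leq 0$.

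We argue by contradiction and suppose $H^i(M) \neq 0$ for some $i \leq 0$. Since $\D^{\leq 0,\cpt}$ is closed under $\Sigma$, the hypothesis also gives $\Hom_\D(S,\Sigma^{-i}M)=0$ for all $S \in \D^{\leq 0,\cpt}$. Replacing $M$ by $\Sigma^{-i}M$, we may therefore assume $i=0$. Note that $\Hom_\D(S,M)=\Hom_\D(S,\Sigma^{j}M)$ vanishes for all $j \leq 0$ as well.

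Since $X$ is Noetherian, we may represent $M$ by a complex with quasi-coherent components, as in \cref{hoinjectives}. Using the map of complexes induced by the inclusion $Z^0(M) \hookrightarrow M^0$, we obtain a morphism $Z^0(M) \to M$ in $\D$ whose effect on $H^0$ is the epimorphism $Z^0(M) \to H^0(M)$, and this is nonzero. Since $\Qcoh{X}$ is locally finitely presentable with coherent sheaves as finitely presented objects, $Z^0(M)$ is the directed union of its coherent subsheaves. The image of $Z^0(M)$ in $H^0(M)$ is nonzero, so we can choose a coherent subsheaf $E \subseteq Z^0(M)$ whose image in $H^0(M)$ is nonzero. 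The composite $f\colon E \to M$ then satisfies $H^0(f) \neq 0$.

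Now apply \cref{perfect-approx} to obtain $S \in \D^{\leq 0,\cpt}$ with $H^0(S) \cong E$. Since $S \in \D^{\leq 0}$, the truncation gives a morphism $\tau\colon S \to H^0(S) \cong E$ with $H^0(\tau)$ an isomorphism. The composite $f \circ \tau\colon S \to M$ has $H^0(f\circ\tau) = H^0(f)\circ H^0(\tau) \neq 0$, so $f\circ\tau$ is a nonzero morphism from an object of $\D^{\leq 0,\cpt}$ to $M$. This contradicts the hypothesis, so $H^i(M)=0$ for all $i \leq 0$ and $M \in \D^{>0}$.

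The main input is \cref{perfect-approx}, which is already available; the only delicate point is making sure that the chosen morphism out of the coherent sheaf $E$ remains nonzero in $\D$. We handle this by checking nonvanishing on $H^0$, which also survives precomposition with the truncation $\tau$.
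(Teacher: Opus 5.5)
Your proof is correct and is essentially the paper's argument: pick a coherent subsheaf of the cocycle sheaf $Z^i(M)$ that detects $H^i(M)$, approximate it by a perfect complex via \cref{perfect-approx}, precompose with the truncation, and detect nonvanishing on cohomology. The paper keeps $i$ general and maps $\Sigma^{-i}S \to M$ rather than shifting $M$.

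There is one sign slip in your reduction to $i=0$. You must replace $M$ by $\Sigma^{i}M$, not $\Sigma^{-i}M$, because $H^0(\Sigma^{i}M)=H^i(M)$ and $\Hom_\D(S,\Sigma^{i}M)\cong\Hom_\D(\Sigma^{-i}S,M)=0$, where $\Sigma^{-i}S\in\D^{\leq 0,\cpt}$ for $i\leq 0$. Your next sentence, about $\Sigma^{j}M$ with $j\leq 0$, already records this vanishing.
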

\begin{proof}
  It is clear that $\D^{>0} \subseteq (\D^{\leq 0, \cpt})\Perp{0}$, it remains to show the converse inclusion. Let $M \in  (\D^{\leq 0, \cpt})\Perp{0}$ and assume towards contradiction that $H^i(M) \neq 0$ for some $i \leq 0$. By the assumption, we can assume that $M$ is represented by a complex of quasi-coherent sheaves. In particular, the $i$-th cocycle sheaf $Z \coloneqq Z^i(M)$ belongs to $\Qcoh{X}$. Then there is $E \in \coh{X}$ and a morphism $E \to Z$ such that the composition $E \to Z \to H^i(M)$ is nonzero. Let $S \in \D^{\leq 0,\cpt}$ be the compact object such that $H^0(S) \cong E$ provided by \cref{perfect-approx}. We claim that there is a nonzero morphism $\Sigma^{-i} S \to M$, a contradiction with $M \in (\D^{\leq 0, \cpt})\Perp{0}$. Consider the morphism obtained by taking the composition 
  $$\Sigma^{-i}(S) \to \Sigma^{-i}(S)^{\geq i} \cong \Sigma^{-i}(E) \to \Sigma^{-i}(Z) \to M,$$ 
  where $\Sigma^{-i} Z \to M$ is just the inclusion of the cocycle sheaf. Then this is a nonzero map, as its $i$-th cohomology map $H^i(S) \cong E \to Z \to H^i(M)$ is nonzero.
\end{proof}

\begin{center}
\textit{It follows that, for a Noetherian scheme $X$, $\D$ is a big ttt-category in the sense of \cref{s:semibousfield}.  }
\end{center}

\subsection{Brown-Comenetz Duality}
It will be useful to compute the Brown-Comenetz duality $(-)^\bc$ in $\D$ explicitly. By definition, $\Ocal_X^\bc$ is determined by the natural isomorphism of functors $\D^\op \to \Mod{\Zbb}$
$$\Hom_\D(-,\Ocal_X^\bc) \cong \Gamma(X,-)^\cd,$$
because $\Hom_\D(\Ocal_X,-)$ coincides with the global sections functor 
$$\Gamma(X,-)\coloneqq(-)(X)\colon \Qcoh{X} \to \Mod{\Ocal_X(X)}.$$ 
For a general object $M \in \D$, $M^\bc$ is defined by the natural isomorphism
$$\Hom_\D(-,M^\bc) \cong \Gamma(X,- \otimes_X^\mathbf{L} M)^\cd,$$
and we also have the isomorphism $M^\bc \cong [M,\Ocal_X^\bc]$.

We first show that the Brown-Comenetz duality is computed via the character duality on $\Qcoh{U} \cong \Mod{\Ocal_X(U)}$ when restricted to an open affine subset $U$.

\begin{lemma}\label{dual-affine}
    The following hold. 
    \begin{itemize}
        \item[(i)] If $X$ is affine then the Brown-Comenetz duality is equivalent to the character duality.
        \item[(ii)]For any $U \subseteq X$ quasi-compact open we have an isomorphism natural in $M \in \D(U)$
  $$\mathbf{R}(i_U)_*(M^\bc) \cong \mathbf{R}(i_U)_*(M)^\bc,$$
  where $M^\bc$ denotes the Brown-Comenetz dual of $M$ computed inside $\D(U)$.
    \end{itemize}
\end{lemma}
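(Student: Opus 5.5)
Both parts are proved by Yoneda, checking that the two sides represent the same functor.

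\emph{Part (i).} By \cref{BC-ring}, $(-)^\cd$ on $\D(\Mod{R})$ is the Brown-Comenetz duality for $\Tcal=\D(\Mod R)$. When $X=\Spec{R}$ we have the tensor equivalence $\D_{\qc}(X)\simeq \D(\Qcoh{X})\simeq\D(\Mod R)$. It sends $\Ocal_X$ to $R$, identifies $\Gamma(X,-)$ with $\Hom_{\D(\Mod R)}(R,-)$, and identifies $\otimes^\mathbf{L}_X$ with $\otimes^\mathbf{L}_R$. The defining natural isomorphism $\Hom(-,M^\bc)\cong \Hom(\1,M\otimes -)^\cd$ is intrinsic to the tt-structure, so it is transported along this equivalence, and (i) follows by uniqueness of representing objects.

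\emph{Part (ii).} Let $N\in\D$ and $M\in\D(U)$. I compute:
$$\Hom_\D(N,\mathbf{R}(i_U)_*(M^\bc))\cong \Hom_{\D(U)}(N_{\restriction U},M^\bc)\cong \Gamma(U,N_{\restriction U}\otimes^\mathbf{L}_U M)^\cd.$$
For the other side,
$$\Hom_\D(N,\mathbf{R}(i_U)_*(M)^\bc)\cong \Gamma(X,N\otimes^\mathbf{L}_X \mathbf{R}(i_U)_*M)^\cd.$$
It therefore suffices to produce a natural isomorphism
$$\mathbf{R}\Gamma(X,N\otimes^\mathbf{L}_X\mathbf{R}(i_U)_*M)\cong\mathbf{R}\Gamma(U,N_{\restriction U}\otimes^\mathbf{L}_U M),$$
or at least one on $H^0$, which is what $\Hom_\D(\Ocal_X,-)$ computes.

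I obtain this from two facts. The first is the projection formula $N\otimes^\mathbf{L}_X\mathbf{R}(i_U)_*M\cong \mathbf{R}(i_U)_*(N_{\restriction U}\otimes^\mathbf{L}_U M)$, valid for the quasi-compact quasi-separated morphism $i_U$ on $\D_{\qc}$ (Lipman; \cite[08EU]{Stacks}). The second is $\Gamma(X,\mathbf{R}(i_U)_*-)\cong\Gamma(U,-)$ on $H^0$. This follows from the adjunction, since $\Hom_\D(\Ocal_X,\mathbf{R}(i_U)_*P)\cong\Hom_{\D(U)}(\Ocal_U,P)$ because $(\Ocal_X)_{\restriction U}=\Ocal_U$. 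Both isomorphisms are natural in $N$ and $M$, so Yoneda gives the claimed natural isomorphism $\mathbf{R}(i_U)_*(M^\bc)\cong \mathbf{R}(i_U)_*(M)^\bc$.

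The main obstacle is the projection formula. If the paper wants to avoid citing it, I would argue as follows. Both sides are triangulated functors of $N$ that preserve coproducts, since $\mathbf{R}(i_U)_*$ preserves coproducts (its left adjoint preserves compacts). There is a natural comparison map between them, and it is an isomorphism for $N=\Ocal_X$. The localising subcategory generated by $\Ocal_X$ alone need not be all of $\D$, so instead I would check the map on all perfect $N$, which generate $\D$. For perfect $N$ the comparison is an isomorphism by rigidity: $N\otimes-\cong[N^*,-]$, and $[N^*,-]$ commutes with $\mathbf{R}(i_U)_*$ because restriction is a tensor functor. The isomorphism on perfects then extends to all of $\D$ by the coproduct preservation above.
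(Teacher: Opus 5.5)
Your proposal is correct and follows the paper's proof: (i) is reduced to \cref{BC-ring} via the affine identification $\D\simeq\D(\Mod R)$, and (ii) is the same Yoneda chain, namely adjunction, the defining isomorphism of $(-)^\bc$ in $\D(U)$, $\Gamma(X,\mathbf{R}(i_U)_*-)\cong\Gamma(U,-)$, and the projection formula, which the paper simply cites as \cite[Theorem 3.7]{CH11}. Your fallback proof of the projection formula (checking it on perfect complexes via rigidity, then extending by coproduct preservation of $\mathbf{R}(i_U)_*$) is also sound, though the paper does not need it.
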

\begin{proof}
  $(i)\colon$ This is \cref{BC-ring}. More explicitly, assume $X = \Spec{R}$. We have for any $M \in \D$ the natural isomorphisms $\Hom_\D(M,R^\cd) = \Hom_\D(M,\Hom_{\Zbb}(R,\Qbb/\Zbb)) \cong \Hom_\Zbb(M,\Qbb/\Zbb)) \cong \Gamma(X,M)^\cd$.

  $(ii)\colon$ It is enough to show that $\mathbf{R}(i_U)_*(M^\cd)$ satisfies the same natural isomorphism which defines the Brown-Comenetz dual $\mathbf{R}(i_U)_*(M)^\bc$. We have a chain of isomorphisms for any $N \in \D$:
  $$\Hom_\D(N,\mathbf{R}(i_U)_*(M^\cd)) \cong \Hom_{\D(U)}(N_{\restriction U},M^\bc) \cong \Gamma(U,N_{\restriction U} \otimes_U^\mathbf{L} M)^\cd \cong$$
  $$\cong \Gamma(X,\mathbf{R}(i_U)_*(N_{\restriction U} \otimes_U^\mathbf{L} M))^\cd \cong  \Gamma(X,N \otimes_X^\mathbf{L} \mathbf{R}(i_U)_*(M))^\cd \cong \Hom_\D(N,\mathbf{R}(i_U)_*(M)^\bc). $$  
  The first isomorphism comes from adjunction, the second from the definition of Brown-Comenetz duality in $\D(U)$, the third one is the definition of direct images, fourth one is \cite[Theorem 3.7]{CH11}, and the last one is the by the definition of Brown-Comenetz duality in $\D$.
\end{proof}

We first compute an explicit complex representing $\Ocal_X^\bc$ in the case when $X$ is semi-separated. Denote $[n] = \{1,2,\ldots,n\}$ and let $X = \bigcup_{i \in [n]} U_i$ be a cover of $X$ by open affine sets. For any $\lambda \subseteq [n]$ set $U_\lambda = \bigcap_{i \in \lambda} U_i$ and let us simplify the notation by letting $\Ocal_\lambda$ denote the structure sheaf of the scheme $U_\lambda$.

For $\lambda \subseteq {\lambda'}$, consider the restriction morphism $f_{\lambda,{\lambda'}}\colon \Ocal_{\lambda} \to \Ocal_{{{\lambda'}}}$ viewed as morphism in $\Qcoh{\Ocal_{\lambda}} \cong \Mod{\Ocal_{X}(U_\lambda)}$. Applying the character duality in $\Mod{\Ocal_{U_X}(U_\lambda)}$, we obtain the dual morphism $f_{\lambda,{\lambda'}}^\cd\colon \Ocal_{\lambda'}^\cd \to \Ocal_{\lambda}^\cd$ between injective modules. Define 
$$g_{\lambda,{\lambda'}} \coloneqq \mathbf{R}(i_U)_*(f^\cd) = (i_U)_*(f^\cd)\colon (i_U)_*(\Ocal_{\lambda'}^\cd) \to (i_U)_*(\Ocal_{\lambda}^\cd)$$ 
to be its direct image in $\D$, a morphism between injective objects of $\Qcoh{X}$. Let $\Lambda_k$ denote the set of all subsets of $\{1,2,\ldots,n\}$ of cardinality precisely $k$ and use the morphisms $g_{\lambda,{\lambda'}}$ for $\lambda \in \Lambda_k$ and ${\lambda'} \in \Lambda_{k+1}$ such that $\lambda \subseteq {\lambda'}$ to assemble the following induced cochain complex

$$(i_{U_{[n]}})_*\Ocal_{[n]}^\cd \to \prod_{\lambda \in \Lambda_{n-1}}(i_{U_{\lambda}})_*\Ocal_{\lambda}^\cd \to \prod_{\lambda \in \Lambda_{n-2}}(i_{U_{\lambda}})_*\Ocal_{\lambda}^\cd \to \cdots \to \prod_{\lambda \in \Lambda_{1}}(i_{U_{\lambda}})_*\Ocal_{\lambda}^\cd$$
of quasi-coherent sheaves of $\Ocal_X$-modules concentrated in degrees $-(n-1)$ to $0$ as an object of $\D$. Denote this complex by $\check{C}^\bc$. We claim that $\check{C}^\bc$ is isomorphic in $\D$ to $\Ocal_X^\bc$ by showing that it satisfies the same defining natural isomorphism as $\Ocal_X^\bc$. Since $\check{C}^\bc$ is a bounded complex of injectives it is hoinjective, and thus the homomorphism group $\Hom_\Dcal(M,\check{C}^\bc)$ for any $M \in \D$ is computed as the zero cohomology of following chain complex of abelian groups, using \cref{dual-affine}:
$$\Hom_\Dcal(M,((i_{U_{[n]}})_*\Ocal_{[n]})^\bc) \to \prod_{\lambda \in \Lambda_{n-1}}\Hom_\Dcal(M,((i_{U_{\lambda}})_*\Ocal_{\lambda})^\bc)  \to \cdots \to \prod_{\lambda \in \Lambda_{1}}\Hom_\Dcal(M,((i_{U_{\lambda}})_*\Ocal_{\lambda})^\bc),$$
which is by the definition of the Brown-Comenetz duality further isomorphic to
$$\Gamma(X,M \otimes_X^\mathbf{L} (i_{U_{[n]}})_*\Ocal_{[n]})^\cd \to (\coprod_{\lambda \in \Lambda_{n-1}}\Gamma(X,M \otimes_X^\mathbf{L} (i_{U_{\lambda}})_*\Ocal_{\lambda}))^\cd \to \cdots \to (\coprod_{\lambda \in \Lambda_{1}}\Gamma(X,M \otimes_X^\mathbf{L} (i_{U_{\lambda}})_*\Ocal_{\lambda}))^\cd$$
Denoting by $\check{C}$ the Čech complex
$$\coprod_{\lambda \in \Lambda_1}(i_{U_{\lambda}})_*\Ocal_{\lambda}\to \cdots \to \coprod_{\lambda \in \Lambda_{n-1}}(i_{U_{\lambda}})_*\Ocal_{\lambda} \to (i_{U_{[n]}})_* \Ocal_{\lambda_{[n]}},$$
we see that $\Hom_\Dcal(M,\check{C}^\bc)$ is naturally isomorphic to $\Gamma(X,\check{C} \otimes_X M)^\cd \cong \Gamma(X,M)^\cd$ as desired, using \cite[Lemma III.4.2]{Har77}.

For a Noetherian, but not necessarily semi-separated scheme $X$, we still get that $\Ocal_X^\bc$ can be represented by a bounded complex of injectives. 

\begin{lemma}\label{dualCech-I}
  Let $X$ be semi-separated or Noetherian. Then the object $\Ocal_X^\bc$ of $\D$ can be represented as a bounded complex of injective quasi-coherent sheaves concentrated in nonpositive degrees.
\end{lemma}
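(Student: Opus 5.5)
The semi-separated case is already handled by the explicit dual Čech complex $\check{C}^\bc$ constructed above. It is a bounded complex concentrated in degrees $-(n-1)$ to $0$, and its components are finite products of sheaves $(i_{U_\lambda})_*\Ocal_\lambda^\cd$. Each such sheaf is injective in $\Qcoh{X}$, because $\Ocal_\lambda^\cd$ is an injective $\Ocal_X(U_\lambda)$-module and direct image along an open immersion preserves injectives (its left adjoint, restriction, is exact). So only the Noetherian, non-semi-separated case needs an argument, and there I would avoid any explicit complex and instead use \cref{hoinjectives}(1)$\iff$(2) together with a boundedness estimate.

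By \cref{hoinjectives}, to get a hoinjective complex of injective quasi-coherent sheaves in nonpositive degrees, it suffices to check that $\Hom_\D(\Sigma^i F,\Ocal_X^\bc)=0$ for all $F\in\Qcoh{X}$ and $i<0$. By the defining isomorphism, $\Hom_\D(\Sigma^i F,\Ocal_X^\bc)\cong \Gamma(X,\Sigma^iF)^\cd = H^i(X,F)^\cd$, the dual of the sheaf cohomology $H^i(X,F)=\Hom_\D(\Ocal_X,\Sigma^iF)$. For $i<0$ this vanishes because $F$ is a sheaf concentrated in degree $0$. This gives nonpositive concentration.

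For boundedness below, I would use that $X$ is quasi-compact, covered by $n$ affines, and quasi-separated. Then $H^j(X,F)=0$ for $j$ larger than some $d$ depending only on $X$, uniformly in $F\in\Qcoh{X}$. For Noetherian $X$, Grothendieck vanishing gives $d=\dim X$, but that may be infinite. Instead I would use the standard bound via the Čech-to-derived spectral sequence, or induction on the number of affines with Mayer--Vietoris and the fact that intersections are quasi-compact and quasi-separated (\cite[073A]{Stacks}-type results). This yields cohomological dimension bounded in terms of $n$ for quasi-coherent sheaves. Consequently $\Hom_\D(\Sigma^{j}F,\Ocal_X^\bc)=H^j(X,F)^\cd=0$ for $j>d$.

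Given the hoinjective representative $I$ from the first step, I then need to truncate it. The vanishing above says $\mathrm{Ext}^{j}_{\Qcoh X}(F,\Ocal_X^\bc)=0$ for all $F$ and $j> d$ (in the reindexed sense), so $\Ocal_X^\bc$ has injective dimension at most $d$ in $\D(\Qcoh{X})$. A standard argument then applies: the cocycle object in degree $-d$ of $I$ is injective, because a splitting obstruction lies in an Ext group that vanishes. Replacing $I$ by its brutal truncation above that degree gives a bounded complex of injectives in degrees $[-d,0]$.

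The main obstacle is establishing the uniform cohomological vanishing bound for non-semi-separated $X$. The first step is formal, and the truncation is routine homological algebra. If the Mayer--Vietoris induction proves delicate, I would fall back on the Noetherian-specific route: $\Qcoh{X}$ is locally Noetherian with injectives closed under coproducts, so cohomology commutes with filtered colimits, and it suffices to bound $H^j(X,F)$ for coherent $F$ by the Čech complex of a fine enough cover.
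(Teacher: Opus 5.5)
Your proposal is correct and follows the paper's proof. The semi-separated case goes through the dual \v{C}ech complex $\check{C}^\bc$, and the Noetherian case applies \cref{hoinjectives}, using $H^i(X,F)=0$ for $i<0$ and the uniform vanishing for $i>d$ that the paper quotes from \cite[071L]{Stacks}; that vanishing is proved by your Mayer--Vietoris induction, though the bound depends on $X$ and not only on the number $n$ of affines. The truncation step is misstated as written: the $i>d$ vanishing gives $\Ocal_X^\bc\in\D^{\geq -d}$, and for $I$ in nonpositive degrees the object to prove injective is the boundary $B^{-d}=\mathrm{im}(I^{-d-1}\to I^{-d})$, not the cocycle $Z^{-d}$ (it is, up to shift, the only cohomology of the hoinjective complex $\sigma^{<-d}I$, which lives in degrees $\leq -d-1$). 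The bounded complex is then the soft truncation $I^{\geq -d}$, not a brutal truncation.
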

\begin{proof}
  The semi-separated case is done above, so let us assume that $X$ is Noetherian. Let $F \in \Qcoh{X}$ and $i \in \Zbb$. Then we have the isomorphism 
  $$\Hom_\D(\Sigma^i F,\Ocal_X^\bc) \cong \Gamma(X,\Sigma^i F)^\cd.$$ 
  Clearly, $\Gamma(X,\Sigma^i F) = 0$ for any $i < 0$. On the other hand, there is $d>0$ such that $\Gamma(X,\Sigma^i F) = 0$ for any $i > d$, this follows by \cite[071L]{Stacks}. Then we apply \cref{hoinjectives}.
\end{proof}
\subsection{Standard co-$t$-structure}
Let us discuss the standard co-$t$-structure $(\Pcal^{\geq 0},\WDcal{0})$ as introduced more generally in \cref{ss:cotstructres}, that is, the co-$t$-structure generated by $(\D^{\cpt,\leq 0})^*$, where $(-)^*=[-,\Ocal_X]$ is the duality on $\D^\cpt$, so that $\WDcal{0} = (\D^{>0})^\vee$ is the dual definable category to $\D^{>0}$. In the affine case, $\WDcal{0}$ coincides with the standard aisle $\D^{<0}$. This is however not true in general, as indeed, $\D^{<0}$ is not even a definable subcategory of $\D$.

\begin{lemma}\label{WDcal}
  Let $X$ be Noetherian or semi-separated.
  \begin{enumerate}
    \item[(i)] $\WDcal{0} = \{M \in \D \colon [M,\Ocal_X^\bc] \in \D^{>0}\}$.
    \item[(ii)] We have $\D^{\leq -a} \subseteq \WDcal{0} \subseteq \D^{<b}$ for some $a,b\geq 0$. If $X$ is semi-separated, we can take $b=0$.
    \item[(iii)] If $\WDcal{0} = \D^{<0}$ then direct products are exact in $\Qcoh X$. 
    \item[(iv)] If $X$ is affine then $\WDcal{0} = \D^{<0}$.
  \end{enumerate}
\end{lemma}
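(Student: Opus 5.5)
For (i), I would start from $\WDcal{0} = (\D^{>0})^\vee$, the dual definable subcategory of the standard coaisle. By \cref{dualdefinable}, $M \in \D^{>0}$ if and only if $M^\bc \in \WDcal{0}$. This alone does not describe an arbitrary $M$. Instead I would run the argument of \cref{tensorstr-t-costr} with $X=\Ocal_X$.

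For any $N$ we have $\Hom_\D(S^*,[N,\Ocal_X^\bc]) \cong \Hom_\D(S^* \otimes N,\Ocal_X^\bc) \cong \Gamma(X,S^*\otimes N)^\cd \cong \Hom_\D(S,N)^\cd$, using rigidity. Hence $[N,\Ocal_X^\bc]=N^\bc \in \WDcal{0}$ if and only if $N \in \D^{>0}$. For (i) itself I need the mirror statement: $M \in \WDcal{0}$ if and only if $M^\bc=[M,\Ocal_X^\bc]\in \D^{>0}$. I would get this by applying \cref{dualdefinable} in the other direction. The double dual of the pair $(\D^{>0},\WDcal{0})$ is the same pair, because $f\mapsto f^{**}$ is the identity on compacts. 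So $\WDcal{0}^\vee=\D^{>0}$, and \cref{dualdefinable} applied to $\Ccal=\WDcal{0}$ gives $M\in\WDcal{0} \iff M^\bc\in\D^{>0}$, which is (i).

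For (ii), I would use \cref{dualCech-I}: $\Ocal_X^\bc$ is a bounded complex of injective quasi-coherent sheaves in degrees $[-d,0]$. Then $[M,\Ocal_X^\bc]$ is computed as $\RQ\,\SHom_X(M,I)$ with $I$ in degrees $[-d,0]$.
\begin{itemize}
\item If $M\in\D^{\leq -a}$, then $\SHom(M,I)$ lives in degrees $\geq a-d$. $\RQ$ has bounded cohomological amplitude on a Noetherian or quasi-compact semi-separated scheme, say at most $c$, but it is left exact, so it keeps the complex in degrees $\geq a-d$. Taking $a=d+1$ gives $[M,\Ocal_X^\bc]\in\D^{>0}$.
\item Conversely, suppose $M\in\WDcal{0}$ has $H^i(M)\neq0$ with $i\geq b$. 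Pick a coherent $E\to Z^i(M)$ detecting this class, and use \cref{perfect-approx} to obtain a perfect $S\in\D^{\leq0,\cpt}$ and a nonzero map $\Sigma^{-i}S\to M$. Dualising, this should be nonzero in $\Hom(M^\bc,(\Sigma^{-i}S)^\bc)$, but that would contradict orthogonality only in a range of degrees. I am less sure of this step; the bound $b$ should come from $d$.
\item In the semi-separated case, the Čech description makes $\Ocal_X^\bc$ a product of sheaves $(i_U)_*\Ocal_U^\cd$ with $i_U$ affine. The test $\Hom(\Sigma^i F,-)$ then reduces to the affine character duality, which should give $b=0$.
\end{itemize}

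For (iv), I would use (i) together with \cref{dual-affine}(i). On $\Spec R$, $[M,R^\cd]=M^\cd$, and $M^\cd\in\D^{>0}$ holds exactly when $M\in\D^{<0}$.

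For (iii), if $\WDcal{0}=\D^{<0}$, then $\D^{<0}$ is definable, hence closed under products in $\D$. Products in $\D$ of sheaves placed in degree $-1$ compute the derived products $\mathbf{R}\prod$ in $\Qcoh X$. These staying in $\D^{<0}$ forces $\mathbf{R}^k\prod=0$ for $k\geq1$, which gives exactness of products.

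I expect the upper bound in (ii) to be the main obstacle.
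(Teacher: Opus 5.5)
Your arguments for (i), (iii), (iv) and for the inclusion $\D^{\leq -a}\subseteq\WDcal{0}$ are correct and essentially the paper's. In particular, (i) is exactly the observation that $\D^{>0}$ and $\WDcal{0}$ are each other's dual definable subcategories. The genuine gap is the upper inclusion $\WDcal{0}\subseteq\D^{<b}$, together with $b=0$ in the semi-separated case, which you leave open.

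Your perfect-approximation attempt does not close it. Dualising $\Sigma^{-i}S\to M$ gives a nonzero map out of $M^\bc\in\D^{>0}$, and nothing forbids such maps. The defining orthogonality $\Hom_\D(T^*,M)=0$ for $T\in\D^{\leq 0,\cpt}$ rules out a map $\Sigma^{-i}S\cong(\Sigma^{i}S^*)^*\to M$ only when $S^*\in\D^{\leq i}$. So you would need an amplitude bound on $S^*$ that is uniform in the coherent sheaf $E$, and \cref{perfect-approx} does not provide one. For $b=0$ you would even need $S^*\in\D^{\leq 0}$, which forces $H^0(S)$ to be locally free.

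Your semi-separated bullet is not an argument either. $\Ocal_X^\bc$ is the dual \v{C}ech complex, not a product, and the test $\Hom_\D(\Sigma^iF,-)$ of \cref{hoinjectives} detects $\D^{>0}$, not $\D^{<0}$. Note also that $d$ governs $a$, not $b$.

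The missing idea is to use (i) in the other direction and pass to double duals. For $W\in\WDcal{0}$, (i) gives $N\coloneqq W^\bc\in\D^{>0}$, and $W\to W^{\bc\bc}=N^\bc$ is a pure monomorphism by \cref{BW-duality}(3). The subcategory $\D^{<b}$ is closed under pure subobjects: for $U$ affine, $\Gamma(U,H^j(-))\cong\Hom_\D(\Sigma^{-j}\Ocal_X,\mathbf{R}(i_U)_*\Ocal_U\otimes_X^{\mathbf{L}}-)$ with $\Ocal_X$ compact, and tensoring preserves pure triangles as in the proof of \cref{SBousfield-closure}. Hence it suffices to show $N^\bc\in\D^{<b}$ for $N\in\D^{>0}$.

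This is the same degree count you already did for the lower bound. With $N$ in degrees $\geq 1$ and $I\simeq\Ocal_X^\bc$ in degrees $[-d,0]$, the complex $\SHom_X(N,I)$ lives in degrees $\leq -1$. If $X$ is semi-separated, $[N,\Ocal_X^\bc]\cong\SHomqc_X(N,I)$ is computed termwise, so $b=0$. In general $[N,\Ocal_X^\bc]\cong\RQ\,\SHom_X(N,I)$, and $b$ is the cohomological dimension of $\RQ$.

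If you prefer testing against concrete objects, as you attempted, use $\Sigma^j\mathbf{R}(i_U)_*\Ocal_U$ mapping into $W^\bc$, not perfect complexes mapping into $W$. For $U$ affine, $\Gamma(U,H^j(W))^\cd\cong\Hom_\D(\Sigma^j\mathbf{R}(i_U)_*\Ocal_U,W^\bc)$. This vanishes as soon as $\Sigma^j\mathbf{R}(i_U)_*\Ocal_U\in\D^{\leq 0}$, because $W^\bc\in\D^{>0}$. Over a finite affine cover this gives a uniform $b$, and $b=0$ when each $i_U$ is affine.
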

\begin{proof}
  $(i)\colon$ This follows immediately from dual definability of $\WDcal{0}$ and $\D^{>0}$ and the natural isomorphism $M^\bc \cong [M,\Ocal_X^\bc]$, see \cref{ss:bw-duality}.

  $(ii)\colon$ By \cref{dualCech-I}, $\Ocal_X^\bc \in \D^{> -a}$ for some $a>0$. Then it follows easily from $(i)$ that $\D^{\leq -a} \subseteq \WDcal{0}$. Since $\D^{<0}$ is closed under pure subobjects, it is enough to check $M^\bc \in \D^{<b}$ for any $M \in \D^{>0}$ to check the second inclusion. Now $M^\bc \cong [M,\Ocal_X^\bc]$ and by \cref{dualCech-I}, $\Ocal_X^\bc$ can be realised as a bounded complex of injectives which vanishes in positive degrees. If $X$ is semi-separated, we have by the discussion above that $[M,\Ocal_X^\bc] \cong \SHomqc(M,\Ocal_X^\bc)$, and the latter is clearly a complex which vanishes in nonnegative degrees. In the general situation, we have $[M,\Ocal_X^\bc] \cong \RQ \circ \SHom(M,\Ocal_X^\bc)$. Then we get the bound $b$ from the bound on the cohomological dimension of the derived coherator $\RQ$, see \cite[0CSS]{Stacks}.

  $(iii)\colon$ The equality $\WDcal{0} = \D^{<0}$ implies that $\D^{<0}$ is a definable subcategory of $\D$, in particular, that $\D^{<0}$ is closed in $\D$ under products. This clearly implies that $\Qcoh X$ has exact direct products. 

  $(iv)\colon$ In the affine case, $\Qcoh X \cong \Mod R$ and then it is well-known that $\D^{>0}$ and $\D^{<0}$ are dual definable.
\end{proof}
\begin{rmk}
Under the extra assumption of the existence of an ample family of invertible sheaves, Kanda proves in \cite{Kan19} that exactness of direct products in $\Qcoh{X}$ forces $X$ to be affine. Therefore, \cref{WDcal} yields under the same assumption that $\WDcal{0}=\D^{<0}$ if and only if $X$ is affine.
\end{rmk}

\subsection{Flat and injective dimensions}
We are now ready to discuss how the flat and injective dimensions defined in \cref{ss:dimensions} are related to hoflat and hoinjective resolutions in $\D$. 
\begin{lemma}\label{scheme-dimensions}
The following hold. 
  \begin{enumerate}
    \item[(i)] $M \text{ is isomorphic to a nonnegative hoflat complex} \iff M \in \Fcal^{\geq 0}$
    
    $M \text{ is isomorphic to a bounded below hoflat complex} \iff M \in \Fcal^+$

    \item[(ii )] $M \text{ is isomorphic to a bounded above hoinjective complex} \implies M \in \Ical^{-}$
  
    with the converse implication holding in case $X$ is Noetherian.
  \end{enumerate}
\end{lemma}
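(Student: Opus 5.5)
For (i), first direction: let $M$ be a hoflat complex concentrated in degrees $\geq 0$ and $N \in \D^{>0}$. Then $M \otimes_X^\mathbf{L} N \cong M \otimes_X N$, and the total complex of the tensor product lives in degrees $\geq 1$ once $N$ is soft-truncated to a complex concentrated in degrees $>0$. Hence $M \otimes_X^\mathbf{L} N \in \D^{>0}$, so $M \in \Fcal^{\geq 0}$.

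For the converse, take $M \in \Fcal^{\geq 0}$ and choose a hoflat replacement $F$ of $M$ with flat components. Consider the truncation $\sigma^{\geq 0}$-type replacement: cut $F$ at degree $0$ by replacing $F^0$ with the cokernel $C \coloneqq \mathrm{coker}(F^{-1} \to F^0)$ and discarding negative degrees. This yields a complex $C \to F^1 \to \cdots$ quasi-isomorphic to $M$ as soon as $H^i(M)=0$ for $i<0$. That vanishing follows by testing against $N = \Sigma^{-1}\mathcal{O}_X \in \D^{>0}$ shifted appropriately; more directly, $M \otimes \Sigma^{-j}\kappa$ stays in degrees $>0$.

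It remains to show $C$ is flat. The key step is to check flatness stalkwise: for any quasi-coherent $G$ (or the residue fields / coherent sheaves), $\mathcal{H}^{-1}$-type Tor vanishing $\Tor_1(C,G) \cong H^{-1}(M \otimes^\mathbf{L} G)$ must vanish, which follows because $M \otimes^\mathbf{L} \Sigma^{-1}G \in \D^{>0}$. The identification uses that the truncated part $\cdots \to F^{-2} \to F^{-1} \to F^0$ is a flat resolution of $C$, since it is exact in negative degrees. Combining $\Tor_1^{\Ocal_{X,x}}(C_x,-)=0$ on all modules, $C$ is flat. A bounded-above complex of flats, here $C \to F^1 \to \cdots$, need not be automatically hoflat, since it is bounded below rather than above. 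So I would argue instead that $C \to F^1 \to \cdots$ is hoflat as the cone of $F$ and the hoflat bounded-above flat complex $F^{\leq -1}\to$ (the resolution of $C$), both hoflat. The second equivalence then follows by shifting and taking unions.

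For (ii), if $I$ is a hoinjective complex concentrated in degrees $\leq n$ and $Z \in \D^{>0}$, the plan is to show $[Z,I]\in \WDcal{m}$ for some $m$. I use \cref{WDcal}(i)-style reasoning, or compute $[Z,I] = \RQ\,\SHom(Z,I)$: $\SHom(Z,I)$ sits in degrees $<n$, and $\RQ$ has bounded cohomological dimension \cite[0CSS]{Stacks}. This gives $[Z,I]\in \D^{<n+b}\subseteq$ some $\WDcal{m}$ by \cref{WDcal}(ii). For the Noetherian converse, I use \cref{hoinjectives}(1)$\iff$(2): it suffices to show $\Hom(\Sigma^iF,M)=0$ for $F$ quasi-coherent and $i\ll 0$. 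I would write $\Hom(\Sigma^iF,M)$ via $[\Sigma^iF,M]$ and $\Hom(\Ocal_X,-)$, then use that $\Hom(\Ocal_X,-)$ kills $\WDcal{m}$ shifts beyond the cohomological dimension bound. The main obstacle is the truncation and flatness argument in (i), specifically ensuring the truncated complex remains hoflat in $\D_{\qc}(X)$ without quasi-coherent flat components.
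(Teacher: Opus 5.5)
Your proposal is correct and is essentially the paper's proof. In (i) you truncate a hoflat replacement with flat components at degree $0$, get flatness of the new degree-$0$ term from a $\Tor_1$ computation checked on stalks, and get hoflatness from a bounded-above hoflat piece; in (ii) you use the same bound on $\SHom_X(Z,I)$, the finite cohomological dimension of $\RQ$ with \cref{WDcal}, and for the converse \cref{hoinjectives} via $\Hom_\D(\Sigma^iF,M)\cong\Hom_\D(\Ocal_X,[\Sigma^iF,M])$.

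Your one deviation is worth keeping. You use $C=\mathrm{coker}(F^{-1}\to F^0)$ and $H^{-1}(M\otimes_X^{\mathbf L}G)=0$, whereas the paper uses the coboundary $B^0(M)$. The paper's complex $B\to M^0\to M^1\to\cdots$ starts in degree $-1$, so your version is the one that actually lands in nonnegative degrees. Two hedged steps need tidying:

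\begin{itemize}
\item For flatness, test against $G=(i_x)_*N$ for arbitrary $\Ocal_{X,x}$-modules $N$. Residue fields alone would not detect flatness of the possibly non-finitely generated $C_x$.
\item For hoflatness, use the degreewise split sequence $0\to\sigma^{\geq 1}F\to(C\to F^1\to F^2\to\cdots)\to C\to 0$. Here $\sigma^{\geq 1}F$ is hoflat as the kernel of $F\to\sigma^{\leq 0}F$. A cone taken only up to quasi-isomorphism would not suffice, since hoflatness is not quasi-isomorphism invariant.
\end{itemize}
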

\begin{proof}
  $(i)\colon$ Let $M \in \D$ be represented by a nonnegative hoflat complex (in general, with components in $\Mod{X}$) then clearly $M \otimes_X^\mathbf{L} - \cong M \otimes_X -$ preserves $\D^{>0}$, so that $M \in \Fcal^{\geq 0}$. Let us prove the converse. Let $M \in \Fcal^{\geq 0}$ and assume that $M$ is a hoflat complex of flat quasi-coherent sheaves. The condition $M \in \Fcal^{\geq 0}$ implies that $M \otimes_X H \in \D^{\geq 0}$ for any $H \in \Qcoh{X}$. Let $B = B^{0}(M)$ be the $0$-th coboundary sheaf of $M$, so that the truncated complex $\sigma^{\leq -1}M$ is a flat resolution of $B$ in $\Mod{X}$. We see that $B \otimes_X -$ is an exact functor $\Qcoh{X} \to \Mod{X}$. We claim that this implies that $B$ is flat, for which it is enough to check that $B_x$ is a flat $\Ocal_{X,x}$-module for any $x \in X$. We have a natural isomorphism

  $$B_x \otimes_{\Ocal_{X,x}} - \cong (B \otimes_X (i_x)_* -)$$
  which shows that $B_x \otimes_{\Ocal_{X,x}} -$ is left exact, thus exact, establishing the claim. It follows that the soft truncation $M^{\geq 0} = (0 \to B \to M^0 \to M^1 \to \cdots)$ is a complex of flat sheaves. Consider the soft truncation triangle 
  $$M^{<0} \to M \to M^{\geq 0} \rightsquigarrow.$$ 
  The cocycle-coboundary exact sequence $0 \to Z^{-1}(M) \to M^{-1} \to B \to 0$ shows that $Z^{-1}(M)$ is flat, and so $M^{< 0}$ is hoflat. It follows that $M^{\geq 0}$ is hoflat and quasi-isomorphic to $M$, as desired.

  The second equivalence regarding $\Fcal^+$ is a direct consequence of the first one.


  $(ii)\colon$ Let $M$ be a bounded above hoinjective complex. As per the proof of \cref{WDcal}, there is $b > 0$ such that $[-,M] = \RQ \SHom_X(-,M)$ takes $\D^{>0}$ to $\D^{<b}$, and thus also to a suitable shift of $\WDcal{0}$ by \cref{WDcal}. Therefore, $M \in \Ical^{-}$. Assume that $X$ is Noetherian and let us prove the converse, if $M \in \Ical^{-}$ then $[-,M]$ takes $\D^{>0}$ to a suitable shift of $\WDcal{0}$, and therefore to $\D^{<d}$ for some $d>0$ by \cref{WDcal}. Applying $\Gamma(X,-)$, we see that $\Hom_{\D}(\Sigma^i F,M) = 0$ for any $i<-d$ and $F \in \Qcoh{X}$, so that \cref{hoinjectives} applies.
\end{proof}

\section{Classification over Noetherian schemes}
\label{s:classification}

\textit{From now on, $X$ is always a Noetherian scheme.}

\subsection{Local comohology and support}
Let $U$ be an open subset of $X$ and set $V = X \setminus U$. Let 
$$\D_V = \{M \in \D \colon M_x = 0 ~\forall x \in U\},$$ 
this is a compactly generated $\otimes$-ideal of $\D$, see \cite[Corollary 6.8]{BF11}, the separated case comes from \cite[Theorem 5.8]{AJS04}. Let $\Lcal_V \subseteq \Qcoh{X}$ be the localising subcategory of $V$-torsion quasi-coherent sheaves, that is, 
$$\Lcal_V = \{F \in \Qcoh{X} \colon F_x = 0 ~\forall x \in U\},$$ 
and let 
$$t_V\colon \Qcoh{X} \to \Lcal_V$$ 
be the $V$-torsion functor, i.e., the right adjoint to the inclusion $\Lcal_V \hookrightarrow \Qcoh{X}$. Then 
$$\D_V = \{ M \in \D \colon H^i(M) \in \Lcal_V ~\forall i\}$$ 
and the right derived functor $\mathbf{R}t_V\colon \D \to \D_V$ is the right adjoint to the inclusion $\D_V \hookrightarrow \D$. Set $\Gamma_V\colon \D \to \D$ to be the composition of $\mathbf{R}t_V$ with the inclusion $\D_V \hookrightarrow \D$. Following \cite[\S 2.1]{AJS04}, \cite[Lemma 8.12]{Ste13}, we have the triangle
$$\Gamma_V(\Ocal_X) \to \Ocal_X \to \mathbf{R}(i_U)_* (i_U)^* \Ocal_X \rightsquigarrow$$
which is idempotent in the sense of \cite[Definition 3.2]{BF11}. In particular, we have the natural isomorphism $\Gamma_V(-) \cong \Gamma_V(\Ocal_X) \otimes_X^\mathbf{L} -$. Given a point $x \in X$, denote by $\cl{x}$ the closure of $x$ in $X$ and let $\Gamma_x$ denote the composition $\mathbf{R}(i_x)_* \Gamma_{\cl{x}}(-)_x\colon \D \to \D$. As per \cite[\S 8]{BF11}, we have $\Gamma_x(-) \cong \Gamma_x(\Ocal_X) \otimes_X^\mathbf{L} -$. Given any $M \in \D(X)$, define the \newterm{Balmer-Favi support} of $M$ as
$$\supp(M) = \{x \in X \colon \Gamma_x(M) \neq 0\}.$$ 
Let $\Gamma_x \D$ denote the essential image of the functor $\Gamma_x\colon \D \to \D$, or equivalently, 
$$\Gamma_x \D = \{M \in \D \colon \supp(M) \subseteq \{x\}\}.$$ 
Note that the stalk functor $(-)_x$ restricts to an equivalence between $\Gamma_x \D$ and the subcategory 
$$\{M \in \D(\Ocal_{X,x}) \colon M_y = 0 ~\forall y \in \Spec{\Ocal_{X,x}} \setminus \{x\}\}$$ 
of $\D(\Ocal_{X,x})$ consisting of objects supported only on $\{x\}$, that is, objects whose cohomology is a semiartinian $\Ocal_{X,x}$-module. By abuse of notation, we shall sometimes consider $\Gamma_x \D$ as identified with this subcategory of $\D(\Ocal_{X,x})$. It will be also useful at times to view $\Gamma_x\colon \D \to \D$ as the composition of the following functors:
$$\D \xrightarrow{(-)_x} \D(\Ocal_{X,x}) \to \Gamma_x \D \hookrightarrow \D,$$
where the functor $\D(\Ocal_{X,x}) \to \Gamma_x \D$ is the local cohomology functor over the local ring $\Ocal_{X,x}$.
\begin{lemma}\label{Gamma-Fminus}
  For any $x \in X$ we have $\Gamma_x(\Ocal_X) \in \Fcal^{\geq 0}$.
\end{lemma}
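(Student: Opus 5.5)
Recall that $\Fcal^{\geq 0}=\{M\in\D \colon M\otimes_X^\mathbf{L}\D^{>0}\subseteq \D^{>0}\}$. By \cref{scheme-dimensions}(i), it is enough to show that $\Gamma_x(\Ocal_X)$ is isomorphic to a hoflat complex concentrated in nonnegative degrees. I would produce such a complex explicitly in two steps, following the factorisation $\Gamma_x = \mathbf{R}(i_x)_*\circ\Gamma_{\cl{x}}\circ(-)_x$.

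\textbf{Step 1: the stalk-level local cohomology object.} Put $R=\Ocal_{X,x}$, let $\mm$ be its maximal ideal, and choose generators $f_1,\dots,f_n$ of $\mm$. Then $\Gamma_{\cl{x}}(\Ocal_X)_x \cong \mathbf{R}\Gamma_\mm(R)$. This is represented by the stable Koszul (Čech) complex
$$K=\bigotimes_{i}\,(R\to R_{f_i}),$$
placed in degrees $0,\dots,n$. Each term is a finite direct sum of localisations of $R$, hence flat. Since $K$ is a bounded complex of flat modules, it is hoflat over $R$ and concentrated in nonnegative degrees.

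\textbf{Step 2: transport to $X$.} We have $\Gamma_x(\Ocal_X)\cong \mathbf{R}(i_x)_*K$.
\begin{itemize}
\item \emph{Direct approach.} Show that $\mathbf{R}(i_x)_*$ sends $K$ to a nonnegative hoflat complex. Computing it by pushing forward along an affine open $U\ni x$ followed by $\mathbf{R}(i_U)_*$ gives a Čech-type complex, but its terms need not be flat, and the Čech resolution of $\mathbf{R}(i_U)_*$ may spread into higher degrees. So this route is messy.
\item \emph{Cleaner approach (preferred).} Check the defining property directly. For $N\in\D^{>0}$, the projection formula (as in \cite[Theorem 3.7]{CH11}, already used in \cref{dual-affine}) gives
$$\Gamma_x(\Ocal_X)\otimes_X^\mathbf{L} N \cong \mathbf{R}(i_x)_*\bigl(K\otimes_R N_x\bigr).$$
The stalk functor is exact, so $N_x\in\D(R)^{>0}$. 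Because $K$ is a nonnegative bounded flat complex, $K\otimes_R N_x\in \D(R)^{>0}$.
\end{itemize}

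\textbf{Step 3: the remaining point, and the main obstacle.} What is left is that $\mathbf{R}(i_x)_*$ preserves $\D^{>0}$. For this I would use the adjunction with the exact functor $(-)_x$. For $S\in\D^{\leq 0}$,
$$\Hom_\D\bigl(S,\mathbf{R}(i_x)_*M\bigr)\cong \Hom_{\D(R)}(S_x,M)=0 \qquad\text{whenever } M\in\D(R)^{>0},$$
because $S_x\in\D(R)^{\leq 0}$. Thus a right adjoint of a $t$-exact (in particular right $t$-exact) functor is left $t$-exact, and this step closes formally. I expect the real obstacle to be justifying the projection formula for the non-quasi-compact map $i_x$, i.e.\ making the identification $\Gamma_x(\Ocal_X)\otimes N\cong \mathbf{R}(i_x)_*(K\otimes N_x)$ rigorous. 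My plan is to avoid $i_x$ there: use $\Gamma_x(-)\cong \Gamma_x(\Ocal_X)\otimes -$, so that
$$\Gamma_x(\Ocal_X)\otimes N\cong \Gamma_x(N)=\mathbf{R}(i_x)_*\,\mathbf{R}\Gamma_\mm(N_x),$$
and then observe that $\mathbf{R}\Gamma_\mm(N_x)\cong K\otimes_R N_x\in\D(R)^{>0}$.
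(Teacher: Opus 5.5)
Your final argument is correct and is essentially the paper's proof: the paper writes $\Gamma_x(\Ocal_X)\otimes_X^{\mathbf{L}} N\cong\Gamma_x(N)=\mathbf{R}(i_x)_*\Gamma_{\cl{x}}(N_x)$ and says this composite clearly preserves $\D^{>0}$, while you spell that out using the stable Koszul complex for $\mathbf{R}\Gamma_\mm$ and the adjunction $(-)_x\dashv\mathbf{R}(i_x)_*$. The detour through \cref{scheme-dimensions} is not needed, and your worry about the projection formula is unfounded anyway: $i_x$ is quasi-compact and quasi-separated, because its source is affine and $X$ is quasi-separated.
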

\begin{proof}
  Given $M \in \D^{>0}$, we have $\Gamma_x(\Ocal_X) \otimes_X^\mathbf{L} M \cong \Gamma_x(M) = \mathbf{R}(i_x)_* \Gamma_{\cl{x}}(-)_x(M)$, the latter clearly preserving $\D^{>0}$.
\end{proof}
\subsection{Stratification}
The category $\Dcal$ is stratified in the sense of \cite[4.4]{BHS23}, which amounts to the following. Consider the assignment
$$\supp^{-1}\colon \{\text{subsets of $X$}\} \to \{\text{localising $\otimes$-ideals of $\Dcal$}\}$$
which takes a subset $Y$ of $X$ to $\supp^{-1}(Y) \coloneqq \{M \in \Dcal \colon \supp(M) \subseteq Y\}$. This assignment is always injective and we say that $\Dcal$ is \newterm{stratified} if it is also surjective. One of the main thrusts of stratification theory, which allowed to establish stratification results in other various tt-categories, is that the surjectivity of this assignment amounts to checking the following two conditions:

  $\bullet$ The \textit{Local to Global principle:} For any localising $\otimes$-ideal $\Lcal$ we have $M \in \Lcal$ if and only if $\Gamma_x (M) \in \Lcal$ for all $x \in X$.

  $\bullet$ The \textit{Minimality principle:} For any $x \in X$, $\Gamma_x \D$ is a minimal nonzero localising $\otimes$-ideal of $\D$.

The compactly generated $\otimes$-ideals then correspond to those subsets $V$ of $X$ which are specialisation-closed, that is, $x \in V \implies \cl{x} \subseteq V$. It is standard to check that 
$$\supp^{-1}(V) = \Supp^{-1}(V) \coloneqq \{M \in \Dcal \colon \Supp(M) \subseteq V\}$$ 
for any specialisation-closed subset $V$ of $X$. Note that, in this case, $\supp^{-1}(X \setminus V)$ is the \textit{image} of the smashing localisation functor whose kernel is $\supp^{-1}(V)$.

\subsection{Bousfield stratification}\label{ss:Bousfield-stratification}
Note that any localising $\otimes$-ideal of the form $\supp^{-1}(Y)$ is a Bousfield class. Indeed, we have  $\supp^{-1}(Y)=\Bou{\coprod_{x \in X \setminus Y}\Gamma_x(\Ocal_X)}$. In fact, the assignment $\supp^{-1}$ yields a complete lattice isomorphism between the lattice of subsets of $X$ and $\BL{\D}$, this also applies to the general tt-theoretical stratification, see \cite[Theorem 8.8]{BHS23}. Recall that this is not a completely general fact---in the absence of stratification, a localising $\otimes$-ideal can fail to be a Bousfield (or even cohomological Bousfield) class, see \cite[Corollary 4.11]{Ste14}.

Since our aim is to construct a certain analogue of stratification for semi-Bousfield classes, it is useful to see how stratification works in $\D$ if we restrict to Bousfield classes from the get go. In this case, the reformulation of the Minimality principle in terms of Bousfield classes boils down to the ``Tensor nonvanishing'' in $\D$. In fact, this amounts to the validity of the ``$\otimes$-theorem'' in $\D$, see \cite[Remark 7.20]{BF11}.
\begin{lemma}\label{tensor-nonvanishing-classical}
  If $M,N \in \D$ are such that $x \in \supp(M) \cap \supp(N)$ then $x \in \supp(M \otimes_X^\mathbf{L} N)$. In particular, $M \otimes_X^\mathbf{L} N \neq 0$.
\end{lemma}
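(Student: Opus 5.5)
The plan is to reduce to the local ring $R=\Ocal_{X,x}$ and then use residue fields. First, since $\Gamma_x(-)\cong\Gamma_x(\Ocal_X)\otimes_X^\mathbf{L}-$ and $\Gamma_x(\Ocal_X)$ is a tensor idempotent (as the composite of the idempotents $\Gamma_{\cl{x}}(\Ocal_X)$ and the localisation at $x$, see \cite[\S 8]{BF11}), we have
$$\Gamma_x(M\otimes_X^\mathbf{L} N)\cong \Gamma_x(M)\otimes_X^\mathbf{L}\Gamma_x(N).$$
So it suffices to show that $A\otimes_X^\mathbf{L} B\neq 0$ for nonzero $A,B\in\Gamma_x\D$. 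Under the stalk equivalence between $\Gamma_x\D$ and the subcategory of $\D(R)$ of complexes with $\mathfrak m$-torsion cohomology, and using that $(-)_x$ is a tensor functor, the problem becomes the following. If $A,B\in\D(R)$ are nonzero with $\mathfrak m$-torsion cohomology, then $A\otimes_R^\mathbf{L}B\neq0$.

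Next I would show that a nonzero $A\in\D(R)$ with $\mathfrak m$-torsion cohomology satisfies $A\otimes^\mathbf{L}_R k\neq 0$, where $k=R/\mathfrak m$. Let $K$ be the Koszul complex on generators of $\mathfrak m$. Since $A$ lies in the localising subcategory generated by $K$ (it is $\mathfrak m$-torsion), and $A\neq0$, we get $K\otimes_R A\neq 0$; indeed $\Gamma_{\mathfrak m}A=A$ and $\Gamma_{\mathfrak m}R$ is a homotopy colimit of shifted Koszul complexes. The complex $K$ is a perfect complex with finite length cohomology, so it lies in $\thick{k}$. Hence $K\otimes A\in\thick{k\otimes^\mathbf{L}_R A}$, which forces $k\otimes^\mathbf{L}_R A\neq0$. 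Since $k\otimes^\mathbf{L}_R A$ is a complex of $k$-vector spaces, it is a nonzero coproduct of shifts of $k$.

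Finally, I compute
$$(A\otimes B)\otimes k\cong A\otimes (B\otimes k)\cong A\otimes \textstyle\coprod_i \Sigma^{n_i}k\cong \coprod_i\Sigma^{n_i}(A\otimes k)\neq0.$$
So $A\otimes^\mathbf{L}_R B\neq 0$, and translating back, $\Gamma_x(M\otimes N)\neq0$, that is, $x\in\supp(M\otimes_X^\mathbf{L}N)$. One caveat is that the equivalence $B\otimes k\cong\coprod\Sigma^{n_i}k$ holds in $\D(R)$ because $B\otimes^\mathbf{L}_R k$ is a dg-$k$-module, hence formal.

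The main obstacle is the detection step $A\otimes k\neq0$ for unbounded torsion $A$. The Koszul and $\thick{k}$ argument handles it, but it needs care that $\Gamma_{\mathfrak m}$ is the colimit of Koszul duals and commutes with tensor products. The reduction to the stalk also requires checking that $(-)_x$ is monoidal and conservative on $\Gamma_x\D$. Both are standard, and alternatively one can cite \cite[Remark 7.20]{BF11}, \cite{AJS04}, and \cite{Ste13}.
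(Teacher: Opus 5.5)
Your proof is correct. It rests on the same two ingredients as the paper's proof: residue fields detect support, and complexes of $k(x)$-vector spaces split into shifts of $k(x)$. The route differs, though.

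\textbf{The paper's route.} It works globally on $X$. It cites Neeman's theorem that $x\in\supp(L)$ if and only if $k(x)\otimes_X^{\mathbf L}L\neq 0$ for arbitrary $L\in\D$, and it uses both directions of that equivalence. It then applies the base-change isomorphism $k(x)\otimes(M\otimes N)\cong(k(x)\otimes M)\otimes_{k(x)}(k(x)\otimes N)$. The conclusion becomes the statement that a graded tensor product of two nonzero graded vector spaces is nonzero.

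\textbf{Your route.} You first use idempotence of $\Gamma_x(\Ocal_X)$ to get $\Gamma_x(M\otimes N)\cong\Gamma_x M\otimes\Gamma_x N$. You then pass to $\mm$-torsion complexes over $\Ocal_{X,x}$. This has two benefits:
\begin{enumerate}
\item You need only one direction of the detection statement ($A\neq 0$ torsion implies $k\otimes A\neq 0$), and only for torsion complexes. You prove it directly via Koszul complexes and $\thick{k}$ instead of citing it.
\item You avoid the base-change isomorphism over $k(x)$. You split only one factor, $B\otimes k\cong\coprod\Sigma^{n_i}k$, and then use associativity and the fact that $A\otimes-$ preserves coproducts.
\end{enumerate}
The cost is the extra reduction step, namely the stalk equivalence for $\Gamma_x\D$ and monoidality of $(-)_x$. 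The paper states both, so your argument is more self-contained but slightly longer.

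\textbf{A small point to tidy.} $\Gamma_{\mm}R$ is a homotopy colimit of shifted Koszul complexes $K_n$ on the powers $x_1^n,\dots,x_d^n$, not of $K$ itself. So what you actually get is that $K_n\otimes A\neq 0$ for some $n$. That is enough: $K_n$ is perfect with finite-length cohomology, so it already lies in $\thick{k}$. This gives $k\otimes A\neq 0$ without passing back to $K$.
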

\begin{proof}
  We give a short proof of this for convenience and also for comparison with \cref{tensor-vanishing}. Let $k=k(x)$ be the residue field sheaf at $x$ and recall that $k(x)$ detects being supported on $x$ in the sense that for any $L \in \D$ we have $x \in \supp(L)$ if and only if $k(x) \otimes_X^\mathbf{L} L \neq 0$, see \cite{Nee92}. So we need to check nonvanishing of 
  $$k(x) \otimes_X^\mathbf{L} (M \otimes_X^\mathbf{L} N) \cong (k(x) \otimes_X^\mathbf{L} M) \otimes_{k(x)} (k(x) \otimes_X^\mathbf{L} N),$$
  but this is clear as the right hand side is simply a graded tensor product of two nonzero graded $k(x)$-vector spaces.
\end{proof}
\begin{rmk}
    Another advantage of restricting the stratification to Bousfield classes is that it might be achievable in cases that the usual stronger formulation of stratification is not. See \cref{sec:appendix-vnr} for discussion of this in the particular case of a commutative von Neumann regular ring.
\end{rmk}
\subsection{Perversities on $X$} Let $\Zbb^* \coloneqq \Zbb \cup \{-\infty,\infty\}$ denote the extended integer line, which we will often consider as a totally ordered set with the obvious order. By a \newterm{perversity} on $X$, we mean any map $p\colon X \to \Zbb^*$. Let $\Perv{X}$ denote the set of all perversities on $X$. Note that $\Perv{X}$ is a complete lattice with respect to the order inherited from $\Zbb^*$ pointwise. The scheme $X$ also has a poset structure with respect to the \newterm{specialisation order} $\rightsquigarrow$ defined by $x \rightsquigarrow y \iff y \in \cl{x}$. A perversity $p$ on $X$ is called \newterm{monotone} if it is an order-preserving map. As  $X$ is Noetherian, the specialisation order on $X$ satisfies both the ascending and the descending chain conditions, i.e., every non-empty subset of $X$ contains a closed point and a generic point---we will use this property freely.

\begin{rmk}
  Our notion of perversity is slightly more general than those encountered in the literature. In \cite{Bez00, AB09}, only integer-valued perversities are considered. In other sources, perversities are always assumed monotone.
\end{rmk}

\subsection{Main assignment}\label{ss:assignment}
Now we define an assignment
$$\Phi\colon \Perv{X} \to \SBL{\D}$$
by setting 
$$\Phi(p) = \SBouBig{\coprod_{x \in X}\Sigma^{p(x)}\Gamma_x(\Ocal_X)}.$$
We first observe that $\Phi$ extends the stratification assignment $\supp^{-1}$. To that aim, we shall make the identification of the lattice of subsets of $X$ with the sublattice of $\Perv{X}$ consisting of those perversities with image inside $\{-\infty,\infty\}$. Our choice of this identification sends a subset $Y$ of $X$ to the perversity $p_Y$ defined as

$$p_Y(x) = \begin{cases} \infty, & x \in Y \\ -\infty, & x \not\in Y. \end{cases}$$
It is then easy to check that 
$$\Phi(p_Y) = \SBouBig{\coprod_{x \in Y}\Sigma^{\infty}\Gamma_x(\Ocal_X)} = \BouBig{\coprod_{x \in Y}\Gamma_x(\Ocal_X)} = \supp^{-1}(X \setminus Y).$$

\subsection{Local and nonlocal depth}
The \newterm{infimum} of $M \in \Dcal$ is defined as $\inf(M) = \inf \{n \in \Zbb \colon H^n(M) \neq 0\}$. Equivalently, $\inf(M) = \inf \{n \in \Zbb \colon M \in \Dcal^{\geq n}\}$. The supremum $\sup(M)$ is defined dually. Given $x \in X$, the \newterm{(local) depth} of $M$ at $x$ is defined as
$$\depth_x(M) \coloneqq \inf \Gamma_x(M).$$
Therefore, the assignment $\Phi$ is equivalently defined via depth:
$$\Phi(p) = \{M \in \Dcal \colon \depth_x(M) > p(x) ~\forall x \in X\}.$$
In the original work \cite{Bez00, AB09} on $t$-structures arising from perversities, a related but in general different assignment is used using the nonlocal depth instead. The \newterm{nonlocal depth} of $M$ at $x$ is defined as
$$\depth(x,M) \coloneqq\inf \Gamma_{\cl{x}}(M).$$
The two assignments coincide for those perversities which we expect to give rise to compactly generated $\otimes$-$t$-structures, that is, the monotone perversities. Let us check that.
\begin{lemma}\label{monotone-perv}
  Let $p$ be a monotone perversity. Then
  $$\Phi(p) = \{M \in \Dcal \colon \depth(x,M) > p(x) ~\forall x \in X\}.$$
  In this case, $\Phi(p)$ is closed under products, and thus constitutes a coaisle of a $t$-structure.
\end{lemma}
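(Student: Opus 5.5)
The plan is to compare the two depth functions pointwise, using that both $\Gamma_x$ and $\Gamma_{\cl{x}}$ are computed locally. The inclusion ``$\supseteq$'' holds for every perversity; monotonicity is needed only for ``$\subseteq$''. Product-closedness will then follow from rewriting the condition $\depth(x,M) > p(x)$ as right orthogonality to a set of compact objects.

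\emph{Step 1: $\depth_x(M) \geq \depth(x,M)$ for all $M$ and $x$.} Since $\Gamma_{\cl{x}}(\Ocal_X)\otimes^{\mathbf{L}}_X\Gamma_x(\Ocal_X)\cong\Gamma_x(\Ocal_X)$ (idempotents), we get $\Gamma_x(M)\cong\Gamma_x(\Gamma_{\cl{x}}M)$. By \cref{Gamma-Fminus}, $\Gamma_x(\Ocal_X)\otimes_X^{\mathbf{L}}-$ preserves $\D^{>n}$. Hence $\Gamma_{\cl{x}}M\in\D^{>n}$ implies $\Gamma_xM\in\D^{>n}$. This gives ``$\supseteq$'' with no hypothesis on $p$.

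\emph{Step 2: the converse under monotonicity.} Let $M\in\Phi(p)$, fix $x$, set $V=\cl{x}$ and $n=p(x)$. Every $y\in V$ satisfies $x\rightsquigarrow y$, so monotonicity gives $p(y)\geq n$ and hence $\depth_y(M)>n$. It therefore suffices to prove the following claim: if $V$ is closed and $\Gamma_yM\in\D^{>n}$ for all $y\in V$, then $\Gamma_VM\in\D^{>n}$.

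Suppose not, and let $i\leq n$ be minimal with $H^i(\Gamma_VM)\neq0$; such an $i$ exists after a shift, or one simply takes any $i\le n$ with nonzero cohomology and argues degreewise. Let $y$ be a generic point of $\Supp H^i(\Gamma_VM)\subseteq V$. Pass to the stalk at $y$, which is exact and commutes with $\Gamma_V$. Then $N\coloneqq(\Gamma_VM)_y\cong\mathbf{R}\Gamma_{V_y}(M_y)$ over the local ring $\Ocal_{X,y}$, and $H^i(N)$ is a nonzero module supported only at the closed point, because $y$ is generic in the support. Since $\Gamma_{\mm_y}\Gamma_{V_y}=\Gamma_{\mm_y}$, we get $(\Gamma_yM)_y\cong\Gamma_{\mm_y}N$. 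As $N\in\D^{\geq i}$, we have $H^i(\Gamma_{\mm_y}N)=\Gamma_{\mm_y}(H^iN)=H^iN\neq0$. So $\depth_y(M)\leq i\leq n$, a contradiction.

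The main technical point is making this minimality and genericity argument rigorous for unbounded $M$, namely ensuring that a lowest degree $i\le n$ exists. I expect to handle it by replacing $M$ with its truncation or by working one degree at a time. The argument needs only $N\in\D^{\ge i}$ locally, which can be arranged by applying the argument to $\tau^{\geq i}$ of the stalk complex.

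\emph{Step 3: products and coaisle.} For closed $V$, $\D_V$ is compactly generated by perfect complexes supported in $V$, and $\Gamma_V$ is right adjoint to the inclusion $\D_V\hookrightarrow\D$. Hence $\Gamma_VM\in\D^{>n}$ if and only if $\Hom_\D(S,M)=0$ for all $S\in\D^{\leq n,\cpt}$ with $\Supp S\subseteq V$. The ``if'' direction uses \cref{perfect-approx} as in the proof that the standard $t$-structure is compactly generated: a nonzero cohomology sheaf of $\Gamma_VM$ in degree $\leq n$ is $V$-torsion, so it receives a nonzero map from a coherent sheaf $E$ supported in $V$. That $E$ is then realised as $H^0$ of a compact $S$ with $\Supp S=\Supp E\subseteq V$. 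Consequently $\Phi(p)=\Scal\Perp{0}$ for the set $\Scal$ of such $\Sigma^{-k}S$, with $S$ supported in $\cl{x}$ and $k\geq -p(x)$ appropriately, taken over all $x$. This class is closed under products. Since $\Scal$ is closed under $\Sigma$, it compactly generates a $t$-structure whose coaisle is $\Phi(p)$.
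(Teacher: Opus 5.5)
Steps~1 and~3 are sound, and your overall route is different from the paper's, which simply quotes Foxby--Iyengar \cite[Propositions 2.10, 2.11 and Theorem 2.1]{FI03}. With a correct Step~2, your argument would be self-contained. Step~3 gets the description of $\Phi(p)$ as the right orthogonal of a $\Sigma$-closed set of compacts directly from \cref{perfect-approx}. One detail there: to turn $E\to H^j(\Gamma_VM)$ into a morphism $\Sigma^{-j}S\to\Gamma_VM$, $E$ must map into the cocycles of a representing complex. To keep $\Supp(E)\subseteq V$, represent $\Gamma_VM$ by a complex of $V$-torsion quasi-coherent sheaves, e.g.\ $t_V$ applied to a hoinjective resolution by injective quasi-coherent sheaves.

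The genuine gap is in Step~2 when $\Gamma_VM\notin\D^+$. There need not be a minimal $i\le n$ with $H^i(\Gamma_VM)\neq0$, and a shift does not produce one. Moreover, the equality $H^i(\Gamma_{\mm_y}N)=H^i(N)$ really requires $N\in\D^{\geq i}$. For example, let $(R,\mm,k)$ be regular local of dimension $2$, and let $N$ fit into a triangle $\Sigma^{1-i}R\to N\to\Sigma^{-i}k\xrightarrow{\xi}\Sigma^{2-i}R$ with $0\neq\xi\in\mathrm{Ext}^2_R(k,R)\cong k$. Then $H^i(N)=k$ is supported at $\mm$. But $\Gamma_\mm R\cong\Sigma^{-2}E(k)$, and $\Gamma_\mm\xi$ is a nonzero, hence injective, map $\Sigma^{-i}k\to\Sigma^{-i}E(k)$, so $H^i(\Gamma_\mm N)=0$. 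The same example shows that $H^i(\Gamma_\mm\tau^{\geq i}N)\neq0$ says nothing about $N$. Truncating $M$ does not help either: $\Gamma_y\tau^{\geq m}M\in\D^{>n}$ is not inherited from $\Gamma_yM\in\D^{>n}$, because local cohomology has positive amplitude and connecting maps can create cohomology in degrees $\le n$.

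The repair is to choose the point before the degree, as in the proof of \cref{local-piece}. Let $y$ be a generic point of $\bigcup_{j\le n}\Supp H^j(\Gamma_VM)\subseteq V$, which exists because $X$ is Noetherian. Then every $H^j(N)$ with $j\le n$ is supported at the closed point of $\Spec{\Ocal_{X,y}}$. Hence $\Gamma_{\mm_y}\tau^{\le n}N\cong\tau^{\le n}N$, while $\Gamma_{\mm_y}\tau^{>n}N\in\D^{>n}$. The long exact sequence gives $H^j(\Gamma_{\mm_y}N)\cong H^j(N)$ for all $j\le n$, and one of these is nonzero. This contradicts $\Gamma_yM\in\D^{>n}$, since $(\Gamma_yM)_y\cong\Gamma_{\mm_y}N$.

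Equivalently, you can apply, after shifting, the implication ``$\Gamma_z(L)\in\D^{>0}$ for all $z$ implies $L\in\D^{>0}$'' proved in \cref{local-piece} to $L=\Gamma_VM$. This uses $\Gamma_z\Gamma_VM\cong\Gamma_zM$ for $z\in V$ and $\Gamma_z\Gamma_VM=0$ for $z\notin V$. With this change your proof goes through.
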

\begin{proof}
  This follows from \cite[Propositions 2.10 and 2.11]{FI03}, see also \cite[\S 2.3]{HNS24}. The fact that $\Phi(p)$ is product-closed then follows from \cite[Theorem 2.1]{FI03}. The last claim is then a formal consequence of \cref{product-tstr}, but \cite[Theorem 2.1]{FI03} also directly yields that $\Phi(p)$ is a right orthogonal to a suspension-closed set of compact objects, and thus the coaisle of a compactly generated $t$-structure, see \cite[Appendix 2]{KN13}.
\end{proof}
However, for nonmonotone perversities, our assignment differs from the one of \cite{AB09}, as one cannot expect the local cohomology functors $\Gamma_{\cl{x}}$ to capture the support of an arbitrary subset of $X$. Note here that if $p_Y$ is the perversity corresponding to a subset $Y$ of $X$ defined above, then $p_Y$ is monotone if and only if $Y$ is specialisation-closed.
\subsection{Local to Global principle for semi-Bousfield classes}\label{ss:Local to Global-principle-for-sb}
The good news is that (even cohomological) semi-Bousfield classes admit a straightforward formulation of the Local to Global principle which holds in our setting.
\begin{lemma}\label{local-piece}
  Let $\Ccal = \CSBou{\Ecal} \in \CSBL{\D}$. Then $M \in \Ccal$ if and only if $\Gamma_x(M) \in \Ccal$ for all $x \in X$.
\end{lemma}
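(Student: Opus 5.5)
The forward implication is the easy half. Since $\Gamma_x(M) \cong \Gamma_x(\Ocal_X) \otimes_X^\mathbf{L} M$ and $\Gamma_x(\Ocal_X) \in \Fcal^{\geq 0}$ by \cref{Gamma-Fminus}, \cref{F0preserve} shows that $\Gamma_x(\Ocal_X) \otimes_X^\mathbf{L} -$ preserves the cohomological semi-Bousfield class $\Ccal$. Hence $M \in \Ccal$ implies $\Gamma_x(M) \in \Ccal$ for every $x \in X$.

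For the converse, assume $\Gamma_x(M) \in \Ccal$ for all $x$. The plan is to rebuild $M$ from the pieces $\Gamma_x(M)$ using only operations under which $\Ccal$ is closed. By \cref{csBousfield-closure}, these operations are coproducts, extensions, $\Sigma^{-1}$, pure subobjects, pure quotients and filtered homotopy colimits ($\D$ has a model). The mechanism is Noetherian induction on specialisation-closed subsets. For a specialisation-closed $V \subseteq X$ we consider $\Gamma_V(M) = \Gamma_V(\Ocal_X) \otimes_X^\mathbf{L} M$, and we aim to prove $\Gamma_V(M) \in \Ccal$ for all such $V$. Taking $V = X$ then gives $\Gamma_X(M) = M$.

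We build this up in three steps.

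\emph{Closed subsets.} For $V$ closed, we induct using the descending chain condition: if the claim fails, pick a minimal closed $V$ for which it fails. Choose a generic point $x$ of an irreducible component of $V$, and set $W = V \setminus \{x\}$. The set $W$ is specialisation-closed but need not be closed. Combining the idempotent triangles, we obtain a triangle
\[
\Gamma_W(M) \to \Gamma_V(M) \to \Gamma_x(M) \rightsquigarrow,
\]
where $\Gamma_x(M)$ arises as $\Gamma_V$ followed by localisation away from $W$, identified with $\Gamma_x$ because $x$ is maximal in $V$ for specialisation. Since $\Ccal$ is extension-closed, it suffices to get $\Gamma_W(M) \in \Ccal$.

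\emph{Specialisation-closed subsets.} A specialisation-closed $W$ is the directed union of the closed subsets $\cl{y_1} \cup \dots \cup \cl{y_k}$ with $y_i \in W$. Accordingly, $\Gamma_W(\Ocal_X)$ is the homotopy colimit of the $\Gamma_{Z}(\Ocal_X)$ over these closed $Z \subseteq W$; these are compactly generated, so this follows from \cite{BF11}. Closure of $\Ccal$ under filtered homotopy colimits then reduces $\Gamma_W(M)$ to the closed pieces $Z \subseteq W \subsetneq V$, and those lie in $\Ccal$ by minimality of $V$.

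\emph{Ordering the induction.} To avoid circularity, I would phrase the induction as Noetherian induction on closed subsets $V$, proving for each $V$ that $\Gamma_Z(M) \in \Ccal$ for every closed $Z \subseteq V$. Every closed $Z \subseteq W$ is a proper closed subset of $V$, so the induction is well-founded.

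The main obstacle I expect is the rigorous identification of the cone of $\Gamma_W(M) \to \Gamma_V(M)$ with $\Gamma_x(M)$ when $W$ is not closed. This amounts to the tensor-idempotent calculus $\Gamma_V \Ocal_X \otimes \Lcal_{W}\Ocal_X \cong \Gamma_x \Ocal_X$ for the Balmer–Favi idempotents, together with the check that $x$ is a maximal point of $V$. I would deduce this from \cite[\S 7]{BF11}, where $\Gamma_x$ is defined precisely as $\Gamma_{V}\otimes \Lcal_{W}$ for any specialisation-closed $V \ni x$ with $W = V \setminus \{x\}$ specialisation-closed. A second, milder point is justifying the homotopy-colimit presentation of $\Gamma_W(\Ocal_X)$ as a \emph{filtered} homotopy colimit in the model. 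If that is problematic, I would instead write $\Gamma_W(M)$ as a pure quotient of $\coprod_Z \Gamma_Z(M)$, via the Milnor-type/telescope description, and use closure under coproducts and pure quotients.
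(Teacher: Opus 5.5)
Your proposal is correct. The forward direction is the same as the paper's: it is \cref{F0preserve} applied to $\Gamma_x(\Ocal_X)\in\Fcal^{\geq 0}$. Your converse, however, takes a genuinely different route.

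The paper works with the specific shape of $\Ccal$. Using $[\Gamma_x(M),E]\cong[\Gamma_x(\Ocal_X),[M,E]]$ and Brown--Comenetz duality, it reduces to the statement that $\Gamma_x(N)\in\D^{>0}$ for all $x$ forces $N\in\D^{>0}$. It proves this pointwise, by choosing $x$ in the support of $H^{\leq 0}(N)$ with no proper generalisation there, so that $(N_x)^{\leq 0}$ is supported at $x$.

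You instead run the standard Noetherian-induction proof of the Local to Global principle. You reassemble $M=\Gamma_X(M)$ from the $\Gamma_x(M)$ using the triangles $\Gamma_W(M)\to\Gamma_V(M)\to\Gamma_x(M)\rightsquigarrow$ together with directed homotopy colimits. Your checks are right: $W=V\setminus\{x\}$ is specialisation-closed, and the induction is well-founded. The cone identification is exactly Balmer--Favi's independence of $e_{Y_1}\otimes f_{Y_2}$ from the choice of $Y_1,Y_2$ with $Y_1\setminus Y_2=\{x\}$. To see it, compare $Y_1=V$, $Y_2=W$ with $Y_1=\cl{x}$, $Y_2=\{y\colon x\notin\cl{y}\}$; the latter choice gives the paper's $\Gamma_x$.

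What each route buys:
\begin{itemize}
\item Yours uses only that $\Ccal$ is closed under extensions and filtered homotopy colimits (\cref{csBousfield-closure}). So it applies to any subcategory with those closure properties, and it never has to pass from an arbitrary pure-injective $[M,E]$ to one of the form $N^\bc$.
\item The paper's avoids the idempotent calculus and any homotopy-colimit presentation.
\end{itemize}

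For that presentation, I would not lean on \cite{BF11}; argue directly instead. Let $I$ be a hoinjective complex of injective quasi-coherent sheaves representing $M$. A local section supported in $W$ has closed support with finitely many generic points, all lying in $W$. Hence $t_W(I)$ is the directed union of the subcomplexes $t_Z(I)$ over closed $Z\subseteq W$. Since directed colimits are exact in $\Qcoh{X}$, this is a filtered homotopy colimit.
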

\begin{proof}
  Let us first prove that $\Ccal$ is closed under $\Gamma_x$ for any $x \in X$. For any $E \in \Ecal$ we have 
  $$[\Gamma_x(M),E] \cong [\Gamma_x(\Ocal_X),[M,E]].$$ 
  If $M \in \Ccal$ then $[M,E] \in \WDcal{0}$. It thus remains to observe that $\WDcal{0} \cap \PI$ is closed under $[\Gamma_x(\Ocal_X),-]$, which follows from \cref{F0preserve} and the fact that $\Gamma_x(\Ocal_X) \in \Fcal^{\geq 0}$ by \cref{Gamma-Fminus}. Now assume that $\Gamma_x(M) \in \Ccal$ for all $x \in X$ and let us prove that $M \in \Ccal$. We have $[\Gamma_x(\Ocal_X),[M,E]] \cong [\Gamma_x(M),E]$, and so it suffices to show that for any $M \in \PI$ we have $[\Gamma_x(\Ocal_X),M] \in \WDcal{0}$ for all $x \in X$ implies that $M \in \WDcal{0}$. It is enough to check this for $M=N^\cd$, see \cref{BW-duality}, and thus it remains to check that the following implication holds: 
  $$\Gamma_x(N) \in \D^{>0} ~\forall x \in X \implies N \in \D^{>0}.$$ 
  Towards contradiction, let $x \in \Supp(H^i(N))$ be minimal such that there is $i \leq 0$ with 
  $$H^i(N_x) \cong H^i(N)_x \neq 0.$$ 
  By the assumption, we have $N_y \in \D^{>0}$ for any $y \in X$ such that $x \in \cl{y}$ and $x \neq y$. It follows that $(N_x)^{\leq 0}$ is a complex whose cohomology consists of $\Ocal_{X,x}$-modules supported on $\{x\}$. As a consequence, $\Gamma_x(N) \not\in \D^{>0}$, a contradiction.
\end{proof}
\subsection{Substitute for the Minimality principle}\label{ss:substitute-for-minimality-principle} The Local to Global principle we just established reduces the goal of ``semi-Bousfield stratification'' to analysing what happens inside $\Gamma_x \D$ for each $x \in X$. If a semi-Bousfield class arises from a perversity via the assignment $\Phi$, its intersection with $\Gamma_x \D$ is of the simple form $\Gamma_x \Dcal^{\geq n} \coloneqq \Gamma_x \D \cap \D^{\geq n}$ where $n \in \Zbb^*$. Since these subcategories form a chain isomorphic to $\Zbb^*$, it does not make much sense to talk about ``Minimality principle'' in this setup. Instead, we want to formulate a useful criterion for a semi-Bousfield class to be of this form. In fact in the nonregular case, it will turn out that there are semi-Bousfield classes which are not in the image of the assignment $\Phi$.
\begin{lemma}\label{coloc-PI-Iminus}
  Let $x \in X$ and $E \in \D$.
  \begin{enumerate}
    \item[(i)] We have a natural isomorphism $(E^\bc)^x \cong (E_x)^\cd$ in $\D(\Ocal_{X,x})$.
    \item[(ii)] If $E \in \PI$ then $E^x$ is pure-injective in $\D(\Ocal_{X,x})$.
    \item[(iii)] If $E \in \Ical^-$ then $E^x$ is isomorphic to a bounded above hoinjective complex in $\D(\Ocal_{X,x})$.
  \end{enumerate}
\end{lemma}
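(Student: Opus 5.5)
Part (i) is settled by comparing representing functors, and (ii) and (iii) then follow from (i) together with the structure of pure-injectives and with \cref{scheme-dimensions}.

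\emph{Part (i).} For $N \in \D(\Ocal_{X,x})$ I compute $\Hom_{\D(\Ocal_{X,x})}(N,(E^\bc)^x)$. By adjunction it equals $\Hom_\D(\mathbf{R}(i_x)_*N,E^\bc)$. By the defining property of Brown--Comenetz duality this is
$$\Gamma(X,\mathbf{R}(i_x)_*N \otimes_X^\mathbf{L} E)^\cd.$$
Next I apply the projection formula $\mathbf{R}(i_x)_*N \otimes_X^\mathbf{L} E \cong \mathbf{R}(i_x)_*(N \otimes^\mathbf{L}_{\Ocal_{X,x}} E_x)$. I would justify this as in the proof of \cref{dual-affine}(ii) via \cite[Theorem 3.7]{CH11}, applied to the qcqs morphism $i_x$. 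Since $\Spec{\Ocal_{X,x}}$ is affine, global sections of the direct image give
$$\Gamma(\Spec{\Ocal_{X,x}},N\otimes^\mathbf{L} E_x)^\cd \cong \Hom_{\D(\Ocal_{X,x})}(N,(E_x)^\cd),$$
using \cref{BC-ring}. All isomorphisms are natural in $N$, so Yoneda gives the claim. The main care needed is the projection formula for the non-open immersion $i_x$. If \cite{CH11} does not cover it directly, I would factor $i_x$ through an open affine $U \ni x$: the step $U \hookrightarrow X$ is handled by \cref{dual-affine}(ii), and the step $\Spec{\Ocal_{X,x}} \to U$ is a flat affine map of affine schemes, where everything reduces to module tensor identities.

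\emph{Part (ii).} By \cref{BW-duality}(2), $E$ is a retract of some $F^\bc$. The functor $(-)^x$ is additive, so $E^x$ is a retract of $(F^\bc)^x \cong (F_x)^\cd$ by (i). Character duals are pure-injective in $\D(\Ocal_{X,x})$, again by \cref{BW-duality}(2) read in the affine case (\cref{BC-ring}). Retracts of pure-injectives are pure-injective, which proves (ii).

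\emph{Part (iii).} By \cref{scheme-dimensions}(ii), where $X$ is Noetherian, $E$ may be taken to be a bounded above hoinjective complex of injective quasi-coherent sheaves. The functor $(-)^x$ is right adjoint to $\mathbf{R}(i_x)_*$. I would rather compute it directly: for $N$ a module and $i \ll 0$, I want $\Hom(\Sigma^i N,E^x)=\Hom_\D(\Sigma^i\mathbf{R}(i_x)_*N,E)=0$. Here $\mathbf{R}(i_x)_*N$ has cohomology bounded in degrees $[0,d]$, with $d$ uniform by bounded cohomological dimension (\cite[0CSS]{Stacks}-type bounds, as in \cref{WDcal}). Hence $\Sigma^i\mathbf{R}(i_x)_*N \in \D^{\geq -i}$. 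Since $E$ is bounded above hoinjective, this Hom vanishes once $-i$ exceeds the top degree of $E$. Then the affine analogue of \cref{hoinjectives}(1)$\iff$(2) over the Noetherian ring $\Ocal_{X,x}$ gives that $E^x$ is a bounded above hoinjective complex.

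The hardest point is this uniform bound on $\mathbf{R}(i_x)_*$. I would get it by factoring through an open affine $U$: $\mathbf{R}(i_U)_*$ has finite cohomological dimension by Noetherianity, and the map from $\Spec{\Ocal_{X,x}}$ to $U$ is affine, hence exact on pushforward.
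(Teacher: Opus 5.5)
Your proposal is correct and follows essentially the same route as the paper: (i) is the Yoneda computation via adjunction, the defining property of $(-)^\bc$ and the projection formula; (ii) reduces to retracts of $M^\bc$; and (iii) shows that $\Hom_\D(\Sigma^i\mathbf{R}(i_x)_*N,E)$ vanishes for $i \ll 0$ and then applies \cref{hoinjectives}. One small remark: the uniform upper bound on the cohomology of $\mathbf{R}(i_x)_*N$, which you flag as the hardest point in (iii), is never actually used. Only $\mathbf{R}(i_x)_*N \in \D^{\geq 0}$ enters, and this is automatic for the right derived functor of a left exact functor, exactly as in the paper.
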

\begin{proof}
  $(i)\colon$ For any $N \in \D(\Ocal_{X,x})$, we have 
  $$\Hom_{\D(\Ocal_{X,x})}(N,(E^\bc)^x) \cong \Hom_{\D}(\mathbf{R}(i_x)_* N,E^\bc) \cong \Gamma(X,\mathbf{R}(i_x)_* N \otimes_X^\mathbf{L} E)^\cd \cong $$ $$\cong (N \otimes_{\Ocal_{X,x}}^\mathbf{L} E_x)^\cd \cong \Hom_{\D(\Ocal_{X,x})}(N,(E_x)^\cd)$$
  establishing the isomorphism.

  $(ii)\colon$ By passing to direct summands, it is enough to establish this in the case $E = M^\bc$ for some $M \in \D$. Then this follows from $(i)$, as $E^x = (M^\bc)^x \cong (M_x)^\cd$.

  $(iii)\colon$ Since $E \in \Ical^-$, there is by \cref{scheme-dimensions} an integer $a \in \Zbb$ such that $\Hom_\D(\D^{>a},E) = 0$. Let $F \in \Mod{\Ocal_{X,x}}$, then $\Hom_{\D(\Ocal_{X,x})}(\Sigma ^i F,E^x) \cong \Hom_{\D}(\Sigma ^i \mathbf{R}(i_x)_* F,E)$. If $i<-a$ then $\Sigma ^i \mathbf{R}(i_x)_* F \in \D^{>a}$, making the latter Hom group vanish. The rest follows from \cref{hoinjectives}.
\end{proof}
The following is the promised criterion. Note that the condition $(iii)$ bears certain comparison to the Tensor nonvanishing of \cref{tensor-nonvanishing-classical}. We recall the notation $\widehat{\Ecal} = \{S \otimes E \colon S \in \D^{\leq 0,\cpt}, E \in \Ecal\}$ of \cref{ss:cohomologicalsb}.
\begin{lemma}\label{gen-crit}
  Let $\Ccal = \CSBou{\Ecal} \in \CSBL{\D}$ be a cohomological semi-Bousfield class. Then the following are equivalent:
  \begin{enumerate}
    \item[(i)] $\Ccal$ is in the image of $\Phi$,
    \item[(ii)] for each $x \in X$ and each $M \in \Ccal \cap \Gamma_x \D$ we have $\Gamma_x \Dcal^{\geq \inf(M)} \subseteq \Ccal$.
    \item[(iii)]  For any $x \in X$ and $M \in \Ccal \cap \Gamma_x \D$ we have
    $$M \in \Ccal \implies \Hom_{\D(\Ocal_{X,x})}(\RHom_{\Ocal_{X,x}}(k(x),M),E^x)=0 ~\forall E \in \widehat{\Ecal}$$
  \end{enumerate}
\end{lemma}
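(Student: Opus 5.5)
The plan is to prove the cycle (i)$\Rightarrow$(ii)$\Rightarrow$(i), and then (ii)$\Leftrightarrow$(iii), using the Local to Global principle of \cref{local-piece} throughout.

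\emph{(i)$\Rightarrow$(ii).} If $\Ccal=\Phi(p)$, then $\Ccal=\{M\colon \depth_y(M)>p(y)~\forall y\}$. For $M\in\Gamma_x\D$ we have $\Gamma_y M=0$ for $y\neq x$ and $\Gamma_x M\cong M$. Hence $\Ccal\cap\Gamma_x\D=\Gamma_x\D^{>p(x)}$. Any $M$ in this intersection has $\inf(M)>p(x)$, so $\Gamma_x\D^{\geq\inf(M)}\subseteq\Gamma_x\D^{>p(x)}\subseteq\Ccal$.

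\emph{(ii)$\Rightarrow$(i).} For each $x$, put $p(x)=\inf\{\inf(M)\colon M\in\Ccal\cap\Gamma_x\D\}-1\in\Zbb^*$. The conventions are $p(x)=\infty$ if the intersection is $0$, and $p(x)=-\infty$ if the infima are unbounded below.
\begin{itemize}
\item By (ii), $\Gamma_x\D^{>p(x)}\subseteq\Ccal$. When the infimum is not attained, we take a union over $M$ with decreasing $\inf(M)$.
\item Conversely, $\Ccal\cap\Gamma_x\D\subseteq\Gamma_x\D^{>p(x)}$ holds by the definition of $p$.
\item So $\Ccal\cap\Gamma_x\D=\Gamma_x\D^{>p(x)}=\Phi(p)\cap\Gamma_x\D$.
\end{itemize}
Both $\Ccal$ and $\Phi(p)$ are cohomological semi-Bousfield classes, by \cref{tensorstr-t-costr}. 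So \cref{local-piece} applies to each: $M\in\Ccal$ iff $\Gamma_xM\in\Ccal$ for all $x$, iff $\Gamma_xM\in\Phi(p)$ for all $x$, iff $M\in\Phi(p)$. Hence $\Ccal=\Phi(p)$.

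\emph{(ii)$\Leftrightarrow$(iii).} By \cref{csB-orth}, $N\in\Ccal$ iff $\Hom_\D(N,\widehat{\Ecal})=0$. For $N\in\Gamma_x\D$ we have $N\cong\mathbf{R}(i_x)_*N_x$, so adjunction gives $\Hom_\D(N,E)\cong\Hom_{\D(\Ocal_{X,x})}(N_x,E^x)$. This reduces everything to the local ring $R=\Ocal_{X,x}$ with residue field $k=k(x)$, where the targets $E^x$ are pure-injective by \cref{coloc-PI-Iminus}(ii).

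The key local input is the standard fact that $\Gamma_x\D^{\geq n}$ is the smallest subcategory closed under coproducts, extensions, $\Sigma^{-1}$ and homotopy colimits containing $\Sigma^{-n}k$. This holds because every $R$-module supported on $\{x\}$ has a filtration by copies of $k$, and \cref{csBousfield-closure} supplies the needed closure properties of $\Ccal$. Thus (ii) is equivalent to: for $M\in\Ccal\cap\Gamma_x\D$ with $n=\inf(M)$, we have $\Sigma^{-n}k\in\Ccal$, i.e.\ $\Hom_R(\Sigma^{-n}k,E^x)=0$ for all $E\in\widehat\Ecal$.

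It remains to compare this with $\Hom_R(\RHom_R(k,M),E^x)=0$. The complex $\RHom_R(k,M)$ is a complex of $k$-vector spaces, hence a coproduct of shifts of $k$. The infimum of $M$ is detected by its lowest cohomology, which is semiartinian and so has nonzero socle. This gives $H^{n}\RHom_R(k,M)\neq0$, and the lowest nonzero degree of $\RHom_R(k,M)$ is exactly $n$. So if (iii) holds, $\Sigma^{-n}k$ is a summand of $\RHom_R(k,M)$, and we conclude $\Hom(\Sigma^{-n}k,E^x)=0$, which is (ii). For the converse, (ii) puts all of $\Gamma_x\D^{\geq n}$ in $\Ccal$. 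Now $\RHom_R(k,M)\in\Gamma_x\D^{\geq n}$, so its Hom into $E^x$ vanishes, using the adjunction again.

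The main obstacle is the local statement about socles and $\inf$ when $M$ is unbounded below ($n=-\infty$). There I would argue degreewise: every degree $m$ with $H^m(M)\ne0$ yields a summand $\Sigma^{-m}k$, and these give $\Gamma_x\D^{\geq m}\subseteq\Ccal$ for arbitrarily small $m$. Taking the union over $m$ then gives all of $\Gamma_x\D$.
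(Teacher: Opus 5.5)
Your architecture matches the paper's: \cref{local-piece} reduces everything to $\Gamma_x\D$, you read off a perversity from infima, you build $\Gamma_x\D^{\geq n}$ from $\Sigma^{-n}k$, and you split $\RHom_R(k,M)$ into shifts of $k$. Your shift by $-1$ is in fact the correct normalisation, since $\Phi(p)$ is cut out by the strict inequality $\depth_x>p(x)$. For finite $n=\inf(M)$, your socle argument legitimately replaces the appeal to Foxby--Iyengar, and (ii)$\Rightarrow$(iii) needs no such input at all.

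The gap is in (iii)$\Rightarrow$(ii) when $\inf(M)=-\infty$, exactly where you flag it. You assert that every degree $m$ with $H^m(M)\neq 0$ yields a summand $\Sigma^{-m}k$ of $\RHom_R(k,M)$, but this does not follow from the socle argument. That argument only works in the lowest nonvanishing degree $n$, where $\Hom_\D(k,\Sigma^nM)\cong\Hom_R(k,H^n(M))$. In any other degree you only have the exact sequence
\[
\Hom_\D(k,\Sigma^m\tau^{<m}M)\to\Hom_\D(k,\Sigma^mM)\to\Hom_R(k,H^m(M))\to\Hom_\D(k,\Sigma^{m+1}\tau^{<m}M).
\]
Nothing in your argument rules out that the connecting map is injective on $\Hom_R(k,H^m(M))$, in which case the socle of $H^m(M)$ contributes nothing to $H^m\RHom_R(k,M)$. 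For $M$ unbounded below there is also no lowest degree to start from.

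What you need is that $\RHom_R(k,M)$ is unbounded below whenever $M\in\Gamma_x\D$ is, and this is genuinely nontrivial. The paper takes it from \cite[Theorem 2.1]{FI03}, which gives $\inf\RHom_R(k,M)=\inf(M)$ for arbitrary, possibly unbounded, $M$ supported at the closed point. Alternatively, argue as in the proof of \cref{flat-unbounded}: represent $M$ by a minimal hoinjective complex of copies of $E(k)$ \cite[Appendix B]{Kra05}. Then $\Hom_R(k,M)$ has zero differentials, so $H^m\RHom_R(k,M)$ is the socle of $M^m$, which is nonzero whenever $H^m(M)\neq 0$. With either input your degreewise claim becomes correct.

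A smaller slip: the union of the subcategories $\Gamma_x\D^{\geq m}$ over all $m\in\Zbb$ is only $\Gamma_x\D\cap\D^+$, not $\Gamma_x\D$. This matters at the end of your argument, and also in (ii)$\Rightarrow$(i) when $p(x)=-\infty$ is not attained by a single $M$. In both places you still have to pass to unbounded objects. Represent $N\in\Gamma_x\D$ by a complex of $\Ocal_{X,x}$-modules supported at $x$, and write it as the homotopy colimit of its brutal truncations $\sigma^{>m}N$, which lie in $\Gamma_x\D\cap\D^+\subseteq\Ccal$. Closure of $\Ccal$ under filtered homotopy colimits (\cref{csBousfield-closure}) then gives $\Gamma_x\D\subseteq\Ccal$; this is the last step of the paper's deconstruction.
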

\begin{proof}
  $(i) \iff (ii)\colon$ The direct implication is clear from the description of $\Phi(p)$, so we need to show the converse. Let us define a perversity $p$ on $X$ by setting $p(x)$ to be the infimum of $\inf(M)$, where $M$ runs through $\Ccal \cap \Gamma_x \Dcal$. We claim that $\Ccal = \Phi(p)$. Since both $\Ccal$ and $\Phi(p)$ are cohomological semi-Bousfield classes, by \cref{local-piece} it is enough to show for any $x \in X$ and $M \in \Gamma_x \Dcal$ that $M \in \Ccal \iff M \in \Phi(p)$, but this equivalence follows directly from the condition $(ii)$.

  $(ii) \iff (iii)$. Without loss of generality, we assume $\Ecal = \widehat{\Ecal}$. For any $M \in \Gamma_x \D \subseteq \D(\Ocal_{X,x})$ we have $\Hom_{\D(\Ocal_{X,x})}(M,E^x)=0 ~\forall E \in \widehat{\Ecal}$ if and only if $M \in \Ccal$. Indeed, we have by \cref{csB-orth} that 
  $$M \in \Ccal \iff ~\forall E \in \Ecal\colon 0= \Hom_\D(M,E) \cong \Hom_{\D(\Ocal_{X,x})}(M,E^x) ~\forall E \in \widehat{\Ecal}$$
  Thus the condition $(iii)$ equivalently states that if $M \in \Ccal$ then $\RHom_{\Ocal_{X,x}}(k(x),M) \in \Ccal$. By \cite[Theorem 2.1]{FI03}, we have $\inf(M) = \inf \RHom_{\Ocal_{X,x}}(k(x),M)$, which shows $(ii) \implies (iii)$. To see the converse, recall that $\RHom_{\Ocal_{X,x}}(k(x),M)$ is isomorphic to a split complex whose cohomology is a vector space over the field $k(x)$. Then $(iii)$ implies that $k(x)[-i] \in \Ccal$ for any $i \geq \inf(M)$. We claim that this implies $N \in \Ccal$ for any $N \in \Gamma_x \D^{\geq \inf(M)}$. The argument goes by the standard deconstruction of $N$ to pieces built from $k(x)$. If $N$ is a stalk complex in some degree $i \geq \inf(M)$, then since $\supp(N) \subseteq \{x\}$, we can build $N$ by taking transfinite extensions of $k(x)[-i]$ inside $\Ccal$, which we can do as $\Ccal$ is closed under direct limits. Since $\Ccal$ is closed under extensions, this immediately extends to $N$ with bounded cohomology. If $N \in \D^+$, we can write $N$ as a homotopy colimit of its right soft truncations $N^{<n}$. Finally, if $N$ is general, we can represent it as an actual complex of $\Ocal_{X,x}$-modules supported on $\{x\}$, and then $N$ is a homotopy colimit of its left brutal truncations $\sigma^{>n}(N)$, each of which belongs to $\Ccal$.
\end{proof}
\begin{rmk}
  The passage to the tensor product $k(x) \otimes -$, followed by a similar deconstruction as in the last proof, played a crucial role already in the original stratification argument for commutative Noetherian rings of Neeman \cite{Nee92}. In the later classification story of $t$-structure, the dual trick of applying the internal Hom functor $[k(x),-]$ to the coaisle side of the $t$-structure proved successful, see e.g. \cite{HN21} or \cite{Hrb20}, and the same condition manifests in \cref{gen-crit}. However, while the product closure of coaisles makes it straightforward to check that they are actually closed under $[k(x),-]$ \cite[Proposition 2.3]{Hrb20}, this is much less clear for semi-Bousfield classes. This is what necessitates the commutative algebra analysis of \cref{ss:cohomology-nonvanishing} that we apply in what follows.
\end{rmk}
\subsection{Main result for regular schemes}
Recall that a Noetherian scheme $X$ is called \newterm{regular} provided that $\Ocal_{X,x}$ is a regular local ring for all $x \in X$. If $X$ is a regular scheme, it turns out that our assignment $\Phi\colon \Perv{X} \to \SBL{\D}$ is not only injective but also surjective, establishing that $\D$ is \newterm{semi-Bousfield stratified}.
\begin{theorem}\label{T:regular}
  The following are equivalent for a Noetherian scheme $X$:
  \begin{enumerate}
    \item[(i)] $\Phi$ induces a bijection $\Perv{X} \to \CSBL{\D}$,
    \item[(ii)] $\Phi$ induces a bijection $\Perv{X} \to \SBL{\D}$,
    \item[(iii)] $X$ is regular. 
  \end{enumerate}
\end{theorem}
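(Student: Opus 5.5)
The plan is to prove the cycle $(iii)\Rightarrow(i)\Rightarrow(ii)\Rightarrow(iii)$. Two preliminary facts will be used throughout. First, $\Phi$ is injective: the description $\Phi(p)=\{M \colon \depth_x(M)>p(x)~\forall x\}$ gives $\Phi(p)\cap\Gamma_x\D=\Gamma_x\D^{>p(x)}$. Since $\Gamma_x\D$ contains nonzero objects of every infimum, for instance shifts of $\mathbf{R}(i_x)_*k(x)$, we recover $p(x)$ from $\Phi(p)$. Second, by \cref{tensorstr-t-costr} every semi-Bousfield class is a cohomological one, via $\SBou{Y}=\CSBou{Y^\bc}$. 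So $\operatorname{im}\Phi\subseteq\SBL{\D}\subseteq\CSBL{\D}$.

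\emph{$(i)\Rightarrow(ii)$.} If $\Phi$ maps onto $\CSBL{\D}$, the chain of inclusions above forces $\operatorname{im}\Phi=\SBL{\D}=\CSBL{\D}$. Hence $\Phi$ is a bijection onto $\SBL{\D}$ as well.

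\emph{$(ii)\Rightarrow(iii)$.} Argue by contraposition. Suppose $R=\Ocal_{X,x}$ is not regular, and write $k=k(x)$, viewed in $\D$ as $\mathbf{R}(i_x)_*k$. Consider $\Ccal=\SBou{k}$ and $M=\Sigma^{-1}\Gamma_x(\Ocal_X)\in\Gamma_x\D$. Since $k$ is $x$-torsion, $\Gamma_x(\Ocal_X)\otimes^{\mathbf L}_X k\cong\Gamma_x(k)\cong k$. Therefore $M\otimes^{\mathbf L}k\cong\Sigma^{-1}k\in\D^{>0}$, so $M\in\Ccal$. We have $\inf(M)=\depth R+1=:n$. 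Now $\Sigma^{-n}k\otimes^{\mathbf L}k$ has $H^{n-j}=\Tor_j^R(k,k)$. Because $R$ is singular, $k$ has infinite projective dimension, so these groups are nonzero for all $j\geq 0$. Hence $\Sigma^{-n}k\notin\Ccal$, although $\Sigma^{-n}k\in\Gamma_x\D^{\geq\inf(M)}$. This violates condition (ii) of \cref{gen-crit}, so $\Ccal$ is a semi-Bousfield class outside the image of $\Phi$.

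\emph{$(iii)\Rightarrow(i)$.} This is the main step. I expect the only real obstacle to be the closure property below, since semi-Bousfield classes are not product-closed and so cannot simply be tested against $[k(x),-]$ as coaisles can. The plan is to verify criterion (ii) of \cref{gen-crit} by going through condition (iii) in its reformulated form. That reformulation, in the proof of \cref{gen-crit}, reduces everything to showing:
\begin{quote}
if $\Ccal\in\CSBL{\D}$ and $M\in\Ccal\cap\Gamma_x\D$, then $\RHom_{\Ocal_{X,x}}(k(x),M)\in\Ccal$.
\end{quote}

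Since $R=\Ocal_{X,x}$ is regular, $k$ is quasi-isomorphic to the Koszul complex on a regular system of parameters $r_1,\dots,r_d$. So $\RHom_R(k,M)$ is obtained by iterating $d$ times the construction $N\mapsto\mathrm{fib}(N\xrightarrow{r_i}N)$. Each $r_i$ acts on objects of $\Gamma_x\D$ through morphisms in $\D$, using the identification of $\Gamma_x\D$ with the $x$-supported part of $\D(\Ocal_{X,x})$ given by $\mathbf{R}(i_x)_*$. The triangle $\Sigma^{-1}N\to\mathrm{fib}\to N\rightsquigarrow$ shows that $\mathrm{fib}\in\Sigma^{-1}\Ccal\star\Ccal$. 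By \cref{csBousfield-closure}, $\Ccal$ is closed under $\Sigma^{-1}$ and extensions, so each fibre stays in $\Ccal$ and the iteration stays inside $\Ccal$.

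Consequently every cohomological semi-Bousfield class lies in the image of $\Phi$. Together with injectivity, $\Phi$ is a bijection $\Perv{X}\to\CSBL{\D}$.
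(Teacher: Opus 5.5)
Your proof is correct. The directions $(i)\Rightarrow(ii)$ and $(ii)\Rightarrow(iii)$ are essentially the paper's. The paper uses the class $\SBou{k}\cap\Gamma_x\D$ instead of $\SBou{k}$, and the fact $k\otimes^{\mathbf{L}}_{\Ocal_{X,x}}k\notin\D^+$, which is the same Serre-type input as your nonvanishing of all $\Tor_j(k,k)$.

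Your $(iii)\Rightarrow(i)$, however, takes a genuinely different route. The paper checks condition (iii) of \cref{gen-crit} in its Hom-vanishing form. From $[M,E]\in\WDcal{0}$ it deduces $\RHom_{\Ocal_{X,x}}(M,E^x)\in\D^{<0}$. It then uses the Avramov--Foxby isomorphism $\theta$ of \cref{avramov-foxby}(i), via a comparison of suprema, to transport this bound to $\RHom_{\Ocal_{X,x}}(\RHom_{\Ocal_{X,x}}(k,M),E^x)$.

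You instead prove the equivalent closure statement $\RHom_{\Ocal_{X,x}}(k,M)\in\Ccal$ directly. Via the Koszul resolution, $\RHom_{\Ocal_{X,x}}(k,M)$ lies in the closure of $\{M\}$ under $\Sigma^{-1}$ and extensions, and $\Ccal$ has both closure properties by \cref{csBousfield-closure}. This is more elementary: it needs nothing about the objects $E^x$, only the formal closure properties of $\Ccal$.

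Its limitation is that it depends on $k$ being perfect with a free resolution in nonpositive degrees. Over a singular stalk, $\RHom(k,M)$ is only a homotopy limit of such iterated fibres, and the closure genuinely fails. Your own example shows this: for $\Ccal=\SBou{k}$ and $M=\Sigma^{-1}\Gamma_x(\Ocal_X)$, closure would force $\Sigma^{-\inf(M)}k\in\SBou{k}$.

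The paper's comparison of suprema is designed to survive exactly there. Replacing \cref{avramov-foxby}(i) by \cref{tensor-vanishing-dual}, the same argument proves \cref{T:main} for classes generated by objects of $\Fcal^+$. Your explicit injectivity check for $\Phi$ is a point the paper leaves implicit.
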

\begin{proof}
  $(i) \implies (ii)\colon$ Clear.

  $(ii) \implies (iii)\colon$ We prove the converse implication by contradiction, so let $x \in X$ be such that $\Ocal_{X,x}$ is a singular local ring. Let $k = k(x)$ be the residue field sheaf at $x$. Let $\Ccal = \SBou{k} \cap \Gamma_x \D = \SBou{k,\coprod_{y \neq x}\Sigma^\infty \Gamma_y(\Ocal_X)} \in \SBL{\D}$. We claim that $\Ccal$ is not in the image of $\Phi$. To that aim, observe first that $\Sigma^n k \not\in \Ccal$ for any $n \in \Zbb$. Indeed, the singularity of $\Ocal_{X,x}$ ensures that $k \otimes_X^\mathbf{L} k \cong k \otimes_{\Ocal_{X,x}}^\mathbf{L} k \not\in \D^+$ by \cite[0AVJ]{Stacks}. On the other hand, $\Sigma^{-1}\Gamma_{x}(\Ocal_X)$ belongs to $\Ccal$, as $\Sigma^{-1}\Gamma_{x}(\Ocal_X) \in \Gamma_x \D$ and $\Sigma^{-1}\Gamma_{x}(\Ocal_X) \otimes_R^\mathbf{L} k \cong \Sigma^{-1}k \in \D^{>0}$. Then $\Ccal$ does not belong to the image of $\Phi$ by \cref{gen-crit}.

  $(iii) \implies (i)\colon$  We will check the condition $(iii)$ of \cref{gen-crit} for a cohomological semi-Bousfield class $\Ccal = \CSBou{\Ecal}$. Let $x \in X$, $M \in \Ccal \cap \Gamma_x \Dcal$, and $E \in \widehat{\Ecal}$. Since $X$ is regular we can apply \cref{avramov-foxby} in order to obtain
  \begin{equation}
    \label{eq-sup-RHom}
      \sup \RHom_{\Ocal_{X,x}}(\RHom_{\Ocal_{X,x}}(k,M),E^x) = \sup(k \otimes_{\Ocal_{X,x}}^\mathbf{L}  \RHom_{\Ocal_{X,x}}(M,E^x)).
  \end{equation}
  Since $M \in \Ccal$, we have $[M,E] \in \WDcal{0}$. Since $M \in \Gamma_x \D$, we have $[M,E] \cong \RHom_{\Ocal_{X,x}}(M,E^x)$. Also, we have for any $i\geq 0$ the isomorphism 
  $$0 = \Hom_{\D}(\Sigma^{i}\Ocal_X,[M,E]) \cong \Hom_{\D(\Ocal_{X,x})}(\Sigma^{i}\Ocal_{X,x},\RHom_{\Ocal_{X,x}}(M,E^x)),$$ 
  showing that $\RHom_{\Ocal_{X,x}}(M,E^x) \in \D^{<0}$. Then we also have
  $$k \otimes_{\Ocal_{X,x}}^\mathbf{L}  \RHom_{\Ocal_{X,x}}(M,E^x) \in \D^{<0},$$
  which together with \cref{eq-sup-RHom} yields
  $$\RHom_{\Ocal_{X,x}}(\RHom_{\Ocal_{X,x}}(k,M),E) \in \D^{<0}.$$ 
  This implies the desired vanishing 
  $$\Hom_{\D(\Ocal_{X,x})}(\RHom_{\Ocal_{X,x}}(k,M),E)=0$$ 
  which in view of \cref{gen-crit} verifies that $\Ccal$ belongs to the image of $\Phi$.
  \end{proof}
\subsection{Main result for singular schemes}
\label{ss:main-result}
In view of \cref{T:regular}, we cannot expect $\Phi$ to be a bijection onto $\SBL{\D}$ if $X$ is singular. We can however describe the image of $\Phi$ in a satisfactory way. Given a subcategory $\Ccal$ of $\D$, let $\SBL{\Ccal}$ denote the sublattice of $\SBL{\D}$ consisting of semi-Bousfield classes of the form $\SBou{\Xcal}$ for a subcategory $\Xcal$ of $\Ccal$. 

\begin{theorem}\label{T:main}
  Let $X$ be a Noetherian scheme. Then $\Phi$ induces a bijection 
  $$\Perv{X} \to \SBL{\Fcal^+}.$$
\end{theorem}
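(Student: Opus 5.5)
The plan is to establish three things: that $\Phi$ lands in $\SBL{\Fcal^+}$, that $\Phi$ is injective, and that every element of $\SBL{\Fcal^+}$ is hit by verifying condition $(iii)$ of \cref{gen-crit}. The surjectivity argument should run parallel to the implication $(iii)\implies(i)$ of \cref{T:regular}. There, regularity of $X$ was only used to make the Avramov--Foxby type identity \cref{eq-sup-RHom} available. Here, finite Tor-dimension of the generators should instead force the relevant pure-injectives to have finite injective dimension locally, and in that situation the identity holds without any regularity hypothesis.

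\emph{Landing in $\SBL{\Fcal^+}$.} For each $x \in X$ we have $\Gamma_x(\Ocal_X) \in \Fcal^{\geq 0}$ by \cref{Gamma-Fminus}, so every integer shift $\Sigma^n\Gamma_x(\Ocal_X)$ lies in $\Fcal^+$. The only issue is $p(x)=\infty$, since $\Sigma^\infty\Gamma_x(\Ocal_X)$ is not in $\Fcal^+$. We avoid this by rewriting $\Phi(p)$ as $\SBou{\Xcal}$ with
$$\Xcal=\{\Sigma^{p(x)}\Gamma_x(\Ocal_X) : p(x)\in\Zbb\}\cup\{\Sigma^n\Gamma_x(\Ocal_X) : p(x)=\infty,\ n\in\Zbb\}\subseteq\Fcal^+.$$
The terms with $p(x)=-\infty$ are zero and can be dropped.

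\emph{Injectivity.} We recover $p$ from $\Phi(p)$ by testing on shifts of $k(x)$, viewed in $\Gamma_x\D$. Since $\Gamma_y(k(x))=0$ for $y\neq x$ and $\Gamma_x(k(x))\cong k(x)$, the depth description of $\Phi(p)$ gives
$$\Sigma^{-n}k(x)\in\Phi(p) \iff n>p(x).$$
Hence $p(x)=\inf\{n : \Sigma^{-n}k(x)\in\Phi(p)\}-1$, which reads off the values $\pm\infty$ correctly as well.

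\emph{Surjectivity.} Let $\Xcal\subseteq\Fcal^+$ and $\Ccal=\SBou{\Xcal}$. By \cref{tensorstr-t-costr}, $\Ccal=\CSBou{\Ecal}$ with $\Ecal=\{Y^\bc : Y\in\Xcal\}$. By \cref{FIdual}(ii), $\Ecal\subseteq\PI\cap\Ical^-$. For $S\in\D^{\leq0,\cpt}$ we have $S\otimes E\cong[S^*,E]$ with $S^*\in\Fcal^+$ by \cref{compactsflat}, so $\widehat{\Ecal}\subseteq\PI\cap\Ical^-$ by \cref{Hom-PI} and \cref{FIdual}(iii). Then \cref{coloc-PI-Iminus}(iii) shows that for every $E\in\widehat{\Ecal}$ and $x\in X$, the costalk $E^x$ is a bounded above hoinjective complex over the local ring $\Ocal_{X,x}$, i.e.\ it has finite injective dimension. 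Now we follow the proof of \cref{T:regular} verbatim for $M\in\Ccal\cap\Gamma_x\D$: we get $\RHom_{\Ocal_{X,x}}(M,E^x)\cong[M,E]\in\D^{<0}$, hence $k\otimes^{\mathbf L}\RHom(M,E^x)\in\D^{<0}$. In place of the regular version of \cref{avramov-foxby} we invoke its finite-injective-dimension variant from Appendix~\ref{ss:cohomology-nonvanishing}:
$$\sup\RHom(\RHom(k,M),E^x)=\sup\bigl(k\otimes^{\mathbf L}\RHom(M,E^x)\bigr).$$
This yields the vanishing in \cref{gen-crit}$(iii)$, and so $\Ccal$ lies in the image of $\Phi$.

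The main obstacle is this last identity in the required generality. $M$ is an arbitrary, possibly unbounded, object of $\Gamma_x\D$, and $E^x$ is bounded above but not bounded, so the classical Avramov--Foxby formulas for bounded complexes do not apply directly. My first attempt would be a truncation argument. Write $M$ as a homotopy colimit of its brutal truncations, and use that $\RHom(-,E^x)$ converts these into homotopy limits whose $\sup$ is controlled, because $E^x$ is bounded above and consists of injectives. Then reduce to bounded $M$, where the formula follows from tensor evaluation $\RHom(\RHom(k,M),E^x)\cong k\otimes^{\mathbf L}\RHom(M,E^x)$; this evaluation is valid because $k$ is finitely generated and $E^x$ has finite injective dimension. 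This is presumably exactly the generalisation to unbounded complexes promised in the appendix, so I would cite it once it is in place.
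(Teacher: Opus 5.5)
Your landing and injectivity steps are fine. Your handling of $p(x)=\infty$ is more careful than the paper's bare appeal to \cref{Gamma-Fminus}, since $\Sigma^\infty\Gamma_x(\Ocal_X)\notin\Fcal^+$. The gap is in surjectivity, at the identity $\sup\RHom(\RHom(k,M),E^x)=\sup(k\otimes^{\mathbf L}\RHom(M,E^x))$ for unbounded $M$.

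You keep only the information that $E^x$ has finite injective dimension, but the appendix does not give the identity in that generality. \cref{tensor-vanishing-dual} requires $E^x=F^\cd$ with $F\in\Fcal^+$, and the paper says explicitly that it does not know whether this can be relaxed to $E\in\Ical^-$.

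Your truncation sketch does not fill this gap. The argument of \cref{T:regular} uses the inequality ``left side $\leq$ right side''. Suppose $M\in\Gamma_x\D\setminus\D^+$ and $\RHom(k,E^x)\neq 0$. Then $\RHom(k,M)$ is a graded $k$-vector space that is unbounded below, so the left side is $+\infty$. You must therefore show, in particular, that $\RHom(M,E^x)\cong\operatorname{holim}_n\RHom(\sigma^{\geq -n}M,E^x)$ is unbounded above. That is a nonvanishing statement about a homotopy limit. The Milnor $\lim$--$\lim^1$ sequence only bounds the cohomology of a homotopy limit from above: unboundedness of the individual terms can be killed by the transition maps. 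Nor can you pass the bounded Avramov--Foxby isomorphism to the limit. When $\Ocal_{X,x}$ is singular, $\RHom(k,-)$ does not commute with homotopy colimits and $k\otimes^{\mathbf L}-$ does not commute with homotopy limits (cf.\ \cref{avramov-foxby-unbounded-obstruction}).

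The missing idea is to keep track of where $E$ comes from instead of weakening to finite injective dimension. Every $E\in\widehat{\Ecal}$ is itself a Brown--Comenetz dual $G^\bc$ with $G\in\Fcal^+$, because $S\otimes Y^\bc\cong[S^*,Y^\bc]\cong(S^*\otimes Y)^\bc$ and $S^*\otimes Y\in\Fcal^+$. Hence $E^x\cong(G_x)^\cd$ by \cref{coloc-PI-Iminus}(i), and $\RHom(M,E^x)\cong(M\otimes^{\mathbf L}G_x)^\cd$. The required unboundedness then becomes $M\otimes^{\mathbf L}G_x\notin\D^+$.

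That statement is \cref{flat-unbounded}, the real technical input of the theorem, and the paper proves it without truncation. It takes a minimal hoinjective model of $M$ in $\K(\Inj{\Lcal})$, whose differentials kill socles. It replaces $G_x$ by a bounded below complex of $\mm$-adically completed free modules whose differentials have entries in $\mm$. It then uses Krause's left adjoint $Q_\lambda$ to produce nonzero maps $\Sigma^{-l}k\to M\otimes G_x$ for infinitely many $l\ll 0$. Through \cref{tensor-vanishing} and \cref{tensor-vanishing-dual} this yields the identity, and from there your argument runs as in the paper.
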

\begin{proof}
  By \cref{Gamma-Fminus}, the assignment $\Phi\colon \Perv{X} \to \SBL{\D}$ lands inside $\SBL{\Fcal^+}$. It remains to show that the image is the whole of $\SBL{\Fcal^+}$.

  Let $\Ccal = \CSBou{\Ecal}$, where $\Ecal = \Gcal^\bc$ for a choice of $\Gcal \subseteq \Fcal^+$. Fix $x \in X$ and $M \in \Ccal \cap \Gamma_x \Dcal$, and let us check the equivalent conditions of \cref{gen-crit} for $\Ccal$. Note first that for any $S \in \D^{\leq 0,\cpt}$, the functor $S \otimes_X^\mathbf{L} -$ preserves $\Ical^- \cap \PI$, we can thus assume $\Ecal = \widehat{\Ecal}$. Next, also the functor $(-)^x$ preserves $\Ical^- \cap \PI$ by \cref{coloc-PI-Iminus}. We see that $E^x \in \Ical^-$ for any $E \in \widehat{\Ecal}$. Let $G \in \Gcal$ be such that $E=G^\bc$ so that $E^x = (G_x)^\cd$ by \cref{coloc-PI-Iminus}. Therefore, \cref{tensor-vanishing-dual} yields 
  $$\sup \RHom_{\Ocal_{X,x}}(\RHom_{\Ocal_{X,x}}(k,M),E^x) = \sup(k \otimes_{\Ocal_{X,x}}^\mathbf{L}  \RHom_{\Ocal_{X,x}}(M,E^x)),$$
  and we can thus follow the same argument as in the proof of \cref{T:regular}.
\end{proof}
Let $\Fcal^b = \Fcal^+ \cap \Tcal^-$ and observe that, in view of \cref{scheme-dimensions}, an object $F \in \D$ belongs to $\Fcal^b$ if and only if $F$ can be represented by a bounded hoflat complex. Thus, the class $\Fcal^b$ consists precisely of objects of \newterm{finite Tor dimension} \cite[08FY]{Stacks}. If $X$ is semi-separated, this is further equivalent to $F$ being representable by a bounded complex of flat quasi-coherent sheaves and in that case we also say that $F$ is of \newterm{finite flat dimension}. With this setup, \cref{T:main} restricts to the formulation of \cref{IT:main}.
\begin{cor}
  \label{C:main-finite-tor}
  Let $X$ be a Noetherian scheme. Then $\Phi$ induces a bijection 
  $$\Perv{X} \to \SBL{\Fcal^b}.$$
  In particular, $\SBL{\Fcal^b} = \SBL{\Fcal^+}$.
\end{cor}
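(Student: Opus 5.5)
The plan is to sandwich $\SBL{\Fcal^b}$ between the image of $\Phi$ and $\SBL{\Fcal^+}$, and then apply \cref{T:main}. Since $\Fcal^b \subseteq \Fcal^+$, every class of the form $\SBou{\Xcal}$ with $\Xcal \subseteq \Fcal^b$ lies in $\SBL{\Fcal^+}$. Hence $\SBL{\Fcal^b} \subseteq \SBL{\Fcal^+}$, and by \cref{T:main} the latter is exactly the image of $\Phi$, with $\Phi$ injective onto it. It therefore suffices to show that every $\Phi(p)$ lies in $\SBL{\Fcal^b}$, i.e. that $\Phi(p) = \SBou{\Xcal}$ for some $\Xcal \subseteq \Fcal^b$. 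This gives $\mathrm{Im}(\Phi) \subseteq \SBL{\Fcal^b} \subseteq \SBL{\Fcal^+} = \mathrm{Im}(\Phi)$, so all three coincide and both claims follow at once.

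The natural candidate is $\Xcal = \{\Sigma^{p(x)}\Gamma_x(\Ocal_X) \colon x \in X\}$, but $\Gamma_x(\Ocal_X)$ need not be bounded above, and for $p(x)=\infty$ we get $\Sigma^\infty$, which is not bounded either. For $p(x)=\pm\infty$ we simply replace the generator. If $p(x)=-\infty$ the summand is $0$ and can be dropped. If $p(x)=\infty$ we replace $\Sigma^\infty \Gamma_x(\Ocal_X)$ by the collection $\{\Sigma^n \Gamma_x(\Ocal_X) \colon n \in \Zbb\}$; this has the same semi-Bousfield class because $\SBou{\coprod_i X_i} = \bigcap_i \SBou{X_i}$. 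So it suffices to find, for each $x$ and $n \in \Zbb$, an object $K \in \Fcal^b$ with $\SBou{K} = \SBou{\Sigma^n \Gamma_x(\Ocal_X)}$, i.e. with $\{M \colon \depth_x(M) > n\}$ as its class.

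For this I would use a Koszul-type object localized at $x$. Choose an affine open $U \ni x$ and a system of generators $f_1,\dots,f_r$ of the prime of $x$ in $\Ocal_X(U)$. Set $K = \mathbf{R}(i_x)_*\bigl(K(\underline f)_x\bigr)$, or equivalently $\Gamma_{\cl{x}}$ applied to a perfect Koszul complex followed by stalk at $x$. In fact the simplest route is $K_x := \Gamma_x(\Ocal_X)\otimes K(\underline f)$: Koszul complexes are perfect, and the local cohomology complex $\Gamma_x(\Ocal_X)$ is a bounded complex of flat modules (the stable Koszul/Čech complex, localized), so $\Gamma_x(\Ocal_X)$ itself is already in $\Fcal^b$. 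I would first check this: locally $\Gamma_{\cl x}(\Ocal)_x$ is the Čech complex $\check{C}(\underline f)\otimes \Ocal_{X,x}$, a bounded complex of flat $\Ocal_{X,x}$-modules. Moreover $\mathbf{R}(i_x)_*$ of a flat module has bounded Tor-dimension, since its stalks are localizations. If that works, no replacement is needed and $\Gamma_x(\Ocal_X) \in \Fcal^b$ directly, so $\Xcal \subseteq \Fcal^b$.

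The main obstacle is verifying boundedness above of $\Gamma_x(\Ocal_X)$ in $\D$, specifically that $\mathbf{R}(i_x)_*$ does not introduce unbounded higher cohomology when $X$ is not semi-separated. I would handle this by Noetherianity, using finite cohomological dimension of direct images via a finite affine cover (\cite[071L]{Stacks}). Alternatively I would compute the stalks $\Gamma_x(\Ocal_X)_y$, which are localizations of a bounded flat complex, and show they are bounded, using the flat-dimension characterization of \cref{scheme-dimensions}. Should this fail, the fallback is the Koszul-tensored object $\Gamma_x(\Ocal_X)\otimes K$ with $K$ perfect supported on $\cl{x}$. This has the same depth-detection at $x$ by \cite[Theorem 2.1]{FI03}-type formulas, shifted by $r$, so one adjusts $n$ accordingly.
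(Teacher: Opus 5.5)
Your proof is correct and follows the paper's argument. The image of $\Phi$ lands in $\SBL{\Fcal^b}$ because each $\Gamma_x(\Ocal_X)$ lies in $\Fcal^{\geq 0}$ (\cref{Gamma-Fminus}) and is bounded above (finite cohomological dimension of $\mathbf{R}(i_x)_*$ applied to a bounded local-cohomology complex), after which $\SBL{\Fcal^b}\subseteq\SBL{\Fcal^+}$ and \cref{T:main} close the sandwich. Your replacement of $\Sigma^{\infty}\Gamma_x(\Ocal_X)$ by the family $\{\Sigma^n\Gamma_x(\Ocal_X)\}_{n\in\Zbb}$ is the right reading of the paper's implicit step, since $\SBL{-}$ allows subcategories, and the Koszul fallback is unnecessary.
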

\begin{proof}
  The image of the assignment $\Phi$ in fact lands inside $\SBL{\Fcal^b}$ as $\Gamma_x(\Ocal_X) \in \Tcal^-$ for each $x \in X$, see e.g. \cite[02UZ]{Stacks}. The rest is a direct consequence of \cref{T:main}.
\end{proof}
\begin{cor}
  Let $\Xcal$ be a subcategory of $\D$ consisting of bounded above hoflat complexes. Then there is a perversity $p \in \Perv{X}$ such that $\SBou{\Xcal} = \Phi(p)$.
\end{cor}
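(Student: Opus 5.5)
The idea is to verify the criterion of \cref{gen-crit} directly for $\Ccal = \SBou{\Xcal}$. One cannot simply quote \cref{T:main}: a bounded above hoflat complex need not lie in $\Fcal^+$. For instance, a flat resolution of $k(x)$ at a singular point is bounded above and hoflat, but $k(x) \otimes_X^\mathbf{L} -$ does not preserve $\D^+$. So instead I would rerun the local argument of the proof of \cref{T:regular}, replacing the dimension-theoretic input (\cref{avramov-foxby}, \cref{tensor-vanishing-dual}) by a direct tensor-evaluation argument tailored to bounded above flat complexes.

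\textbf{Step 1 (setup).} By \cref{tensorstr-t-costr} we have $\Ccal = \CSBou{\Xcal^\bc}$, which is a cohomological semi-Bousfield class. Hence \cref{local-piece} and \cref{gen-crit} apply, and it suffices to check condition (ii) of \cref{gen-crit}. Fix $x \in X$, set $R = \Ocal_{X,x}$ and $k = k(x)$, and take $M \in \Ccal \cap \Gamma_x \D$, viewed inside $\D(R)$. For $G \in \Xcal$, the stalk $G_x$ is a bounded above complex of flat $R$-modules. Since $M$ is supported at $x$, we have $M \otimes_X^\mathbf{L} G \cong \mathbf{R}(i_x)_*(M \otimes_R G_x)$, and this object again lies in $\Gamma_x\D$. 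Thus $M \in \Ccal$ if and only if $M \otimes_R G_x \in \D^{>0}$ for all $G \in \Xcal$.

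\textbf{Step 2 (reduction to $\RHom_R(k,M)$).} As in the proof of \cref{gen-crit}, the object $\RHom_R(k,M)$ is a direct sum of shifts $\Sigma^{-i}k$ over all $i$ with nonzero socle cohomology. By \cite[Theorem 2.1]{FI03} its infimum equals $\inf(M)$. It therefore suffices to show
$$M \otimes_R G_x \in \D^{>0} \implies \RHom_R(k,M) \otimes_R G_x \in \D^{>0}.$$
Once this holds, $\Sigma^{-\inf M}k \in \Ccal$, and the deconstruction argument in the proof of \cref{gen-crit} then gives $\Gamma_x\D^{\geq \inf(M)} \subseteq \Ccal$.

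\textbf{Step 3 (tensor evaluation, the main obstacle).} I would show that the natural evaluation map
$$\RHom_R(k,M) \otimes_R G_x \to \RHom_R(k, M \otimes_R G_x)$$
is an isomorphism, or at least an isomorphism on cohomology in degrees $\leq 0$. Granting this, \cite[Theorem 2.1]{FI03}, applied to $M \otimes_R G_x \in \Gamma_x\D$, gives
$$\inf \RHom_R(k,M\otimes_R G_x) = \inf(M \otimes_R G_x) > 0,$$
which is exactly the implication required in Step 2.

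The difficulty is the boundedness hypotheses. Let $P \to k$ be a resolution by finite free $R$-modules, which is bounded above. For a finite brutal truncation $P_n$ of $P$, evaluation is trivially an isomorphism, since $\Hom_R(P_n,-)$ is a finite direct sum of shifts of the identity and commutes with $-\otimes_R G_x$. I would therefore try to pass to the limit over $n$. The flatness of each $G_x^j$ makes $-\otimes_R G_x^j$ exact and lets it commute with $\Hom_R(P^i,-)$ for finite free $P^i$. The risk lies in the infinite products appearing in $\Hom_R(P,M)$ when $M$ is unbounded. If this fails, a fallback is to pass to the dual side via $(-)^\cd$. There the condition becomes $\Hom_R(\RHom_R(k,M),(G_x)^\cd) = 0$, with $(G_x)^\cd$ a bounded below complex of injectives. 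One would then compare $\RHom_R(\RHom_R(k,M),(G_x)^\cd) \cong (\RHom_R(k,M)\otimes_R G_x)^\cd$ with $k \otimes_R^\mathbf{L} (M\otimes_R G_x)^\cd$. This uses a Hom-evaluation isomorphism, which is better behaved for bounded below injective targets and a pseudo-coherent first argument.
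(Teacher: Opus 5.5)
Your Step~3 is not merely hard to justify. It is false, and so is the statement in the reading you adopt.

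You are right that a hoflat complex bounded above in cohomological degrees need not lie in $\Fcal^+$; your example is a flat resolution $G$ of $k(x)$ at a point $x$ with $R=\Ocal_{X,x}$ singular. But for this very $G$ the class $\SBou{G}=\SBou{k(x)}$ is \emph{not} in the image of $\Phi$. This is the argument of (ii)$\Rightarrow$(iii) in \cref{T:regular}.

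Concretely, let $M=\Sigma^{-1}\Gamma_x(\Ocal_X)$, i.e.\ $\Sigma^{-1}\mathbf{R}\Gamma_\mm R$. Then $M\otimes^{\mathbf L}_R k\cong\Sigma^{-1}k\in\D^{>0}$, so $M\in\SBou{k(x)}\cap\Gamma_x\D$ with $\inf M=\depth R+1$. However, $\RHom_R(k,M)\cong\Sigma^{-1}\RHom_R(k,R)$ has $\Sigma^{-1-\depth R}k$ as a direct summand. Hence $\RHom_R(k,M)\otimes^{\mathbf L}_R k$ contains $\Sigma^{-1-\depth R}(k\otimes^{\mathbf L}_R k)$, which is unbounded below because $\Tor_i^R(k,k)\neq 0$ for all $i$. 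Three things follow:
\begin{enumerate}
\item[(a)] The implication in your Step~2 fails.
\item[(b)] The evaluation map of Step~3 is not an isomorphism in degrees $\leq 0$: its target $\Sigma^{-1}\RHom_R(k,k)$ lies in $\D^{>0}$, and its source does not.
\item[(c)] No $\Sigma^n k(x)$ belongs to $\SBou{k(x)}$, so condition (ii) of \cref{gen-crit} is violated.
\end{enumerate}
Note that this $M$ is bounded. The obstruction is not the infinite products coming from unbounded $M$, but the infinite flat dimension of $G_x$.

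Your dual fallback fails for the same reason. The Hom-evaluation map $\theta$ of \cref{avramov-foxby} needs a target of finite injective dimension ($E\in\Ical^-$). But $(G_x)^\cd$ is a bounded below complex of injectives, typically of infinite injective dimension; that is the bad case, not the good one.

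The statement has to be read with the boundedness that appears in \cref{scheme-dimensions}(i), and this is how the paper's proof uses it. That means hoflat complexes bounded in the direction of finite flat dimension: cohomologically bounded below, or ``bounded above'' in homological indexing. These are exactly the objects of $\Fcal^+$. The proof is then one line: $\Xcal\subseteq\Fcal^+$ by \cref{scheme-dimensions}(i), so $\SBou{\Xcal}\in\SBL{\Fcal^+}$, and \cref{T:main} identifies $\SBL{\Fcal^+}$ with the image of $\Phi$.

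If you rerun your Steps~1--3 with $G_x\in\Fcal^+$, you are reproving \cref{T:main}. Step~3 then becomes \cref{avramov-foxby}(iii) for $M\in\D^+$. For unbounded $M$, where your worry about products is genuinely the issue, it is replaced by the infimum comparison of \cref{tensor-vanishing}, dualised in \cref{tensor-vanishing-dual}, whose proof rests on \cref{flat-unbounded}.
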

\begin{proof}
  Combine \cref{T:main} with \cref{scheme-dimensions}.
\end{proof}

\section{Compactly generated $\otimes$-$t$-structures in $\D_{\qc}(X)$}
\label{s:tstructures}
The goal of this section is to show that the restriction of \cref{T:main} to monotone perversities, relatively quickly recovers the classification of compactly generated $\otimes$-$t$-structures achieved in \cite[Theorem 3.11]{AJS10} for the affine case and in \cite[Theorem 4.11]{DS23} for general Noetherian schemes. An exposition of these classification results from the viewpoint of perversity functions is available in \cite{Sah24}. 

\begin{prop}\label{scheme-tstr}
  Let $p$ be a perversity on $X$. The following are equivalent:
  \begin{enumerate}
    \item[(i)] $p$ is monotone,
    \item[(ii)] $\Phi(p)$ is closed under products in $\D$. 
  \end{enumerate}
\end{prop}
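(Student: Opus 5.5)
(i)$\Rightarrow$(ii) is immediate from \cref{monotone-perv}: for monotone $p$, $\Phi(p)$ is described by nonlocal depth, hence closed under products (via \cite[Theorem 2.1]{FI03}).

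For (ii)$\Rightarrow$(i) I argue by contraposition. Suppose $p$ is not monotone, so there are $x \rightsquigarrow y$ (that is, $y \in \cl{x}$, $y\neq x$) with $p(x) > p(y)$. In particular $p(y) < \infty$ and $p(x) > -\infty$. Since $X$ is Noetherian, I reduce to the local situation: replace $x$ by a generisation and $y$ by a specialisation within $\cl{x}$ so that the comparison happens inside $\Spec{\Ocal_{X,y}}$. The aim is a family of objects in $\Phi(p)$ supported on $\{y\}$ whose product acquires nonzero local cohomology at $x$ in low degree. Set $n = p(y)+1$ (if $p(y)=-\infty$, any integer $n$ will do). Take $R=\Ocal_{X,y}$ and $M = \Sigma^{-n}E(k(y))$, i.e.\ the injective hull of the residue field placed in degree $n$, pushed forward via $\mathbf{R}(i_y)_*$. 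Then $M \in \Gamma_y\D$ with $\depth_y(M) = n > p(y)$. Since its support is $\{y\}$, we get $\Gamma_z(M)=0$ for $z\ne y$, and so every coproduct of shifts $\Sigma^{-n-j}M$ with $j \geq 0$ lies in $\Phi(p)$. I will use the countable family $M_j=\Sigma^{-n}(\text{suitable }y\text{-torsion modules})$, for instance $R/\mathfrak m_y^j$ in degree $n$, or simply copies of $E(k(y))$.

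The key step is to compute the product $P=\prod_j M_j$ in $\D$ and show $\Gamma_x(P)$ has nonzero cohomology in degree $\leq p(x)$, or more precisely in degree $n \le p(x)$. Products of complexes concentrated in one degree with injective (or arbitrary) quasi-coherent terms: over the affine open around $y$, pushing forward along an affine map preserves products, so the product is computed as the module product in degree $n$ (up to the bounded error from \cref{WDcal}, which vanishes here since I work on an affine neighbourhood and $\mathbf{R}(i_U)_*$ is a right adjoint preserving products). Choose the factors so that the module product $\prod_j R/\mathfrak m_y^j$ contains $\widehat{R}$, which is not $y$-torsion. The localisation at $\mathfrak p_x$ is then nonzero, and the goal is $H^n(\Gamma_x P)\neq 0$, using that $\depth$ of $\widehat R_{\mathfrak p_x}$-type modules is finite. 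It is cleaner to choose $x$ minimal, i.e.\ an immediate generisation achieved by first reducing the chain. If additionally $x$ can be taken generic in $\Spec R$ (reducing by passing to $R/\mathfrak p_x$), then $\Gamma_x$ is just localisation, and $(\prod_j R/\mathfrak m^j)_{\mathfrak p_x}\supseteq \widehat R_{\mathfrak p_x}\neq0$ gives $\depth_x(P)\le n\le p(x)$. Hence $P\notin\Phi(p)$.

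The main obstacle is this last step, controlling $\Gamma_x$ of the product when $x$ is not a minimal prime. I would handle it by taking the factors to be $R/(\mathfrak p_x+\mathfrak m_y^j)$, so everything lives on $V(\mathfrak p_x)$, where $x$ is generic. Then $\Gamma_{\cl x}$ and localisation interact well and the depth at $x$ is computed as a localisation, which is nonzero because the product contains $\widehat{R/\mathfrak p_x}$, which is faithfully flat over $R/\mathfrak p_x$. The case $p(x)=\infty$ is absorbed by choosing $n=p(y)+1$, which is finite.
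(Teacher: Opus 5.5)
Your final argument is the paper's proof. Placing $\Ocal_{X,y}/(\pp_x+\mm_y^j)$ in degree $p(y)+1$ gives objects of $\Phi(p)$ supported at $y$ whose product contains $\Ocal_{X,y}/\pp_x$ (you phrase this via $\widehat{R/\pp_x}$, the paper via the monomorphism $\Ocal_{X,y}/\pp\hookrightarrow\prod_i M_i$), so $\Gamma_x$ of the product is nonzero in degree $p(y)+1\le p(x)$.

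You should drop the earlier detours. In particular, do not replace $x$ or $y$: this is unnecessary, since $x$ already lies in $\Spec{\Ocal_{X,y}}$, and it would risk destroying $p(x)>p(y)$. Also, when $p(y)=-\infty$ you must choose the integer $n\le p(x)$, not an arbitrary one.
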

\begin{proof}
  $(i) \to (ii)\colon$ This is \cref{monotone-perv}.

  $(ii) \to (i)\colon$ Towards a contradiction, let $x,y \in X$ be points such that $x \rightsquigarrow y$ but $p(y) < p(x)$. Let $\mm$ denote the maximal ideal of $\Ocal_{X,y}$ and $\pp$ denote the prime ideal corresponding to the point $x$. Consider the $\Ocal_{X,y}$-modules $M_i = \Ocal_{X,y}/(\mm^i + \pp)$. Then $\Sigma^{-p(y)-1}M_i \in \Phi(p)$ for all $i>0$, as for any $z \in X$ we have
  $$\Gamma_z(M_i) = \begin{cases} M_i, & z=y  \\ 0, & \text{otherwise}  \end{cases}$$
  making it clear that $\Gamma_z(M_i) \in \D^{>p(y)}$. On the other hand, the product $\prod_{i>0} \Sigma^{-p(y)-1} M_i \not\in \Phi(p)$, as there is a monomorphism $\Ocal_{X,y}/\pp \hookrightarrow \prod_{i>0} M_i$, and thus $\Gamma_x(\prod_{i>0} \Sigma^{-p(y)-1} M_i) \not\in \D^{>p(x)}$.
\end{proof}

\begin{lemma}\label{perfect-approx2}
  For any closed subset $V$ of $X$ there is $S \in \D^{\leq 0,\cpt}$ such that $\supp(S) \subseteq V$ and $\Supp(H^0(S)) = V$.
\end{lemma}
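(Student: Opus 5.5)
Reduce to \cref{perfect-approx}. A closed subset $V$ of the Noetherian scheme $X$ is the support of a coherent sheaf: give $V$ its reduced induced closed subscheme structure, with ideal sheaf $\Ical_V\subseteq\Ocal_X$, and set $E\coloneqq\Ocal_X/\Ical_V$. This $E$ is coherent with $\Supp(E)=V$. Applying \cref{perfect-approx} to $E$ yields $S\in\D^{\leq 0,\cpt}$ with $H^0(S)\cong E$ and $\Supp(S)=\Supp(E)=V$. The condition $\Supp(H^0(S))=V$ is then immediate.

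It remains to show $\supp(S)\subseteq V$, i.e.\ to compare the Balmer--Favi support with the sheaf support. If $x\notin V=\Supp(S)$, then $S_x=0$. Since $\Gamma_x=\mathbf{R}(i_x)_*\Gamma_{\cl{x}}(-)_x$ factors through the stalk functor, $\Gamma_x(S)=0$, so $x\notin\supp(S)$. Hence $\supp(S)\subseteq\Supp(S)=V$. (In fact equality holds since $S$ is compact, but only the inclusion is needed.)

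The only point requiring care is the application of \cref{perfect-approx}: its proof uses the Lipman--Neeman approximation with $T=\Supp(E)$, which needs the complement $X\setminus V$ to be quasi-compact. This holds automatically because $X$ is Noetherian. I do not expect a genuine obstacle; the main step is simply choosing $E=\Ocal_V$ so that a single coherent sheaf realises $V$ as its support.
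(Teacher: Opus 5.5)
Your proof is correct and follows the paper's route: produce a coherent sheaf $E$ with $\Supp(E)=V$ and apply \cref{perfect-approx} to it. The only difference is the choice of $E$: the paper takes $E=\coprod_{i}H^i(C)$ for a perfect complex $C$ with $\supp(C)=V$, citing Balmer, whereas your $E=\Ocal_X/\Ical_V$ is more elementary, and you also spell out the inclusion $\supp(S)\subseteq\Supp(S)$ that the paper leaves implicit.
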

\begin{proof}
  There is a coherent sheaf $E \in \coh{X}$ such that $\Supp(E) = V$. Indeed, we can take $E = \coprod_{i \in \Zbb}H^i(C)$, where $C$ is a compact object satisfying $\supp(C) = V$, which exists e.g. by \cite[Proposition 2.14(b)]{Bal05}. Then $S$ is obtained by applying \cref{perfect-approx} to $E$.
\end{proof}

\begin{theorem}\label{T:tstr}
  The assignment $p \mapsto (\Ucal_p,\Vcal_p)$, where 
  $$\Ucal_p = \{M \in \D \colon \Supp (H^i(M)) \subseteq \{x \in X \colon p(x) \geq i\} ~\forall i \in \Zbb\}$$
  and
  $$\Vcal_p = \Phi(p) = \{M \in \D \colon \Gamma_{\cl{x}}(M) \in \D^{>p(x)} ~\forall x \in X\}$$
  yields a bijection
  $$
\begin{tikzcd}[column sep = 5.0em, row sep = 0.1em]
 \begin{Bmatrix} 
  \text{Monotone perversities} \\ \text{on $X$} \end{Bmatrix} \arrow[leftrightarrow]{r}{1-1} & \begin{Bmatrix} 
    \text{Compactly generated} \\ \text{$\otimes$-$t$-structures in $\D$} \end{Bmatrix} \\
\end{tikzcd}
$$
\end{theorem}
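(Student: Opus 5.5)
Since $\Phi$ is injective on all perversities by \cref{T:main}, the plan is to show that its restriction to monotone perversities lands exactly on the coaisles of compactly generated $\otimes$-$t$-structures. We then check that the aisle of the $t$-structure with coaisle $\Phi(p)$ is $\Ucal_p$. A $t$-structure is determined by its coaisle, so this gives the bijection.

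\emph{Monotone perversities give compactly generated $\otimes$-$t$-structures.} Let $p$ be monotone. By \cref{monotone-perv}, $\Phi(p)=\Vcal_p$ (the description via nonlocal depth), and it is the coaisle of a compactly generated $t$-structure. For the $\otimes$-condition, note that $\Phi(p)$ is a semi-Bousfield class $\SBou{\Xcal}$. For $S\in\D^{\leq 0,\cpt}$ and $V\in\Phi(p)$ we have $[S,V]\cong S^*\otimes V$, and $X\otimes S^*\otimes V\in\D^{>0}$ for $X\in\Xcal$ because $S^*\in\Fcal^{\geq 0}$ by \cref{compactsflat}. Hence $[\D^{\leq 0},\Vcal]\subseteq \Vcal$ after passing to compact generators, i.e. the $t$-structure is a $\otimes$-$t$-structure. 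Alternatively, one can invoke \cref{cgtstrtotensor}(iii) once the generators are identified.

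\emph{Every compactly generated $\otimes$-$t$-structure comes from a perversity.} By \cref{cgtstrtotensor}, its coaisle is $\SBou{\Scal^*}$ with $\Scal^*\subseteq\D^\cpt\subseteq\Fcal^+$ (\cref{compactsflat}). By \cref{T:main}, the coaisle therefore equals $\Phi(p)$ for some perversity $p$. Coaisles are product-closed, so $p$ is monotone by \cref{scheme-tstr}. Injectivity of the assignment also follows from \cref{T:main}.

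\emph{The aisle is $\Ucal_p$.} Here $\Ucal_p$ means the perversity-defined class in the statement, and we must show ${}^{\perp_0}\Phi(p)=\Ucal_p$. For $\Ucal_p\subseteq {}^{\perp_0}\Vcal_p$, take $M\in\Ucal_p$. Since $p$ is monotone, $\{x\colon p(x)\ge i\}$ is specialisation closed, so $\Sigma^{-i}H^i(M)$ lies in the localising subcategory generated by objects supported on these sets. The Hom-vanishing into $V$ with $\Gamma_{\cl{x}}V\in\D^{>p(x)}$ then follows from $\Hom(N,V)\cong\Hom(N,\Gamma_W V)$ for $N$ supported in the specialisation-closed set $W$. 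Truncation and homotopy colimit arguments handle unbounded $M$. This is the step I expect to be the most technical.

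For the converse inclusion, I would instead exhibit compact generators of the aisle inside $\Ucal_p$: for each $x$ with $p(x)$ finite, apply \cref{perfect-approx2} to $V=\cl{x}$ to get $S_x\in\D^{\leq 0,\cpt}$ supported in $\cl{x}$, and use the shifts $\Sigma^{-p(x)}S_x\otimes\D^{\leq 0,\cpt}$ (or all shifts when $p(x)=\infty$). These lie in $\Ucal_p$ by monotonicity. Their right orthogonal is $\{M\colon \Gamma_{\cl{x}}M\in \D^{>p(x)}\}$, which by the Foxby–Iyengar depth computation (\cref{monotone-perv}, \cite{FI03}) recovers $\Vcal_p$. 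Then $\Ucal_p$ is contained in the aisle they generate and also in ${}^{\perp_0}\Vcal_p$, which is that aisle. Given the first inclusion, this yields equality.
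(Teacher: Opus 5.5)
Your architecture agrees with the paper's. Both directions of the bijection go through \cref{T:main}, \cref{cgtstrtotensor}, \cref{compactsflat} and \cref{scheme-tstr}. What remains is compact generation of $(\Ucal_p,\Vcal_p)$ and the cohomological description of its aisle.

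The genuine gap is in the inclusion $\Ucal_p\subseteq{}^{\perp_0}\Vcal_p$, exactly where you appeal to ``truncation and homotopy colimit arguments''. These handle bounded complexes (by extension closure) and bounded below ones ($M\simeq\hocolim_n\tau^{\leq n}M$ by right completeness). They do not handle an $M\in\Ucal_p$ with cohomology in infinitely many negative degrees:
such an $M$ is only a homotopy \emph{limit} of the $\tau^{\geq -n}M$, and ${}^{\perp_0}\Vcal_p$ has no reason to be closed under homotopy limits. Brutal truncations of an arbitrary representative do not help either, since they destroy the support conditions defining $\Ucal_p$.

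A separate idea is needed; this is Case~4 of the paper's proof. Take the truncation triangle $U\to M\to V\rightsquigarrow$ of $M\in\Ucal_p$ with respect to the $t$-structure $({}^{\perp_0}\Vcal_p,\Vcal_p)$. By the other inclusion $U\in\Ucal_p$. Since $\Ucal_p$ is closed under cones (long exact cohomology sequence), $V\in\Ucal_p\cap\Vcal_p$, and one kills $V$ at a generic point $x$ of its support. The paper does this via $\Gamma_xV$, Case~3 and monotonicity. More directly:
minimality of $x$ makes every $H^i(V)_x$ an $\mathfrak{m}_x$-torsion module, so taking stalks at $x$ in $\Gamma_{\cl x}V\in\D^{>p(x)}$ gives $V_x\in\D^{>p(x)}$. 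Hence any $i$ with $H^i(V)_x\neq0$ satisfies $i>p(x)$, contradicting $p(x)\geq i$, which $V\in\Ucal_p$ forces.

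There are two smaller problems in your last paragraph.

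First, your final sentence has the inclusion backwards: ``$\Ucal_p$ is contained in the aisle they generate'' just restates the first inclusion. What generators $\Scal\subseteq\Ucal_p$ actually give is ${}^{\perp_0}\Vcal_p\subseteq\Ucal_p$. This holds because ${}^{\perp_0}\Vcal_p$ is the smallest subcategory containing $\Scal$ closed under coproducts, extensions and $\Sigma$, and $\Ucal_p$ has these closure properties.

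Second, on a non-affine $X$ compact generation is the substantive point, and the statement of \cref{monotone-perv} only provides a $t$-structure. So it is this paragraph, not \cref{monotone-perv}, that must carry compact generation, and here your route differs from the paper's.
\begin{itemize}
\item The paper identifies $\{M\colon[S,M]\in\D^{>0}\}$ with $\Phi(q)$ for some $q\leq p$ via \cref{cgtstrtotensor} and \cref{T:main}, then pins $q$ down on $\cl x$.
\item You assert directly that $\{M\colon[S_x,M]\in\D^{>n}\}=\{M\colon\Gamma_{\cl x}M\in\D^{>n}\}$. This avoids \cref{T:main}, but it is not a mere citation of \cite{FI03}.
\end{itemize}
The inclusion $\supseteq$ is easy, since $S_x^*\in\Fcal^{\geq0}$ is supported on $\cl x$. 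The inclusion $\subseteq$ needs $\Supp H^0(S_x)=\cl x$, and goes as follows.
\begin{enumerate}
\item For $y\in\cl x$, Nakayama gives $H^0((S_x)_y\otimes^{\mathbf L}k(y))\neq0$.
\item Hence $\RHom(k(y),[S_x,M]_y)\cong\RHom_{k(y)}((S_x)_y\otimes^{\mathbf L}k(y),\RHom(k(y),M_y))$ has infimum at most $\depth_yM$.
\item Since $\RHom(k(y),-)$ preserves $\D^{>n}$, the condition $[S_x,M]\in\D^{>n}$ forces $\depth_yM>n$ for all $y\in\cl x$.
\item Finally $\inf\Gamma_{\cl x}M=\inf_{y\in\cl x}\depth_yM$ by \cite{FI03}.
\end{enumerate}
With this written out, your direct route to compact generation is a valid and more elementary alternative to the paper's.
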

\begin{proof}
  \cref{scheme-tstr} shows that the assignment yields a bijection between monotone perversities and those $\otimes$-$t$-structures whose coaisles are in the image of $\Phi$, while all the compactly generated $\otimes$-$t$-structures are in this image by \cref{cgtstrtotensor} and \cref{compactsflat}. It thus remains to show that the $t$-structure $(\Ucal_p,\Vcal_p)$ is compactly generated for any monotone perversity $p$. 
  
  By taking intersection of shifts of coaisles, it is enough to check that the coaisle of the form $\Phi(p) = \{M \in \D \colon \Gamma_{\cl{x}}(M) \in \D^{>0}\}$ is compactly generated for any $x \in X$, note that here $p$ is the monotone perversity such that $p(y) = 0$ for all $y \in \cl{x}$ and $p(y) = -\infty$ otherwise. In fact, by \cref{cgtstrtotensor}, it is enough to find a compact object $S \in \D^{\cpt}$ such that $\Phi(p)$ is equal to 
  $$\Vcal \coloneqq \{M \in \D \colon [S,M] \in \D^{>0}\}.$$ 
  Let $S$ be an object of $\D^{\leq 0,\cpt}$ satisfying $\supp(S) \subseteq \cl{x}$ and $\Supp(H^0(S)) = \cl{x}$ provided by \cref{perfect-approx2}. Clearly, we have $\Phi(p) \subseteq \Vcal$. By \cref{T:main} and \cref{cgtstrtotensor}, there is a perversity $q$ such that $\Vcal = \Phi(q)$, and the inclusion yields $q \leq p$. It remains to show that $q(x) = 0$; this follows since $H^0(S) \not\in \Phi(q)$.

  The last job is to check the cohomological description of $\Ucal_p$. Set $\Ucal \coloneqq \Ucal_p = \Perp{0}\Vcal_p$ and 
  $$\overline{\Ucal} = \{M \in \D \colon \Supp (H^i(M)) \subseteq \{x \in X \colon p(x) \geq i\} ~\forall i \in \Zbb\}.$$ 
  Our goal is to show $\Ucal = \overline{\Ucal}$. It is easier to show the inclusion $\Ucal \subseteq \overline{\Ucal}$. Indeed, let $M \in \Ucal$, $x \in X$, and consider the canonical map $M_x \to (M_x)^{>p(x)}$. By the above, $(M_x)^{>p(x)}$ belongs to $\D^{>p(x)}(\Ocal_{X,x})$, and thus $(M_x)^{>p(x)} \in \Vcal$. On the other hand, $M_x \in \Ucal$. This immediately yields $(M_x)^{>p(x)} = 0$, showing that $M \in \overline{\Ucal}$.

  The converse implication $M \in \overline{\Ucal} \implies M \in \Ucal$ will be proved in several steps. Let $M$ be represented as a cochain complex with components in $\Qcoh{X}$.

  \textbf{Case 1:} $M \in \Sigma^i\Qcoh{X}$ for some $i \in \Zbb$. Without loss of generality, let us prove the case $i=0$. Then the condition $M \in \overline{\Ucal}$ translates simply as $\Supp(M) \subseteq{V_0}$, where $V_0 = \{x \in X \colon p(x) >0\}$. For any $N \in \Vcal$ we thus have $\Hom_\D(M,N) = \Hom_\D(\Gamma_{V_0}M,N) = \Hom_\D(\Gamma_{V_0}M,\Gamma_{V_0}N) = 0$, the last vanishing provided by $\Gamma_{V_0}N \in \D^{>0}$.

  \textbf{Case 2:} $M$ is cohomologically bounded. This is easily deduced from Case 1, as $\Ucal$ is closed under extensions.

  \textbf{Case 3:} $M \in \D^+$. The soft truncation functors $(-)^{\leq n}$ for $n \geq 0$ yield a directed tower 
  $$M^{\leq 0} \to  M^{\leq 1} \to M^{\leq 2} \to \cdots$$
  where each object $M^{\leq n}$ belongs to $\overline{\Ucal}$ and is cohomologically bounded, and thus to belong $\Ucal$ by Case 2. Therefore, the homotopy colimit $\hocolim_{n\geq 0} M^{\leq n}$ of this tower belongs to $\Ucal$. By \cite[Proposition 1.2.1.19, Remark 1.2.1.20]{Lur17}, the $t$-structure $(\D^{\leq 0},\D^{>0})$ is right-complete, which amounts to $M$ being isomorphic to $\hocolim_{n\geq 0} M^{\leq n}$.

  \textbf{Case 4:} $M$ is a general object in $\overline{\Ucal}$. Consider the truncation triangle $U \to M \to V \rightsquigarrow$ with respect to the $t$-structure $(\Ucal,\Vcal)$, that is, $U \in \Ucal$ and $V \in \Vcal$. Then $V$ belongs to $\Vcal \cap \overline{\Ucal}$. Let $x$ be a generic point of $\supp(V)$. Then $V' \coloneqq \Gamma_x(V) \cong \mathbf{R}(i_x)_* V_x$ also belongs to $\Vcal \cap \overline{\Ucal}$ and $\supp(V') = \{x\}$. If $V'$ belongs to $\D^+$ then $V' \in \Ucal$ by Case 3, which implies $V' = 0$, a contradiction. Therefore, $V' \cong \Gamma_x V' \in \Vcal$ is cohomologically unbounded below, which implies $p(x) = -\infty$. As $p$ is monotone, we have $p(y) = -\infty$ for all $y \in X$ such that $x \in \cl{y}$, and thus $\D(\Ocal_{X,x}) \subseteq \Vcal$. This in turn implies that $\Ucal \cap \Gamma_x \D = 0$. This is absurd, as $V'^{\geq i} \in \Ucal \cap \Gamma_x$ for every $i \in \Zbb$ by Case 3, and there must be $i$ such that $V'^{\geq i} $ is nonzero.
\end{proof}

\subsection{Comonotone monotone perversities}\label{ss:comonotone}
A perversity $p$ on $X$ is \newterm{comonotone} if $p(y) - p(x) \leq 1$ for any \newterm{immediate specialisation} $x \rightsquigarrow y$ in $X$, that is, if  $x \rightsquigarrow z \rightsquigarrow y$ then $z \in \{x,y\}$. Following prior work of Deligne, Bezrukavnikov \cite{AB09} showed that if $X$ admits a dualising complex then the perversities which are both monotone and comonotone give rise to $t$-structures in the bounded derived category $\D^\bdd(\coh{X})$ of coherent sheaves on $X$. The assignment is nothing else than the restriction of the assignment of \cref{T:tstr} to $\D^\bdd(\coh{X})$, see \cite[\S 4]{Sah24}. In the affine case, where $X=\Spec{R}$ for a commutative Noetherian ring $R$, it was conjectured by Stanley \cite[Conjecture 7.10]{Sta10} that this phenomenon is true in general, even in the absence of a dualising complex. Takahashi \cite{Tak23} established this when $R$ is CM-excellent, a common generalisation of rings with dualising complexes and Cohen--Macaulay rings. This was recently extended to the nonaffine setting in \cite{CLMP24}. The conjecture however turns out to be false in full generality, which we use the opportunity to explicitly explain now. By \cite[Corollary 3.12]{AJS10}, any $t$-structure in $\D^\bdd(\coh{X})$ is the restriction of a $t$-structure of the form $(\Ucal_p,\Vcal_p)$ for a monotone perversity $p$. Then \cite[Corollary 5.6]{HM24} asserts that if $R$ admits a codimension function, i.e. perversity $d$ which is both strictly monotone and comonotone, then $(\Ucal_d,\Vcal_d)$ restricts to $\D^\bdd(\coh{X})$ if and only if $R$ is CM-excellent. There are examples of local Noetherian integral domains $R$ which fail to be CM-excellent \cite[Proposition~4.5]{HRW01}, \cite[Example~2.6]{Nis12}, and for such rings the perversity function given by prime ideal height is a codimension function, see the discussion in \cite[Remark 7.21]{HNS24}. Therefore, any such ring $R$ constitutes a counterexample to Stanley's conjecture.

\section{Hereditary Tor-pairs in $\Qcoh{X}$} 
\label{s:Tor-pair}
\textit{In this section, let $X$ be a semi-separated Noetherian scheme.}

\vspace{1em}

Let $\Tor_i^X(-,-) \coloneqq H^{-i}(- \otimes_X^\mathbf{L} -)\colon \Qcoh{X} \times \Qcoh{X} \to \Qcoh{X}$ denote the classical $i$-th derived functor of the tensor product functor on $\Qcoh{X}$. Note that our assumption of $X$ being semi-separated ensures that $\Qcoh{X}$ has enough flat objects, see \cite[Corollary 3.21]{MurA}. A \newterm{hereditary Tor-pair} in $\Qcoh{X}$ is a pair $(\Ccal,\Ecal)$ of subcategories of $\Qcoh X$ such that $\Ccal$ and $\Ecal$ are maximal with respect to the property 
$$\Tor_i^X(C,E) = 0  ~\forall C \in \Ccal ~\forall E \in \Ecal ~\forall i>0.$$ 
Recall from \cref{scheme-dimensions} that $\Fcal^{\geq 0}$ coincides with the subcategory of $\D$ consisting of hoflat complexes of flat quasi-coherent sheaves in nonnegative degrees. We start by showing that hereditary Tor-pairs can be viewed as special kinds of semi-Bousfield classes.

\begin{lemma}\label{Tor-to-sB}
  Given $(\Ccal,\Ecal)$ a hereditary Tor-pair in $\Qcoh{X}$, there is a semi-Bousfield pair in $\D$ of the form
  $$(\Fcal^{> 0} \star \Ccal, \Fcal^{> 1} \star \Sigma^{-1}\Ecal).$$     
\end{lemma}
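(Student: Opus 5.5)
The plan is to prove the two equalities directly. Write $\Acal \coloneqq \Fcal^{>0}\star\Ccal$ and $\Bcal \coloneqq \Fcal^{>1}\star\Sigma^{-1}\Ecal$. Here $\Ccal,\Ecal$ are viewed as stalk complexes in degree $0$, and $\Fcal^{>n}=\Fcal^{\geq n+1}$ consists, by \cref{scheme-dimensions}, of hoflat complexes of flat quasi-coherent sheaves concentrated in degrees $\geq n+1$. I will show $\Acal=\SBou{\Bcal}$ and $\Bcal=\SBou{\Acal}$. Each equality splits into an orthogonality inclusion and a decomposition argument.

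\textbf{Orthogonality.} First I check $\Acal\otimes_X^\mathbf{L}\Bcal\subseteq\D^{>0}$. Fix $B\in\Bcal$. Then $\{A \colon A\otimes_X^\mathbf{L} B\in\D^{>0}\}$ is closed under extensions, and likewise in the other variable, by \cref{SBousfield-closure} and symmetry of the tensor product. So it suffices to treat the four pairs of pieces.
\begin{itemize}
\item $\Fcal^{>0}\otimes\Fcal^{>1}\subseteq\D^{>0}$ and $\Fcal^{>0}\otimes\Sigma^{-1}\Ecal\subseteq\D^{>0}$ hold because objects of $\Fcal^{\geq m}$ send $\D^{>n}$ into $\D^{>n+m}$, and $\Fcal^{>1}\subseteq\D^{>1}$, $\Sigma^{-1}\Ecal\subseteq\D^{>0}$.
\item $\Ccal\otimes\Fcal^{>1}\subseteq\D^{>0}$ holds since $\Ccal\subseteq\D^{>-1}$.
\item For $C\in\Ccal$ and $E\in\Ecal$, the object $C\otimes_X^\mathbf{L}\Sigma^{-1}E$ has $H^{1-i}=\Tor_i^X(C,E)$. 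This vanishes for all $i\geq 1$ by the Tor-pair hypothesis, so the object lies in $\D^{>0}$.
\end{itemize}
This gives $\Acal\subseteq\SBou{\Bcal}$ and $\Bcal\subseteq\SBou{\Acal}$.

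\textbf{The inclusion $\SBou{\Acal}\subseteq\Bcal$.} Flat sheaves lie in both $\Ccal$ and $\Ecal$, so $\Ocal_X\in\Ccal\subseteq\Acal$. Hence any $Y\in\SBou{\Acal}$ satisfies $Y\cong Y\otimes_X^\mathbf{L}\Ocal_X\in\D^{>0}$. Using semi-separatedness, represent $Y$ by a hoflat complex $P$ of flat quasi-coherent sheaves. The key observation is that the brutal truncation $\sigma^{\geq 2}P$ is again hoflat. Indeed, $\sigma^{\leq 1}P$ is a bounded above complex of flats, hence hoflat, and hoflatness passes to the third term of the degreewise split sequence. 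Since $H^i(P)=0$ for $i\leq 0$, $P$ is quasi-isomorphic to $(0\to Q\to P^2\to P^3\to\cdots)$ with $Q=\mathrm{coker}(P^0\to P^1)$ in degree $1$. This gives a triangle
$$F\to Y\to\Sigma^{-1}Q\rightsquigarrow$$
with $F=\sigma^{\geq 2}P\in\Fcal^{>1}$. For $C\in\Ccal$, the object $\Sigma^{-1}Q\otimes_X^\mathbf{L} C$ is an extension of $Y\otimes_X^\mathbf{L} C\in\D^{>0}$ and $\Sigma F\otimes_X^\mathbf{L} C\in\D^{>0}$. Hence $\Tor_i^X(C,Q)=0$ for $i\geq1$, and maximality of $\Ecal$ gives $Q\in\Ecal$, so $Y\in\Bcal$.

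\textbf{The inclusion $\SBou{\Bcal}\subseteq\Acal$.} This is symmetric. Since $\Sigma^{-1}\Ocal_X\in\Bcal$, any $Z\in\SBou{\Bcal}$ lies in $\D^{\geq 0}$. Truncating a flat hoflat model of $Z$ at degree $0$ gives a triangle
$$\sigma^{\geq1}P\to Z\to Q'\rightsquigarrow$$
with $\sigma^{\geq1}P\in\Fcal^{>0}$ and $Q'$ the degree-$0$ cokernel. The same extension argument, tensoring with $\Sigma^{-1}E$ for $E\in\Ecal$, shows $\Tor_i^X(Q',E)=0$ for $i\geq1$. Hence $Q'\in\Ccal$ and $Z\in\Acal$.

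I expect the main obstacle to be the decomposition step. One must ensure that the brutal truncation really stays inside $\Fcal^{>1}$ (respectively $\Fcal^{>0}$), since a bounded below complex of flats need not be hoflat. The argument through the bounded above piece $\sigma^{\leq 1}P$ is what I would rely on. One must also ensure the cokernel sheaf is quasi-coherent, which is where semi-separatedness (enough flat quasi-coherent sheaves) is used.
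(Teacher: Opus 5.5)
Your proposal is correct and follows essentially the same route as the paper. You check orthogonality on the generating pieces and pass to the extensions by extension-closure. You then brutally truncate a hoflat model with flat quasi-coherent components, deducing hoflatness of the truncated part from the bounded-above complementary part. Finally, you place the leftover cokernel sheaf in $\Ccal$ (resp.\ $\Ecal$) via the Tor long exact sequence and maximality of the Tor-pair.

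The differences are minor. You spell out both inclusions, where the paper proves one and calls the other analogous. You also get the bound $\D^{\geq 0}$ from $\Sigma^{-1}\Ocal_X\in\Fcal^{>1}\star\Sigma^{-1}\Ecal$, which is a sound way to obtain it.
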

\begin{proof}
  It is easy to see that $M \otimes_X^\mathbf{L} N \in \D^{>0}$ whenever
  $$M \in \Fcal^{> 1} \cup \Sigma^{-1}\Ecal \text{ ,and } N \in \Fcal^{> 0} \cup \Ccal.$$
  Since semi-Bousfield classes are extension closed, we see that $M \otimes_X^\mathbf{L} N \in \D^{>0}$ for $M \in \Fcal^{> 1} \star \Sigma^{-1}\Ecal$ and $N \in \Fcal^{> 0} \star \Ccal$. Now let $N \in \D$ be such that $N \in \SBou{\Fcal^{> 1} \star \Sigma^{-1}\Ecal}$, and we need to show that $N \in  \Fcal^{> 0} \star \Ccal$. We have $N \in \SBou{\Fcal^{> 1}} = \D^{\geq 0}$. Let us assume that $N$ is a hoflat cochain complex with quasi-coherent flat components and consider the brutal truncation triangle
  $$\sigma^{> 0}N \to N \to \sigma^{\leq 0}N \rightsquigarrow.$$
  Since both $N$ and $\sigma^{\leq 0}N$ are hoflat, so is $\sigma^{> 0}N$ and thus $\sigma^{> 0}N \in \Fcal^{> 0}$. Since the cohomology of $\sigma^{\leq 0}N$ is concentrated in degree zero, it remains to show that $H^0(\sigma^{\leq 0}N) \in \Ccal$. Let $E \in \Ecal$ and consider the induced triangle 
  $$E \otimes_X^\mathbf{L} \sigma^{> 0}N \to E \otimes_X^\mathbf{L} N \to E \otimes_X^\mathbf{L}\sigma^{\leq 0}N \rightsquigarrow$$
  and the induced exact sequence for any $i>0$:
  $$H^{-i}(E \otimes_X^\mathbf{L} N) \to \Tor_i^X(E,H^0(\sigma^{\leq 0}N)) \to H^{-i+1}(E \otimes_X^\mathbf{L} \sigma^{> 0}N).$$
  Since $N \in \SBou{\Sigma^{-1}\Ecal}$, the leftmost term vanishes, while the rightmost term vanishes as $\sigma^{> 0}N$ is hoflat. The other inclusion is proved analogously.
\end{proof}
For example, if we input the trivial hereditary Tor-pair $(\Qcoh{X}, \Flat(X))$, where $\Flat(X)$ is the subcategory of flat quasi-coherent sheaves, we obtain the ``standard'' semi-Bousfield pair $(\D^{\geq 0}, \Fcal^{> 0})$. On can actually characterise the semi-Bousfield pairs which are obtained from a hereditary Tor-pair via this construction.
\begin{lemma}\label{Tor-sB}
  A semi-Bousfield pair $(\Xcal,\Ycal)$ is obtained from a hereditary Tor-pair in the above way if and only if $\Fcal^{> 0} \subseteq \Ycal \subseteq \D^{> 0}$
\end{lemma}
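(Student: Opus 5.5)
We prove both implications by working inside the subcategory $\Fcal^{>0} \star \Qcoh{X}$, using brutal truncations of hoflat complexes of flat quasi-coherent sheaves, as in the proof of \cref{Tor-to-sB}.

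\emph{Necessity.} Suppose $(\Xcal,\Ycal) = (\Fcal^{>0}\star\Ccal, \Fcal^{>1}\star\Sigma^{-1}\Ecal)$ for a hereditary Tor-pair $(\Ccal,\Ecal)$.
\begin{itemize}
\item Since $\Fcal^{>1} \subseteq \D^{>0}$ and $\Sigma^{-1}\Ecal \subseteq \D^{>0}$, and $\D^{>0}$ is extension closed, we get $\Ycal \subseteq \D^{>0}$.
\item Flat sheaves lie in $\Ecal$: they have vanishing higher Tor against everything, so maximality of $\Ecal$ forces them in.
\item Now let $F \in \Fcal^{>0}$, represented by a hoflat complex of flat sheaves in degrees $\geq 1$ (\cref{scheme-dimensions}). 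The brutal truncation triangle $\sigma^{>1}F \to F \to \sigma^{\leq 1}F \rightsquigarrow$ has $\sigma^{>1}F \in \Fcal^{>1}$ and $\sigma^{\leq 1}F = \Sigma^{-1}F^1$ with $F^1$ flat. Hence $F \in \Fcal^{>1}\star\Sigma^{-1}\Ecal = \Ycal$, so $\Fcal^{>0} \subseteq \Ycal$.
\end{itemize}

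\emph{Sufficiency, Step 1: the bounds on $\Xcal$.} Assume $\Fcal^{>0}\subseteq\Ycal\subseteq\D^{>0}$. Applying $\SBou{-}$ reverses inclusions, so
$$\Fcal^{\geq 0} = \SBou{\D^{>0}} \subseteq \Xcal \subseteq \SBou{\Fcal^{>0}} = \D^{\geq 0}.$$
The last equality is the trivial Tor-pair case of \cref{Tor-to-sB}.

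\emph{Sufficiency, Step 2: a description of $\Ycal$.} Set $\Ecal_0 = \{E \in \Qcoh{X} \colon \Sigma^{-1}E \in \Ycal\}$. We claim $\Ycal = \Fcal^{>1}\star\Sigma^{-1}\Ecal_0$.
\begin{itemize}
\item The inclusion $\supseteq$ holds because $\Ycal$ is extension closed and contains $\Fcal^{>1} \subseteq \Fcal^{>0}$.
\item For $\subseteq$, take $Y' \in \Ycal$ and represent it by a hoflat complex of flat quasi-coherent sheaves. Take the brutal truncation triangle $\sigma^{>1}Y' \to Y' \to \sigma^{\leq 1}Y' \to \Sigma\sigma^{>1}Y'$.
\item As in \cref{Tor-to-sB}, $\sigma^{>1}Y'$ is hoflat, so it lies in $\Fcal^{>1}$.
\item Since $Y' \in \D^{>0}$, the complex $\sigma^{\leq 1}Y'$ has cohomology only in degree $1$, so $\sigma^{\leq 1}Y' \cong \Sigma^{-1}H$ for a sheaf $H$.
\item The triangle exhibits $\sigma^{\leq 1}Y'$ as an extension of $Y'$ by $\Sigma\sigma^{>1}Y' \in \Fcal^{>0} \subseteq \Ycal$. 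By \cref{SBousfield-closure} this gives $\Sigma^{-1}H \in \Ycal$, i.e. $H \in \Ecal_0$.
\end{itemize}

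\emph{Sufficiency, Step 3: building the Tor-pair.}
\begin{itemize}
\item Let $\Ccal$ be the class of sheaves with $\Tor_i^X(\Ccal,\Ecal_0)=0$ for all $i>0$, and let $\Ecal$ be the analogous Tor-orthogonal of $\Ccal$. Then $(\Ccal,\Ecal)$ is a hereditary Tor-pair and $\Ecal_0 \subseteq \Ecal$.
\item The second half of the proof of \cref{Tor-to-sB} used only the sheaves $E \in \Ecal_0$ tested against $N \in \D^{\geq 0}$. Running it verbatim with Step 2 gives $\Xcal = \SBou{\Fcal^{>1}\star\Sigma^{-1}\Ecal_0} \subseteq \Fcal^{>0}\star\Ccal$.
\item The reverse inclusion $\Fcal^{>0}\star\Ccal \subseteq \Xcal$ holds since $\Ccal$ and $\Fcal^{>0}$ tensor $\Fcal^{>1}\star\Sigma^{-1}\Ecal_0$ into $\D^{>0}$, and semi-Bousfield classes are extension closed. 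So $\Xcal = \Fcal^{>0}\star\Ccal$.
\item By \cref{Tor-to-sB}, $\SBou{\Fcal^{>0}\star\Ccal} = \Fcal^{>1}\star\Sigma^{-1}\Ecal$. Since $(\Xcal,\Ycal)$ is a semi-Bousfield pair, $\Ycal = \SBou{\Xcal} = \Fcal^{>1}\star\Sigma^{-1}\Ecal$. Hence $(\Xcal,\Ycal)$ arises from $(\Ccal,\Ecal)$.
\end{itemize}

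The main obstacle is Step 2, the description of $\Ycal$ by truncation. The key points there are that $\Ycal\subseteq\D^{>0}$ forces the low truncation to be a single shifted sheaf, and that extension closure together with $\Fcal^{>0}\subseteq\Ycal$ puts that sheaf into $\Ecal_0$.
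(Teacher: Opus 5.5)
Your proof is correct and follows the same approach as the paper. The paper likewise observes that $\Fcal^{>0}\subseteq\Ycal\subseteq\D^{>0}$ is equivalent to $\Fcal^{\geq 0}\subseteq\Xcal\subseteq\D^{\geq 0}$, and it recovers the Tor-pair from the sheaves lying in $\Xcal\cap\D^{\leq 0}$ and in $\Ycal\cap\D^{\leq 1}$. Your version supplies the truncation arguments the paper leaves implicit, and your Tor-closed pair $(\Ccal,\Ecal)$ agrees a posteriori with the paper's choice: for instance $\Ecal=\Ecal_0$, because $\Sigma^{-1}\Ecal\subseteq\Ycal$ once $\Ycal=\Fcal^{>1}\star\Sigma^{-1}\Ecal$.
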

\begin{proof}
  If $(\Xcal,\Ycal)$ is a semi-Bousfield pair, then $\Fcal^{\geq 0} \subseteq \Xcal \subseteq \D^{\geq 0}$ if and only if $\Fcal^{> 0} \subseteq \Ycal \subseteq \D^{> 0}$, and this is the condition we are looking for. Then the hereditary Tor-pair is recovered by putting $\Ccal = \Xcal \cap \D^{\leq 0}$ and $\Ecal = \Ycal \cap \D^{\leq 1}$.  
\end{proof}
The following shows that our result is compatible with, and in fact yields a nonaffine version of, the classification result \cite[Theorem 4.17]{HHLG24}. Let us say that a hereditary Tor-pair $(\Ccal,\Ecal)$ is \newterm{generated} by a subcategory $\Gcal \subseteq \Qcoh{X}$ if 
$$\Ecal = \{M \in \Qcoh{X} \colon \Tor_i^X(G,M) = 0 ~\forall G \in \Gcal ~\forall i>0\}.$$ 
Recall that $M \in \Qcoh{X}$ is of \newterm{finite flat dimension} if there is $i>0$ such that $\Tor_i^X(M,-) = 0$ and that it is \newterm{locally of finite flat dimension} if for each $x \in X$ there is $i>0$ such that $\Tor_i^X(M,-)_x \cong \Tor_i^{\Ocal_{X,x}}(M_x,(-)_x) = 0$. Note by the locality of tensor product, if a hereditary Tor-pair $(\Ccal,\Ecal)$ is generated by a subcategory $\Gcal$ consisting of quasi-coherent sheaves which are locally of finite flat dimension, it is also generated by the subcategory $\{(i_x)_* G_x \colon G \in \Gcal, x \in X\}$ which consists of quasi-coherent sheaves of finite flat dimension.
\begin{prop}\label{P:Tor-pairs}
  The following hold:
  
  \begin{enumerate}
    \item[(i)] Let $p \in \Perv{X}$. Then the semi-Bousfield pair $(\Xcal,\Phi(p))$ is obtained from a hereditary Tor-pair $(\Ccal,\Ecal)$ as in \cref{Tor-to-sB} if and only if $0 \leq p(x) \leq \depth(\Ocal_{X,x})$ for all $x \in X$. In this situation, we have $\Ecal = \{M \in \Qcoh{X} \colon \depth_x(M) \geq p(x) ~\forall x \in X\}$.

    \item[(ii)] Let hereditary Tor-pair $(\Ccal,\Ecal)$ and semi-Bousfield pair $(\Xcal,\Ycal)$ obtained from \cref{Tor-to-sB}. Then the semi-Bousfield class $\Ycal$ is in $\SBL{\Fcal^+}$ if and only if $(\Ccal,\Ecal)$ is generated by a subcategory of quasi-coherent sheaves (locally) of finite flat dimension.
  \end{enumerate}
\end{prop}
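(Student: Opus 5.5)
For (i), I would reduce to \cref{Tor-sB}: the pair $(\Xcal,\Phi(p))$ comes from a hereditary Tor-pair exactly when $\Fcal^{>0} \subseteq \Phi(p) \subseteq \D^{>0}$. Both inclusions will be tested pointwise through the description $\Phi(p) = \{M \colon \depth_x(M) > p(x)~\forall x\}$.

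\emph{The inclusion $\Phi(p) \subseteq \D^{>0}$.} By the proof of \cref{local-piece}, $M \in \D^{>0}$ as soon as $\Gamma_x(M) \in \D^{>0}$ for all $x$. So $p \geq 0$ pointwise suffices. Conversely, if $p(x) < 0$ for some $x$, then $\Sigma\Gamma_x(\Ocal_X)$, or more simply the stalk $\Sigma^{-p(x)-1}k(x)$ supported at $x$, lies in $\Phi(p)$ but not in $\D^{>0}$. Hence this inclusion holds if and only if $p \geq 0$.

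\emph{The inclusion $\Fcal^{>0} \subseteq \Phi(p)$.} By \cref{scheme-dimensions}, $\Fcal^{>0}$ consists of hoflat complexes of flat sheaves in degrees $\geq 1$. The least depth attained at $x$ is attained by $\Sigma^{-1}\Ocal_{X,x}$ pushed forward, with $\depth_x = \depth(\Ocal_{X,x})+1$. For a general flat $F$ in degrees $\geq 1$, one has $\depth_x(F) \geq 1+\depth(\Ocal_{X,x})$, because flat modules over a local ring have depth at least that of the ring. This follows from the Foxby/Iyengar formula $\inf \RHom(k,F_x) = \depth R + \inf(k\otimes^{\mathbf{L}} F_x)$ for flat $F_x$, combined with \cite[Theorem 2.1]{FI03}, and it extends to bounded-below hoflat complexes by a filtration argument. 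So this inclusion holds if and only if $p(x) \leq \depth(\Ocal_{X,x})$. This is the step I expect to need the most care, namely getting the depth bound for arbitrary objects of $\Fcal^{>0}$. I would first try reducing to stalks $F_x$ and using that $\Gamma_x$ commutes with $\otimes$ against the flat complex.

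\emph{Identifying $\Ecal$.} \cref{Tor-sB} gives $\Ecal = \Ycal \cap \D^{\leq 1}$ shifted, i.e.\ $E \in \Ecal$ iff $\Sigma^{-1}E \in \Phi(p)$. This means $\depth_x(E)+1 > p(x)$, that is, $\depth_x(E) \geq p(x)$.

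For (ii), one direction is immediate. Suppose $\Ecal$ is generated by $\Gcal$ of (locally, and then after replacing by $(i_x)_*G_x$ also globally) finite flat dimension. Then $\Ycal = \Fcal^{>1}\star\Sigma^{-1}\Ecal$ equals $\SBou{\Fcal^{\geq 0} \cup \Gcal}$ up to the shift conventions of \cref{Tor-to-sB}. To see this, take flat resolutions of $G$: these are bounded hoflat complexes, so $\Gcal \subseteq \Fcal^b \subseteq \Fcal^+$. The standard generator $\Fcal^{\geq 0}$ also lies in $\Fcal^+$, and the equality is checked via the same truncation argument as in \cref{Tor-to-sB}.

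For the converse, suppose $\Ycal \in \SBL{\Fcal^+}$. By \cref{T:main}, $\Ycal = \Phi(p)$, and by (i), $0 \leq p \leq \depth$. The candidate generators are then $\Gcal = \{(i_x)_*\Omega\}$, taking suitable syzygies or truncations of the Koszul-type compact objects. More concretely, I would use the modules $H^{0}$ of $\tau$-truncations of $\Sigma^{p(x)}\Gamma_x(\Ocal_X)$ localised at $x$, which have finite flat dimension because $\Gamma_x(\Ocal_X)$ is a bounded flat (Čech) complex. I would then verify, using \cref{gen-crit} and the depth description of $\Ecal$, that $\Tor_{>0}(G,E)=0$ exactly when $\depth_x(E) \geq p(x)$.
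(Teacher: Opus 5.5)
\textbf{Part (i) and the ``if'' direction of (ii).} These are fine. In (i), the detour through an Auslander--Buchsbaum/Foxby--Iyengar formula plus a filtration is unnecessary. $F\in\Fcal^{>0}=\Fcal^{\geq 1}$ means exactly that $F\otimes_X^{\mathbf{L}}-$ sends $\D^{\geq n}$ into $\D^{\geq n+1}$, so $\Gamma_x(F)\cong\Gamma_x(\Ocal_X)\otimes_X^{\mathbf{L}}F\in\D^{>\depth(\Ocal_{X,x})}$ at once. This is the paper's argument, and your fallback already points to it.

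\textbf{The gap: the ``only if'' direction of (ii).} Your route is to use \cref{T:main} and (i) to write $\Ycal=\Phi(p)$ with $0\le p(x)\le\depth(\Ocal_{X,x})$, and then produce explicit generators. That is legitimate and differs from the paper, which instead argues that objects of $\Xcal=\Fcal^{>0}\star\Ccal$ are locally in $\Fcal^+$, so the pair is generated by sheaves locally of finite flat dimension. But the generators you name do not work. Any soft truncation of $\Sigma^{p(x)}\Gamma_x(\Ocal_X)$ has $H^0$ equal to $0$ or to $H^{p(x)}_{\mm_x}(\Ocal_{X,x})$.
\begin{itemize}
\item If $p(x)<\depth(\Ocal_{X,x})$, this module is zero.
\item If $p(x)=\depth(\Ocal_{X,x})$, it can have infinite flat dimension. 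For $R=k[[s,t]]/(st,t^2)$ and $p=0$ it is $H^0_{\mm}(R)=kt\cong k$.
\end{itemize}
The cohomology of a truncation of a bounded flat complex has no reason to have finite flat dimension. Also, \cref{gen-crit} is not the tool that computes the relevant Tor groups.

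\textbf{The repair.} You need the cosyzygy, not the cohomology. Let $\check{C}$ be the \v{C}ech complex on generators of $\mm_x$, and set $G_x=\operatorname{coker}(\check{C}^{p(x)-1}\to\check{C}^{p(x)})$, so that $G_x=\Ocal_{X,x}$ if $p(x)=0$.
\begin{itemize}
\item Since $p(x)\le\depth(\Ocal_{X,x})$, we have $H^i_{\mm_x}(\Ocal_{X,x})=0$ for $i<p(x)$. Hence $0\to\Ocal_{X,x}\to\check{C}^1\to\cdots\to\check{C}^{p(x)}\to G_x\to 0$ is a flat resolution of length $p(x)$. This is exactly where the bound from (i) enters.
\item Tensoring this resolution with $E_x$ gives $\Tor_i(G_x,E_x)\cong H^{p(x)-i}_{\mm_x}(E_x)$ for $1\le i\le p(x)$, and $0$ for $i>p(x)$.
\item So the sheaves $(i_x)_*G_x$, $x\in X$, have finite flat dimension. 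Their Tor-orthogonal is precisely $\{E\colon\depth_x(E)\ge p(x)~\forall x\}$, which equals $\Ecal$ by (i).
\end{itemize}
With this repair your route goes through. It yields explicit generators of flat dimension at most $p(x)$, which is more explicit than the paper's argument.
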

\begin{proof}
  $(i)\colon$ First, the condition $\Phi(p) \subseteq \D^{>0}$ is clearly equivalent to $0 \leq p(x)$ for all $x \in X$. In view of \cref{Tor-sB}, it remains to show that $\Fcal^{> 0} \subseteq \Phi(p)$ if and only if $p(x) \leq \depth(\Ocal_{X,x})$ for all $x \in X$. If $\Fcal^{>0} \subseteq \Phi(p)$ then in particular $\Sigma^{-1}\Ocal_X \in \Phi(p)$ and thus $\Gamma_x(\Ocal_X) \in \D^{\geq p(x)}$ for all $x \in X$. As $\inf(\Gamma_x(\Ocal_X)) = \depth(\Ocal_{X,x})$, this forces $p(x) \leq \depth(\Ocal_{X,x})$ for all $x \in X$. The converse also follows, as $p(x) \leq \depth(\Ocal_{X,x})$ shows that $\Gamma_x(\Ocal_X) \in \D^{\geq p(x)}$, and thus $\Gamma_x(F) \cong \Gamma_x(\Ocal_X) \otimes_X F \in \D^{>p(x)}$ for any $F \in \Fcal^{>0}$. It remains to check the final claim about the description of $\Ecal$. We have
  $$\Fcal^{> 1} \star \Sigma^{-1}\Ecal = \Phi(p) = \{M \in \D \colon \depth_x(M) > p(x) ~\forall x \in X\},$$
  from which it follows that $\Ecal = \{M \in \Qcoh{X} \colon \depth_x(M) \geq p(x) ~\forall x \in X\}$.

  $(ii)\colon$ The condition on the hereditary Tor-pair $(\Ccal,\Ecal)$ is, by the construction above, equivalent to every complex $M \in \Xcal = \Fcal^{> 0} \star \Ccal$ to be such that $(i_x)_* M_x \in \Fcal^+$, and consequently $\Ycal$ is the semi-Bousfield class of the subcategory $\{(i_x)_* M_x \colon x \in X, M \in  \Xcal\} \subseteq \Fcal^+$. Conversely, if $\Ycal$ is the semi-Bousfield class of a subcategory $\Scal$ of $\Fcal^+$, then each complex of $\Xcal$ is locally in $\Fcal^+$, and thus quasi-coherent sheaves of $\Ecal$ are locally of finite flat dimension.
\end{proof}

\begin{theorem}\label{T:Tor-pair}
  Let $X$ be a semi-separated Noetherian scheme. The assignment $p \mapsto (\Ccal_p,\Ecal_p)$, where $\Ecal_p = \{M \in \Qcoh{X} \colon \depth_x(M) \geq p(x) ~\forall x \in X\}$ yields the following bijection:
  $$
\begin{tikzcd}[column sep = 5.0em, row sep = 0.1em]
 \begin{Bmatrix} 
  \text{Perversities $p$ on $X$} \\ \text{which for all $x \in X$ satisfy} \\ \text{$0 \leq p(x) \leq \depth(\Ocal_{X,x})$} \end{Bmatrix} \arrow[leftrightarrow]{r}{1-1} & \begin{Bmatrix} 
    \text{Hereditary Tor-pairs in $\Qcoh{X}$} \\ \text{generated by sheaves of} \\ \text{finite flat dimension} \end{Bmatrix} \\
\end{tikzcd}
$$
If $X$ is regular, all hereditary Tor-pairs in $\Qcoh{X}$ are in the image of this assignment.
\end{theorem}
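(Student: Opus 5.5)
The plan is to compose three bijections already in hand: \cref{T:main}, \cref{Tor-to-sB}/\cref{Tor-sB}, and \cref{P:Tor-pairs}.

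First I will check that the construction of \cref{Tor-to-sB} is injective. Send a hereditary Tor-pair $(\Ccal,\Ecal)$ to the semi-Bousfield pair $(\Fcal^{>0}\star\Ccal,\ \Fcal^{>1}\star\Sigma^{-1}\Ecal)$. The inverse recipe in the proof of \cref{Tor-sB} recovers the pair as $\Ccal=\Xcal\cap\D^{\leq 0}$ and $\Ecal=\Ycal\cap\D^{\leq 1}$, read in the appropriate degree. Hence hereditary Tor-pairs correspond bijectively to semi-Bousfield pairs $(\Xcal,\Ycal)$ with $\Fcal^{>0}\subseteq\Ycal\subseteq\D^{>0}$. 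Since a semi-Bousfield pair is determined by its second component, this is a bijection with semi-Bousfield classes $\Ycal$ satisfying the same sandwich condition.

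Next, by \cref{P:Tor-pairs}(ii), the Tor-pairs generated by sheaves of finite flat dimension correspond exactly to those $\Ycal$ lying in $\SBL{\Fcal^+}$. Here I use the remark preceding the proposition, that "locally of finite flat dimension" and "of finite flat dimension" generate the same Tor-pairs via the sheaves $(i_x)_*G_x$. By \cref{T:main}, these $\Ycal$ are exactly $\Phi(p)$ for a unique $p\in\Perv{X}$. By \cref{P:Tor-pairs}(i), the sandwich condition $\Fcal^{>0}\subseteq\Phi(p)\subseteq\D^{>0}$ holds if and only if $0\le p(x)\le\depth(\Ocal_{X,x})$ for all $x$, and in that case $\Ecal=\Ecal_p$. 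Composing gives the asserted bijection, with $\Ccal_p$ defined as the left Tor-orthogonal of $\Ecal_p$ (equivalently $\Xcal\cap\D^{\leq0}$).

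For the regular case, take any hereditary Tor-pair. By \cref{T:regular}, its associated semi-Bousfield class $\Ycal$ equals $\Phi(p)$ for some perversity $p$. By \cref{P:Tor-pairs}(i), $p$ then satisfies the depth bounds, so the pair lies in the image. Alternatively, over a regular scheme every quasi-coherent sheaf has locally finite flat dimension, so the generation hypothesis is automatic; this is the Auslander–Buchsbaum–Serre bound applied stalkwise, together with Noetherianity giving finite Krull dimension locally.

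The main obstacle is bookkeeping rather than substance. I must confirm that \cref{P:Tor-pairs}(ii) really yields an "if and only if" at the level of the image of $\Phi$, and that the identification of $\Ecal$ with $\Ycal$ in degree $1$ gives exactly $\Ecal_p$. This follows from the shift computation already done in \cref{P:Tor-pairs}(i).
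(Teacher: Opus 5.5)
Your proposal is correct and is essentially the paper's proof: the paper composes \cref{Tor-sB}/\cref{P:Tor-pairs}, \cref{T:main} and \cref{T:regular} exactly as you do. Your alternative argument for the regular case is also valid and bypasses \cref{T:regular}. Each $\Ocal_{X,x}$ is a regular local ring of finite Krull dimension, so every quasi-coherent sheaf is locally of finite flat dimension; a hereditary Tor-pair is generated by its left class, so it falls under the first part of the theorem.
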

\begin{proof}
  Follows directly from \cref{P:Tor-pairs}, \cref{T:main}, and \cref{T:regular}.
\end{proof}
\appendix
\section{Von Neumann Regular rings and continuous perversities}
\label{sec:appendix-vnr}
In this appendix, we take a brief detour and consider the nature of semi-Bousfield classes outside of the Noetherian setting. For this section $R$ will always be a commutative ring and $\Spec{R}$ the prime spectrum of $R$, and as we are working in the affine case, we let $\D(R)=
\D(\Mod{R})$ or sometimes simply $\D$ denote the unbounded derived category of $R$-modules. Additionally, as we are in the affine case, all $t$-structures are $\otimes$-$t$-structures with respect to the standard $t$-structure of $\D$.

For our non-Noetherian study, we have chosen the derived category of a commutative von Neumann regular ring (sometimes called an absolutely flat ring). It is primarly due to work of Stevenson in \cite{Ste14} that, relatively speaking, we have some grasp of the inner workings of its derived category. We now provide some background for what we mean by this.

For a moment, let $R$ be an arbitrary commutative ring. Even outside of the Noetherian case, the collection of compact objects of its unbounded derived category is well understood in a specific sense which we make precise now. 
There is a generalisation of the Hopkins-Neeman result over commutative Noetherian rings mentioned in the introduction: For an arbitrary commutative ring,
there is a lattice isomorphism between the thick subcategories of perfect complexes $\D(R)^c$ and the Thomason subsets of the prime spectrum of $R$ \cite{Tho97}, see \cref{ss:continuous-perv} for details. 

However, the big tt-category $\D$ can fail to be stratified in the usual sense of $\Spec R$ being the classifying space for all localising subcategories. When $\D$ is stratified for an arbitrary commutative ring is an open question, nevertheless, when $R$ is von Neumann regular, we have a full classification:
 $\D$ is stratified if and only if the Local to Global principle holds for localising ideals if and only if $\Spec{R}$ is semi-artinian, \cite[Theorem 6.3]{Ste17} and \cite[Lemma 4.6, Theorem 4.8]{Ste14}. On the way to this result, Stevenson shows that $\D$ is Bousfield stratified, see also \cite[Example 4.21]{BHSZ26}. We will make this explicit here for ease of comparison to the semi-Bousfield case.

The favourable property of working with von Neumann regular rings is that every module is flat, so in particular, the acyclicisation functor at a prime $\mathfrak{p}$ is $t$-exact and isomorphic to the (non-derived) tensor product with the residue field of $R$ at the prime $\mathfrak{p}$, or equivalently, the localisation at the prime $\mathfrak{p}$,  $\Gamma_\mathfrak{p}(-)\cong(-\otimes_R k(\mathfrak{p}))\cong (-\otimes_R R_\mathfrak{p})$. See the beginning of \cite[Section 4]{Ste14} for more details. That is:
\begin{itemize}
    \item Bousfield classes satisfy the \textit{Local to Global principle}: $M \in \Bou{X}$ if and only if $M_\mathfrak{p} \in\Bou{X}$ for every prime ideal $\mathfrak{p}$, 
    \item Bousfield classes satisfy the \textit{Minimality principle}: $\Bou{\coprod_{\mathfrak{q}\in \Spec{R}\setminus \{\mathfrak{p}\}}k(\mathfrak{q})}$ is a minimal Bousfield class, as it coincides with $\Loc(k(\mathfrak{p}))$.
\end{itemize}

We now claim that the derived category of a commutative von Neumann regular ring is always semi-Bousfield stratified. The Local to Global follows exactly as the Bousfield case above. As for the Minimality principle for semi-Bousfield classes as discussed in \cref{ss:substitute-for-minimality-principle}, the essential ingredient is that the acyclicisation functors $\Gamma_\mathfrak{p}$ for $\pp \in \Spec{R}$, which not only detect vanishing in $\D$, but are also $t$-exact with respect to the standard t-structure.

Similarly to \cref{s:classification}, let $\Perv{\Spec{R}}$ denote the lattice of functions on $\Spec R$ with values in $\Zbb^* = \Zbb \cup \{-\infty,\infty\}$.

\begin{prop}\label{p:sBous-vnregular}
    The unbounded derived category of modules over a commutative von Neumann regular ring $R$ is semi-Bousfield stratified. In other words,
    \begin{itemize}
        \item Semi-Bousfield classes satisfy the \textit{Local to Global principle}: $M \in \SBou{X}$ if and only if $M_\mathfrak{p} \in\SBou{X}$ for every prime ideal $\mathfrak{p}$,
        \item Semi-Bousfield classes satisfy the \textit{Minimality principle}: if $M \in \SBou{X}\cap \Gamma_\pp\D$, then  $\Gamma_\pp\D^{\geq \inf (M)} \subseteq \SBou{X}$.
    \end{itemize}
    Therefore, there is a lattice isomorphism 
    \[\Phi \colon \Perv{\Spec{R}}\to \SBL{\D(R)},\]
    given by the assignment
    \[f\mapsto \SBouBig{\coprod_{\pp \in \Spec{R}}\Sigma^{f(\pp)}\Gamma_\pp(R)}.\]
\end{prop}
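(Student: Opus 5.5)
We work directly in $\D=\D(R)$ with the standard $t$-structure, which is a big ttt-category with $\WDcal{0}=\D^{<0}$ (affine case). The key input is that every $R$-module is flat. Hence, for each prime $\pp$, the functor $\Gamma_\pp(-)\cong -\otimes_R R_\pp$ is $t$-exact. Moreover, the family $\{\Gamma_\pp\}_\pp$ jointly detects membership in $\D^{>0}$: $N\in\D^{>0}$ if and only if $N_\pp\in\D^{>0}$ for all $\pp$. This holds because $H^i(N_\pp)=H^i(N)_\pp$ and a module vanishes if all its localisations do.

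\textbf{Local to Global.} Let $Y\in\Tcal$ and $M\in\D$. We use
$$(M\otimes^{\mathbf L}_R Y)_\pp\cong M_\pp\otimes^{\mathbf L}_R Y.$$
Then $M\in\SBou{Y}$ if and only if $M_\pp\otimes^{\mathbf L}_R Y\in\D^{>0}$ for every $\pp$, by the detection statement above. Using the same isomorphism in the other direction, $M_\pp\otimes^{\mathbf L}_R Y \cong (M_\pp\otimes^{\mathbf L}_R Y)_\pp$, and exactness of localisation gives that this lies in $\D^{>0}$ if and only if $M_\pp\in\SBou{Y}$. This proves the first bullet. It is a simpler replacement for \cref{local-piece} and requires no pure-injectivity arguments.

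\textbf{Minimality.} Since $R_\pp=k(\pp)$ is a field, $\Gamma_\pp\D\simeq\D(k(\pp))$. Every object of this category is a coproduct of shifts of $k(\pp)$, and
$$\inf(M)=\min\{i : \Sigma^{-i}k(\pp)\text{ is a summand of }M\}.$$
Let $M\in\SBou{Y}\cap\Gamma_\pp\D$ with $n=\inf(M)$ finite. Then $\Sigma^{-n}k(\pp)$ is a summand of $M$. Since $\SBou{Y}$ is closed under summands (being closed under pure subobjects), we get $\Sigma^{-n}k(\pp)\in\SBou{Y}$. By closure under $\Sigma^{-1}$, also $\Sigma^{-i}k(\pp)\in\SBou{Y}$ for all $i\ge n$. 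Any $N\in\Gamma_\pp\D^{\ge n}$ is a coproduct of such objects, so by closure under coproducts (\cref{SBousfield-closure}) $N\in\SBou{Y}$. If $n=-\infty$, the same argument places every shift of $k(\pp)$ in $\SBou{Y}$. If $M=0$, there is nothing to prove.

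\textbf{The lattice isomorphism.} Given a semi-Bousfield class $\Ccal$, define
$$p(\pp)=\inf\{\inf(M) : M\in\Ccal\cap\Gamma_\pp\D\}-1,$$
following the convention $\Phi(p)=\{M : \depth_\pp M>p(\pp)\}$ and using $\inf(0)=\infty$. Minimality shows
$$\Ccal\cap\Gamma_\pp\D=\Gamma_\pp\D^{>p(\pp)}=\Phi(p)\cap\Gamma_\pp\D.$$
Local to Global then gives $\Ccal=\Phi(p)$, because both classes are semi-Bousfield and $M$ lies in either one if and only if all its localisations do. Here $\Gamma_\pp(M)=M_\pp$, so $\depth_\pp(M)=\inf(M_\pp)$. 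This proves surjectivity.

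Injectivity follows by testing on the objects $\Sigma^{-i}k(\pp)$: $\Sigma^{-i}k(\pp)\in\Phi(p)$ if and only if $i>p(\pp)$. Since the assignment is an order isomorphism, it is a lattice isomorphism.

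The only delicate point I expect is checking that $\Phi(f)$ really equals the depth description $\{M : \inf(M_\pp)>f(\pp)\ \forall\pp\}$. This holds because
$$M\otimes_R\Sigma^{f(\pp)}\Gamma_\pp(R)\cong\Sigma^{f(\pp)}M_\pp,$$
with the conventions $\Sigma^{\pm\infty}$ handled exactly as in \cref{SBou-Bou}.
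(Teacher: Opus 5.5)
Your proposal is correct and follows essentially the same route as the paper. Local to Global comes from $(M\otimes^{\mathbf L}_R Y)_\pp\cong M_\pp\otimes^{\mathbf L}_R Y$ and exactness of localisation. Minimality comes from splitting off the summand $\Sigma^{-\inf(M)}k(\pp)$ and using closure of $\SBou{Y}$ under summands, $\Sigma^{-1}$ and coproducts; the paper quotes Stevenson's Lemma 4.5 for the decomposition into shifts of $k(\pp)$, whereas you derive it from $\Gamma_\pp\D\simeq\D(k(\pp))$.

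You also spell out the lattice isomorphism, which the paper only asserts with ``Therefore'', and your shift by $-1$ in defining $p$ correctly matches the convention $\depth_\pp(M)>p(\pp)$. One small step needs adding: when $\inf\{\inf(M)\}=-\infty$ is not attained, run the coproduct argument once more to obtain all of $\Gamma_\pp\D$.
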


\begin{proof}
    Let $M \in \D$ be such that $M_\pp \in \SBou{X}$ for all $\pp \in \Spec R$. This means that 
    $$(M \otimes_R^\mathbf{L} X)_\pp \cong (M_\pp \otimes_R^\mathbf{L} X) \in \D^{>0},$$
    which clearly yields $M \otimes_R^\mathbf{L} X \in \D^{>0}$, verifying that $M \in \SBou{X}$.
    
    Now we consider the Minimality principle. Fix $X \in 
    D(R)$ and suppose $M \in \SBou{X}\cap \Gamma_\pp\D$. By \cite[Lemma 4.5]{Ste14} mentioned above, $M$ is a coproduct of suspensions of $k(\pp)$. In particular, $\Sigma^{-\inf (M)}\Gamma_\pp(R) \cong \Sigma^{-\inf (M)}k(\pp)$ is a direct summand of $M$, and as semi-Bousfield classes are closed under direct summands,  $\Sigma^{-\inf (M)}k(\pp) \in \SBou{X}\cap \Gamma_\pp\D$. In turn, semi-Bousfield classes are closed under nonpositive suspensions and coproducts, so we can conclude $\Gamma_\pp\D^{\geq \inf (M)} \subseteq \SBou{X}$.  
\end{proof}

Note that a von Neumann regular ring $R$ is Noetherian if and only if it is semisimple artinian, that is, a finite product of fields, and in this case, $\D$ is semi-Bousfield stratified by \cref{T:regular}. 

\subsection{Continuous perversities}\label{ss:continuous-perv}

We One could ask whether we can distinguish the semi-Bousfield classes which correspond to a coaisle of a compactly generated $t$-structure via the assignment $\Phi$, as occurs in the Noetherian case by \cref{cgtstrtotensor} and \cref{scheme-tstr}. Fortunately, all the material to do so already exists in the literature. First, the classification of compactly generated $t$-structures over commutative Noetherian rings in \cite{AJS10} was extended to arbitrary commutative rings \cite[Theorem 5.6]{Hrb20}. Parallel to the classification of thick subcategories of the perfect complexes $\D^c$, one must replace ``specialisation closed subset'' with ``Thomason subset,'' of the prime spectrum to extend to the non-Noetherian situation. Therefore, unlike in the Noetherian situation, it is not sufficient to simply restrict to perversities which are monotone with respect to the inclusion of prime ideals to ensure closure under products of the image under $\Phi$. We first need to make some topological considerations.

Recall that a topological space is \newterm{Alexandrov} if its open sets are closed under arbitrary intersections or, equivalently, if its open sets are precisely the upper subsets with respect to the induced specialisation order on points. For example, the Alexandrov topology on $\Zbb^*$ has as open subsets the upper subsets with respect to its total order. 

Let $R$ be a general commutative ring. A subset $V$ of $\Spec R$ is called \newterm{Thomason} if it is a union of closed subsets of $\Spec R$ with quasi-compact complements. Moreover, by a \newterm{Thomason filtration} of $\Spec R$ we mean a decreasing sequence $(V_n \mid n \in \Zbb)$ of Thomason subsets of $\Spec R$. In fact, the Thomason subsets form the set of open subsets of the \newterm{Hoschter dual topology} on $\Spec R$, see e.g. \cite[Remark 5.11]{BF11}. Let us say that a function $f\colon \Spec R \to \Zbb^*$ is a \newterm{continuous perversity} if it is continuous with respect to the Hochster dual topology on $\Spec R$ and the Alexandrov topology on $\Zbb^*$. This allows us to reformulate \cite[Theorem 5.6]{Hrb20} in terms of continuous perversities.

\begin{theorem}
    Let $R$ be a commutative ring. Then:
    \begin{enumerate}
        \item[(i)] The assignment $f \mapsto (V_n \mid n \in \Zbb)$ given by $V_n \coloneqq \bigcup_{i\geq n}f^{-1}(i)$ provides a bijection between continuous perversities and Thomason filtrations of $\Spec R$.
        \item[(ii)] The assignment $$f \mapsto \Vcal_f =\{M \in \D(R) \mid \Gamma_{V_n}(M) \in \D^{>n} ~\forall n \in \Zbb\},$$ 
        induces a bijection between continuous perversities $f$ on $\Spec R$ and compactly generated $t$-structures $(\Ucal_f,\Vcal_f)$ on $\D(R)$. Here, $\Gamma_{V_n}\colon \D(R) \to \D(R)$ is the acyclisation with respect to the Thomason subset $V_n = \bigcup_{i\geq n}f^{-1}(i)$ of $\Spec R$.        
    \end{enumerate}
\end{theorem}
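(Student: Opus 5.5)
The statement is a reformulation of \cite[Theorem 5.6]{Hrb20}, so the plan is to do the topological translation in (i) and then transport that classification along it in (ii).

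For (i), we use the following description of the opens. In the Alexandrov topology on $\Zbb^*$ the open sets are the upper sets $\{i \geq n\}$ for $n \in \Zbb^*$, together with $\emptyset$ and $\Zbb^*$. Hence $f$ is continuous with respect to the Hochster dual topology exactly when each $f^{-1}(\{i\geq n\}) = \bigcup_{i \geq n} f^{-1}(i)$ is Thomason. A priori the union should include the value $\infty$, i.e. $V_n = f^{-1}([n,\infty])$, and we read the displayed formula that way.

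The sets $V_n$, $n\in\Zbb$, are decreasing, so $f \mapsto (V_n)$ lands in Thomason filtrations. Continuity at the remaining opens, namely the sets $\{\infty\}$ and $\{i > -\infty\}$, is then automatic. Indeed, $f^{-1}(\infty) = \bigcap_n V_n$ and $f^{-1}(>-\infty) = \bigcup_n V_n$, and these are Thomason, but only after the convention is fixed. The intersection $\bigcap_n V_n$ need not be Thomason, so one should declare the Alexandrov opens to be generated by the rays $[n,\infty]$ with $n \in \Zbb$; then $\{\infty\}$ is not open. Countable unions of Thomason sets are Thomason, so $\bigcup_n V_n$ causes no difficulty.

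For the inverse, given a filtration $(V_n)$ we set $f(x) = \sup\{n : x \in V_n\}$, with $\sup\emptyset = -\infty$. Checking that the two assignments are mutually inverse is routine, using only that the filtration is decreasing. The main point to settle in (i) is this bookkeeping of the values $\pm\infty$ and which subsets of $\Zbb^*$ count as open.

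For (ii), we compose (i) with \cite[Theorem 5.6]{Hrb20}, which bijects Thomason filtrations with compactly generated $t$-structures. Under that bijection the coaisle is $\{M : \Gamma_{V_n}M \in \D^{>n}\ \forall n\}$, or equivalently $\{M : \RHom(R/I, M) \in \D^{>n}\}$ for finitely generated $I$ with $V(I)\subseteq V_n$. We cite the $\Gamma_{V_n}$ form, passing through the Koszul complex description if necessary. Every $t$-structure here is a $\otimes$-$t$-structure by the affine remark, so no tensor condition is lost in the translation.

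The only real obstacle is matching the two indexing conventions: Hrbek's filtrations use either $\D^{\geq n}$ or $\D^{>n}$, and the filtration may be shifted by one relative to ours. We would resolve this by testing the perversity constant $0$ with filtration $V_n = \Spec R$ for $n \le 0$. It must produce the standard $t$-structure with coaisle $\D^{>0}$, and this fixes $\Gamma_{V_n}(M)\in\D^{>n}$ as the correct condition.
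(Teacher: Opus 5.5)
Your proposal follows the paper's proof: (i) is the same bookkeeping with the inverse $g(\pp)=\sup\{n\in\Zbb \colon \pp\in V_n\}$, and (ii) is obtained by composing (i) with \cite[Theorem 5.6, Proposition 5.10]{Hrb20}. Your two conventions are exactly what the paper's argument tacitly uses, since it only checks preimages of the rays $[n,\infty]$. The first is reading $V_n=f^{-1}([n,\infty])$; the second is declaring open on $\Zbb^*$ only the sets generated by the rays $[n,\infty]$ with $n\in\Zbb$. The second is genuinely needed and not cosmetic: if $\{\infty\}$ were open, take $R$ the ring of locally constant functions from the Cantor set to a field and the filtration $V_n=\Spec{R}$ for $n\le 0$, $V_n=U_n$ for $n>0$, with clopen $U_n$ shrinking to a single point; then $\bigcap_n V_n$ is not Thomason, so this filtration would have no continuous preimage.
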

\begin{proof}
    $(i)\colon$ The assignment is well defined, as if $f$ is continuous then $\bigcup_{i\geq n}f^{-1}(i)$ is a union of Thomason subsets, and thus is itself a Thomason subset. Clearly, the assignment is also injective. To a given Thomason filtration $(V_n \mid n \in \Zbb)$, we can associated a function $g\colon \Spec R \to \Zbb^*$ by setting $g(\pp) = \sup \{n \in \Zbb \mid \pp \in V_n)$. It is easy to see that $\bigcup_{i\geq n}g^{-1}(i) = V_n$ for each $n \in \Zbb$, which verifies both that $g$ is a continuous perversity and that our assignment is also surjective.

    $(ii)\colon$ Follows by combining the bijections provided by $(i)$ and \cite[Theorem 5.6]{Hrb20}, see also the description of the associated coaisle in \cite[Proposition 5.10]{Hrb20}.
\end{proof}
\begin{rmk}
If $R$ is Noetherian (or, more generally, if $\Spec R$ is a Noetherian space) then Thomason subsets coincide with specialisation closed subsets (\cite[Proposition 7.13]{BF11}), and then continuous perversities coincide with monotone perversities. Indeed, if $\Spec R$ is Noetherian then the Hochster dual topology is an Alexandrov topology.  A map between two Alexandrov topological spaces is continuous if and only if it is order-preserving with respect to the induced specialisation orders. In this way, the continuous assumption on a perversity $f\colon \Spec R \to \Zbb^*$ naturally extends the monotone assumption to non-Noetherian contexts.
\end{rmk}

Now let us return to the case of $R$ von Neumann regular. Then $\Spec{R}$ is a totally disconnected Hausdorff and zero-dimensional, and Thomason subsets of $\Spec{R}$ coincide with the open subsets, in other words, the Hoschter dual topology on $\Spec R$ coincides with the Zariski topology, see the discussion in \cite[Section 3]{Ste14} for a summary. Thus, a continuous perversity is just a continuous function $\Spec R \to \Zbb^*$ with respect to the usual Zariski topology on $\Spec R$. Furthermore, a semi-Bousfield class in $\D(R)$ is closed under all products if and only if it is a coaisle of a compactly generated t-structure. Indeed, one implication follows from the general \cref{product-tstr}, while the converse is specific to this setting and follows from \cite[Corollary 3.12]{BH21}. We show that the classification of compactly generated t-structures in $\D(R)$ can be recovered from \cref{p:sBous-vnregular} in a similar fashion as in the Noetherian case of \cref{s:tstructures}.

\begin{cor}
    Let $R$ be a von Neumann regular commutative ring. In the lattice isomorphism $\Phi$ of \Cref{p:sBous-vnregular}, coaisles of compactly generated $t$-structures in $\SBL{\D}$ correspond to continuous perversites in $\Perv{\D(R)}$.
\end{cor}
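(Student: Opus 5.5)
The strategy is to combine three facts: the lattice isomorphism $\Phi$ of \cref{p:sBous-vnregular}, the equivalence (stated just above) between product-closed semi-Bousfield classes and coaisles of compactly generated $t$-structures in $\D(R)$, and the classification of compactly generated $t$-structures by continuous perversities via $f \mapsto \Vcal_f$. It then suffices to show, for every $f \in \Perv{\Spec R}$, that $\Phi(f)$ is closed under products if and only if $f$ is continuous. In that case we also want $\Phi(f) = \Vcal_f$, so that the two bijections are compatible.

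\textbf{Step 1.} First I would rewrite both sides in terms of stalks. Since $\Gamma_\pp(-) \cong (-)_\pp$ is $t$-exact, we get
$$\Phi(f) = \{M \colon M_\pp \in \D^{>f(\pp)} ~\forall \pp\} = \{M \colon H^i(M)_\pp = 0 ~\forall i \leq f(\pp)\}.$$
Equivalently, $M \in \Phi(f)$ exactly when $\Supp H^i(M) \cap V_i = \emptyset$ for all $i$, where $V_i = \{\pp \colon f(\pp) \geq i\}$.

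\textbf{Step 2 (continuous $\Rightarrow$ product-closed, $\Phi(f)=\Vcal_f$).} Suppose $f$ is continuous. Then each $V_n$ is open, hence Thomason, because the Hochster dual topology equals the Zariski topology. Over a von Neumann regular ring, $\Gamma_{V_n}$ is given by tensoring with the flat idempotent ideal $\bigcup$ of the idempotents $e$ with $D(e)\subseteq V_n$, so it is $t$-exact. This gives $\Gamma_{V_n}(M) \in \D^{>n}$ if and only if $H^i(M)$ vanishes on $V_n$ for all $i \leq n$. Since the $V_n$ are decreasing in $n$, this is the same as the description in Step 1. Hence $\Phi(f) = \Vcal_f$, which is a coaisle, so it is product-closed.

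\textbf{Step 3 (product-closed $\Rightarrow$ continuous).} This is the main obstacle. Suppose $\Phi(f)$ is product-closed. By \cref{product-tstr} and \cite[Corollary 3.12]{BH21}, $\Phi(f) = \Vcal_g$ for some continuous $g$. By Step 2, $\Vcal_g = \Phi(g)$, and injectivity of $\Phi$ then forces $f = g$, so $f$ is continuous.

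If I wanted to avoid citing \cite{BH21}, I would argue directly, in analogy with \cref{scheme-tstr}. Suppose some $V_n$ is not open. Then there is $\pp \in V_n$ with every basic clopen neighbourhood $D(e)$ of $\pp$ meeting the complement of $V_n$. Choose a decreasing sequence of idempotents $e_j$ whose sets $D(e_j)$ shrink toward $\pp$, and points $\qq_j \in D(e_j)\setminus V_n$. Then consider the modules $k(\qq_j)$ placed in degree $n$, that is, $\Sigma^{-n}k(\qq_j)$; each lies in $\Phi(f)$. Their product should have nonzero stalk at $\pp$, since $(\prod_j k(\qq_j))_\pp \neq 0$ whenever the $\qq_j$ accumulate at $\pp$. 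This would show the product is not in $\Phi(f)$. Accumulation can be arranged using the ultrafilter description of primes of $\prod$ and the fact that the stalk at $\pp$ kills exactly the elements annihilated by some $1-e$ with $\pp \in D(e)$. If $\pp$ has no countable neighbourhood basis, I would replace the sequence by a net indexed by the neighbourhoods.
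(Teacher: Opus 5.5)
Your proof is correct, but it takes a different route from the paper's. The paper does not use the classification \cite[Theorem 5.6]{Hrb20} in this proof. It uses \cref{product-tstr} and \cite[Corollary 3.12]{BH21} to identify compactly generated coaisles with product-closed semi-Bousfield classes, and then proves directly that $\Phi(f)$ is product-closed if and only if $f$ is continuous:
\begin{itemize}
\item For continuous $f$, it chooses neighbourhoods $U_\pp=\Spec{R}\setminus V(J(\pp))$ on which $f\geq f(\pp)$. Since ideals of $R$ are idempotent, membership in $\Phi(f)$ becomes $J(\pp)H^i(M)=0$ for $i\leq f(\pp)$, a condition visibly stable under products.
\item At a point $\pp$ of discontinuity, it takes $N=\prod_U k(\qq_U)$ over \emph{all} neighbourhoods $U$ of $\pp$. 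With $J=\bigcap_U\qq_U$ one has $R/J\hookrightarrow N$ and $\pp\in V(J)$, so $N_\pp\neq 0$.
\end{itemize}

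You instead identify $\Phi(f)$ with $\Vcal_f$ via $t$-exactness of $\Gamma_{V_n}$, and get the converse formally from the classification plus injectivity of $\Phi$. Your route shows that the two bijections are compatible, $\Phi(f)=\Vcal_f$. Moreover, the corollary concerns compactly generated coaisles, not product-closed classes, so your Step 3 needs no products at all. If $\Phi(f)$ is such a coaisle, then directly $\Phi(f)=\Vcal_g=\Phi(g)$ with $g$ continuous, and \cref{product-tstr} and \cite{BH21} can be dropped. The paper's route instead gives an elementary, intrinsic criterion for product-closure that is independent of \cite{Hrb20}. For your optional direct argument, the paper's device removes the need for sequences, nets or ultrafilters: index over all neighbourhoods and use $R/J\hookrightarrow\prod_U k(\qq_U)$.

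A side remark: your direct formulation, via openness of the sets $V_n=f^{-1}([n,\infty])$ for $n\in\Zbb$, is the condition that actually decides whether $\Phi(f)$ is a compactly generated coaisle; your Step 2 uses nothing else. With the Alexandrov topology on $\Zbb^*$ as literally defined in the paper, $\{\infty\}$ is open, so continuity is strictly stronger. For example, take $R$ the ring of eventually constant sequences over a field, so that $\Spec{R}=\mathbb{N}\cup\{\infty\}$ is the one-point compactification, and set $f(m)=m$, $f(\infty)=\infty$. Then every $V_n$ is open, so your Step 2 makes $\Phi(f)$ a compactly generated coaisle, yet $f^{-1}(\infty)$ is not open. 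Correspondingly, the paper's use of $\Sigma^{f(\pp)}k(\qq_U)$ tacitly assumes $f(\pp)$ is finite. So ``continuous'' must be read with respect to the topology generated by the sets $[n,\infty]$, $n\in\Zbb$. This issue lies in the paper's reformulation of \cite[Theorem 5.6]{Hrb20}, on which your Step 3 relies, not in your argument.
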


\begin{proof}
  In view of the discussion above, we need to show that given a perversity $f$ on $\Spec R$ the corresponding semi-Bousfield class 
  $$\Ccal \coloneqq \SBouBig{\coprod_{\pp \in \Spec{R}}\Sigma^{f(\pp)}\Gamma_\pp} = \{M \in \D(R) \mid M_\pp \in \D^{>f(\pp)} ~ \forall \pp \in \Spec R\}$$
  is closed under products if and only if $f$ is continuous.

  Assume first that $f$ is not continuous. This means that there is $\pp \in \Spec R$ such that every open neighbourhood $\pp \in U \subseteq \Spec R$ contains $\qq_U$ such that $f(\qq_U) < f(\pp)$. Note that this implies that $\pp$ belongs to the closure $\cl{\qq_U \mid U \in \Ucal}$. Let $\Ucal$ be the collection of all neighbourhoods of $\pp$ and fix the $\qq_U$ as in the previous sentence for each $U \in \Ucal$. Set $N = \prod_{U\in \Ucal}k(\qq_U)$ to be the product of all the associated residue fields. Set $J = \bigcap_{U\in \Ucal}\qq_U$ and note that $R/J$ embeds into $N$. Since $V(J) = \{\qq \in \Spec R \mid J \subseteq \qq\}$ is a closed subset of $\Spec R$ containing $\qq_U$ for every $U \in \Ucal$, necessarily $\pp$ belongs to $V(J)$ as well. Then $k(\pp) \otimes R/J \neq 0$, and using flatness of $k(\pp)$, it also follows that $k(\pp) \otimes_R N \neq 0$. By our assumption, $\Sigma^{f(\pp)}k(\qq_U) \in \Ccal$ for each $\qq_U$. On the other hand, the previous computation shows that $H^0 \Gamma_\pp(N) \cong k(\pp) \otimes_R N \neq 0$, and thus $\Sigma^{f(\pp)}N \not\in \Ccal$, showing that $\Ccal$ is not closed under products.

  Conversely, assume that $f$ is continuous. Then there is for each $\pp \in \Spec R$ an open neighbourhood $U_\pp$ such that we can write
  $$\Ccal = \{M \in \D(R) \mid M_\qq \in \D^{>f(\pp)} ~ \forall \pp \in \Spec R, \qq \in U_\pp\}.$$
  Let $J(\pp)$ be an ideal of $R$ such that $U_\pp = \Spec R \setminus V(J(\pp))$. Then the condition $M_\qq \in \D^{>f(\pp)} ~ \forall \qq \in U_\pp$ is equivalent to $H^i(M)$ being annihilated by $J(\pp)$ for all $i \leq f(\pp)$, recall here that any ideal of $R$ is idempotent. This condition is clearly stable under products.
\end{proof}

\section{Tensor nonvanishing}
\label{ss:cohomology-nonvanishing}
In this appendix, we inspect certain cohomological (non)vanishing formulas which serve as an essential tool in our characterisation of the image of the assignment $\Phi$ of \Cref{s:classification}. Note that the more or less classical \cref{avramov-foxby} is the only input necessary from this appendix for the regular case of a Noetherian scheme in \cref{T:regular}, the compactly generated case in \cref{T:tstr}, and the Tor-pair case in \cref{T:Tor-pair}. The rest of this appendix deals with  unbounded complexes in the general formulation of \cref{T:main}.

Throughout, let $(R,\mm,k)$ be a local commutative Noetherian ring with derived category $\D = \D(\Mod{R})$. 

\begin{prop}\label{avramov-foxby}
  Let $M,F,E \in \D$ and consider the natural morphisms
  $$\omega\colon F \otimes_R^\mathbf{L} \RHom_R(k,M) \to \RHom_R(k,M \otimes_R^\mathbf{L} F) $$
  and
  $$\theta\colon k \otimes_R^\mathbf{L} \RHom_R(M,E) \to \RHom_R(\RHom_R(k,M),E).$$
  Then $\omega$ and $\theta$ are isomorphisms whenever one of the following conditions holds:
  \begin{enumerate}
    \item[(i)] $R$ is regular, or
    \item[(ii)] $F$ is compact and $E=F^\cd$,
    \item[(iii)] $M \in \D^+$, $F \in \Fcal^+$, and $E \in \Ical^-$.
  \end{enumerate}
\end{prop}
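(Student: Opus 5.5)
The plan is the standard ``way-out'' argument: fix two of the three variables, reduce to a generating case where the map is trivially an isomorphism, and extend by triangles and (co)limits. Both $\omega$ and $\theta$ are natural transformations between triangulated functors in each variable. The key input is that $k$ is a compact object when $R$ is regular, while in general $\RHom_R(k,-)$ has bounded-below behaviour that makes it commute with the relevant (co)limits.

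Case (i). If $R$ is regular, $k$ has a finite free resolution, so $k \in \D^\cpt$. Then $\RHom_R(k,-) \cong k^* \otimes_R^\mathbf{L} -$ with $k^* = \RHom_R(k,R)$. This gives
$$F \otimes \RHom_R(k,M) \cong F \otimes k^* \otimes M \cong \RHom_R(k,M\otimes F),$$
which is $\omega$ after checking compatibility of the natural maps. For $\theta$, note that $\RHom_R(M,E)$ is arbitrary, but $\RHom_R(\RHom_R(k,M),E) \cong \RHom_R(k^*\otimes M,E) \cong \RHom_R(k^*,\RHom_R(M,E)) \cong k^{**}\otimes \RHom_R(M,E) \cong k \otimes \RHom_R(M,E)$, using rigidity of $k^*$. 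Alternatively, one can argue that the class of $k$ for which $\theta$ is invertible is thick and contains $R$.

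Case (ii). Here $F$ is compact, so $\omega$ is an isomorphism for all $M$. This holds because both sides are exact in $F$, and the class of $F$ making $\omega$ invertible is thick and contains $R$. For $\theta$ with $E = F^\cd$, I rewrite
$$\RHom_R(M,F^\cd) \cong (M \otimes F)^\cd.$$
Then $\RHom_R(\RHom_R(k,M),F^\cd) \cong (F\otimes \RHom_R(k,M))^\cd$, which via $\omega$ equals $\RHom_R(k,M\otimes F)^\cd$. Since $k$ has a resolution by finite free modules, there is the standard tensor-evaluation isomorphism $k \otimes (N)^\cd \cong \RHom_R(k,N)^\cd$, which holds for any $N$ because $k$ is pseudo-coherent (using a degreewise finite free resolution and $(-)^\cd$ turning direct sums into products). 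Applying it with $N = M \otimes F$ yields $\theta$ after matching the natural maps.

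Case (iii). This is the main obstacle. Take a resolution $P \to k$ by finite free modules, bounded above but possibly infinite. For $\omega$: with $M \in \D^+$ and $F$ bounded-below hoflat (\cref{scheme-dimensions}), $\Hom_R(P,M)$ is a product-totalisation. In each cohomological degree only finitely many terms contribute, because $M$ is bounded below and $P^{-n}$ is finite free. Tensoring with the flat complex $F$, which is bounded below, commutes with these degreewise-finite products. Hence $F\otimes \Hom(P,M) \to \Hom(P, M\otimes F)$ is a degreewise isomorphism. Boundedness of $M$ and $F$ from below is essential here; this is the usual Avramov--Foxby tensor evaluation.

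For $\theta$: take $E$ to be a bounded-above complex of injectives. Since $\RHom_R(M,E)$ is then bounded above and $k$ has the finite free resolution $P$, the map $P \otimes \Hom(M,E) \to \Hom(\Hom(P,M),E)$ is again degreewise a finite sum against finitely generated free modules. The bounds ($M$ bounded below, $E$ bounded above, $P$ bounded above) ensure that only finitely many terms appear in each degree. It is therefore a degreewise isomorphism, which is Hom-evaluation.

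The delicate point I expect is checking these degreewise finiteness claims carefully for the totalisations, which must be taken with products versus sums. If that bookkeeping fails, I would fall back to a d\'evissage: first treat bounded $M$ by induction on amplitude from modules, then pass to $M \in \D^+$ via homotopy limits $M \simeq \operatorname{holim} M^{\le n}$, using that the cohomology in each fixed degree stabilises.
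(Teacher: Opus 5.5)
Your arguments for (i), for $\omega$ in (ii), and for (iii) are sound.

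For (iii) you take a different route from the paper. You resolve $k$ by a degreewise finite free complex $P$ and check that each degree involves only finitely many terms. The paper instead keeps $k$ itself and replaces $M$ and $E$ by injective complexes and $F$ by a flat one. It then uses that $k$ is finitely presented, together with two Noetherian facts: (injective)$\otimes$(flat) is injective, and $\Hom$ between injectives is flat. Your bookkeeping checks out. The one thing to add is that $E$ should be a bounded above \emph{hoinjective} complex (\cref{scheme-dimensions}), not merely a bounded above complex of injectives. That is what makes $\Hom_R(M,E)$ and $\Hom_R(\Hom_R(P,M),E)$ compute the derived functors. So (iii) is not where the difficulty lies.

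The genuine gap is $\theta$ in case (ii). You invoke $k\otimes^{\mathbf L}_R N^\cd\cong\RHom_R(k,N)^\cd$ for \emph{every} $N$. With $P\to k$ as above, the comparison map in degree $n$ is
$$\bigoplus_i \Hom_R(P^i,N^{i-n})^\cd\longrightarrow\Big(\prod_i\Hom_R(P^i,N^{i-n})\Big)^\cd .$$
Character duality turns sums into products, not products into sums. So this map is an isomorphism only when finitely many $i$ contribute, for instance when $N\in\D^+$ or $k$ is perfect.

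It really fails otherwise. Let $R=\mathbb{F}_p[x]/(x^2)$ and $N=\coprod_{n\ge0}\Sigma^nk$, which as a complex coincides with $\prod_{n\ge0}\Sigma^nk$.
\begin{itemize}
\item On one side, $H^0\RHom_R(k,N)\cong\prod_{n\ge0}\mathrm{Ext}^n_R(k,k)\cong k^{\mathbb N}$. Hence $H^0(\RHom_R(k,N)^\cd)\cong(k^{\mathbb N})^\cd$, which is uncountable.
\item On the other side, $N^\cd\cong\coprod_{n\ge0}\Sigma^{-n}k$. This gives $H^0(k\otimes^{\mathbf L}_R N^\cd)\cong\bigoplus_{n\ge0}\Tor^R_n(k,k)\cong k^{(\mathbb N)}$, which is countable.
\end{itemize}
For $F=R$ and $E=R^\cd$, the map $\theta$ is exactly $k\otimes^{\mathbf L}_R M^\cd\to\RHom_R(k,M)^\cd$. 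So $M=N$ is a counterexample to the $\theta$-part of (ii) as stated, and this step cannot be repaired. The paper's remark that (ii) ``is proved similarly to (i)'' amounts to a thick-subcategory argument in $F$, and that only covers $\omega$.

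What your argument does establish is $\theta$ under (ii) with the extra hypothesis $M\in\D^+$. That case is already contained in (iii): a perfect $F$ lies in $\Fcal^+$, and $F^\cd$ is a bounded complex of injectives, hence in $\Ical^-$. None of this affects the rest of the paper, since (ii) is never used. \cref{T:regular} relies on (i), and \cref{T:main} relies on (iii) through \cref{tensor-vanishing} and \cref{tensor-vanishing-dual}.
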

\begin{proof}
  $(i)\colon$ If $R$ is regular then $k$ is a compact object of $\D$. Since both $\omega$ and $\theta$ are easily seen to be isomorphisms if we replace $k$ by $R$, the claim follows from $k \in \thick{R}$. 
  
  $(ii)\colon$ This is proved similarly to $(i)$.

  $(iii)\colon$ This is essentially \cite[Lemma 4.4]{AF91}. For convenience, and also because \cite[Lemma 4.4(I)]{AF91} is stated under slightly more restrictive assumptions, we provide a short proof. We may assume that $M$ is a bounded below complex of injectives, $F$ is a bounded below hoflat complex, and $I$ is a bounded above hoinjective complex. Then $M \otimes_R^\mathbf{L} F \cong M \otimes_R F$ is a bounded below complex of injectives, and thus is hoinjective. Similarly, $\RHom_R(M,E) \cong \Hom_R(M,E)$ is a bounded above complex of flats, and thus is hoflat. The maps $\omega$ and $\theta$ are thus represented by the following natural cochain maps:
  $$\omega\colon F \otimes_R \Hom_R(k,M) \to \Hom_R(k,M \otimes_R F)$$
  and
  $$\theta\colon k \otimes_R \Hom_R(M,E) \to \Hom_R(\Hom_R(k,M),E),$$
  both of which are easily checked to be isomorphisms using the fact that $k$ is a finitely presented $R$-module.
\end{proof}

\begin{rmk}
  \label{avramov-foxby-unbounded-obstruction}
  If $R$ is singular, then the assumption $M \in \D^+$ is essential in the formulation of \Cref{avramov-foxby}(iii). Indeed, setting $F$ to be the stalk complex of the free $R$-module $R^{(I)}$ whose basis is a set $I$, the map $\omega$ is nothing but the natural map
  $$\RHom_{\Ocal_{X,x}}(k,M)^{(I)} \to \RHom_{\Ocal_{X,x}}(k,M^{(I)}) .$$
  We claim that this map being isomorphism, for all $M \in \D$ and any index set $I$, forces $k$ to be compact in $\D$, and thus $R$ to be regular. Let $(M_i)_{i \in I}$ be a collection of objects in $\D$ and set $M = \coprod_{i \in I}M_i$. For each $i \in I$ consider the split inclusion $\iota_i\colon M_i \hookrightarrow M$. Taking the coproduct of $\iota_i$'s over $I$, we obtain a split monomorphism $\iota\colon \coprod_{i \in I}M_i = M \hookrightarrow M^{(I)}$. Applying the natural isomorphism of our assumption to $\iota$ yields that the natural map
  $$\coprod_{i \in I}\RHom_{R}(k,M_i) \to \RHom_{R}(k,\coprod_{i \in I}M_i) $$
  is an isomorphism, showing that $k$ is compact, which in turn forces $R$ to be regular.
\end{rmk}
Although \Cref{avramov-foxby-unbounded-obstruction} shows that \Cref{avramov-foxby}(iii) cannot be directly extended to an unbounded complex $M$, in what follows we show that a weaker version exists, which asserts that the cohomological infima of the two complexes connected by the map $\omega$ coincide even for general unbounded $M$. The following is the main technical input.
\begin{lemma}
  \label{flat-unbounded}
  Let $M,F$ be two complexes in $\D$ such that $\mm \in \supp(M) \cap \supp(F)$. Assume that $M \in \D \setminus \D^+$ and $F \in \mathcal{F}^+$. Then $M \otimes_R^\mathbf{L} F \not\in \D^+$.
\end{lemma}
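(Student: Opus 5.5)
The plan is to transfer the unboundedness of $M$ to $M \otimes_R^\mathbf{L} F$ through the maximal ideal $\mm$, using a compact test object that preserves $\D^+$. Fix a Koszul complex $K$ on a finite generating set of $\mm$. Then $K$ is compact, $\supp(K)=\{\mm\}$, and $K \otimes_R^\mathbf{L} -$ preserves $\D^+$, since $K$ is a bounded complex of finite free modules. It therefore suffices to show
$$K \otimes_R^\mathbf{L} (M \otimes_R^\mathbf{L} F) \cong (K \otimes_R^\mathbf{L} M) \otimes_R^\mathbf{L} F \notin \D^+.$$

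\textbf{Hypothesis as it must be read.} The first step is to see that $K \otimes_R^\mathbf{L} M$ is unbounded below, and this needs the unboundedness of $M$ to be visible at $\mm$. The statement's hypotheses alone do not give this, and they do not suffice as written. Let $\pp \neq \mm$ be a prime and take
$$M = k \oplus \coprod_{n \geq 0}\Sigma^n k(\pp), \qquad F = K.$$
Then $\mm \in \supp(M) \cap \supp(F)$, $M \notin \D^+$, and $F \in \Fcal^+$ by \cref{compactsflat}. Yet $M \otimes_R^\mathbf{L} F \cong k \otimes_R^\mathbf{L} K$, because $K \otimes_R^\mathbf{L} k(\pp) = 0$, and this object is bounded. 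So the statement can hold only under the stronger hypothesis $\Gamma_\mm M \notin \D^+$, which holds whenever $\supp(M)=\{\mm\}$. That is the situation of the intended application, with $M \in \Gamma_x\D$, and I will argue under this hypothesis.

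\textbf{Unboundedness of $K \otimes_R^\mathbf{L} M$.} Since $\Gamma_\mm(M \otimes_R^\mathbf{L} F) \cong \Gamma_\mm M \otimes_R^\mathbf{L} F$ and $\Gamma_\mm$ preserves $\D^+$, we may replace $M$ by $\Gamma_\mm M$. We may then assume $\supp(M)=\{\mm\}$ and $M \notin \D^+$. By \cite[Theorem 2.1]{FI03}, $\inf \RHom_R(k,M) = \inf M = -\infty$. Self-duality of the Koszul complex, $\RHom_R(K,M)\cong \Sigma^{-d}K \otimes_R^\mathbf{L} M$, together with the fact that $k$ and $K$ build each other in finitely many steps inside the subcategory of objects supported at $\mm$ on which $\RHom_R(-,M)$ is tested, transfers this to $K \otimes_R^\mathbf{L} M \notin \D^+$. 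The cohomology modules of $N \coloneqq K \otimes_R^\mathbf{L} M$ are annihilated by a fixed power $\mm^c$.

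\textbf{Main obstacle.} The hardest step is to show $N \otimes_R^\mathbf{L} F \notin \D^+$. Tensoring with $k$ does not work directly, since over a singular ring $k \otimes_R^\mathbf{L} -$ does not preserve $\D^+$. Instead, represent $F$ as a bounded below hoflat complex, concentrated in degrees $\geq -a$; then $-\otimes_R F$ is exact and lowers degrees by at most $a$. Filter each cohomology module $H^n(N)$ by the finite $\mm$-adic filtration, whose subquotients are $k$-vector spaces $V$, and note $V \otimes_R^\mathbf{L} F \cong V \otimes_k (k \otimes_R^\mathbf{L} F)$. Here $k \otimes_R^\mathbf{L} F$ is nonzero and bounded below because $\mm \in \supp(F)$, so if $s=\inf(k \otimes_R^\mathbf{L} F)$, then $V\otimes_R^\mathbf{L} F$ has nonzero lowest cohomology in degree exactly $s$, and the top subquotient $\mm^{j}H^n(N)$ (with $j$ maximal such that it is nonzero) contributes a nonzero class in degree $n+s$. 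I would then use the spectral sequence of the soft-truncation tower of $N$ tensored with $F$, or equivalently an induction over truncations $N^{\geq n}$, to show that the class $H^{n+s}$ of $\mm^{j}H^n(N)\otimes_R^\mathbf{L}F$ survives in $H^{n+s}(N \otimes_R F)$ for infinitely many $n \to -\infty$. The uniform bound $a$ limits the interfering differentials, and choosing $n$ along a sparse sequence of nonvanishing degrees of $H^\ast(N)$ should avoid them. That yields $N \otimes_R^\mathbf{L} F \notin \D^+$.
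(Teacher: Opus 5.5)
Your counterexample is correct: as stated, the lemma fails whenever $\dim R \geq 1$, and the right hypothesis is $\Gamma_\mm M \notin \D^+$. The paper's proof uses exactly this, without saying so, when it replaces $M$ by $\Gamma_\mm M$. This is harmless there, since in \cref{T:main} the lemma is only needed for $M \in \Gamma_x\D$.

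There is a smaller slip in your Koszul step. $k$ is not finitely built from $K$ unless $R$ is regular, because $\thick{K}$ consists of perfect complexes. So your justification that $K \otimes_R^{\mathbf{L}} M \notin \D^+$ is wrong as written. The claim itself is true: $k \otimes_R K$ is a finite sum of shifts of $k$, so $\RHom_R(k,M)$ is a direct summand of $\RHom_R(k,\RHom_R(K,M))$, and $\RHom_R(k,-)$ preserves $\D^+$.

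Note also what this reduction achieves. The remaining step is still the full lemma for $N = K \otimes_R M$, which lies in $\Gamma_\mm \D$ and is unbounded below. The Koszul complex only makes the cohomology $k$-linear; it does not touch the real difficulty.

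That remaining step is the genuine gap, and the mechanism you propose for it does not work. Take the spectral sequence of the soft-truncation filtration of $N$ tensored with $F$, with $E_2^{p,q} = H^q(N) \otimes_k H^p(k \otimes_R^{\mathbf{L}} F) \Rightarrow H^{p+q}(N \otimes_R F)$. Your class sits at $(s,n)$. Nothing hits it, but it can support differentials $d_r \colon E_r^{s,n} \to E_r^{s+r,n-r+1}$ for $r \geq 2$. These land on the cohomology of $N$ \emph{below} degree $n$, of which there is infinitely much.

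Neither of your controls helps:
\begin{itemize}
\item The bound $a$ only limits how far the \emph{higher} cohomology of $N$ reaches down. Since $k \otimes_R^{\mathbf{L}} F$ may be unbounded above, nothing bounds $r$.
\item You cannot choose the nonvanishing degrees of $H^*(N)$ to be sparse; if $H^q(N) \neq 0$ for all $q \ll 0$, every target can be nonzero.
\end{itemize}

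These differentials also do not vanish for structural reasons. $d_2$ is the action of the Postnikov invariants of $N$ (classes in $\mathrm{Ext}^2_R(k,k)$) on $\Tor^R_*(k,F)$. Over a singular ring this action can be nonzero on the lowest Tor class. Take $R = k[[x_1,x_2,y]]/(y^2)$, $F = R/(x_1,x_2)$, and $\xi_i \in \mathrm{Ext}^1_R(k,k)$ dual to $x_i$. Let $N$ be given by the triangle $\Sigma k \to N \to k \xrightarrow{\xi_1\xi_2} \Sigma^2 k$. Then $\xi_1\xi_2$ maps $\Tor^R_2(k,F) \cong k$ isomorphically onto $\Tor^R_0(k,F)$, so the bottom class coming from $H^0(N)$ dies. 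Only the bottom class from the lowest cohomology survives, and an unbounded-below $N$ has no lowest cohomology.

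So survival for infinitely many $n$ is precisely where singularity bites (compare \cref{avramov-foxby-unbounded-obstruction}), and your sketch gives no argument for it. The paper obtains it by rigidifying both sides:
\begin{itemize}
\item $M$ is replaced by a minimal hoinjective complex in $\K(\Inj{\Lcal})$ whose differentials kill socles, using Krause's adjoints $Q_\rho, Q_\lambda$.
\item $F$ is replaced by a bounded below complex of completed free modules whose differentials have entries in $\mm$.
\end{itemize}
A socle map $k \to M^l$ then yields a non-null-homotopic map $\Sigma^{-l} Q_\lambda(k) \to F^0 \otimes_R M$ that is killed by the differential of $F$. This gives a nonzero element of $\Hom(k, \Sigma^l F \otimes_R M)$ for each of the infinitely many $l \ll 0$ with $M^l \neq 0$.
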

\begin{proof}
  Let 
  $$\Lcal = \{M \in \Mod{R} \colon \Supp(M) \subseteq \{\mm\}\}$$
  be the Grothendieck subcategory consisting of semi-artinian $R$-modules, i.e. modules with (possibly infinite) composition series. Note that $\D(\Lcal) \cong \Gamma_\mm \D$ is compactly generated, and so we can make use of Krause's results of \cite{Kra05}. Namely, the Verdier localisation functor 
  $$Q\colon \K(\Inj{\Lcal}) \to \D(\Lcal)$$ 
  from the homotopy category of complexes whose components are injective objects of $\Lcal$ to the derived category $\D(\Lcal)$ admits not only the right adjoint $Q_\rho$---which computes hoinjective resolutions---but also the left adjoint $Q_\lambda$. Recall that the category $\Inj{\Lcal}$ of injectives in $\Lcal$ consists precisely of coproducts of copies of the indecomposable injective $R$-module $E(k)$.

  We first reduce the claim to a situation in which both the complexes $M$ and $F$ take a particular shape. We will start by replacing $M$ by $\Gamma_\mm M$. This is legal, as if we show that $\Gamma_\mm M \otimes_R^\mathbf{L} F \cong \Gamma_\mm(M \otimes_R^\mathbf{L} F) \not\in \D^+$ then $M \otimes_R^\mathbf{L} F \not\in \D^+$ follows as a consequence. Then $\supp(M) = \{\mm\}$ and so we can view $M$ as an object in $\D(\Lcal)$. Next, we replace $M$ by the hoinjective complex $Q_\rho(M)$. Explicitly, this means that we assume that the components $M^i$ belong to $\Inj{\Lcal}$ and that $M$ is right orthogonal in $\K(\Inj{\Lcal})$ to any acyclic complex. Finally, by \cite[Appendix B]{Kra05} we may in addition assume that $M$ is minimal, which means that the $i$-th differential $d^i_M$ of $M$ kills the socle of $M^i$ for all $i \in \Zbb$.
  
  Next we adjust the complex $F$. Since $F \in \mathcal{F}^+$, we first represent $F$ by a bounded below hoflat complex so that $F \otimes_R^\mathbf{L} M$ is represented by $F \otimes_R M$. We may in fact assume that the components $F^i$ of $F$ are free $R$-modules. Indeed, $\dim(R) < \infty$ implies that every flat $R$-module has projective dimension at most $\dim(R)$ by \cite[Corollaire (3.2.7)]{RG71} (following \cite[Proposition 6]{Jen70}), and so there is a quasi-isomorphism $F' \to F$ where $F'$ is a bounded below complex of free $R$-modules. The mapping cone of this map is pure-acyclic by \cite[Corollary A.4]{Sha23}, and so $F'$ is hoflat and we may take $F'$ to be our $F$. Denote by $\widehat{F}$ the complex obtained from $F$ by applying the $\mm$-adic completion. Recall that for any free $R$-module $R^{(\lambda)}$, the natural map $R^{(\lambda)} \to \widehat{R^{(\lambda)}}$ becomes an isomorphism after applying $- \otimes_R L$ for any $L \in \Lcal$. It follows that $F \otimes_R M \cong \widehat{F} \otimes_R M$. We can thus replace $F$ by $\widehat{F}$, and the goal becomes showing that $F \otimes_R M \not\in \D^+$ (we do not claim that $\widehat{F}$ is hoflat). Using \cite[Lemma 1.5]{NT20}, we may and will also assume that $k \otimes_R F$ has zero differentials. Explicitly, the components $F^i$ are of the form $\widehat{R^{(\lambda_i)}}$ for suitable cardinals $\lambda_i$ and the differential $d^i_F$ is then given by a $(\lambda_i \times \lambda_{i+1})$ matrix $A_i$ with inputs in $\mm\widehat{R}$ whose columns converge to zero in the $\mm$-adic topology, i.e., for each $n>0$ only finitely many inputs of each column lie outside of the ideal $\mm^n \widehat{R}$, see e.g. \cite[\S 5]{PS22}. As a consequence, the induced morphism 
  $$M^j \otimes (\widehat{R^{(\lambda_i)}} \xrightarrow{A_i} \widehat{R^{(\lambda_{i+1})}}) \cong ((M^j)^{(\lambda_i)} \xrightarrow{A_i} (M^j)^{(\lambda_{i+1})})$$ 
  kills the socle as well. Without loss of generality, we will assume that $F^0 \neq 0$ and $F^i = 0$ for all $i<0$. 
  
  Let us show that $M \otimes_R F \not\in \D^+$. Consider the brutal truncation triangle
  $$F \to F^0 \xrightarrow{h} F' \to \Sigma F.$$
  Fix $l \in \Zbb$ such that $M^l \neq 0$ and consider a nonzero morphism $k \to M^l$, any such morphism has image in the socle of $M^l$. Since the differentials $d_M^i$ kill socles, this induces a chain map $\Sigma^{-l}k \to M$ which is not null-homotopic. Set $P = Q_\lambda(k) \in K(\Inj{\Lcal})$ and recall that there is the coreflection $P \to k$ in $\K(\Lcal)$ whose cone is acyclic. As $M$ is hoinjective as a complex over $\Lcal$, $f$ is homotopy equivalent to the induced map $f\colon \Sigma^{-l}P \to M$ which factorises through the coreflection $\Sigma^{-l}P \to \Sigma^{-l}k$. Embedding $M$ as a summand of $F^0 \otimes_R M \cong M^{(\lambda_0)}$, we can view $f$ as a nonzero element of $\Hom_{\K(\Inj{\Lcal})}(P,\Sigma^{l} F^0 \otimes_R M)$. The composition $h \circ f$ with the connecting morphism $h\colon F^0 \to F'$ is however zero, as its $l$-th component is the composition 
  $$P^0 \to k \to M^0 \to (M^0)^{(\lambda_0)} \xrightarrow{A_0} (M^0)^{(\lambda_1)} \to \bigoplus_{i \geq 0}(M^i)^{(\lambda_{i+1})}$$
  which vanishes, as the map induced by the matrix $A_0$ kills the socle of $(M^0)^{(\lambda_0)}$, and all of the other components are zero. This shows that the induced map
  $$\Hom_{\K(\Inj{\Lcal})}(P,h)\colon \Hom_{\K(\Inj{\Lcal})}(P,\Sigma^{l} F^0 \otimes_R M) \to \Hom_{\K(\Inj{\Lcal})}(P,\Sigma^{l}F' \otimes_R M)$$ 
  is not injective, so that 
  $$\Hom_{\K(\Inj{\Lcal})}(P,\Sigma^{l} F \otimes_R M) \cong \Hom_{\D(\Lcal)}(k,\Sigma^{l} F \otimes_R M) \cong \Hom_{\D(R)}(k,\Sigma^{l} F \otimes_R M)$$ 
  is nonzero. The assumption $M \not\in \D^+$ together with minimality of $M$ ensures that $M^l \neq 0$ for infinitely many $l \ll 0$. Then our argument yields $\RHom_R(k,F \otimes_R M) \not\in \D^+$, which in turn implies the desired conclusion $F \otimes_R M \not\in \D^+$.
\end{proof}

\begin{prop}\label{tensor-vanishing}
  Let $M \in \D$ and $F \in \Fcal^+$. Then the following holds:
  $$\inf F \otimes_R^\mathbf{L} \RHom_R(k,M) = \inf \RHom_R(k,M \otimes_R^\mathbf{L} F). $$
\end{prop}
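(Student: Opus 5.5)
The proof splits according to whether $M$ is bounded below, after a reduction to $M$ supported at $\mm$.

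\textbf{Reduction to $M\in\Gamma_\mm\D$.} We have $\RHom_R(k,M)\cong\RHom_R(k,\Gamma_\mm M)$. Since $\Gamma_\mm(-)\cong\Gamma_\mm(R)\otimes_R^\mathbf{L}-$ commutes with $-\otimes_R^\mathbf{L}F$, also
$$\RHom_R(k,M\otimes_R^\mathbf{L}F)\cong\RHom_R(k,\Gamma_\mm M\otimes_R^\mathbf{L}F).$$
So both sides of the claimed equality are unchanged when $M$ is replaced by $\Gamma_\mm M$, and we may assume $\supp(M)\subseteq\{\mm\}$.

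\textbf{Case $M\in\D^+$.} Then \cref{avramov-foxby}(iii), applied with any $E$ (only the statement about $\omega$ is needed), shows that $\omega$ is an isomorphism. The two infima therefore agree trivially.

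\textbf{Case $M\notin\D^+$.} Here we show that both infima equal $-\infty$.

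For the right-hand side, first dispose of the degenerate case: if $\mm\notin\supp(F)$, then $\Gamma_\mm M\otimes_R^\mathbf{L}F=0$. Moreover $k\otimes_R^\mathbf{L}F=0$, so $F\otimes_R^\mathbf{L}\RHom_R(k,M)=0$, since the latter object lies in the localising subcategory generated by $k$. Both sides are then $\infty$.

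Otherwise $\mm\in\supp(M)\cap\supp(F)$. The proof of \cref{flat-unbounded} in fact established that $\RHom_R(k,F\otimes_R M)\notin\D^+$, and this is exactly the statement that the right-hand infimum is $-\infty$. I would re-quote that step explicitly rather than only the lemma's conclusion, because the lemma as stated only gives $F\otimes_R^\mathbf{L}M\notin\D^+$.

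For the left-hand side, note that $M\notin\D^+$ with $\supp M=\{\mm\}$ gives $\RHom_R(k,M)\notin\D^+$. By \cite[Theorem 2.1]{FI03}, or its unbounded analogue obtained by writing $M$ as a homotopy colimit or limit of truncations, we have $\inf\RHom_R(k,M)=\inf M=-\infty$. Now $\RHom_R(k,M)$ is a complex of $k$-vector spaces, hence a coproduct of shifts of $k$. Tensoring with $F$ gives a coproduct of shifts of $k\otimes_R^\mathbf{L}F$, where $k\otimes_R^\mathbf{L}F\neq0$ because $\mm\in\supp F$, and $k\otimes_R^\mathbf{L}F\in\D^+$ because $F\in\Fcal^+$. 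Since the shifts are unbounded, the infimum of this coproduct is $-\infty$.

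\textbf{Main obstacle.} The hardest point is the identity $\inf\RHom_R(k,M)=\inf M$ for unbounded $M$ supported at $\mm$. I would get it from the minimal hoinjective model used in \cref{flat-unbounded}: there the nonzero socle maps $k\to M^l$ give nonzero classes in $\Hom(k,\Sigma^lM)$ for infinitely many $l\ll0$. This argument should be written out with care rather than quoted.
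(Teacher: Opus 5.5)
Your proposal is correct and follows the paper's proof. When $M$ is bounded below you use Avramov--Foxby (\cref{avramov-foxby}(iii)); otherwise you show both infima equal $-\infty$ via the argument of \cref{flat-unbounded} and \cite[Theorem 2.1]{FI03}. That theorem is already stated for unbounded complexes, so the identity you flag as your ``main obstacle'' is settled by citing it.

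Your preliminary replacement of $M$ by $\Gamma_\mm M$ is a real improvement, not a formality. The paper's step $\inf M=\inf\RHom_R(k,M)$ and its appeal to \cref{flat-unbounded} are only valid when $\Gamma_\mm M\notin\D^+$, which does not follow from $M\notin\D^+$. For example, take $M=k\oplus\coprod_{n\geq 0}\Sigma^{n}R_\pp$ with $\pp\neq\mm$. Your reduction sends such cases correctly into the bounded-below case.
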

\begin{proof}
  This is a direct consequence \Cref{avramov-foxby}(iii) if $M \in \D^+$. Thus, we further assume that $M \in \D \setminus \D^+$. If $\mm \not\in \supp(M)$ or $\mm \not\in \supp(F)$ then one can check directly that both $F \otimes_R^\mathbf{L} \RHom_R(k,M)$ and $\RHom_R(k,M \otimes_R^\mathbf{L} F)$ vanish. We can thus assume $\mm \in \supp(M) \cap \supp(F)$, so that \cref{flat-unbounded} applies and yields 
  $$\inf M \otimes_R^\mathbf{L} F = -\infty.$$ 
  By \cite[Theorem 2.1]{FI03}, we have
  $$-\infty = \inf M = \inf \RHom_R(k,M)$$
  and
  $$-\infty = \inf \RHom_R(k,M \otimes_R^\mathbf{L} F).$$
  Since $\RHom_R(k,M)$ can be represented by a graded $k$-vector space which is not bounded below, we easily also check the equality
  $$\inf F \otimes_R^\mathbf{L} \RHom_R(k,M) = -\infty$$
  which concludes the proof.
\end{proof}
As a consequence, we also obtain a dual statement which will be convenient for our application in \cref{T:main}. We \textit{do not know} if the following assumption can be relaxed to just $E \in \Ical^-$.
\begin{prop}\label{tensor-vanishing-dual}
  Let $M \in \D$, $F \in \Fcal^+$, and $E=F^\cd$. Then the following holds:
  $$\sup k \otimes_R^\mathbf{L} \RHom_R(M,E) = \sup \RHom_R(\RHom_R(k,M),E). $$
\end{prop}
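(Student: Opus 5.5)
The plan is to dualise \cref{tensor-vanishing} using the character duality $(-)^\cd = \Hom_\Zbb(-,\Qbb/\Zbb)$, which is exact and conservative and turns tensor products into $\RHom$. Throughout, $E = F^\cd$.

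First I identify the left-hand side. By tensor–Hom adjunction we have natural isomorphisms
$$\RHom_R(M,F^\cd) \cong (M \otimes_R^\mathbf{L} F)^\cd .$$
Since $k$ is finitely presented over the Noetherian ring $R$, it is isomorphic to a bounded above complex of finitely generated free modules, so the standard evaluation isomorphism gives
$$k \otimes_R^\mathbf{L} (M \otimes_R^\mathbf{L} F)^\cd \cong \RHom_R(k, M \otimes_R^\mathbf{L} F)^\cd .$$
This step is the one to double-check. It holds because the functor $(-)^\cd$ is exact, and the isomorphism is clear for $k$ replaced by $R$; one then passes to a resolution of $k$ by finite free modules. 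A bounded-above resolution does not need to be compact for this, because $\Hom_\Zbb(-,\Qbb/\Zbb)$ sends coproducts to products, and $\Hom(P,-)$ for $P$ finite free in each degree commutes with these degreewise.

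Next I identify the right-hand side. By adjunction,
$$\RHom_R(\RHom_R(k,M),F^\cd) \cong \bigl(F \otimes_R^\mathbf{L} \RHom_R(k,M)\bigr)^\cd .$$

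Finally, for any complex $N$, exactness and conservativity of $(-)^\cd$ give $H^i(N^\cd) \cong H^{-i}(N)^\cd$, and hence $\sup N^\cd = -\inf N$. Applying this to both sides, the claimed equality becomes
$$-\inf \RHom_R(k,M \otimes_R^\mathbf{L} F) = -\inf F \otimes_R^\mathbf{L} \RHom_R(k,M),$$
which is exactly \cref{tensor-vanishing}. The conventions $\pm\infty$ match, including the vanishing case where $\sup 0 = -\infty$ and $\inf 0 = \infty$.

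The only real obstacle is justifying the evaluation isomorphism $k \otimes_R^\mathbf{L} N^\cd \cong \RHom_R(k,N)^\cd$ for an arbitrary unbounded $N$. This is precisely why the hypothesis is $E = F^\cd$ rather than $E \in \Ical^-$. To handle it, I would represent $k$ by a degreewise finite free resolution $P$, which is bounded above. Then $\Hom_R(P,N)$ computes $\RHom$ for any $N$, since $P$ is K-projective, and $P \otimes_R N^\cd$ computes the derived tensor product, since $P$ is K-flat. The natural map $P \otimes_R \Hom_\Zbb(N,\Qbb/\Zbb) \to \Hom_\Zbb(\Hom_R(P,N),\Qbb/\Zbb)$ is then a degreewise isomorphism. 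This uses that each $P^j$ is finite free and that, in each total degree, the relevant sums and products over $j$ agree because only the finitely many relevant indices contribute in the product-versus-sum comparison. If that degreewise finiteness fails, I would instead argue via the homotopy limit/colimit of the brutal truncations of $P$.
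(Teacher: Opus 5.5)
You have a genuine gap. The reductions $\RHom_R(M,F^\cd)\cong(M\otimes^{\mathbf L}_RF)^\cd$ and $\RHom_R(\RHom_R(k,M),F^\cd)\cong(F\otimes^{\mathbf L}_R\RHom_R(k,M))^\cd$ are correct, as is $\sup N^\cd=-\inf N$; the second is exactly how the paper treats the right-hand side. The step that fails is the evaluation isomorphism $k\otimes^{\mathbf L}_RN^\cd\cong\RHom_R(k,N)^\cd$ for arbitrary $N$.

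With $P$ a finite free resolution of $k$, your comparison map in degree $m$ is
$$\textstyle\bigoplus_{j\leq 0}P^j\otimes_R(N^{j-m})^\cd\longrightarrow\bigl(\prod_{j\leq 0}\Hom_R(P^j,N^{j-m})\bigr)^\cd .$$
For this to be an isomorphism, character duality would have to turn an infinite \emph{product} into a sum. It does not; it only turns sums into products. Infinitely many $j$ contribute exactly when $R$ is singular (so $P$ is unbounded) and $N\notin\D^+$. That is precisely the case not already covered by \cref{avramov-foxby}(iii), i.e. the case this proposition exists for.

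Concretely, take $R=k[x]/(x^2)$ with $k$ a finite field, so $k^\cd\cong k$. Let $N=\prod_{n\geq0}\Sigma^nk=\coprod_{n\geq0}\Sigma^nk$, which is $M\otimes^{\mathbf L}_RF$ for $M=N$ and $F=R$. Since $H^i\RHom_R(k,k)\cong k\cong\Tor^R_i(k,k)$ for all $i\geq0$, every $H^j(k\otimes^{\mathbf L}_RN^\cd)$ is a countable direct sum of copies of $k$. By contrast, $H^j(\RHom_R(k,N)^\cd)$ is the character dual of a countable product of copies of $k$, which has uncountable dimension. So no isomorphism exists at all. Your fallback via brutal truncations of $P$ cannot help either, since $(-)^\cd$ does not convert $\mathrm{holim}$ into $\hocolim$.

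What does survive is only the equality of extremal degrees, $\sup(k\otimes^{\mathbf L}_RY^\cd)=-\inf\RHom_R(k,Y)$, and this needs a genuine extra input. The paper gets it from Foxby--Iyengar \cite[Theorem 2.1, Theorem 4.1]{FI03}: depth $\inf\RHom_R(k,-)$ and width $\sup(k\otimes^{\mathbf L}_R-)$ of an arbitrary unbounded complex can be computed by local (co)homology, equivalently by the Koszul complex.

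You can repair your argument directly as follows. Let $K$ be the Koszul complex on generators of $\mm$. Since $K$ is compact, your evaluation argument is valid with $K$ in place of $k$, giving $K\otimes_RY^\cd\cong\RHom_R(K,Y)^\cd$. Moreover, \cite{FI03} gives $\inf\RHom_R(k,Y)=\inf\RHom_R(K,Y)$ and $\sup(k\otimes^{\mathbf L}_RZ)=\sup(K\otimes_RZ)$ for all complexes. Hence
$$\sup(k\otimes^{\mathbf L}_RY^\cd)=\sup(K\otimes_RY^\cd)=-\inf\RHom_R(K,Y)=-\inf\RHom_R(k,Y).$$
Applying this with $Y=M\otimes^{\mathbf L}_RF$, the rest of your proof goes through using \cref{tensor-vanishing}.
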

\begin{proof}
  This follows from \cref{tensor-vanishing} using duality. Indeed, we have 
  $$\RHom_R(\RHom_R(k,M),F^\cd) \cong (F \otimes_R^\mathbf{L} \RHom_R(k,M))^\cd$$
  so that
  $$\sup k \otimes_R^\mathbf{L} \RHom_R(M,E) = -\inf F \otimes_R^\mathbf{L} \RHom_R(k,M).$$ 
  On the other hand, 
  $$k \otimes_R^\mathbf{L} \RHom_R(M,F^\cd) \cong k \otimes_R^\mathbf{L} (M \otimes_R^\mathbf{L} F)^\cd$$
  so that
  $$\sup k \otimes_R^\mathbf{L} \RHom_R(M,F^\cd) = \sup k \otimes_R^\mathbf{L} (M \otimes_R^\mathbf{L} F)^\cd = $$
  $$= \sup (M \otimes_R^\mathbf{L} F)^\cd = - \inf (M \otimes_R^\mathbf{L} F) = - \inf \RHom_R(k,M \otimes_R^\mathbf{L} F)$$
  using \cite[Theorem 2.1, Theorem 4.1]{FI03}.
\end{proof}

\bibliographystyle{abstract}
\bibliography{bibitems}
\end{document}